\newcommand {\fim}{\end{enumerate}}
\newtheorem{teo}{Teorema}[chapter]
\newtheorem{lema}[teo]{Lema}%[chapter]
\newtheorem{prop}[teo]{Proposição}%[chapter]
\newtheorem{corolario}[teo]{Corolário}%[chapter]
\newtheorem{definicao}[teo]{Definição}%[chapter]
\newtheorem{obs}[teo]{Observação}
\newcommand{\lb}{\linebreak}
\newcommand{\vs}{\vspace{0.5cm}}
\begin{document}
%\onehalfspacing

%%%%%%%%%%%%%%% CAPA%%%%%%%%%%%%%%%%%%%%%%

\thispagestyle{empty}
\begin{center}
    \vspace*{2.3cm}
    \textbf{\Large{K-Teoria de operadores pseudodiferenciais  \\
    com símbolos semi-periódicos no cilindro }}\\
	
    \vspace*{1.2cm}
    \Large{Patricia Hess}
    
    \vskip 2cm
	\textsc{
	Tese apresentada\\[-0.25cm] 
	ao\\[-0.25cm]
	Instituto de Matemática e Estatística\\[-0.25cm]
	da\\[-0.25cm]
	Universidade de São Paulo\\[-0.25cm]
	para\\[-0.25cm]
	obtenção do título\\[-0.25cm]
	de\\[-0.25cm]
	Doutor em Ciências}
    
    \vskip 1.5cm
    Programa: Matemática Aplicada\\
    Orientador: Prof. Dr. Severino Toscano do Rego Melo

    \vskip 1cm
	\normalsize{Durante o desenvolvimento deste trabalho a autora recebeu auxílio
	financeiro do CNPq}
	
    \vskip 0.5cm
    \normalsize{São Paulo, dezembro de 2008}
\end{center}

\newpage

% Página de rosto (só para a versão final)

\thispagestyle{empty}
	\begin{center}
    	\vspace*{2.3 cm}
    	\textbf{\Large{K-Teoria de operadores pseudodiferenciais  \\
    com símbolos semi-periódicos no cilindro}}\\
	    \vspace*{2 cm}
	\end{center}

	\vskip 2cm

	\begin{flushright}
	Este exemplar corresponde à redação\\
	final da tese devidamente corrigida\\
	e defendida por Patricia Hess\\
	e aprovada pela Comissão Julgadora.
	\vskip 2cm

	\end{flushright}
	\vskip 3.8cm

	\begin{quote}
	\noindent Banca Examinadora:
	
	\begin{itemize}
		\item Prof. Dr. Severino Toscano do Rego Melo (orientador) - IME-USP
		\item Prof. Dra. Cristina Cerri - IME-USP
		\item Prof. Dra. Beatriz Abadie - URU
		\item Prof. Dr. Antônio Roberto da Silva - UFRJ
		\item Prof. Dr. Ricardo Bianconi - IME-USP
	\end{itemize}
	  
	\end{quote}
\pagebreak

%%%%%%%%%%%%%%%Agradecimentos%%%%%%%%%%%%%%%%%%

\pagenumbering{roman} 	% começamos a numerar

\chapter*{Agradecimentos}

Agradeço a todos aqueles que sempre me incentivaram e apoiaram, em especial, ao professor Toscano, por ter aceito ser meu orientador e estar sempre disposto a me ajudar; aos amigos que fiz nestes últimos anos, por tornar a minha estadia em São Paulo tão agradável; e por fim, ao professor Daniel Gonçalves, que, embora não tenha seu nome citado na banca examinadora, esteve presente à defesa  e colaborou com muitas sugestões. 

\newpage

%%%%%%%%%%%%%%%Resumo%%%%%%%%%%%%%%%%%%

\chapter*{Resumo}

Seja $\mathcal{A}$ a C*-álgebra dos operadores limitados em $L^2(\mathbb{R}\times \mathbb{S}^1)$ gerada por: operadores $a(M)$ de multiplicação por funções $a \in C^{\infty}(\mathbb{S}^1)$, operadores  $b(M)$ de multiplicação por funções  $b \in C([-\infty, + \infty]),$ operadores de multiplicação por funções contínuas $2\pi$-periódicas, $\Lambda = (1-\Delta_{\mathbb{R}\times \mathbb{S}^1})^{-1/2},$ onde $\Delta_{\mathbb{R}\times \mathbb{S}^1}$ é o Laplaciano de $\mathbb{R}\times \mathbb{S}^1,$ e $\partial_t \Lambda, ~ \partial_x \Lambda$ para $t\in \mathbb{R}$ e $x\in \mathbb{S}^1$. Calculamos a K-teoria de $\mathcal{A}$ e de $\mathcal{A}/\mathcal{K}(L^2(\mathbb{R}\times \mathbb{S}^1))$,  onde $\mathcal{K}(L^2(\mathbb{R}\times \mathbb{S}^1))$ é o ideal dos operadores compactos em $L^2(\mathbb{R}\times \mathbb{S}^1)$.

\noindent \textbf{Palavras-chave:} K-teoria, Operadores pseudodiferenciais.

% ---------------------------------------------------------------------------- %
% Abstract
\chapter*{Abstract}
Let $\mathcal{A}$ denote the C*-algebra of bounded operators on $L^2(\mathbb{R}\times \mathbb{S}^1)$ generated by: all multiplications $a(M)$ by functions $a \in C^{\infty}(\mathbb{S}^1)$, all multiplications $b(M)$ by functions  $b \in C([-\infty, + \infty]),$ all multiplications by $2\pi$-periodic continuous functions, $\Lambda = (1-\Delta_{\mathbb{R}\times \mathbb{S}^1})^{-1/2},$ where $\Delta_{\mathbb{R}\times \mathbb{S}^1}$ is the Laplacian on $\mathbb{R}\times \mathbb{S}^1,$ and $\partial_t \Lambda, ~ \partial_x \Lambda$, for $t\in \mathbb{R}$ and $x\in \mathbb{S}^1$. We compute the K-theory of $\mathcal{A}$ and $\mathcal{A}/\mathcal{K}(L^2(\mathbb{R}\times \mathbb{S}^1))$,  where $\mathcal{K}(L^2(\mathbb{R}\times \mathbb{S}^1))$ is the ideal of compact operators  on $L^2(\mathbb{R}\times \mathbb{S}^1)$. 

\noindent \textbf{Keywords:} K-theory, Pseudodifferential operators.

%%%%%%%%%%%%%%% INDICE %%%%%%%%%%%%%%%%%%%%%

\tableofcontents

%%%%%%%%%%%%%%%% CAPITULOS %%%%%%%%%%%%%%%%%%
%\mainmatter

\chapter*{Introdu\c{c}\~ao} \label{intro}
\pagenumbering{arabic}
\addcontentsline{toc}{chapter}{Introdu\c{c}\~ao}

Este trabalho tem como objetivo calcular a K-teoria da C*-álgebra $\mathcal{A}$ de operadores limitados em $(L^2(\mathbb{R}\times \mathbb{S}^1)),$ gerada pelos operadores: (i) de multiplicação por funções que se estendem continuamente para $[-\infty,+\infty]\times \mathbb{S}^1$, (ii)  de multiplicação por funções contínuas em $\mathbb{R}$ de período $2\pi$, (iii) $\Lambda = (1-\Delta_{\mathbb{R}\times \mathbb{S}^1})^{-1/2},$ onde $\Delta_{\mathbb{R}\times \mathbb{S}^1}$ é o operador Laplaciano em $L^2(\mathbb{R}\times \mathbb{S}^1),$ e (iv) $1/i\partial_t \Lambda, ~ 1/i\partial_x \Lambda$ para $t\in \mathbb{R}$ e $x\in \mathbb{S}^1$.

%Melo em \cite{melo} estudou esta álgebra $\mathcal{A}$ contida em $\mathcal{L}(L^2(\mathbb{R}\times \mathbb{B})),$ onde $\mathbb{B}$ é uma variedade Riemanniana compacta de dimensão $n$. Diversos resultados obtidos em \cite{melo} são citados no primeiro capítulo, restritos ao caso $\mathbb{B} = \mathbb{S}^1,$ e servem para conhecermos melhor a álgebra em estudo.

Para calcular a K-teoria de $\mathcal{A}$, %a ferramenta principal é construir uma seqüência exata curta envolvendo esta álgebra, e para isso 
é importante conhecer a estrutura desta álgebra. Desta forma, o capítulo 1 é destinado essencialmente a descrever sua estrutura, apresentando diversos resultados de \cite{melo} restritos ao caso $\mathbb{B} = \mathbb{S}^1.$ 

A C*-álgebra $\mathcal{A}$ não é comutativa, e seus comutadores não são todos compactos. Temos então que o ideal comutador $\mathcal{E}_{\mathcal{A}}$ de $\mathcal{A}$ contém o ideal dos operadores compactos $\mathcal{K}(L^2(\mathbb{R}\times \mathbb{S}^1))$ e além disso, temos o seguinte isomorfismo: 
\begin{equation} \frac{\mathcal{E}_{\mathcal{A}}}{{\cal{K}}} \cong C(S^{1}\times\{-1,+1\},{\cal{K}}(L^2({\mathbb{Z}\times \mathbb{S}^{1}}))).
\label{e}
\end{equation}
Compondo $ \mathcal{E}_{\mathcal{A}}\rightarrow \mathcal{E}_{\mathcal{A}}/{\cal{K}}$ com o isomorfismo acima, conseguimos estender esta aplicação e obter um *-homomorfismo
$$\gamma: \mathcal{A}\rightarrow C(S^{1}\times\{-1,+1\},{\cal{L}}(L^2({\mathbb{Z}\times \mathbb{S}^{1}}))),$$ 
que nos será muito útil nos cálculos  das aplicações de conexão em K-teoria. 

Apresentamos ainda, no capítulo 1, o isomorfismo entre a álgebra comutativa $\mathcal{A}/\mathcal{E}_{\mathcal{A}}$  e o conjunto das funções contínuas em $M$ tomando valores complexos, denotado por $C(M)$, sendo $M$ um subconjunto compacto de $[-\infty, +\infty]\times \mathbb{S}^1\times S^1\times S^1$, onde este isomorfismo se dá pela aplicação de Gelfand. Note que $S^1$ também representa o círculo, mas preferimos distinguí-lo do círculo proveniente da variedade. A composição da projeção canônica $ \mathcal{A} \rightarrow {\mathcal{A}}/\mathcal{E}_{\mathcal{A}}$ com este isomorfismo nos dá a aplicação que denominamos de símbolo, isto porque esta aplicação é dada pelo símbolo principal dos operadores em ${\mathcal{A}}$. 

Apesar de termos todas estas informações sobre $\mathcal{A}$, elas não foram suficientes para o cálculo de sua K-teoria. 
Definimos então, no capítulo 2, duas C*-subálgebras de $\mathcal{A}$, as quais denominamos $\mathcal{A}^{\dagger}$ e $\mathcal{A}^{\diamond},$ e tais que 
$$ \mathcal{A}^{\dagger} \subset \mathcal{A}^{\diamond} \subset \mathcal{A}.$$
Estas álgebras não são comutativas e não contêm a álgebra dos operadores compactos em $L^2({\mathbb{R}\times \mathbb{S}^{1}})$, denotada por  $\mathcal{K}(L^2({\mathbb{R}\times \mathbb{S}^{1}})).$ Determinamos seus ideais comutadores e mostramos os seguintes isomorfismos:
$$ \frac{\mathcal{A}^{\dagger}}{\mathcal{E}^{\dagger}} \cong C(S^1\times \mathbb{S}^1)~, ~~~~ 
\frac{\mathcal{A}^{\diamond}}{\mathcal{E}^{\diamond}} \cong C(S^1\times S^1\times \mathbb{S}^1).$$
O resultado mais importante deste capítulo é que a C*-álgebra $\mathcal{A}^{\diamond}$ tem estrutura de produto cruzado e este teorema segue como uma generalização do teorema 8 de \cite{cintia}.

No terceiro e último capítulo, apresentamos os cálculos de K-teoria das álgebras $\mathcal{A}^{\dagger}, ~\mathcal{A}^{\diamond}$ e $\mathcal{A}.$
Partindo da seqüência induzida pela aplicação do símbolo quocientada pelo ideal $\mathcal{K}(L^2(\mathbb{R}\times \mathbb{S}^1))$
$$
0 \longrightarrow \frac{\mathcal{E}_{\mathcal{A}}}{\mathcal{K}(L^2(\mathbb{R}\times \mathbb{S}^1))} \stackrel{i}{\longrightarrow} 
\frac{{\mathcal{A}}}{\mathcal{K}(L^2(\mathbb{R}\times \mathbb{S}^1))} \stackrel{\pi}{\longrightarrow}
\frac{{\mathcal{A}}}{\mathcal{E}_{\mathcal{A}}} \longrightarrow 0,
$$
conseguimos calcular a aplicação do índice da seqüência exata de seis termos em K-teoria:
$$\delta_1: K_1(\mathcal{A}/ {\mathcal{E}_{\mathcal{A}}})\longrightarrow  K_0(\mathcal{E}_{\mathcal{A}}/\mathcal{K}(L^2(\mathbb{R}\times \mathbb{S}^1))).$$
Analogamente a proposição 3 de \cite{cintia}, conseguimos mostrar que $\gamma\circ\delta_1$ calculado num elemento $[[A]_{\mathcal{E}_{\mathcal{A}}}]_1 \in K_1(\mathcal{A}/ {\mathcal{E}_{\mathcal{A}}})$ é igual a $\texttt{ind}\gamma_A(1,\pm 1)[E]_0,$ cujo operador $E$ é uma projeção de posto 1 de \linebreak $\mathcal{K}(L^2(\mathbb{Z}\times \mathbb{S}^1))$ e $\texttt{ind}$ é o índice de Fredholm. Para calcular este índice, usamos uma particularização da fórmula do índice para operadores elípticos de Atiyah-Singer dada por Fedosov \cite{fedosov}, no caso em que a variedade tem \textit{Todd class} igual a 1. Esta fórmula determina o índice do operador a partir de seu símbolo, fazendo com que não precisemos entrar em argumentos topológicos, necessários na fórmula de Atiyah-Singer. 

Para calcular $K_0$ e $K_1$ de $\mathcal{A}^{\dagger}$ e $\mathcal{A}^{\diamond}$, usamos a mesma idéia acima, mas foi com a seqüência de Pimsner-Voiculescu \cite{pv} que conseguimos concluir que 
$$  K_0(\mathcal{A}^{\dagger}) \cong \mathbb{Z} ~, ~~  K_1(\mathcal{A}^{\dagger}) \cong \mathbb{Z}^2 ~, ~~ 
K_0(\mathcal{A}^{\diamond}) \cong K_1(\mathcal{A}^{\diamond}) \cong \mathbb{Z}^3 .$$ 

Com todos estes resultados e a partir do cálculo da K-teoria do ideal comutador $\mathcal{E}_{\mathcal{A}}$, provamos que existe um operador de Fredholm $T \in M_n(\mathcal{E}_{\mathcal{A}}^{+})$  cujo índice é igual a 1. Diante disso, conseguimos mostrar que a aplicação do índice da seqüência exata de seis termos em K-teoria associada a seqüência 
$$ 0\longrightarrow \mathcal{K}(L^2(\mathbb{R}\times\mathbb{S}^1)) \longrightarrow \mathcal{A} \longrightarrow \mathcal{A}/\mathcal{K}(L^2(\mathbb{R}\times\mathbb{S}^1)) \longrightarrow 0, $$
é sobrejetora. E esta informação foi crucial para chegarmos ao resultado desejado. 

No trabalho de Melo e Silva \cite{cintia}, eles determinam a K-teoria da C*-álgebra $A \subset \mathcal{L}(L^2(\mathbb{R}))$ gerada por: (i)operadores de multiplicação por funções que admitem extensões contínuas em $[-\infty, +\infty]$, (ii) os operadores descritos em (i) conjugados com a transformada de Fourier $F$, e (iii) multiplicadores por funções contínuas de período $2\pi$. Eles apresentam duas maneiras diferentes para calcular a K-teoria de $A .$ Uma delas se baseia na seqüência exata proveniente da aplicação do símbolo principal, onde se conhece o núcleo dessa aplicação e sua imagem. A outra, surge da seqüência exata induzida pela  aplicação que chamamos de $\gamma'$, que estende o homomorfismo sobrejetor $E \rightarrow C(S^1\times\{\pm1\}, \mathcal{K}(\ell^2(\mathbb{Z}))).$

Apesar de conhecermos o núcleo e a imagem de  $\gamma: \mathcal{A}\rightarrow C(S^{1}\times\{\pm1\},{\cal{L}}(L^2({\mathbb{Z}\times \mathbb{S}^{1}}))),$ a seqüência induzida por esta aplicação não nos possibilitou ir adiante no cálculo da K-teoria de $\mathcal{A}$ e trabalhamos apenas com a seqüência do símbolo. Mas o que é importante ressaltar, é que tivemos que construir várias outras seqüências envolvendo a álgebra, seu comutador e $\mathcal{K}(L^2(\mathbb{R}\times S^1))$ para podermos inferir algo. 

De um modo geral, o que usamos para calcular a K-teoria de $\mathcal{A}$ são argumentos bem conhecidos, que estão associados ao símbolo principal e ao símbolo principal de fronteira, ao qual chamamos de $\gamma$ no nosso trabalho. Conhecer a imagem e o núcleo desses símbolos é uma ferramenta que nos possibilita o cálculo da K-teoria de uma C*-álgebra $A$, como pode ser visto nos trabalhos de Melo, Nest e Schrohe \cite{nest} e Melo, Schick e Schrohe \cite{sch}. A estrutura da álgebra $\mathcal{A}$ se assemelha a estrutura das álgebras estudadas em \cite{nest} e \cite{sch} e de tantas outras C*-álgebras geradas por operadores pseudodiferenciais de ordem zero (ver por exemplo \cite{cordes2}, \cite{cordes1}, \cite{lauter}, \cite{lauter2}, \cite{melo}  e \cite{mor}).

Apesar de termos estudado a K-teoria de ${\mathcal{A}}$ apenas em $\mathbb{R}\times\mathbb{S}^1$, suspeitamos de que vários resultados podem ser generalizados para  $\mathbb{R}\times\mathbb{B}$, onde $\mathbb{B}$ é uma variedade Riemanniana compacta de dimensão n. Mas este é uma assunto que pretendemos abordar num trabalho futuro.

\chapter{A C*-álgebra $\mathcal{A}$}

Começaremos este primeiro capítulo definindo a C*-álgebra $\cal{A},$ com a qual iremos trabalhar, e descrevendo vários resultados já conhecidos sobre ela. Melo, \cite{melo}, e Cordes, \cite{cordes1}, estudaram esta álgebra e deles obtemos informações necessárias para o cálculo da K-teoria de $\mathcal{A}$. 

Nas duas primeiras seções, introduziremos $\cal{A}$ como uma C*-subálgebra de operadores limitados em $L^2(\mathbb{R}\times{\mathbb{B}}),$ cujas funções em $L^2(\mathbb{R}\times{\mathbb{B}})$ são complexas e $\mathbb{B}$ é uma variedade Riemanniana compacta de dimensão $n$. Na terceira seção, nos restringiremos ao caso em que $\mathbb{B}$ é o círculo $\mathbb{S}^1 $ e a partir de então este será o caso que iremos estudar nos próximos capítulos.  

No decorrer do trabalho, usamos duas notações para o círculo. $S^1$ denota um círculo qualquer e $\mathbb{S}^1$ corresponde especificamente à variedade escolhida no lugar de $\mathbb{B}$.

\section{Descrição de $\cal{A}$}

Considere $\mathbb{B}$ uma variedade Riemanniana compacta. Escreva $\Omega = \mathbb{R}\times \mathbb{B}$ e seja $\Delta_{\Omega}$ o operador Laplaciano definido em $L^2(\Omega)$. 
Definimos a álgebra $\cal{A}$ como sendo a menor C*-subálgebra de ${\cal{L}}(L^2(\Omega))$ contendo 

\begin{itemize}

\item[(i)] operadores $a(M_x)$ de multiplicação por $ a\in C^\infty({\mathbb{B}}):$
$$ [a(M_x)f](t,x) = a(x)f(t,x) ;$$
 
\item[(ii)] operadores $b(M_t)$ de multiplicação por $ b\in C([-\infty,+\infty]),$ onde $C([-\infty,+\infty])$ representa o conjunto das funções contínuas em $\mathbb{R}$ que possuem limite quando $t\rightarrow\pm\infty:$ 
$$ [b(M_t)f](t,x) = b(t)f(t,x) , ~~ (t,x)\in\mathbb{R}\times\mathbb{S}^1 ; $$

\item[(iii)] operadores $p_j(M_t)$ de multiplicação na primeira variável por funções contínuas $2\pi$-periódicas $p_j(t)=e^{ijt},~j\in\mathbb{Z}$; 

\item[(iv)] $\Lambda:=(1-\Delta_{\Omega})^{-1/2};$ 

\item[(v)] $\frac{1}{i}\frac{\partial}{\partial t}\Lambda,$ com $t\in \mathbb{R};$

\item[(vi)] $L_x\Lambda,$ onde $L_x$ é um operador diferencial de primeira ordem em $\mathbb{B}$. 
%$\frac{1}{i}\frac{\partial}{\partial \theta}\Lambda,$ com $\theta\in {\mathbb{S}}^1.$

\end{itemize}

Com o intuito de facilitar a notação, denotemos por $A_1, A_2, \dots, A_6$ os operadores (ou classes de operadores) listados nos itens (i), (ii), \dots, (vi) acima, respectivamente. 

%Note que podemos interpretar as funções como operadores em ${\cal{L}}_{\Omega},$ e os operadores $\frac{1}{i}\frac{\partial}{\partial t}\Lambda,$ e $\frac{1}{i}\frac{\partial}{\partial \theta}\Lambda,$ definidos no subespaço denso $ \Lambda^{-1}(C_0^{\infty}(\Omega)),$ pode ser estendido a operadores limitados de $ \cal{L}_{\Omega}$(\cite{melo}, pg. ).

Usando a estrutura de espaço de Hilbert de $L^{2}(\Omega)$, temos que este é isomorfo a $L^{2}(\mathbb{R})\otimes L^{2}(\mathbb{B})$\footnote{$L^{2}(\mathbb{R})\otimes L^{2}(\mathbb{B})$ significa o espaço de Hilbert proveniente do produto tensorial $\otimes$ entre $L^{2}(\mathbb{R})$ e $L^{2}(\mathbb{B})$}. Podemos então conjugar a álgebra $\cal{A}$ com o operador unitário $F\otimes I_{\mathbb{B}}$ definido em $L^{2}(\mathbb{R})\otimes L^{2}(\mathbb{B})$,
$$(F \otimes I_{\mathbb{B}})^{-1} \mathcal{A} (F\otimes I_{\mathbb{B}}),$$
onde $F$ é a transformada de Fourier em $L^{2}(\mathbb{R}),$
$$ (Fu)(\tau) = \frac{1}{\sqrt{2\pi}}\int_{\mathbb{R}} e^{-it\tau}u(t)dt ,$$
e $I_{\mathbb{B}}$ denota o operador identidade em $L^2(\mathbb{B}).$ 
Esta conjugação será útil daqui em diante para obtermos uma série de resultados sobre $\cal{A}.$

Vamos denotar um operador do tipo $A\otimes I_{\mathbb{B}},$ onde $A \in \mathcal{L}(L^{2}(\mathbb{R}))$ e $I_{\mathbb{B}}$ é o operador identidade em $L^2(\mathbb{B}),$  e $I_{\mathbb{R}} \otimes B,$ onde $I_{\mathbb{R}}$ é o operador identidade em $L^2(\mathbb{R})$  e $B \in \mathcal{L}(L^{2}(\mathbb{B})),$ apenas por $A$ e $B$, respectivamente, deixando sempre claro a que espaço cada um deles pertence. 

Seja $\cal{B}$ a C*-álgebra $\cal{A}$ conjugada por $F$ e para cada $i=1,\dots, 6$, $F^{-1} A_i F$ está listado a seguir.   
\begin{itemize}

\item[(i')] $a(M_x)$

\item[(ii')] $b(D):= F^{-1}b(M_t)F;$

\item[(iii')] $ F^{-1}p_j(M_t)F = T_j,$ onde $(T_j u)(\tau,\cdot)=u(\tau + j, \cdot),$ para $u\in L^2(\Omega)$ e $j\in \mathbb{Z}$; 

\item[(iv')] $ (F^{-1}{\Lambda}Fu)(\tau,\cdot)= (1+\tau^2-\Delta_{\mathbb{B}})^{-1/2}u(\tau,\cdot),~u\in L^2(\Omega)$;

\item[(v')] $ (F^{-1} \frac{1}{i}\frac{\partial}{\partial t}\Lambda F u)(\tau,\cdot)= -\tau(1+{\tau}^2-\Delta_{\mathbb{B}})^{-1/2} u(\tau, \cdot),~u\in L^2(\Omega)$;

\item[(vi')] $ (F^{-1} L_x\Lambda F u)(\tau, \cdot) = L_x(1+{\tau}^2-\Delta_{\mathbb{B}})^{-1/2} u(\tau, \cdot),~u\in L^2(\Omega).$

%$ (F^{-1} \frac{1}{i}\frac{\partial}{\partial \theta}\Lambda F u)(\tau, \cdot) = \frac{1}{i}\frac{\partial }{\partial \theta}(1+{\tau}^2-\Delta_{{\mathbb{S}}^1})^{-1/2} u(\tau, \cdot).$
\end{itemize}

Podemos considerar a álgebra das funções contínuas de $\mathbb{R}$ limitadas que tomam valores em $\mathcal{L}(L^2(\mathbb{B}))$, denotada por $C_b(\mathbb{R}, \mathcal{L}(L^2(\mathbb{B})))$, como uma subálgebra de $\mathcal{L}(L^2(\mathbb{R}, L^2(\mathbb{B})))$ no seguinte sentido: uma função $f$ em 
$C_b(\mathbb{R}, \mathcal{L}(L^2(\mathbb{B})))$ é identificada com operador de multiplicação $f(M_t)$ em $\mathcal{L}(L^2(\mathbb{R}, L^2(\mathbb{B})))$ definido por 
$$(f(M_t)u)(t) = f(t) u(t)~ \in ~ L^2(\mathbb{B}),$$
para $u \in L^2(\mathbb{R}, L^2(\mathbb{B}))$. 
Note que $L^2(\mathbb{R}\times \mathbb{B})\cong L^2(\mathbb{R}, L^2(\mathbb{B}))$.
%Diante do isomorfismo entre $\mathcal{L}(L^2(\mathbb{R}\times \mathbb{B}))$ e $\mathcal{L}(L^2(\mathbb{R}, L^2(\mathbb{B})))$, podemos ainda considerar $C_b(\mathbb{R}, \mathcal{L}(L^2(\mathbb{B}))) "\subset " \mathcal{L}(L^2(\mathbb{R}\times \mathbb{B}))$. 

As funções 
\begin{equation}\label{bis}
B_4(\tau) = \tilde{\Lambda}(\tau) ~, ~~ B_5(\tau) = -\tau \tilde{\Lambda}(\tau) ~, ~~ B_6(\tau) = L_x \tilde{\Lambda}(\tau), ~\tau \in \mathbb{R},
\end{equation}
com $\tilde{\Lambda}(\tau) = (1 + {\tau}^2 - {\Delta}_{\mathbb{B}})^{-1/2}$, estão em $C_b(\mathbb{R}, \mathcal{L}(L^2(\mathbb{B})))$ (isto será demonstrado mais adiante no lema \ref{funcaoltda}). Assim, obtemos os operadores 
$$ B_4(M_t) ~, ~B_5(M_t)~, ~ B_6(M_t) ~\in ~ \mathcal{L}(L^2(\mathbb{R}, L^2(\mathbb{B})))$$
\label{opb}
{opb}
de multiplicação pelas funções $B_4, B_5, B_6.$ E estes operadores são exatamente os encontrados nos itens (iv'), (v') e (vi'), respectivamente.

Para simplificar a notação, vamos escrever o conjunto dos operadores limitados em $L^2(Y)$ por $\mathcal{L}_Y$, e o conjunto dos operadores compactos em  $L^2(Y)$ por $\mathcal{K}_Y$, para uma variedade $Y$ qualquer. 

%$\mathcal{L}_{Z}$ o conjunto dos operadores limitados $\mathcal{L}(L^2(Z))$ e por $\mathcal{K}_{Z}$ o conjunto dos operadores compactos $\mathcal{K}(L^2(Z)).$

%Note que os operadores $B_4, B_5$ e $B_6$ são operadores de multiplicação na variável real. Para simplificar a notação, vamos denotar por $\Lambda (\tau):=(1+\tau^2-\Delta_{{\mathbb{S}}^1})^{-1/2}.$ Podemos agora reescrevê-los como: $$  F^{-1}{\Lambda}F = \Lambda (\tau); \ \ \ F^{-1} \frac{1}{i}\frac{\partial}{\partial t}\Lambda F = -\tau\Lambda (\tau); \ \ \ F^{-1} \frac{1}{i}\frac{\partial}{\partial \theta}\Lambda F = \frac{1}{i}\frac{\partial }{\partial \theta} \Lambda (\tau). $$
%************* OU MUDANDO A NOTAÇÃO:
% $$ F^{-1}{\Lambda}F = \Lambda (M_{\mathbb{R}}); \ \ \ F^{-1} \frac{1}{i}\frac{\partial}{\partial t}\Lambda F = -M_{\mathbb{R}}\Lambda (M_{\mathbb{R}}); \ \ \ F^{-1} \frac{1}{i}\frac{\partial}{\partial \theta}\Lambda F = \frac{1}{i}\frac{\partial }{\partial \theta} \Lambda (M_{\mathbb{R}}). $$

\section{O ideal comutador de $\cal{A}$}

\begin{definicao} Chamamos de ideal comutador de uma C*-álgebra $A$ ao ideal fechado gerado pelos comutadores $[a,b] = ab - ba,$ para todo $a$ e $b$ em $A.$
\end{definicao}

Sejam $\cal{E}_{\cal{A}}$ o ideal comutador de $\mathcal{A}$ e $\cal{E}_{\cal{B}}$ o ideal comutador de $\mathcal{B}$. É claro que 
$$
{\cal{E}_{\cal{B}}} = F^{-1} {\cal{E}_{\cal{A}}} F. 
$$
A proposição 1.2 de \cite{melo} nos dá uma descrição do ideal comutador de $\cal{B}:$

\begin{prop} O ideal comutador $\cal{E}_{\cal{B}}$ de $\mathcal{B}$ é o fecho do seguinte conjunto de operadores:
$$
{\cal{E}}_{{\cal{B}},0} = \{\sum_{j=-N}^{N} b_j(D) K_j(M_t) T_j + K ;~ b_j \in C([-\infty, +\infty]),~K_j \in C_0(\mathbb{R}, {\cal{K}}_{\mathbb{B}}), K \in {\cal{K}}_{\Omega}, ~ N \in \mathbb{N} \},
$$
onde $K_j(M_t)$ representa o operador de multiplicação pela função $K_j \in C_0(\mathbb{R}, {\cal{K}}_{\mathbb{B}}):$
$$(K_j(M_t)u)(t) = K_j(t)u(t),~ u\in L^2(\mathbb{R}, L^2(\mathbb{S}^1)).$$
\end{prop}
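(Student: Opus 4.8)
Write $\mathcal{E}_{\mathcal{B},0}$ for the set displayed in the statement. The plan is to prove the two inclusions $\overline{\mathcal{E}_{\mathcal{B},0}}\subseteq\mathcal{E}_{\mathcal{B}}$ and $\mathcal{E}_{\mathcal{B}}\subseteq\overline{\mathcal{E}_{\mathcal{B},0}}$ separately. For the first: since $\mathcal{E}_{\mathcal{B}}$ is norm closed and is a two-sided ideal of $\mathcal{B}$, and $b_j(D),T_j\in\mathcal{B}$, it is enough to check that $\mathcal{K}_{\Omega}\subseteq\mathcal{E}_{\mathcal{B}}$ and that the multiplication operator $K_0(M_t)$ lies in $\mathcal{E}_{\mathcal{B}}$ for every $K_0\in C_0(\mathbb{R},\mathcal{K}_{\mathbb{B}})$; the general element $\sum b_j(D)K_j(M_t)T_j+K$ is then in $\mathcal{E}_{\mathcal{B}}$ automatically. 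The first of these follows from the simplicity of $\mathcal{K}_{\Omega}$ once one exhibits a single nonzero compact operator belonging to $\mathcal{E}_{\mathcal{B}}$: for a suitable $b\in C([-\infty,+\infty])$ the commutator $[B_4(M_t),b(D)]$ is a nonzero compact operator, using that $B_4(\tau)=\tilde{\Lambda}(\tau)=(1+\tau^2-\Delta_{\mathbb{B}})^{-1/2}$ is compact on $L^2(\mathbb{B})$ for each $\tau$ and tends to $0$ in norm as $\tau\to\pm\infty$, so that the commutator is a norm limit of finite sums of tensor products of a compact operator on $L^2(\mathbb{R})$ with a compact operator on $L^2(\mathbb{B})$. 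For the second, each $[a(M_x),B_4(M_t)]$ with $a\in C^{\infty}(\mathbb{B})$ already lies in $C_0(\mathbb{R},\mathcal{K}_{\mathbb{B}})$ (same observation, plus that $[a,(1-\Delta_{\mathbb{B}})^{-1/2}]$ is compact on $L^2(\mathbb{B})$), and fibrewise over $\mathbb{R}$ the closed ideal these commutators generate is all of $\mathcal{K}_{\mathbb{B}}$ --- this being Cordes' computation of the commutator ideal of the minimal comparison algebra on the compact manifold $\mathbb{B}$ --- while the multipliers in the cylinder-axis variable needed to fill out all of $C_0(\mathbb{R},\mathcal{K}_{\mathbb{B}})$ are produced from $B_4(M_t)$ and $B_5(M_t)$ by continuous functional calculus.

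For the inclusion $\mathcal{E}_{\mathcal{B}}\subseteq\overline{\mathcal{E}_{\mathcal{B},0}}$ I would show that $\overline{\mathcal{E}_{\mathcal{B},0}}$ is a closed two-sided $*$-ideal of $\mathcal{B}$ containing $[g,g']$ for every pair of generators $g,g'$ of $\mathcal{B}$; then $\mathcal{B}/\overline{\mathcal{E}_{\mathcal{B},0}}$ is commutative, and $\mathcal{E}_{\mathcal{B}}$, being the smallest closed ideal with commutative quotient, is contained in $\overline{\mathcal{E}_{\mathcal{B},0}}$. The commutators of generators are handled directly: $[a(M_x),b(D)]=[b(D),T_j]=[a(M_x),T_j]=[b(D),b'(D)]=0$, whereas $[a(M_x),B_4(M_t)]$, $[T_j,B_4(M_t)]$ and their analogues for $B_5(M_t)$ and $B_6(M_t)$ all lie in $C_0(\mathbb{R},\mathcal{K}_{\mathbb{B}})+\mathcal{K}_{\Omega}\subseteq\mathcal{E}_{\mathcal{B},0}$; here one writes, modulo $C_0(\mathbb{R},\mathcal{K}_{\mathbb{B}})$, $B_4(M_t)$ and $B_5(M_t)$ as $\psi(M_t)$ with $\psi\in C([-\infty,+\infty])$ scalar, and $B_6(M_t)$ as $P\,\psi(M_t)$ with $P=L_x(1-\Delta_{\mathbb{B}})^{-1/2}$ a fixed zeroth-order operator on $\mathbb{B}$, using $B_6(\tau)=P\,(1-\Delta_{\mathbb{B}})^{1/2}(1+\tau^2-\Delta_{\mathbb{B}})^{-1/2}$.

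The substance of the argument, and the main obstacle, is the ideal property: one multiplies a generic $\sum_j b_j(D)K_j(M_t)T_j$ on either side by each of the six generators and brings the product back to the stated form. The routine relations are that $b(D)$ commutes with $a(M_x)$ and with the $T_j$, that $b(D)b'(D)=(bb')(D)$, that $T_kK_j(M_t)=K_j(\cdot\pm k)(M_t)\,T_k$, that $R(M_t)K_j(M_t)\in C_0(\mathbb{R},\mathcal{K}_{\mathbb{B}})$ whenever $R$ is a bounded strongly continuous $\mathcal{L}_{\mathbb{B}}$-valued multiplier, that $[b(D),K(M_t)]\in\mathcal{K}_{\Omega}$ for $K\in C_0(\mathbb{R},\mathcal{K}_{\mathbb{B}})$, and that $[b(D),\psi(M_t)]$ for scalar $\psi\in C([-\infty,+\infty])$ has the form $\kappa\otimes I_{\mathbb{B}}$ with $\kappa\in\mathcal{K}_{\mathbb{R}}$, hence becomes compact after multiplication on the right by some $K_j(M_t)$; these dispose of the generators $a(M_x)$, $b(D)$, $T_j$ and $B_4(M_t)$. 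The delicate cases are $B_5(M_t)$ and $B_6(M_t)$, whose symbols do not vanish at infinity, so that the cancellations used for $B_4(M_t)$ are unavailable: one must use the explicit form of $(1+\tau^2-\Delta_{\mathbb{B}})^{-1/2}$ to isolate the scalar (resp. fixed zeroth-order) part, estimate the remainder in $C_0(\mathbb{R},\mathcal{K}_{\mathbb{B}})$, and invoke the compactness of commutators of $b(D)$ with smooth functions of $M_t$ and $-\Delta_{\mathbb{B}}$ via a resolvent representation. The second delicate point, already used in the first inclusion, is the identification of the fibrewise closed ideal generated by the $[a(M_x),B_4(M_t)]$ with the whole of $C_0(\mathbb{R},\mathcal{K}_{\mathbb{B}})$, which is precisely where the comparison-algebra structure of $\mathbb{B}$ enters. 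For $\mathbb{B}=\mathbb{S}^1$ --- and, more generally, for any compact $\mathbb{B}$ --- this is Proposition 1.2 of \cite{melo}, which one may alternatively just cite.
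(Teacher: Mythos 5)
You should first note that the paper itself contains no proof of this statement: it is quoted verbatim as Proposi\c{c}\~ao 1.2 of \cite{melo}, so the only ``proof'' in the thesis is that citation, and your two-inclusion architecture (commutators of generators lie in $\mathcal{E}_{\mathcal{B},0}$, the closure of $\mathcal{E}_{\mathcal{B},0}$ is an ideal with commutative quotient, and conversely $\mathcal{K}_{\Omega}$ and $C_0(\mathbb{R},\mathcal{K}_{\mathbb{B}})$ sit inside $\mathcal{E}_{\mathcal{B}}$) is indeed the standard shape of the argument. The genuine problem is in the step you yourself call the substance of the argument. Your reduction of the ``delicate cases'' rests on writing $B_5(M_t)$ as a scalar multiplier $\psi(M_t)$, $\psi\in C([-\infty,+\infty])$, modulo $C_0(\mathbb{R},\mathcal{K}_{\mathbb{B}})$, and $B_6(M_t)$ as $P\psi(M_t)$ modulo $C_0(\mathbb{R},\mathcal{K}_{\mathbb{B}})$. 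This is false. Since $\mathbb{B}$ is compact, $B_5(\tau)=-\tau(1+\tau^2-\Delta_{\mathbb{B}})^{-1/2}$ is itself compact for every $\tau$ (its eigenvalues $-\tau(1+\tau^2+\lambda_n)^{-1/2}$ tend to $0$ as $\lambda_n\to\infty$), so $B_5(\tau)-\psi(\tau)I$ is compact only where $\psi(\tau)=0$; on the other hand $\|B_5(\tau)\|=|\tau|(1+\tau^2)^{-1/2}\to 1$, so $B_5\notin C_0(\mathbb{R},\mathcal{K}_{\mathbb{B}})$. Hence no scalar $\psi$ whatsoever makes $B_5-\psi$ land in $C_0(\mathbb{R},\mathcal{K}_{\mathbb{B}})$. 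Likewise, with $P=L_x(1-\Delta_{\mathbb{B}})^{-1/2}$ one does get $B_6(\tau)-P$ compact for each fixed $\tau$, but its norm does not vanish as $\tau\to\pm\infty$ (for $\mathbb{B}=\mathbb{S}^1$, test on the Fourier mode $n\approx\tau$: the value is about $1/\sqrt{2}-1$), so again the remainder is not in $C_0(\mathbb{R},\mathcal{K}_{\mathbb{B}})$. The facts you actually need --- $[a(M_x),B_k]$, $[B_i,B_k]$ and $B_k(\cdot+j)-B_k(\cdot)$ in $C_0(\mathbb{R},\mathcal{K}_{\mathbb{B}})$, and $[b(D),B_k(M_t)]$ compact --- are true, but they require joint estimates in $(\tau,\lambda)$ via resolvent representations (Cordes), i.e.\ precisely the content of Proposi\c{c}\~ao 1.1 of \cite{melo} that the thesis quotes; your proposed shortcut would not produce them, and citing Proposi\c{c}\~ao 1.2 of \cite{melo} at that point is circular, since it is the statement being proved.

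Two further soft spots. First, $\mathcal{K}_{\Omega}\subseteq\mathcal{E}_{\mathcal{B}}$ does not follow from ``the simplicity of $\mathcal{K}_{\Omega}$'' plus one nonzero compact element: a C*-algebra can contain a single rank-one projection and no other compact operators. What the standard argument needs is irreducibility of $\mathcal{B}$ on $L^2(\Omega)$ (then a nonzero ideal acts irreducibly, and an irreducible C*-algebra containing a nonzero compact operator contains all of $\mathcal{K}_{\Omega}$); irreducibility is true here but must be argued, e.g.\ from the multiplication operators together with the Fourier-side generators. Second, the step filling out all of $C_0(\mathbb{R},\mathcal{K}_{\mathbb{B}})$ inside $\mathcal{E}_{\mathcal{B}}$ cannot be dispatched by ``continuous functional calculus of $B_4(M_t)$ and $B_5(M_t)$'': functional calculus of these produces operator-valued, not scalar, multipliers, and passing from fibrewise fullness to equality with $C_0(\mathbb{R},\mathcal{K}_{\mathbb{B}})$ requires a gluing argument such as the pure-state separation via \cite{dixmier}, 10.4.5, which is exactly how the analogous identification $\mathcal{E}^{\dagger}_{\mathcal{B}}=C_0(\mathbb{R},\mathcal{K}_{\mathbb{S}^1})$ is proved in Theorem \ref{edagger} of this thesis; adapting that argument is the clean way to close this part of your proof.
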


Considerando $J$ o fecho do conjunto
$$
J_0 =\{\sum_{j=-N}^{N} b_j(D) a_j(M_t) T_j + K ; ~b_j \in C([-\infty, +\infty]),~ a_j \in C_0(\mathbb{R}),~ K \in {\cal{K}}_{\mathbb{R}},~N \in \mathbb{N} \},
$$
podemos também escrever o ideal $\cal{E}_{\cal{B}}$ como sendo 
$$
J\otimes{\cal{K}}_{\mathbb{B}},
$$
onde este produto tensorial entre C*-álgebras é único, pois ${\cal{K}}_{\mathbb{B}}$ é nuclear.

Temos em \cite{melo}, teorema 1.6, o isomorfismo entre ${\cal{E}}_{\cal{A}}/{\cal{K}}_{\Omega}$ e duas cópias de $C(S^{1},{\cal{K}}_{\mathbb{Z}\times \mathbb{B}})$. Antes de enunciá-lo, daremos uma breve idéia da construção necessária para chegar a este resultado (a demostração completa pode ser encontrada com todos os detalhes em \cite{melo}).

Para cada $\varphi$ real, seja $U_{\varphi}$ o operador em $L^2(S^1)$ dado por $U_{\varphi}f (z) = z^{-\varphi}f(z),~z\in S^1,$ onde $z=e^{i\theta}$ com $\theta \in [-\pi, \pi]$, e seja $Y_{\varphi}$ o operador em $\ell^2(\mathbb{Z})$ da seguinte forma:
$$
Y_\varphi := F_d U_{\varphi} F_d^{-1}, 
$$
onde $F_d: L^2(S^1)\rightarrow {\ell}^2(\mathbb{Z}) $ é a transformada de Fourier discreta
$$ 
(F_d u)_j = \frac{1}{\sqrt{2\pi}}\int_{0}^{2\pi} u(e^{i\theta}) e^{-ij\theta} d\theta ~,~u\in L^2(S^1) \mbox{ e } j\in \mathbb{Z}. 
$$ 
Então $Y_{\varphi}$ é um operador unitário que satisfaz $(Y_k u)_j = u_{j+k},$ se $k\in \mathbb{Z},$ e $Y_{\varphi} Y_{\omega} = Y_{\varphi +\omega}.$

Seja $u$ uma função em $L^2(\mathbb{R}).$ Defina para cada $\varphi$ em $\mathbb{R},$ a seqüência em ${\ell}^2(\mathbb{Z})$ 
$$
u^\diamond(\varphi) := (u(\varphi - j))_{j\in \mathbb{Z}}
$$
($u^\diamond(\varphi)\in {\ell}^2(\mathbb{Z})$ para quase todo $\varphi \in \mathbb{R}$ pelo Teorema de Fubini, pois $L^2(\mathbb{R})$ pode ser visto como $L^2([0,1]\times\mathbb{Z})$). Podemos definir então o operador unitário 
\begin{equation}
 W \ : \ L^2(\mathbb{R}) \longrightarrow L^2(S^1, d\varphi; {\ell}^2(\mathbb{Z})) ~, ~~~
 u \longmapsto Wu(e^{2\pi i\varphi}) = Y_\varphi u^\diamond(\varphi) \ .
\label{w}
\end{equation}

Denote $\cal{S}$ a C*-subálgebra de ${\cal{L}}_{{S}^1}$ gerada por 
\begin{itemize}

\item[(i)] operadores de multiplicação por funções em $C^\infty({S}^1)$; 

\item[(ii)] operadores do tipo $b(D_\theta)={F_d}^{-1}b(M_j)F_d,$ onde $b(M_j)$ é um operador definido em ${\ell}^2(\mathbb{Z}),$ de multiplicação pela seqüência $b=(b(j))_j$, com limite quando $j\rightarrow\pm\infty.$
\label{s}
\end{itemize}

$\cal{S}$ contém o ideal $\mathcal{K}_{{S}^1}$ e todos os seus comutadores são compactos (\cite{cordes3}, capítulos V e VI e \cite{rodrigo}). % Cordes mostra que vale para uma variedade de dimensão $n\geq 2$ e  em ??, temos o resultado para n=1). 
Temos ainda de \cite{melo}, que vale o  isomorfismo $\mathcal{S}/ \mathcal{K}_{{S}^1}\cong C(S^1\times \{-1,+1\})$. Compondo-o com a projeção canônica $\mathcal{S}\rightarrow \mathcal{S} / \mathcal{K}_{{S}^1},$ obtemos a seguinte aplicação: 
$$ 
\rho : {\mathcal{S}} \longrightarrow C({S}^1\times \{-1,+1\}) 
$$
$$
{\rho}_{a(M_x)}(x,\pm 1)= a(x) , ~ a \in C^{\infty}({S}^1) ~ ~ ~; ~ ~ ~
{\rho}_{b(D_{\theta})}(x,\pm 1)= b(\pm \infty) .
$$

Temos então a seguinte proposição 1.5 de \cite{melo}:

\begin{prop}\label{e} A aplicação
$$
\begin{array}{ccc}
 \mathcal{E}_{\mathcal{B}} & \! \longrightarrow & \!  \mathcal{S}\otimes{\mathcal{K}}_{\mathbb{Z}}\otimes{\mathcal{K}}_{\mathbb{B}} \\     
  B & \! \longmapsto & \! W B W^{-1}
\end{array}
$$
é um $\ast$-isomorfismo. Para $B$ em $\cal{E}_{\cal{B}}$ da forma $B=b(D) K(M_t)T_j$, com $b$ em $ C([-\infty, +\infty]),\ K$ em $C_{0}(\mathbb{R}, {\cal{K}}_{\mathbb{B}})$ e $j \in \mathbb{Z},$ temos 
\begin{equation}\label{w}
WBW^{-1} = b(D_\theta)Y_{\varphi} K(\varphi - M_j)Y_{-\varphi -j} + K', \ com \ K' \in {\cal{K}}_{{S}^1\times\mathbb{Z}\times\mathbb{B}}.
\end{equation}
Para cada $\varphi \in \mathbb{R}$, $K(\varphi - M_j)$ denota o operador compacto em $\ell^2(\mathbb{Z}, L^2(\mathbb{B}))$ de multiplicação pela seqüência $K(\varphi - j)$ em $\mathcal{K}_{\mathbb{B}}$. 
% onde para cada $j \in \mathbb{Z},\  K(\varphi -j)$  pertence a $\mathcal{K}_{\mathbb{B}}.$
%$$ K(\varphi - M_j) (u_j)_j (x) = (K(\varphi - j) (u_j)(x))_j, u_j \in L^2(\mathbb{B}) $$
\end{prop}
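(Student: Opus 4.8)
The plan is to exploit that conjugation by $W$ (here, and below, $W$ denotes $W\otimes I_{\mathbb{B}}$, a unitary from $L^2(\Omega)$ onto $L^2(S^1;\ell^2(\mathbb{Z};L^2(\mathbb{B})))$) is an isometric $\ast$-isomorphism of $\mathcal{L}_\Omega$ onto $\mathcal{L}_{S^1\times\mathbb{Z}\times\mathbb{B}}$, so it carries the C*-algebra $\mathcal{E}_{\mathcal{B}}=\overline{\mathcal{E}_{{\mathcal{B}},0}}$ onto a C*-algebra. Since the map is continuous it suffices to compute $WBW^{-1}$ on $\mathcal{E}_{{\mathcal{B}},0}$, i.e., by linearity, on a single operator $B=b(D)K(M_t)T_j$ with $b\in C([-\infty,+\infty])$, $K\in C_0(\mathbb{R},\mathcal{K}_{\mathbb{B}})$, $j\in\mathbb{Z}$, and on an operator $K\in\mathcal{K}_\Omega$ (for which $WKW^{-1}\in\mathcal{K}_{S^1\times\mathbb{Z}\times\mathbb{B}}$, trivially); and then to identify the closure of the set of such images with $\mathcal{S}\otimes\mathcal{K}_{\mathbb{Z}}\otimes\mathcal{K}_{\mathbb{B}}$. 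For the elementary factors I would argue straight from the definitions of $u^{\diamond}$ and $W$: one gets $(K(M_t)u)^{\diamond}(\varphi)=K(\varphi-M_j)u^{\diamond}(\varphi)$, hence $WK(M_t)W^{-1}$ acts on the fiber over $e^{2\pi i\varphi}$ as $Y_{\varphi}K(\varphi-M_j)Y_{-\varphi}$; and $(T_ju)^{\diamond}(\varphi)=Y_{-j}u^{\diamond}(\varphi)$, so, since the $Y$'s commute, $WT_jW^{-1}=Y_{-j}$ (constant in $\varphi$). Multiplying and using $Y_{-\varphi}Y_{-j}=Y_{-\varphi-j}$,
\[
W\big(b(D)K(M_t)T_j\big)W^{-1}=\big(Wb(D)W^{-1}\big)\,Y_{\varphi}K(\varphi-M_j)Y_{-\varphi-j},
\]
and one checks that $\varphi\mapsto Y_{\varphi}K(\varphi-M_j)Y_{-\varphi-j}$ is norm continuous, $1$-periodic, and compact-valued (because $K(\varphi-j)\to 0$ in $\mathcal{K}_{\mathbb{B}}$ as $|j|\to\infty$), hence lies in $C(S^1,\mathcal{K}_{\mathbb{Z}\times\mathbb{B}})\subset\mathcal{S}\otimes\mathcal{K}_{\mathbb{Z}}\otimes\mathcal{K}_{\mathbb{B}}$.

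The heart of the matter is $Wb(D)W^{-1}$. I would first show that $W$ conjugates $D$ into $D_{\theta}\otimes 1+1\otimes\Theta$, where $D_{\theta}$ is the natural first-order operator on $L^2(S^1)$ (which $F_d$ diagonalizes) and $\Theta$ is multiplication by the angular variable of $S^1$ transported to $\ell^2(\mathbb{Z})$ by $F_d$; as the two summands commute and $\Theta$ is bounded, functional calculus gives $Wb(D)W^{-1}=b(D_{\theta}\otimes 1+1\otimes\Theta)$. Because $D_{\theta}$ has discrete unbounded spectrum while $\Theta$ is bounded, on the spectral subspace $|D_{\theta}|>N$ both $b(D_{\theta})$ and $b(D_{\theta}\otimes 1+1\otimes\Theta)$ differ from the single value $b(\pm\infty)$ by at most the oscillation of $b$ on a half-line receding to infinity; hence $(1-P_N)\big(Wb(D)W^{-1}-b(D_{\theta})\otimes 1\big)\to 0$ in norm, where $P_N$ is the (finite-rank, on $L^2(S^1)$) spectral projection of $D_{\theta}$ onto $|D_{\theta}|\le N$. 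Thus $Wb(D)W^{-1}-b(D_{\theta})\otimes 1$ is a norm limit of operators of finite rank in the $S^1$-variable, and multiplying such an operator on the right by the element $Y_{\varphi}K(\varphi-M_j)Y_{-\varphi-j}$ of $C(S^1,\mathcal{K}_{\mathbb{Z}\times\mathbb{B}})$ (approximate that continuous $\mathcal{K}_{\mathbb{Z}\times\mathbb{B}}$-valued function by step functions) produces a compact operator. Combining with the previous display yields
\[
W\big(b(D)K(M_t)T_j\big)W^{-1}=\big(b(D_{\theta})\otimes 1\big)Y_{\varphi}K(\varphi-M_j)Y_{-\varphi-j}+K',\qquad K'\in\mathcal{K}_{S^1\times\mathbb{Z}\times\mathbb{B}},
\]
which is the asserted formula (note $b\in C([-\infty,+\infty])$ makes $b(D_{\theta})$ one of the generators of $\mathcal{S}$). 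Passing to closures gives $W\mathcal{E}_{\mathcal{B}}W^{-1}\subset\mathcal{S}\otimes\mathcal{K}_{\mathbb{Z}}\otimes\mathcal{K}_{\mathbb{B}}$, and the map is an injective $\ast$-homomorphism with closed range since $W$ is unitary.

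For surjectivity, since $W\mathcal{K}_{\Omega}W^{-1}=\mathcal{K}_{S^1\times\mathbb{Z}\times\mathbb{B}}$, the image contains the ideal $\mathcal{K}_{S^1\times\mathbb{Z}\times\mathbb{B}}$ of $\mathcal{S}\otimes\mathcal{K}_{\mathbb{Z}}\otimes\mathcal{K}_{\mathbb{B}}$, so by exactness it suffices to see that the images of the operators $\big(b(D_{\theta})\otimes 1\big)Y_{\varphi}K(\varphi-M_j)Y_{-\varphi-j}$ generate the quotient $(\mathcal{S}/\mathcal{K}_{S^1})\otimes\mathcal{K}_{\mathbb{Z}}\otimes\mathcal{K}_{\mathbb{B}}\cong C(S^1\times\{-1,+1\},\mathcal{K}_{\mathbb{Z}\times\mathbb{B}})$. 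Under $\rho$ the factor $b(D_{\theta})$ contributes the value $b(\pm\infty)$ on the two sheets, so taking $b\equiv 1$ and a $b$ with $b(+\infty)=1$, $b(-\infty)=-1$ separates the sheets; and a Stone–Weierstrass argument for $C(S^1,\mathcal{K}_{\mathbb{Z}\times\mathbb{B}})$ shows the family $\{Y_{\varphi}K(\varphi-M_j)Y_{-\varphi-j}\}$, as $K\in C_0(\mathbb{R},\mathcal{K}_{\mathbb{B}})$ and $j\in\mathbb{Z}$ vary (closed under sums and products), is dense in $C(S^1,\mathcal{K}_{\mathbb{Z}\times\mathbb{B}})$. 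Together these give density in $C(S^1\times\{-1,+1\},\mathcal{K}_{\mathbb{Z}\times\mathbb{B}})$, hence surjectivity.

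The step I expect to be the main obstacle is the analysis of $Wb(D)W^{-1}$: establishing $WDW^{-1}=D_{\theta}\otimes 1+1\otimes\Theta$ and then proving that the correction $Wb(D)W^{-1}-b(D_{\theta})\otimes 1$ becomes compact once multiplied by the localizing factor $Y_{\varphi}K(\varphi-M_j)Y_{-\varphi-j}$; the elementary conjugation identities and the surjectivity bookkeeping are comparatively routine. The argument with all details is carried out in \cite{melo}.
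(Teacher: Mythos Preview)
The paper does not give its own proof of this proposition: it is quoted verbatim as ``proposi\c{c}\~ao 1.5 de \cite{melo}'' and used as a black box, so there is no argument in the text to compare against. Your sketch is a correct and natural outline of how that proof goes---the elementary conjugation identities for $K(M_t)$ and $T_j$, the identification of $Wb(D)W^{-1}$ with $b(D_\theta)\otimes 1$ modulo an operator that becomes compact after multiplication by the $C(S^1,\mathcal{K}_{\mathbb{Z}\times\mathbb{B}})$-factor, and the surjectivity via the quotient $C(S^1\times\{-1,+1\},\mathcal{K}_{\mathbb{Z}\times\mathbb{B}})$---and you correctly flag the analysis of $Wb(D)W^{-1}$ as the nontrivial step and defer to \cite{melo} for the details, exactly as the paper does.
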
 

Denotando por  $C(S^1\times \{-1,+1\},{\mathcal{K}}_{\mathbb{Z}\times\mathbb{B}})$ o conjunto das funções definidas em $S^1\times \{-1,+1\}$ que tomam valores em ${\mathcal{K}}_{\mathbb{Z}\times\mathbb{B}},$ enunciamos o teorema 1.6 de \cite{melo}.  

\begin{teo} Existe um $\ast$-isomorfismo   
\begin{equation}\label{psi}
\Psi : \frac{\cal{E}_{\cal{A}}}{{\cal{K}}_{\Omega}} \longrightarrow C(S^{1}\times\{-1,+1\},{\cal{K}}_{\mathbb{Z}\times \mathbb{B}}).
\end{equation}
%tal que, se $\widetilde{\gamma}$ denota a composição de $\Psi$ com a projeção canônica
%$\cal{E}\rightarrow \cal{E}/\cal{K}_{\mathbb{R}\times S^{1}}$ e $A\in \cal{E}$
\end{teo}

\begin{proof} Tome $\Psi$ como sendo a composta da seqüência de aplicações abaixo:
$$
\frac{\cal{E}_{\cal{A}}}{{\cal{K}}_{\Omega}} 
\stackrel{(1)}{\longrightarrow} 
\frac{\cal{E}_{\cal{B}}}{{\cal{K}}_{\Omega}} 
\stackrel{(2)}{\longrightarrow}
\frac{{\cal{S}} \otimes {\cal{K}}_{\mathbb{Z}} \otimes {\cal{K}}_{\mathbb{B}}}{{\cal{K}}_{S^{1} \times \mathbb{Z} \times \mathbb{B}}}
\stackrel{(3)}{\longrightarrow}
C(S^{1}\times\{-1,+1\},{\cal{K}}_{\mathbb{Z}\times \mathbb{B}})
$$
onde (1) representa a aplicação $A + {\cal{K}}_{\Omega} \mapsto F^{-1}AF + {\cal{K}}_{\Omega},$ (2) leva este último elemento em \lb $ WF^{-1}AFW^{-1} + {\cal{K}}_{S^{1} \times \mathbb{Z} \times \mathbb{B}}$ e (3) é o $\ast$-isomorfismo:
$$
\begin{array}{ccc}
 \displaystyle\frac{{\cal{S}} \otimes {{\cal{K}}_{\mathbb{Z}}} \otimes {{\cal{K}}_{\mathbb{B}}}}
 {{\cal{K}}_{S^{1} \times \mathbb{Z} \times \mathbb{B}}} & \! 
 \longrightarrow & \!  
 C(S^1\times \{-1,+1\},{\cal{K}}_{\mathbb{Z} \times \mathbb{B}}) \\     
 A \otimes K_1 \otimes K_2 + {\cal{K}}_{S^1 \times \mathbb{Z} \times \mathbb{B}} & \! 
 \longmapsto & \! 
 {\rho}_{A}(\cdot,\pm 1) K_1\otimes K_2 ~.
\end{array}
$$
\end{proof}

\section{O caso $\mathbb{B} = \mathbb{S}^1$}

Desta seção em diante, estudaremos o caso em que a variedade $\mathbb{B}$ é o círculo, apesar dos resultados apresentados nessa seção serem válidos para uma variedade $\mathbb{B}$ compacta de dimensão $n$. 

A classe de geradores denotados por $A_6$ na definição da C*-álgebra $\mathcal{A}$ pode ser substituída pelo operador 
$$
\mbox{(vi)}~ \frac{1}{i}\frac{\partial}{\partial \beta}\Lambda,~ \mbox{com} ~e^{i\beta}\in \mathbb{S}^1.
$$
já que a variedade $\mathbb{S}^1$ tem fibrado tangente trivial. 

Definiremos nesta seção duas aplicações de $\mathcal{A}$ num certo espaço de funções contínuas. Estas aplicações são $\ast$-homomorfismos muito importantes e serão  muito citadas no último capítulo.  % as quais chamaremos de símbolos de $\cal{A}$. 

\begin{definicao} Seja $A$ uma C*-álgebra e $I$ seu ideal comutador. Chamamos de \textbf{espaço símbolo} $M_{\mathcal{A}}$ de $\mathcal{A}$ ao espaço de ideais maximais da álgebra comutativa $\cal{A}/\cal{I}$. 
\end{definicao}

\begin{definicao} Chamamos de \textbf{$\sigma$-símbolo} de uma C*-álgebra $\mathcal{A}$ a aplicação
$$ \sigma: \mathcal{A} \rightarrow C(M_{\mathcal{A}}),$$
que é dada pela composição da projeção canônica $\pi: \mathcal{A} \rightarrow \mathcal{A}/\mathcal{I}$ com a transformada de Gelfand da C*-álgebra $\mathcal{A}/\mathcal{I},$ onde $I$ é o ideal comutador de  $\mathcal{A}.$
\end{definicao}

Dado um elemento $d$ em $\cal{D}$, chamaremos de símbolo de $d$ a função contínua ${\sigma}_{d}\in C(M_{\cal{D}})$ associada ao elemento $d$ módulo $\cal{I}.$ Enunciaremos agora o teorema 2.2 de \cite{melo}, restrito ao caso $\mathbb{B}=\mathbb{S}^1$, que descreve o espaço símbolo de $\cal{A}$ e nos dá os símbolos de seus geradores.

\begin{teo}\label{ma2} O espaço símbolo de $\cal{A}$ é o subconjunto de $[-\infty,+\infty] \times {\mathbb{S}}^1 \times S^1 \times S^1$ dado por 
$$
\begin{array}{cc}
{\bf M_{\cal{A}}} \ = & \! \{(t,x,(\tau ,\xi),e^{i\theta});~t\in[-\infty,+\infty],~x\in {\mathbb{S}}^1 ,~
(\tau,\xi)\in\mathbb{R}^2 : \tau^{2}+\xi^{2}=1, \\
& \! \ \ \ \ \ \ \theta \in \mathbb{R}~ e ~  \theta = t ~ se ~
  |t|<\infty\},
\label{ma}
\end{array}
$$
onde o elemento $(t, x, (\tau ,\xi))$ pertence ao fibrado das coesferas de $\mathbb{R} \times {\mathbb{S}}^1.$
Com esta descrição de ${\bf M_{\cal{A}}},$ os $\sigma$-símbolos da classe dos geradores da C*-álgebra $\cal{A}$ calculados em $(t,x,(\tau ,\xi),e^{i\theta})$ são respectivamente dados por 
\begin{equation}\label{sigma}
 a(x),~ ~ ~ b(t),~ ~ ~ e^{ij\theta},~ ~ ~ 0, ~ ~ ~ \tau,~ ~ ~ \xi.
\end{equation}
\end{teo}
%Por este teorema, temos que o conjunto ${\bf W}= \{(t,x, (\tau,\xi),e^{i\theta})\in {\bf M_{\cal{A}}}; |t| <\infty\}$ é denso em ${\bf M_{\cal{A}}}.$ 

Vejamos agora que todo operador A em $\cal{A}$ determina um operador limitado de ${\cal{L}}
({\cal{E}_{\cal{A}}}/{\cal{K}}_{\Omega}),$ já que $
{\cal{E}_{\cal{A}}}/{\cal{K}}_{\Omega}$ é um ideal de $
{\cal{A}}/{\cal{K}}_{\Omega}.$ Fica bem definido então o operador
abaixo:   
$$
\begin{array}{cccc}
 T \ : & \! {\cal{A}} & \! \longrightarrow & \!
 {\cal{L}}({\cal{E}_{\cal{A}}}/{\cal{K}}_{\Omega}) \\     
 & \! A & \! \longmapsto & \! T_A 
\end{array}
$$
onde $T_A (E + {\cal{K}}_{\Omega})= AE + {\mathcal{K}}_{\Omega}.$

A partir daí, pode-se definir a aplicação
$$
\begin{array}{cccc}
 \gamma \ : & \! \cal{A} & \! \longrightarrow & \!
 {\cal{L}}(C(S^{1}\times \{-1,+1\},{\cal{K}}_{\mathbb{Z} \times {\mathbb{S}^1}})) \\     
 & \! A & \! \longmapsto & \! \gamma_{A} \ = \ \Psi T_{A} \Psi^{-1}, 
\end{array}
$$
para $\Psi$ definido em (\ref{psi}).

Seja $C(S^1\times\{-1,+1\},{\mathcal{L}}_{\mathbb{Z} \times {\mathbb{S}^1}})$ o conjunto das funções contínuas em \lb $S^1\times\{-1,+1\}$ que tomam valores em ${\mathcal{L}}_{\mathbb{Z}\times {\mathbb{S}^1}}.$ Identificando as funções em \lb $C(S^1 \times \{-1,+1\}, {\mathcal{L}}_{\mathbb{Z} \times {\mathbb{S}^1}})$ com os operadores de multiplicação correspondentes em \lb $\mathcal{L}(C(S^1 \times \{-1,+1\}, {\mathcal{K}}_{\mathbb{Z}
\times {\mathbb{S}^1}})),$ temos da proposição 2.9 de \cite{melo} que a imagem de $\gamma$ está contida em $C(S^1\times \{-1,+1\},{\mathcal{L}}_{\mathbb{Z} \times {\mathbb{S}^1}}).$ Obtém-se então o seguinte $\ast$-homomorfismo dado em \cite{melo}, proposição 2.4:

%%%%%%%%%%%%%%%%%%%%%%%%%%%%% MAL ESCRITO - REESCREVER  %%%%%%%%%%%%%%%%%%%%%%

\begin{prop}\label{gamma} Existe um $\ast$-homomorfismo
\begin{eqnarray} 
\gamma : {\cal{A}}  \longrightarrow & C(S^1\times\{-1,+1\},{\mathcal{L}}_{\mathbb{Z}\times \mathbb{S}^1}) \\
 ~ ~ A  \longmapsto & \gamma_A  \nonumber
\end{eqnarray}
tal que, $\gamma_{A_i}(e^{2\pi i\varphi}, \pm 1)$ para $i=1,\dots, 6,$ é dado respectivamente por:
$$a(M_x) ~, ~~ b(\pm \infty) ~, ~~ Y_{-j} ~,  ~~ Y_{\varphi} B_i(\varphi - M_j) Y_{-\varphi}~, ~ i= 4,5,6, $$
onde $B_i(\varphi - M_j) \in \mathcal{L}(\ell^2(\mathbb{Z},L^2(\mathbb{S}^1))), ~ i = 4,5,6, $ são operadores de multiplicação pela seqüências $B_i(\varphi -j) \in L^2(\mathbb{S}^1),$ para $B_i$'s  dadas em (\ref{bis}).
\end{prop}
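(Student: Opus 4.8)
The plan is to assemble the map $\gamma$ by composing the already-constructed objects and then to verify the formulas on generators by direct computation. First I would recall from the previous section that for each $A\in\mathcal{A}$ the operator $T_A\in\mathcal{L}(\mathcal{E}_{\mathcal{A}}/\mathcal{K}_\Omega)$ is well defined (left multiplication on the ideal), and that $\Psi$ of \eqref{psi} is a $\ast$-isomorphism; hence $\gamma_A:=\Psi T_A\Psi^{-1}$ is a well-defined element of $\mathcal{L}(C(S^1\times\{-1,+1\},\mathcal{K}_{\mathbb{Z}\times\mathbb{S}^1}))$ and $A\mapsto\gamma_A$ is a $\ast$-homomorphism, since each ingredient is. By the cited Proposition 2.9 of \cite{melo}, the image lands in $C(S^1\times\{-1,+1\},\mathcal{L}_{\mathbb{Z}\times\mathbb{S}^1})$, viewed inside $\mathcal{L}(C(S^1\times\{-1,+1\},\mathcal{K}_{\mathbb{Z}\times\mathbb{S}^1}))$ via multiplication operators. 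So the only real content to prove is the explicit value of $\gamma_{A_i}(e^{2\pi i\varphi},\pm1)$ for $i=1,\dots,6$.

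To get these values, the idea is to evaluate $T_{A_i}$ on a dense set of elements of $\mathcal{E}_{\mathcal{A}}/\mathcal{K}_\Omega$, push everything through $\Psi$, and read off the multiplier. Concretely, $\Psi$ is realized (after conjugating by $F$) by $B\mapsto WF^{-1}BFW^{-1}$ modulo compacts, and Proposition~\ref{e} gives, for $B=b(D)K(M_t)T_j$, the normal form $WBW^{-1}=b(D_\theta)Y_\varphi K(\varphi-M_j)Y_{-\varphi-j}+K'$. So for a generator $A_i$ I would take such a $B$, compute $A_i B$ (again modulo $\mathcal{K}_\Omega$, using that $\mathcal{E}_{\mathcal{A}}$ is an ideal so $A_iB\in\mathcal{E}_{\mathcal{A}}$), bring $F^{-1}(A_iB)F$ into the normal form of Proposition~\ref{e}, apply $W(\cdot)W^{-1}$, and compare with $\gamma_{A_i}(e^{2\pi i\varphi},\pm1)$ acting by multiplication on $b(D_\theta)Y_\varphi K(\varphi-M_j)Y_{-\varphi-j}$. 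For $i=1$ (multiplication $a(M_x)$ in the $\mathbb{S}^1$ variable) it commutes with $b(D)$, $T_j$ and with the $M_t$-multiplication, and conjugating by $F$ and $W$ leaves the $\mathbb{S}^1$-factor untouched, so $\gamma_{a(M_x)}$ is multiplication by $a(M_x)$. For $i=2$, $b(M_t)$: its commutator with $\mathcal{E}_{\mathcal{B}}$-elements lives in the ideal, and $b(M_t)b'(D)=b'(D)b(M_t)+$ (something in $C_0(\mathbb R)$-type correction), so after passing to $W$-coordinates the $M_t$-multiplication becomes, asymptotically as one localizes near $\pm\infty$ (which is exactly what $S^1\times\{-1,+1\}$ records), the scalar $b(\pm\infty)$. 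For $i=3$, $p_j(M_t)=e^{ijt}$ conjugated by $F$ is the shift $T_j$, which under $W$ becomes $Y_{-j}$ (up to the $Y_\varphi$-twist bookkeeping), giving $\gamma_{p_j(M_t)}=Y_{-j}$. For $i=4,5,6$, the generator is $B_i(M_t)$, a multiplication operator by an $\mathcal{L}_{\mathbb{S}^1}$-valued function of $t$; multiplying by $b(D)K(M_t)T_j$ and normalizing produces $B_i(M_t)$ composed into the $K(\varphi-M_j)$ slot, and conjugation by $W$ (hence by $Y_\varphi$) turns $B_i(M_t)$ into $Y_\varphi B_i(\varphi-M_j)Y_{-\varphi}$, exactly as claimed; here Lemma~\ref{funcaoltda} (the boundedness and continuity of $\tilde\Lambda(\tau)$) is what makes $B_i\in C_b(\mathbb R,\mathcal{L}_{\mathbb{S}^1})$ so that the multiplier makes sense.

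The main obstacle I expect is the careful bookkeeping of the $Y_\varphi$-conjugations and the ``modulo compacts'' errors: one must check that replacing $A_iB$ by its normal form only changes things by $K'\in\mathcal{K}_{S^1\times\mathbb{Z}\times\mathbb{B}}$, that this is consistent on overlaps of the dense set, and that the resulting multiplier is genuinely continuous on $S^1\times\{-1,+1\}$ (the two sheets $\pm1$ coming from the two limits of $b(D_\theta)$ at $\pm\infty$ in the $\mathcal{S}$-factor, via $\rho$). Most of this is already packaged in Propositions~\ref{e} and~\ref{gamma}'s predecessors from \cite{melo}; so in the write-up I would state that $\gamma$ is obtained by composing $A\mapsto T_A$ with conjugation by $\Psi$, invoke the cited propositions for well-definedness and for the image being in $C(S^1\times\{-1,+1\},\mathcal{L}_{\mathbb{Z}\times\mathbb{S}^1})$, and then do the six short generator computations using the normal form of Proposition~\ref{e}, which is the only place any actual calculation occurs.
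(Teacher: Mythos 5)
Your proposal follows essentially the same route as the paper, which itself only sketches the construction and cites \cite{melo} (Prop.\ 2.4 and 2.9): define $\gamma_A:=\Psi T_A\Psi^{-1}$ with $T_A$ the left multiplication on $\mathcal{E}_{\mathcal{A}}/\mathcal{K}_{\Omega}$, invoke the cited result for the image lying in $C(S^1\times\{-1,+1\},\mathcal{L}_{\mathbb{Z}\times\mathbb{S}^1})$, and read off the generator values through the normal form of Proposi\c{c}\~ao \ref{e}, so your six generator computations just fill in what the paper delegates to the reference. One minor slip worth fixing: for $i=2$ the generator in the $F$-conjugated picture is $b(D)$, not $b(M_t)$, so left multiplication against $b'(D)K(M_t)T_j$ gives exactly $(bb')(D)K(M_t)T_j$ with no $C_0(\mathbb{R})$-type correction, and the value $b(\pm\infty)$ then comes from applying $\rho$ to the $\mathcal{S}$-factor.
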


Podemos notar que a imagem de $\gamma$ restrita a $\mathcal{E}_{\mathcal{A}}$ é o conjunto $C(S^1\times\{-1,+1\},{\mathcal{K}}_{\mathbb{Z}\times \mathbb{S}^1})$, e $ \gamma_A = \Psi([A]_{\mathcal{K}_{\Omega}}) $ para $A\in \mathcal{E}_{\mathcal{A}}$. 

%%%%%%%%%%%%%%%%%%%%%%%%%%%%%%%%%%%%%%%%%%%%%%%%%%%%%%%%%%%%%%%%%%%%%%%%%%%%%%%%%%%%%%

%Note que $\Psi$ pode ser vista como a aplicação $\gamma$ restrita à álgebra $\mathcal{E}_{\mathcal{A}}/\mathcal{K}_{\Omega}.$

%Seja ${\cal{J}}_0$ a C*-álgebra gerada por operadores de multiplicação por funções em $C_{0}^{\infty}(\mathbb{R}\times {\mathbb{B}})$ e por operadores da forma D$\Lambda$, onde D é um operador diferencial linear de primeira ordem com coeficientes diferenci\'aveis de suporte compacto. A proposição 2.6 de \cite{melo} mostra a seguinte relação entre o $\sigma$-símbolo de $\cal{A}$, o $\gamma$-símbolo de $\cal{A}$ e ${\cal{J}}_0$.

%\begin{prop} Um operador $A$ em $\cal{A}$ pertence ao núcleo de $\gamma$ se, e somente se, $\sigma_{A}$ se anula em ${\bf M_{\cal{A}}} \setminus {\bf W}.$ Além disso,
%\begin{equation}
% \ker \gamma \cap \ker \sigma \ = \ {\cal{K}}_{\Omega} 
%\end{equation}
%e $ {\cal{J}}_0 = \ker \gamma$.
%\end{prop}

%Já no teorema \ref{gamma}, usando a descrição do operador $A_6$ dada no início desta seção, fica mais fácil de entender ${\gamma}_{A_6}$. 

%Do teorema 3.2 de \cite{cordes1}, temos que $$\frac{{\cal{J}}_0}{\mathcal{K}_{\Omega}} \cong C_0(\mathbb{R}\times \mathbb{S}^1 \times S^1)$$

\chapter{C*-subálgebras de $\mathcal{A}$}

Neste capítulo, definiremos duas C*-álgebras de $\cal{A},$ as quais denotaremos por $\mathcal{A}^{\dagger}$ e ${{\mathcal{A}}^{\diamond}}.$ Determinaremos seus ideais comutadores e espaços símbolo. 

Mostraremos um resultado análogo ao que foi feito em \cite{cintia}, teorema 8, onde Melo e Silva mostraram que uma  C*-álgebra $A\subset\mathcal{L}_{\mathbb{R}}$, gerada por um certo tipo de operadores de multiplicação, é isomorfa ao produto cruzado $C([-\infty, +\infty]){\rtimes}_{\alpha} \mathbb{Z},$ onde $\alpha$ é dada pela translação por um. 
Ve\-ri\-fi\-ca\-remos que ${{\mathcal{A}}^{\diamond}}$ tem estrutura de produto cruzado, isto é, existe um isomorfismo entre  ${{\mathcal{A}}^{\diamond}}$ e ${{\mathcal{A}}^{\dagger}}\times_{\alpha}\mathbb{Z},$ para uma ação $\alpha $ de $ \mathbb{Z}$ por automorfismos de ${{\mathcal{A}}^{\dagger}}$.

Conhecendo um pouco da estrutura destas álgebras, estaremos aptos a calcular suas K-teorias no próximo capítulo. 

%construindo as seqüências exatas envolvendo as álgebras e seus respectivos ideais comutadores, 
%$$
%0 \ \longrightarrow \  \mbox{ideal comutador} \  \longrightarrow \ \mbox{C*-álgebra} \  \longrightarrow \ \frac{\mbox{C*-álgebra}}{\mbox{ideal comutador}} \ \longrightarrow \ 0 ,
%$$  
%e usando a seqüência exata de Pimsner-Voiculescu conseguiremos calcular $K_0$ e $K_1$ de $\mathcal{A}^{\dagger}$ e ${\mathcal{A}}^{\diamond}$. 

\section{A álgebra $\mathcal{A}^{\dagger}$}

Chamemos de ${\mathcal{A}}^{\dagger}$ a  C*-sub\'algebra de 
${\cal{L}}(L^2(\Omega))$ gerada por:  
\begin{itemize}

\item[(i)] operadores de multiplicação $a(M_x),$ com $a\in C^\infty({\mathbb{S}}^{1});$
 
\item[(ii)] $\Lambda:=(1-\Delta_{\Omega})^{-1/2};$

\item[(iii)] $\frac{1}{i}\frac{\partial}{\partial t}\Lambda,$ com $t\in \mathbb{R};$

\item[(iv)] $\frac{1}{i}\frac{\partial}{\partial \beta}\Lambda,$ com $e^{i\beta} \in {\mathbb{S}}^{1}.$

\end{itemize}

Com base no que foi feito no capítulo anterior, descreveremos o ideal comutador e o espaço símbolo de $\mathcal{A}^{\dagger}$. 

Os geradores do item (i), quando conjugados com a transformada de Fourier $F$, não sofrem nenhuma alteração. Já os geradores dos itens (ii), (iii) e (iv) conjugados com $F$ são os operadores $B_4(M_t), B_5(M_t)$ e $B_6(M_t)$ descritos no início do capítulo 1, página \pageref{opb}. Denotemos por $\mathcal{B}^{\dagger}$ a álgebra $\mathcal{A}^{\dagger}$ conjugada com transformada de Fourier.

%\begin{lema}\label{funcaoltda} Para cada $j= 1, 4, 5, 6,~ B_j$ pertence ao conjunto
%  $C_b(\mathbb{R},\mathcal{L}_{\mathbb{S}^1}),$ visto como uma subálgebra de ${\cal{L}}(L^{2}(\mathbb{R}, L^{2}({\mathbb{S}}^1))).$ Isto é, $B_j$ é      identificado com o correspondente operador de multiplicação em  ${\cal{L}}(L^{2}(\mathbb{R}, L^{2}({\mathbb{S}}^1))).$ 
%\end{lema}
\begin{lema}\label{funcaoltda} Para cada $j= 4, 5, 6,~ B_j$ pertence ao conjunto $C_b(\mathbb{R},\mathcal{L}_{\mathbb{S}^1}).$  
\end{lema}

\begin{proof} Como $B_4 \in C_0(\mathbb{R}, {\mathcal{K}}_{\mathbb{S}^1})$ (\cite{cordes1}, proposição 1.1), então fica claro que  $B_4$ está em $C_b(\mathbb{R}, {\mathcal{L}}_{\mathbb{S}^1}).$ 

Como $B_5(\tau) = -\tau B_4(\tau)$ e $B_6(\tau) = \frac{1}{i}\frac{\partial}{\partial\beta}B_4(\tau),$ então estas funções também são contínuas em $\mathbb{R}.$ 

Para mostrarmos que, para cada $\tau$ real, os operadores $B_5(\tau)$ e $B_6(\tau)$ estão em ${\mathcal{L}}_{\mathbb{S}^1}$, primeiro vamos conjugá-los com a   transformada de Fourier discreta $F_d,~B_5'(\tau) := F_d B_5(\tau) F_d^{-1}$ e $B_6'(\tau) := F_d B_6(\tau) F_d^{-1}$, e então mostrar que $B_5'(\tau)$ e $B_6'(\tau)$ são operadores limitados em $\ell^2(\mathbb{Z}).$

Os operadores $B_5'(\tau ),~ B_6'(\tau): \ell^2(\mathbb{Z}) \rightarrow \ell^2(\mathbb{Z})$ são operadores de multiplicação pelas seqüências
$$
-\tau(1+{\tau}^2+j^2)^{-1/2} ~ , ~~ j(1+{\tau}^2+j^2)^{-1/2},  ~~j \in \mathbb{Z}, 
$$
respectivamente. Calculando a norma destes operadores, temos:
\begin{eqnarray*}
|| B_5'(\tau)||^2 & = & \mbox{sup}~\{~|| B_5'(\tau)(v_j)_j||^2_{{\ell}^{2}(\mathbb{Z})};\ (v_j)_j \in
 {{\ell}^{2}({\mathbb{Z}})}, \ ||(v_j)_j||=1  \} \\
& = & \mbox{sup}\{ \sum_{j\in \mathbb{Z}}|\tau (1+{\tau}^2 + j^2)^{-1/2}v_j|^2;\ (v_j)_j \in
 {{\ell}^{2}(\mathbb{Z})}, \ ||(v_j)_j||=1  \} \\
& \le & \sum_{j\in \mathbb{Z}}|v_j|^2 \ = \ 1
\end{eqnarray*}
\begin{eqnarray*}
|| B_6'(\tau)||^2 & = & \mbox{sup}\{ \sum_{j\in \mathbb{Z}}|j (1+{\tau}^2 + j^2)^{-1/2}v_j|^2;\ (v_j)_j \in
 {{\ell}^{2}(\mathbb{Z})}, \ ||(v_j)_j||=1  \} \\
& \le & \sum_{j\in \mathbb{Z}}|v_j|^2 \ = \ 1
\end{eqnarray*}

%$||B_4'(\tau)||,|| B_5'(\tau)||$ e $|| B_6'(\tau)||$ são todas limitadas por 1 para todo $\tau$ real
Logo, $B_5'(\tau)$ e $B_6'(\tau)$ são operadores limitados em $\ell^2(\mathbb{Z})$. 
Como a conjugação é um isomorfismo, temos que $B_5(\tau)$ e $B_6(\tau)$ são operadores limitados em $L^2(\mathbb{S}^1)$ e portanto as funções $B_5$ e $B_6$ pertencem a $C_b(\mathbb{R},\mathcal{L}_{{\mathbb{S}}^1}).$ 

\end{proof}

Para conhecer a estrutura da C*-álgebra $\mathcal{A}^{\dagger}$ e podermos descrever seu espaço símbolo, primeiro encontraremos um isomorfismo entre a álgebra gerada pelos operadores $A_4, A_5$ e $A_6$ e a álgebra de funções contínuas definidas numa compactificação de ${\mathbb{R}\times\mathbb{Z}}$. Antes de enunciarmos o lema que nos mostra este isomorfismo, definiremos a compactificação de ${\mathbb{R}\times\mathbb{Z}}$.

Considere a seguinte compactificação de $\mathbb{R}^2$ dada em \cite{cordes2}, página 133. Dado o homeomorfismo de $\mathbb{R}^2$ na bola aberta
$$
\begin{array}{cccc}
 h~: & \! {\mathbb{R}}^{2} & \! \longrightarrow & \! B = \{y \in {\mathbb{R}}^{2};~ ||y|| < 1 \} \\     
     & \! y & \! \longmapsto & \! \displaystyle\frac{y}{(1+||y||^{2})^{1/2}} 
\end{array}
$$
temos o conjunto $CS(\mathbb{R}^2) = \{ f \in C(\mathbb{R}^2) ; f \circ h^{-1} \in C(D)\},$ onde $D = \{y \in {\mathbb{R}}^{2};~ ||y|| \le 1 \}$ é o fecho de $B$. $CS(\mathbb{R}^2)$ é a classe das funções contínuas limitadas $f$ sobre $\mathbb{R}^2$ tais que a função $g(y) = f \circ h^{-1}(y)$ admite uma extensão contínua em $D$. Temos então que $CS(\mathbb{R}^2) = C(\mathbb{B}^2)$, onde $\mathbb{B}^2$ é a compactificação de $\mathbb{R}^2$ homeomorfa a $D$. 

%As funções em  $CS(\mathbb{R}^2)$ são caracterizadas pela propriedade: para todo $z \in  \mathbb{R}^2$ com norma 1, o limite $\lim_{\rho\rightarrow\infty}f(\rho z)$ existe. 

Denotemos por $\overline{\mathbb{R}\times\mathbb{Z}}$ o fecho de $\mathbb{R}\times\mathbb{Z}$ em $\mathbb{B}^{2}$ e por $S_{\infty}^1$ a fronteira de $\mathbb{B}^{2}$. Vamos mostrar que $\overline{\mathbb{R}\times\mathbb{Z}}$ é $(\mathbb{R}\times\mathbb{Z}) \cup S_{\infty}^1.$

Seja $(x_1, x_2)$ em $S_{\infty}^1$. Então a este elemento está relacionado um único elemento $z=(z_1, z_2)\in S^{1}=\partial B$ pelo homeomorfismo entre $\mathbb{B}^2$ e $D$.  Tome uma seqüência de racionais
$\{(\frac{p_j}{m_j}, \frac{q_j}{n_j})\}_j $ em $B$ que converge para $(z_1,
z_2)$ de forma que $ m_j , n_j \to \infty .$ Considerando então a seqüência
$\{(n_j p_j , m_j q_j)\}_j$ em  ${\mathbb{Z}}^2 \subset
\mathbb{R}\times\mathbb{Z} \subset {\mathbb{R}}^{2},$ temos 
\begin{eqnarray*}
h((n_j p_j, m_j q_j)) & = & \displaystyle\frac{(n_j p_j, m_j
q_j)}{\sqrt[]{1 + ({n_j p_j})^2 + ({ m_j q_j})^2}} \\
& = & \displaystyle\frac{(\displaystyle\frac{p_j}{m_j}, \displaystyle\frac{q_j}{n_j})}{\sqrt[]{\frac{1}{(n_j m_j)^2} + \frac{p_j^2}{m_j^2} + \frac{q_j^2}{n_j^2}}} \\
& \stackrel{j\to \infty}{\longrightarrow} & \frac{(z_1,z_2)}{\sqrt[]{z_1^2 + z_2^2}} ~ = ~ (z_1,z_2)
\end{eqnarray*}
Então, $(n_j p_j , m_j q_j) \stackrel{j\to \infty}{\rightarrow} (x_1, x_2).$  

Fica assim bem caracterizada a seguinte compactificação de $\mathbb{R} \times \mathbb{Z}:$ 
\begin{eqnarray*}
\overline{\mathbb{R}\times\mathbb{Z}} & = &
\{ (t,j) \in \mathbb{R}\times\mathbb{Z}\} \cup  \partial\mathbb{B}^{2} \\
& = & (\mathbb{R}\times\mathbb{Z}) \cup S_{\infty}^1.
\end{eqnarray*}
%$C(\mathbb{R}\times\mathbb{Z})$ o conjunto das funções contínuas em
%$\mathbb{R}\times\mathbb{Z}$ tomando valores complexos. Defina o conjunto
%$C(\overline{\mathbb{R}\times\mathbb{Z}})$ como sendo o conjunto das funções
%contínuas definidas numa compactificação de $\mathbb{R}\times\mathbb{Z}.$ Esta
%compactificação é a mesma feita para o espaço $\mathbb{R}^n$ em
%[\cite{Cordes2}, lema 1.5].???? e está contido em $F_d \mathcal{B}^{\dagger} F_d^{-1}.$
%(Denotamos por $S_{\infty}^1$ o círculo proveniente da compactificação de $\mathbb{R} \times \mathbb{Z}$.)

Agora podemos enunciar o seguinte lema:

\begin{lema}\label{compactificacao}  Seja $\cal{C}$ a C*-álgebra comutativa gerada pelos operadores
$A_4,A_5$ e $A_6.$   Então 
$$
{\cal{C}} \ \cong \ C(\overline{\mathbb{R}\times\mathbb{Z}})
$$
\end{lema}

\begin{proof} Se conjugarmos os operadores $A_4, A_5, A_6$ com o operador $F\otimes F_d^{-1},$ obteremos ope\-ra\-do\-res de multiplicação em $L^2(\mathbb{R}\times \mathbb{Z})$ dados, respectivamente, pelas funções $B_4', B_5', B_6' : \mathbb{R}\times\mathbb{Z} \rightarrow \mathbb{R}$ definidas abaixo:
$$B_4'(\tau,j) = (1+{\tau}^2+j^2)^{-1/2} ~~~ B_5'(\tau,j) = -\tau B_4'(\tau,j) ~~~~ B_6'(\tau,j) = -j  B_4'(\tau,j) ~, ~~ (\tau, j) \in \mathbb{R}\times \mathbb{Z}$$
(lema \ref{funcaoltda}). Denotando por ${\mathcal{C}}'$ a álgebra $(F^{-1}\otimes F_d){\mathcal{C}}(F\otimes F_d^{-1})$, podemos vê-la como uma subálgebra de $C(\overline{\mathbb{R}\times\mathbb{Z}})$. 

Queremos mostrar, usando o teorema de Stone-Weierstrass, que ${\mathcal{C}}' = C(\overline{\mathbb{R}\times\mathbb{Z}})$. 
Como \linebreak $(B_4'(t,j))^2 + (B_5'(t,j))^2 + (B_6'(t,j))^2=1$, para todo $(t,j)\in \mathbb{R}\times\mathbb{Z},$ a álgebra $\mathcal{C}'$ possui unidade. Vamos verificar agora,  que sempre é possível encontrar uma função em $\mathcal{C}'$ que separa pontos. 

\textit{Caso 1.}  
Sejam $(t_1, k_1)\neq(t_2, k_2)$ em $\mathbb{R}\times\mathbb{Z}$. Se $B_4'(t_1, k_1)\neq B_4'(t_2, k_2), $ temos o que queríamos. Caso contrário, então ou 
$$B_5'(t_1, k_1) = \displaystyle\frac{-t_1}{(1+t_1^2+k_1^2)^{1/2}} \neq \displaystyle\frac{-t_2}{(1+t_2^2+k_2^2)^{1/2}} = B_5'(t_2, k_2), $$ 
ou 
$$B_6'(t_1, k_1) = \displaystyle\frac{-k_1}{(1+t_1^2+k_1^2)^{1/2}} \neq \displaystyle\frac{-k_2}{(1+t_2^2+k_2^2)^{1/2}} = B_6'(t_2, k_2), $$
já que $t_1 \neq t_2$ ou $k_1 \neq k_2.$

\textit{Caso 2.}
Sejam $(t,k)$ em $\mathbb{R}\times\mathbb{Z}$ e $z=(z_1,z_2)$ em $S_{\infty}^1$. Vamos primeiro calcular $B_4'$ em $z$.
Para facilitar as contas, suponha que $B_4'$ está definida em todo $\mathbb{R}^2$. Temos então que $g = B_4'\circ h^{-1}$ admite uma extensão contínua em $D= \{y \in {\mathbb{R}}^{2};~ ||y||\leq 1 \}$. Logo, para $y \in B= \{y \in {\mathbb{R}}^{2};~ ||y|| < 1 \}$ temos 
$$ 
B_4'\circ h^{-1}(y) = B_4'\left(\frac{y}{(1-||y||^2)^{1/2}}\right) = \frac{1}{\left(1 + \frac{y_1^2}{1-||y||^2} + \frac{y_2^2}{1-||y||^2} \right)^{1/2}} = (1- y_1^2 - y_2^2)^{1/2} . 
$$
Seja $(y_j)_{j\in\mathbb{Z}}$ uma seqüência em $B$ que converge para $s=(s_1,s_2)$ em $S^1$ quando $j\rightarrow\infty$, então 
$$
B_4'\circ h^{-1} (y_j) = (1- (y_j)_1^2 - (y_j)_2^2)^{1/2} \stackrel{j\rightarrow\infty}{\longrightarrow} (1- s_1^2 - s_2^2)^{1/2} = 0,
$$
já que $||s||=1$. Assim, para qualquer $s=(s_1,s_2)\in S^1 = \partial D, ~ B_4'\circ h^{-1}(s) = 0.$  Portanto, 
$$ B_4'(z_1,z_2) = 0 .$$
Como
$$ B_4'(t,k)=\displaystyle\frac{1}{(1+t^2+k^2)^{1/2}} \neq 0 ,$$
então $B_4'(t,k) \neq B_4'(z)$. 

\textit{Caso 3.}
Sejam $z=(z_1,z_2)$ e $w=(w_1,w_2)$ distintos em $S_{\infty}^1$. Usando o mesmo raciocínio feito no caso 2, primeiramente faremos a composição das funções $B_5'$ e $B_6'$ com $h^{-1}$ para depois podermos calculá-las em $z$ e $w$. Para $y\in B,$
$$ 
B_5'\circ h^{-1}(y) = B_5'\left(\frac{y}{(1-||y||^2)^{1/2}}\right) = \frac{-y_1/ \sqrt{1-||y||^2}}{\left(1 + \frac{y_1^2}{1-||y||^2} + \frac{y_2^2}{1-||y||^2} \right)^{1/2}} = -y_1,
$$
$$
B_6'\circ h^{-1}(y) = B_6'\left(\frac{y}{(1-||y||^2)^{1/2}}\right) = \frac{-y_2/ \sqrt{1-||y||^2}}{\left(1 + \frac{y_1^2}{1-||y||^2} + \frac{y_2^2}{1-||y||^2} \right)^{1/2}} = -y_2.
$$
Dada $(-y_j)_{j\in\mathbb{Z}}$ uma seqüência em $B$ que converge para $s=(s_1,s_2)$ em $S^1$ quando $j\rightarrow\infty$, temos 
$$
B_5'\circ h^{-1} (-y_j) = (-y_j)_1 \stackrel{j\rightarrow\infty}{\longrightarrow} s_1 ~~~\mbox{e}~~~
B_6'\circ h^{-1} (-y_j) = (-y_j)_2 \stackrel{j\rightarrow\infty}{\longrightarrow} s_2.
$$
Assim, para qualquer $s=(s_1,s_2)\in S^1 = \partial D, ~ B_5'\circ h^{-1}(s_1, s_2) = s_1$ e $B_6'\circ h^{-1}(s_1, s_2) = s_2$.  Portanto, 
$$ B_5'(z_1,z_2) = z_1 \neq w_1 = B_5'(w_1,w_2)~~~\mbox{ou}~~~ B_6'(z_1,z_2) = z_2 \neq w_2 = B_6'(w_1,w_2)$$
Assim, $B_5'$ ou $B_6'$ separam pontos. 

Concluímos então que $\mathcal{C}$ é isomorfa a $C(\overline{\mathbb{R}\times\mathbb{Z}}).$

\end{proof}

\begin{obs} Vimos no teorema \ref{ma2} que os $\sigma$-símbolos dos operadores $A_4, A_5, A_6$ calculados em $(t, x, (\tau, \xi), e^{i\theta})$, onde $(\tau, \xi)\in S^1$ são dados por $0, \tau, \xi$, respectivamente. Note que isto coincide com os cálculos de $B_4', B_5', B_6'$ nos pontos $(z_1, z_2)\in S^1_{\infty} $. 
\end{obs}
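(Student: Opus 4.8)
A afirma\c{c}\~ao \'e essencialmente uma compara\c{c}\~ao direta entre dois c\'alculos j\'a feitos, de modo que o plano consiste em alinh\'a-los depois de tornar precisa a identifica\c{c}\~ao dos espa\c{c}os envolvidos.

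Primeiro, recordo do Teorema \ref{ma2} que, na coordenada $(\tau,\xi)$ do fibrado das coesferas com $\tau^2+\xi^2=1$, os $\sigma$-s\'imbolos de $A_4$, $A_5$ e $A_6$ s\~ao $0$, $\tau$ e $\xi$, respectivamente, sendo $\tau$ a vari\'avel cotangente dual a $t\in\mathbb{R}$ e $\xi$ a dual a $x\in\mathbb{S}^1$. Por outro lado, conjugando $A_4,A_5,A_6$ por $F\otimes F_d^{-1}$ obtemos os operadores de multiplica\c{c}\~ao por $B_4',B_5',B_6'$ em $L^2(\mathbb{R}\times\mathbb{Z})$, onde o fator $\mathbb{R}$ carrega a vari\'avel de Fourier $\tau$ dual a $t$ e o fator $\mathbb{Z}$ carrega a vari\'avel de Fourier discreta $j$; assim $S_\infty^1=\partial\mathbb{B}^2$ \'e exatamente o conjunto das ``dire\c{c}\~oes no infinito'' desse plano $(\tau,j)$, e, via o homeomorfismo $\mathbb{B}^2\cong D$, um ponto de $S_\infty^1$ corresponde a um vetor unit\'ario $(s_1,s_2)\in S^1=\partial D$, com $s_1$ a dire\c{c}\~ao de $\tau$ e $s_2$ a de $j$.

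Em seguida, basta ler os valores obtidos nos Casos 2 e 3 da demonstra\c{c}\~ao do Lema \ref{compactificacao}: para $(s_1,s_2)\in S^1$ vale $B_4'\circ h^{-1}(s_1,s_2)=0$, $B_5'\circ h^{-1}(s_1,s_2)=s_1$ e $B_6'\circ h^{-1}(s_1,s_2)=s_2$. Logo, no ponto de fronteira $z=(z_1,z_2)\in S_\infty^1$ os tr\^es s\'imbolos valem $0$, $z_1$ e $z_2$. Identificando $(\tau,\xi)$ na coesfera com $(z_1,z_2)$ em $S_\infty^1$, ou seja $\tau\leftrightarrow z_1$ e $\xi\leftrightarrow z_2$, obtemos precisamente $0$, $\tau$, $\xi$, que \'e a afirma\c{c}\~ao.

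N\~ao espero nenhuma dificuldade real aqui; o \'unico ponto que exige cuidado \'e o controle de qual vari\'avel circular corresponde a qual --- isto \'e, verificar que a dire\c{c}\~ao $\mathbb{Z}$ no infinito da compactifica\c{c}\~ao \'e de fato o mesmo c\'irculo que o Teorema \ref{ma2} chama de $\xi$, e que as conven\c{c}\~oes de sinal nas defini\c{c}\~oes de $B_5'$, $B_6'$ e nos $\sigma$-s\'imbolos s\~ao compat\'iveis --- para que ``coincide'' seja literalmente verdade, e n\~ao apenas verdade a menos de uma renomea\c{c}\~ao das vari\'aveis.
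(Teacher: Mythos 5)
Sua proposta está correta e segue essencialmente o mesmo caminho do texto: a observação não recebe demonstração própria no trabalho, pois consiste exatamente em comparar os valores $0,\ \tau,\ \xi$ do teorema \ref{ma2} com os valores de fronteira de $B_4', B_5', B_6'$ já obtidos nos Casos 2 e 3 da demonstração do lema \ref{compactificacao}, identificando $(\tau,\xi)$ com $(z_1,z_2)\in S^1_{\infty}$. Sua ressalva sobre conferir a correspondência das variáveis e os sinais é pertinente, mas não altera o argumento.
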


\begin{teo}\label{edagger} O ideal comutador de $\mathcal{B}^{\dagger}$ é $C_0(\mathbb{R}, {\cal{K}}_{{\mathbb{S}}^1})$.
\end{teo}

\begin{proof}
Seja ${\cal{E}}^{\dagger}_{\cal{B}}$ o ideal comutador de $\mathcal{B}^{\dagger}$.

%%% (DEMONSTRAÇÃO ALTERNATIVA) Na proposição 1.2 de \cite{cordes1}, temos que os comutadores de todos os geradores de ${\mathcal{B}}^{\dagger}$ e de seus adjuntos pertencem a  $C_0(\mathbb{R},{\cal{K}}_{{\mathbb{S}}^1}).$ Ao mesmo tempo, $C_0(\mathbb{R},{\cal{K}}_{{\mathbb{S}}^1})$ está contido no ideal comutador da C*-álgebra gerada pelos operadores $ B_1, B_4$ e $B_6,$ pela proposição 1.5. Como $C_0(\mathbb{R},{\cal{K}}_{{\mathbb{S}}^1})$ é um ideal bilateral fechado de ${\mathcal{B}}^{\dagger}$, então o ideal comutador de ${\mathcal{B}}^{\dagger}$ é o próprio $C_0(\mathbb{R},{\cal{K}}_{{\mathbb{S}}^1}).$

Note que a álgebra ${\mathcal{B}}^{\dagger}$ está contida em $C_b(\mathbb{R}, \mathcal{S}),$ onde $\mathcal{S}$ foi definido no capítulo 1, página \pageref{s}. De fato,  $a(M_x) \in  \mathcal{S}$ para $a\in C^{\infty}(\mathbb{S}^1)$ (este operador não depende da variável real), e para $\tau\in \mathbb{R}, ~B_i(\tau)\in \mathcal{S},$ pois $B_i(\tau) = F_d^{-1} B_i'(\tau) F_d $, para $i=4,5,6.$

Fixado $\tau \ \in \ \mathbb{R},$ mostremos que $\{B(\tau);B \ \in \ {\mathcal{B}}^{\dagger}\} = \mathcal{S}.$ 

Primeiro, ${\cal{V}}:= \{B(\tau); B  \in {\mathcal{B}}^{\dagger}\}$ é C*-subálgebra de 
${\cal{L}}_{{\mathbb{S}}^1}$ pois a aplicação $ B \in  {\mathcal{B}}^{\dagger} \mapsto B(\tau) \in  {\cal{L}}_{{\mathbb{S}}^1}$ 
é um C*-homomorfismo, portanto tem imagem fechada.
Queremos mostrar que $F_d {\cal{V}} F_{d}^{-1} = F_d \mathcal{S} F_{d}^{-1}$ 
usando o Teorema de Stone-Weierstrass. 
Os operadores $B_i'(\tau) = F_d B_i(\tau)F_d^{-1},\ i = 4,5,6 ,$ 
podem ser vistos como funções em 
$ C(\mathbb{Z}\cup \{-\infty, +\infty\}),$ já que 
$$
-1 \le B_i'(\tau, j) \le 1 \ , \ \ \forall \ j \ \in \ \mathbb{Z}\cup \{-\infty, +\infty\}. 
$$
Temos ainda que $B_6'(\tau,j)=-j(1+{\tau}^2 + j^2)^{-1/2}$ separa pontos em
$C(\mathbb{Z}\cup \{-\infty, +\infty\}),$ pois 
$$ 
\frac{j}{(1+{\tau}^2 + j^2)^{1/2}} =  \frac{k}{(1+{\tau}^2 + k^2)^{1/2}} \ \iff \ j=k. 
$$
Como  $F_d {\cal{V}} F_{d}^{-1}$ contém a identidade, pois $Id = {B_4'}^2(\tau)+{B_5'}^2(\tau)+{B_6'}^2(\tau),$  então $F_d {\cal{V}} F_d^{-1} = F_d \mathcal{S} F_{d}^{-1}$  e portanto $ {\cal{V}} = \mathcal{S}.$

Sabe-se de \cite{cordes1}, proposição 1.2 e 1.3, que todos os comutadores de $\cal{B}^{\dagger}$ e
seus adjuntos estão em $C_0(\mathbb{R},{\cal{K}}_{{\mathbb{S}}^1})$, logo
${\mathcal{E}}^{\dagger}_{\cal{B}} \subseteq C_0(\mathbb{R},{\cal{K}}_{{\mathbb{S}}^1})$. Assim, para
$\tau \in \mathbb{R},$ o conjunto 
${\cal{U}}:= \{ A(\tau); ~ A \ \in \ {\mathcal{E}}^{\dagger}_{\cal{B}}\} \subseteq {\cal{K}}_{{\mathbb{S}}^1}.$ 
Na verdade, ${\cal{U}}:= \{ A(\tau); ~A \ \in \ {\mathcal{E}}^{\dagger}_{\cal{B}}\} = {\cal{K}}_{{\mathbb{S}}^1}.$  
De fato,  dado $T$ pertencente a $\mathcal{S},$ para um $\tau\in \mathbb{R},$ existe 
$ B  \in  \mathcal{B}^{\dagger}$ tal que $B(\tau) = T.$ 
Daí, para $A \ \in \ \mathcal{E}^{\dagger}_{\cal{B}}$
$$
A(\tau) T = A(\tau) B(\tau) = (AB)(\tau) \ \in \ \cal{U}, 
$$
pois $AB$ está em $\mathcal{E}^{\dagger}_{\cal{B}}.$  Logo, $\cal{U}$ é um ideal de 
$\mathcal{S}$ não nulo contido em ${\cal{K}}_{{\mathbb{S}}^1}.$ Segue que  ${\cal{U}} = {\cal{K}}_{{\mathbb{S}}^1}.$  

%%A C*-álgebra $C_0(\mathbb{R},\cal{K}_{S^1})$ é ``posliminal''\[Dixmier, 11.1.8\], então
Por \cite{dixmier}, 10.4.5, temos que $\mathcal{E}^{\dagger}_{\cal{B}} =
C_0(\mathbb{R},{\cal{K}}_{{\mathbb{S}}^1}) $ se os estados puros de
$C_0(\mathbb{R},{\cal{K}}_{{\mathbb{S}}^1})$ são separados por $\mathcal{E}^{\dagger}_{\cal{B}}.$ Os
estados puros de $C_0(\mathbb{R},{\cal{K}}_{{\mathbb{S}}^1})$ são os funcionais lineares da
forma
\begin{equation}\label{estadopuro}
f_{t,v}(A) = (v,A(t)v) \ , \ A \ \in \ C_0(\mathbb{R},{\cal{K}}_{{\mathbb{S}}^1}),
\end{equation}
onde  $(v,A(t)v)$ denota o produto interno de  $v$ por $A(t)v$, $t\in\mathbb{R}$ fixo e $v\in L^2({\mathbb{S}}^1)$ tem norma 1 (ver \cite{dixmier}, 2.5.2 e 4.1.4, e \cite{takesaki}, IV 4.14). Para mostrar
que $\mathcal{E}^{\dagger}_{\cal{B}} = C_0(\mathbb{R},{\cal{K}}_{{\mathbb{S}}^1}),$ é suficiente
mostrar que, dados dois funcionais da forma (\ref{estadopuro}), $f_{t_1, v_1}\ne
f_{t_2, v_2},$ podemos achar $A \ \in \ \mathcal{E}^{\dagger}_{\cal{B}}$ tal que
$f_{t_1, v_1}(A)\ne f_{t_2, v_2}(A).$

\begin{enumerate}
\item Suponha $t_1\neq t_2, ~t_1, t_2 \in \mathbb{R}$. Tome $D \ \in \ F_d \mathcal{E}^{\dagger}_{\cal{B}} F_d^{-1}\subseteq C_0(\mathbb{R},{\mathcal{K}}(\ell^2({\mathbb{Z}})))$ tal que $D(t_1) = (w_1, \cdot)w_1,$ para $ w_1 \in {\ell}^2(\mathbb{Z})$ e defina para cada $l\in \mathbb{Z},$ $a_l \in C(\overline{\mathbb{R}\times\mathbb{Z}})$ da seguinte maneira:
$$ 
a_l(t_1, k) = 1 \ , \ \forall \ k \ \in \ \mathbb{Z} ~~\mbox{e} ~~
a_l(t_2, k) = 0 \ , \ \mbox{se} \ |k|<l.
$$  
Considerando os funcionais lineares $f_{t_i, w_i}(D) = (w_i, D(t_i)w_1)$ em $C_0(\mathbb{R},{\mathcal{K}}(\ell^2({\mathbb{Z}})))$ para $t_i\in\mathbb{R}$  e $w_i \in \ell^2(\mathbb{Z}), ~i=1,2,$ temos 
$$ 
f_{t_1,w_1}(a_l D) = (w_1, (a_l D)(t_1)w_1) = 1~, 
$$
$$
f_{t_2,w_2}(a_l D) = (w_2, (a_l D)(t_2)w_2) = (w_2, a_l(t_2)(M_k) D(t_2)w_2) \stackrel{l\rightarrow\infty}{\longrightarrow }0 ~.
$$
Isto é, existe $l$ em $\mathbb{Z}$ tal que $f_{t_1,w_1}(a_l D)\neq
f_{t_2,w_2}(a_l D)$ e  $a_l D \ \in \ F_d \mathcal{E}^{\dagger}_{\cal{B}}
F_d^{-1}.$  Portanto, para $t_1, t_2$ reais distintos e $v_1=F_d^{-1}w_1$ e $v_2=F_d^{-1}w_2$ em $L^2(S^1)$, tome $ A = F_d^{-1} a_l D  F_d$ em
$\mathcal{E}^{\dagger}_{\cal{B}}$ para termos $f_{t_1,v_1}(A)\neq f_{t_2,v_2}(A)$

\item Suponha $t_1 = t_2 = t_0$. Se  $v_1$ e  $v_2$ têm norma 1, então
  $|(v_1,v_2)|=1 \ \iff \ v_1 = \alpha v_2, \ |\alpha|=1,$ e portanto temos os
  mesmos estados puros. Logo, tomemos $v_1$ e $v_2$ tais que  $|(v_1,v_2)|\neq
  1.$ Temos então, para $A \ \in \ \mathcal{E}^{\dagger}_{\cal{B}},$ onde
  $A(t_0)=(v_1, \cdot)v_1,$
$$
f_{t_0,v_1}(A) = (v_1, A(t_0)v_1) = (v_1, (v_1,v_1)v_1) = 1 \neq
|(v_1,v_2)|^2 = f_{t_0,v_2}(A).
$$
\end{enumerate}

Portanto $\mathcal{E}^{\dagger}_{\cal{B}}$ separa os estados puros de $C_0(\mathbb{R},{\cal{K}}_{{\mathbb{S}}^1})$ e podemos concluir que $\mathcal{E}^{\dagger}_{\cal{B}}= C_0(\mathbb{R}, {\cal{K}}_{{\mathbb{S}}^1})$.
\end{proof}

Denotemos por ${\mathcal{E}}^{\dagger}$ o ideal comutador de ${\mathcal{A}}^{\dagger}$. Lembrando que ${\mathcal{B}}^{\dagger} = F^{-1}{\mathcal{A}}^{\dagger}F,$  podemos então escrever ${\mathcal{E}}^{\dagger} = F{\mathcal{E}}^{\dagger}_{\mathcal{B}}F^{-1}.$

\begin{teo}\label{mdagger} O espaço símbolo ${\bf M}^{\dagger}$ de ${\mathcal{A}}^{\dagger}$ é homeomorfo a $\mathbb{S}^1 \times S_{\infty}^1$ e os $\sigma$-símbolos da classe dos geradores da C*-álgebra $\cal{A}^{\dagger}$ calculados em $(x, w=\tau +i\xi)$ são respectivamente dados por 
\begin{equation}\label{sigmadagger}
 a(x),~ ~ ~ 0, ~ ~ ~ \tau,~ ~ ~ \xi.
\end{equation}
\end{teo}

\begin{proof} Considere as inclusões
$$
\begin{array}{ccccccccc}
 i_1\ :  & \! C^{\infty}(\mathbb{S}^1) & \! \longrightarrow &
 \!\displaystyle\frac{\cal{A}^{\dagger}}{\mathcal{E}^{\dagger}} & \! \ \ \  & \!
 i_2\ :  & \! \cal{C} & \! \longrightarrow &
 \!\displaystyle\frac{\cal{A}^{\dagger}}{\mathcal{E}^{\dagger}} \\     
 & \! a & \! \longmapsto & \! [a(M_x)]_{\mathcal{E}^{\dagger}} & \!  & \!  & \! A & \! \longmapsto & \! [A]_{\mathcal{E}^{\dagger}}
\end{array}
$$

Temos  que $\mathbb{S}^1$ e  $\overline{\mathbb{R}\times\mathbb{Z}}$ são os espaços
símbolo de  $C^{\infty}(\mathbb{S}^1)$ e $\cal{C},$ respectivamente, pois pelo lema \ref{compactificacao}, $\cal{C}$ e
$C(\overline{\mathbb{R}\times\mathbb{Z}})$ são isomorfos. Seja a aplicação dual 
$$ 
\begin{array}{cccc}
 {\iota}^{*} \ :  & \! {\bf M}^{\dagger} & \! \longrightarrow &
 \!\mathbb{S}^1 \times  \overline{\mathbb{R}\times\mathbb{Z}} \\
 & \! \omega & \! \longmapsto & \! (\omega \circ i_1, \omega \circ i_2) 
\end{array}
$$
 
As imagens de $i_1$ e $i_2$ geram $\cal{A}^{\dagger}/\mathcal{E}^{\dagger},$
então ${\iota}^{*}$ é injetora. Como $ {\bf M}^{\dagger}$ é compacto, $\mathbb{S}^1
\times  (\overline{\mathbb{R}\times\mathbb{Z}})$ é Hausdorff e ${\iota}^{*}$ é
contínua, temos que ${\iota}^{*}$ é um homeomorfismo sobre a sua
imagem. Determinemos então a imagem de ${\iota}^{*}$.

Note que 
$$
\mathbb{S}^1 \times (\overline{\mathbb{R} \times \mathbb{Z}}) \ \cong \
(\mathbb{S}^1 \times \mathbb{R} \times \mathbb{Z}) \cup (\mathbb{S}^1 \times S_{\infty}^1).
$$

Suponha que exista $(z,t,j) \in (\mathbb{S}^1 \times {\mathbb{R} \times \mathbb{Z}})\cap Im {\iota}^{*}.$ 
Considere a aplicação do $\sigma$-símbolo 
$$
\begin{array}{cccc}
\sigma^{\dagger} \ :  & \! \mathcal{A}^{\dagger} & \! \longrightarrow  & \! C({\bf M}^{\dagger}) \\
                      & \!      A                & \! \longmapsto      & \! \sigma^{\dagger}_A(\omega) = \omega(A) 
\end{array}
$$ 
Associamos ao ponto $(z,t,j)$ um funcional linear $\omega_0 \in {\bf M}^{\dagger}$ dado pela evaluação neste ponto. 

Para $A_4 \in \mathcal{C}\subset \mathcal{A}^{\dagger},$ temos que $\omega_0(A_4) = (0, \omega_0 \circ i_2(A_4)),$ onde $\omega_0 \circ i_2$ é a evaluação em $(t,j)$. Portanto,  %$\sigma^{\dagger}_{A_4}(\omega_0) = \omega_0\circ i_2(A_4)$ 
$$ {\sigma}^{\dagger}_{A_4}(z,t,j)= B_4'(t,j) = \displaystyle\frac{1}{(1+t^2+j^2)^{1/2}} \neq 0 ,$$
onde $B_4'\in C(\overline{\mathbb{R} \times \mathbb{Z}})$.

Mas  $A_4 \in {\mathcal{E}}^{\dagger}, $ então  ${\sigma}^{\dagger}_{A_4} = 0. $ Portanto, com esta contradição obtemos  
$(\mathbb{S}^1 \times{\mathbb{R} \times \mathbb{Z}})\cap Im {\iota}^{*} = \emptyset .$

Temos então que $Im {\iota}^{*}\subseteq \mathbb{S}^1 \times S_{\infty}^1.$ Vamos mostrar que vale a igualdade.

\begin{itemize} 
\item \textit{Afirmação 1:} Se $(z_0,v_0) \in Im {\iota}^{*},$ então $(z,v_0) \in Im {\iota}^{*}$ para todo $z \in \mathbb{S}^1 . $

Seja $\zeta \in \mathbb{S}^1$. Considere o operador $T_{\zeta}\in \mathcal{L}_{\Omega}$ tal que $(T_{\zeta}u)(t,x) = u(t,x\cdot\zeta)$. Defina 
$ \phi_{\zeta} :\mathcal{A}^{\dagger}\rightarrow \mathcal{A}^{\dagger}$ por 
$$\phi_{\zeta}(A) = T_{\zeta} A T_{{\zeta}^{-1}}.$$
Assim, $\phi_{\zeta}$ é um automorfismo contínuo e $\phi_{\zeta}(a(M_x))=a(M_x\cdot\zeta),$ para $a \in C^{\infty}(\mathbb{S}^1)$, e $\phi (A) = A, $ para $A \in \mathcal{C}.$

Seja $w_0$ o funcional linear multiplicativo de $\mathcal{A}^{\dagger}/\mathcal{E}^{\dagger}$ associado ao ponto $(z_0,v_0) \in Im{\iota}^{*}$. Defina um novo funcional $w_1$ da seguinte forma:
$$w_1([A]_{\mathcal{E}^{\dagger}}) = w_0([\phi_{\zeta}(A)]_{\mathcal{E}^{\dagger}});$$
então $w_1$ é o funcional linear multiplicativo de $\mathcal{A}^{\dagger}/\mathcal{E}^{\dagger}$ associado ao ponto $(z_0 \cdot \zeta,v_0) \in \mathbb{S}^1 \times S_{\infty}^1 $. Logo, $(z,v_0) \in Im {\iota}^{*}$ para todo $z\in \mathbb{S}^1$. 
\end{itemize}

Suponha agora que exista um ponto $(z_1,v_1)$ em $\mathbb{S}^1 \times S_{\infty}^1 $ que não pertença a $Im{\iota}^{*}.$ Portanto, pela afirmação acima, podemos concluir que $(z, v_1)$ não pertence a $Im{\iota}^{*}$ para todo $z$ em $\mathbb{S}^1$. Então existe uma vizinhança aberta $N_{v_1}$ de $v_1$ em $\overline{\mathbb{R} \times \mathbb{Z}}$ tal que $(\mathbb{S}^1 \times N_{v_1}) \cap Im{\iota}^{*} = \emptyset$. 

Seja $B$ em $C(\overline{\mathbb{R} \times \mathbb{Z}})$ tal que $B(v_1)\neq 0$ e o suporte de $B$ está contido em $N_{v_1}.$ A $B$ corresponde um único operador $A$ em $\mathcal{C} \subset \mathcal{A}^{\dagger}.$ Daí, 
$${\sigma}_{A}(z,v)=B(v)=0 ~ ~ ~  \forall ~ ~ (z,v)\in Im {\iota}^* .$$

\begin{itemize} 
\item \textit{Afirmação 2:} $A$ não pertence ao ideal comutador $\mathcal{E}^{\dagger}.$

Suponha que $A~\in ~\mathcal{E}^{\dagger}.$ Então $B$ pertence a $C_0(\mathbb{R},\mathcal{K}_{\mathbb{Z}})$, 
%visto como operador de multiplicação em $\mathcal{K}_{\mathbb{Z}}),$ 
e portanto vale o limite
$$ 
\lim_{\tau \rightarrow\infty} \sup_{k \in \mathbb{Z}} |B(\tau , k)| = 0 .
$$
Seja $\{(\tau _j,s_j)\}_{j\in \mathbb{Z}}$ uma seqüência em $\mathbb{R} \times \mathbb{Z}$ tal que $(\tau _j,s_j)\rightarrow\infty$ quando $j\rightarrow\infty$ e \linebreak $ h((\tau _j,s_j))\rightarrow v_1$. 
Dado $\epsilon > 0$, seja $\tau_0$ real tal que para todo $\tau > \tau_0$, 
$$
\sup_{k \in \mathbb{Z}} |B(\tau , k)| < \epsilon .
$$
Tome $j_0$ inteiro tal que, para todo $j>j_0,$ tem-se $\tau_j>\tau_0$. Temos então para $j>j_0,$
$$ 
|B(\tau_j , s_j)| < \sup_{k \in \mathbb{Z}} |B(\tau_j ,k)| < \epsilon .
$$
Mas $ B(\tau_j , s_j) \stackrel{j\rightarrow\infty}{\longrightarrow} B(v_1)$ e daí  $B(v_1)=0,$ absurdo!
\end{itemize}
Logo, $(z,v) \in Im{\iota}^{*}$ para todo  $(z,v)$ em $\mathbb{S}^1 \times S_{\infty}^1 $.

Vamos agora descrever a aplicação do símbolo:
$$ \sigma^{\dagger}: \mathcal{A}^{\dagger}\longrightarrow C(\mathbb{S}^1 \times S_{\infty}^1) . $$

Para cada ponto $(x,w)\in \mathbb{S}^1 \times S_{\infty}^1,$ existem funcionais lineares multiplicativos $f_1$ associado a $x$ e $f_2$ associado a $w$. Como ${\bf M}^{\dagger}$ e  $\mathbb{S}^1 \times S_{\infty}^1$ são homeomorfos, existe um funcional linear $\omega\in {\bf M}^{\dagger}$ tal que pela aplicação dual $\iota^{*}$, temos $\omega\circ i_1 = f_1$ e  $\omega\circ i_2 = f_2$. Portanto, podemos associar a $\omega$ o ponto $(x,w)$. Logo, 
$$ \sigma^{\dagger}_{a(M_x)}(x,w) = \omega_{(x,w)}(a(M_x)) = f_1(a(M_x)) = a(x).$$
Escrevendo $w=(\tau,\xi) \in S_{\infty}^1$, temos
\begin{enumerate}
\item $ \sigma^{\dagger}_{A_4}(x,w) = \omega_{(x,w)}(A_4) = f_2(A_4) = B_4'(w) = 0$
\item $ \sigma^{\dagger}_{A_5}(x,w) = \omega_{(x,w)}(A_5) = f_2(A_5) = B_5'(\tau, \xi) = \tau$
\item $ \sigma^{\dagger}_{A_6}(x,w) = \omega_{(x,w)}(A_6) = f_2(A_6) = B_6'(\tau, \xi) = \xi$
\end{enumerate}

Podemos ainda dizer que $\sigma^{\dagger}$ é igual a $\sigma$ restrita a álgebra $\mathcal{A}^{\dagger}.$

\end{proof}

\section{A álgebra do produto cruzado}

\subsection{O produto cruzado}

Denote por ${\mathcal{A}}^{\diamond}$ a C*-\'algebra ${\cal{A}}^{\dagger}$
acrescida dos operadores de multiplicação pelas funções periódicas.  
Denotemos por ${\mathcal{B}}^{\diamond}$ a C*-álgebra ${\mathcal{A}}^{\diamond}$ conjugada com a transformada de Fourier. 

Nesta seção iremos mostrar que ${\mathcal{B}}^{\diamond}$ é isomorfa ao produto cruzado de ${\mathcal{B}}^{\dagger}$ por uma ação que definiremos abaixo. 

Identificando ${\cal{B}}^{\dagger}$ como subálgebra de $C_b(\mathbb{R}, \mathcal{L}_{\mathbb{S}^1})$ (ver demostração do teorema \ref{edagger}), considere o seguinte automorfismo de translação $\alpha$:
$$[\alpha(B)](\tau) = B(\tau - 1), ~ \tau \in \mathbb{R}, ~ B \in \mathcal{B}^{\dagger}.$$

Lembre que a C*-álgebra gerada por $B_4 , B_5 $ e $B_6$ é isomorfa a $C(\overline{\mathbb{R}\times \mathbb{Z}})$ (lema \ref{compactificacao}). Como $C(\overline{\mathbb{R}\times \mathbb{Z}})$ é invariante por translação na variável real, então a C*-álgebra gerada por $B_4 , B_5 $ e $B_6$
é invariante por translação em $ \mathbb{R}.$ Portanto, $\mathcal{B}^{\dagger}$ é invariante por translação e o automorfismo $\alpha$ acima está bem definido. 

%Com base no que foi feito em \cite{cintia}, teorema 8, temos o seguinte resultado. 
Melo e Silva \cite{cintia}, teorema 8, demonstraram que a C*-álgebra $A\subset\mathcal{L}_{\mathbb{R}}$, gerada por operadores de multiplicação por funções contínuas em $[-\infty, +\infty]$ e por multiplicadores por funções contínuas de período $2\pi$, é isomorfa ao produto cruzado $C([-\infty, +\infty]){\rtimes}_{\alpha} \mathbb{Z},$ onde $\alpha$ é dada pela translação por um. 

De forma análoga, temos o seguinte resultado.  

\begin{teo}\label{prodcruz} Seja $\alpha$ a ação descrita acima. Então 
$${\mathcal{B}}^{\diamond} \cong \mathcal{B}^{\dagger}{\rtimes}_{\alpha}\mathbb{Z} $$
onde
${\mathcal{B}^{\dagger}}\times_{\alpha}\mathbb{Z}$ é a C*-álgebra envolvente (\cite{dixmier}, 2.7.7) da álgebra de Banach com involução 
$\ell^1(\mathbb{Z},\mathcal{B}^{\dagger})$ de todas $\mathbb{Z}$-seqüências somáveis em $\mathcal{B}^{\dagger},$ equipadas com o produto de convolução 
$$({\bf A \ast B})(n) = \sum_{k\in\mathbb{Z}}^{}A_k\alpha^k(B_{n-k}), ~ n\in \mathbb{Z}, ~ {\bf A}=(A_k)_{k\in \mathbb{Z}},~ {\bf B}=(B_k)_{k\in \mathbb{Z}}, $$ e involução ${\bf A}^*(n)=\alpha^n(A^*_{-n}).$
\end{teo}

%\begin{proof} A demostração é análoga à feita em \cite{cintia}. Como aqui trabalhamos com as álgebras conjugadas pela transformada de Fourier, tomamos 
%$$\varphi (B) = \sum_{j \in \mathbb{Z}} B_j(M_t)T_j ~, $$
%onde $B=(B_j)_j$ é uma sequência com apenas um número finito de entradas não nulas em $\mathcal{B}^{\dagger}$ e $T_j (u)(\tau)=u(\tau - j)$ para $u$ em $L^2(\mathbb{R})$.  
%\end{proof}
A  demonstração deste teorema segue exatamente à feita em \cite{cintia}, mas apresentaremos sua prova com as devidas mudanças após a seguinte proposição, \cite[proposição 2.9]{exel}, a qual será necessária no decorrer da demonstração.   

\begin{prop}\label{exel}
Sejam $G$ e $G'$ C*-álgebras e sejam $\alpha$ e $\alpha '$ ações de $S^1$ em $G$ e $G',$ respectivamente. Suponha que $\Gamma : G\rightarrow G' $ seja um homomorfismo covariante. Se a restrição de $\Gamma$ à subálgebra dos pontos fixos $G_0$ é injetiva, então $\Gamma$ é injetiva.
\end{prop}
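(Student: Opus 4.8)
The plan is to deduce injectivity of $\Gamma$ from the faithfulness of the canonical averaging conditional expectation that every strongly continuous circle action carries onto its fixed-point subalgebra. Assuming, as is implicit in the notion of an action of $S^1$, that $z\mapsto\alpha_z(a)$ is norm-continuous for each $a$, I would first define
$$E(a)=\int_{S^1}\alpha_z(a)\,dz,\qquad a\in G,$$
the Bochner integral against normalized Haar measure $dz$ on $S^1$. Invariance of Haar measure gives $\alpha_w(E(a))=\int_{S^1}\alpha_{wz}(a)\,dz=E(a)$, so $E$ is a (contractive, positive) projection of $G$ onto $G_0$ that fixes $G_0$ pointwise. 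The same recipe produces $E':G'\to G_0'$. One also notes, from covariance $\Gamma\alpha_z=\alpha'_z\Gamma$, that $\Gamma(G_0)\subseteq G_0'$, so the hypothesis that $\Gamma|_{G_0}$ is injective is meaningful.

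The first genuinely analytic step --- and the one I expect to be the crux --- is to show that $E$ is faithful, that is, $E(a^*a)=0$ implies $a=0$. I would argue through states: for any state $\varphi$ on $G$ one has $\varphi(E(a^*a))=\int_{S^1}\varphi(\alpha_z(a)^*\alpha_z(a))\,dz$, an integral of a nonnegative, continuous function of $z$. If $E(a^*a)=0$ this integrand vanishes identically, in particular $\varphi(a^*a)=0$ at $z=1$; letting $\varphi$ range over all states forces $a^*a=0$ and hence $a=0$.

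The second step is the intertwining relation $\Gamma\circ E=E'\circ\Gamma$. Since $\Gamma$ is a $*$-homomorphism, hence bounded, it passes through the Bochner integral, and covariance gives
$$E'(\Gamma(a))=\int_{S^1}\alpha'_z(\Gamma(a))\,dz=\int_{S^1}\Gamma(\alpha_z(a))\,dz=\Gamma\!\left(\int_{S^1}\alpha_z(a)\,dz\right)=\Gamma(E(a)).$$

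Finally I would assemble these facts. Let $a\in\ker\Gamma$; then $\Gamma(a^*a)=\Gamma(a)^*\Gamma(a)=0$, so the intertwining yields $\Gamma(E(a^*a))=E'(\Gamma(a^*a))=0$. As $E(a^*a)\in G_0$ and $\Gamma|_{G_0}$ is injective, we get $E(a^*a)=0$, whereupon faithfulness of $E$ gives $a=0$. Thus $\ker\Gamma=\{0\}$ and $\Gamma$ is injective. Everything after the construction of $E$ is formal; the weight of the argument rests on the faithfulness of the averaging expectation, which is precisely where the positivity of the $C^*$-norm and the continuity of the action genuinely enter.
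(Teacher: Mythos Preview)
Your argument is correct and is precisely the standard proof of this fact: build the averaging conditional expectation $E(a)=\int_{S^1}\alpha_z(a)\,dz$, check that it is faithful and that it intertwines with $E'$ via the covariance $\Gamma\alpha_z=\alpha'_z\Gamma$, and conclude $\ker\Gamma=0$. Note, however, that the paper does not supply its own proof of this proposition; it simply quotes the result from Exel's paper (referenced as \cite[Proposi\c{c}\~ao 2.9]{exel}) and uses it as a black box in the proof of the crossed-product theorem. Your write-up is essentially what one finds in Exel's original source, so there is no substantive difference to discuss.
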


\begin{proof}[Demonstração (teorema \ref{prodcruz})]
Considere $K({\mathbb{Z}}, \mathcal{B}^{\dagger})$ como sendo o conjunto das sequências com apenas um número finito de entradas não nulas em $\mathcal{B}^{\dagger}.$ 
Defina para as sequências $B=(B_j)_{j \in \mathbb{Z}}$ em $ K({\mathbb{Z}}, \mathcal{B}^{\dagger}) $ 
$$\varphi (B) = \sum_{j \in \mathbb{Z}} B_j(M_t)T_j ~, $$
onde $(B_j(M_t)u)(\tau)=B_j(\tau)u(\tau)$ e $(T_j u)(\tau)=u(\tau - j)$ para $u$ em $L^2(\mathbb{R}, L^2(\mathbb{S}^1))$. 

\begin{itemize}
\item \textit{Afirmação:} $\varphi$ é um $\ast$-homomorfismo contínuo. 
	
	Sejam $B=(B_j)_j$ e $D=(D_j)_j$ seqüências em $K(\mathbb{Z}, \mathcal{B}^{\dagger})$. 
	\begin{itemize}
	\item Produto: 
		\begin{eqnarray*}
		\varphi(B)\cdot \varphi(D)  & = & \sum_j \sum_k B_j(M_t)T_jD_k(M_t)T_k = \sum_j \sum_k B_j(M_t)D_k(M_t-j)T_{j+k} \\
															  & = & \sum_l \sum_k B_{l-k}(M_t)D_k(M_t-(l-k))T_{l} = \sum_l \sum_n B_{n}(M_t)D_{l-n}(M_t-n)T_{l} \\
															  & = & \sum_l \left(\sum_n B_{n}(M_t)\alpha^{n}(D_{l-n}(M_t))\right)T_{l} = \varphi(B\cdot D) 
		\end{eqnarray*}			
	\item Involução: 
		\begin{eqnarray*}
		\varphi(B)^{*}  & = & \sum_j T_{-j}B_j^{*}(M_t) = \sum_{j'} T_{j'}B_{-j'}^{*}(M_t) = \sum_{j'} B_{-j'}^{*}(M_t-j')T_{j'} \\
		                & = & \sum_{j'} \alpha^{j'}(B_{-j'}^{*})T_{j'} = \varphi(B^{*})  
		\end{eqnarray*}	
  \item Continuidade:
    \begin{eqnarray*}
		||\varphi(B)||  &   =  & || \sum_j B_{j}(M_t)T_{j} ||  \\
		                & \leq & \sum_{j} ||B_{j}(M_t)|| || T_{j} || = \sum_{j} ||B_{j}(M_t)|| = ||B||_{1}
		\end{eqnarray*}	
	\end{itemize}
\end{itemize}

Como $\varphi$ é contínua, $ K({\mathbb{Z}}, \mathcal{B}^{\dagger}) $ é denso em $\ell^1(\mathbb{Z}, \mathcal{B}^{\dagger})$ e $\{~\sum_{j \in F} B_j(M_t)T_j ~;~ F \subset \mathbb{Z}~\mbox{finito}, \\ B_j \in \mathcal{B}^{\dagger}\}$ é denso em $\mathcal{B}^{\diamond},$ temos que existe um $\ast$-homomorfismo $\tilde{\varphi}:\ell^1(\mathbb{Z}, \mathcal{B}^{\dagger})\rightarrow\mathcal{B}^{\diamond},$ extensão contínua de $\varphi$. Segue de \cite{dixmier}, 2.7.4, que existe um $\ast$-homomorfismo contínuo (ao qual chamaremos de $\varphi$ novamente)
$$ 
\varphi : {\mathcal{B}^{\dagger}}\times_{\alpha}\mathbb{Z}\rightarrow\mathcal{B}^{\diamond}
$$
estendendo $\tilde{\varphi}.$ Vamos verificar que $\varphi$ é um isomorfismo.

Como $Im \varphi$ é fechada e contém o subconjunto $\{\sum_{j\in F} B_j(M_t) T_j; F\subset \mathbb{Z} \mbox{ finito }, ~ B_j \in \mathcal{B}^{\dagger}\},$ que é denso em $\mathcal{B}^{\diamond},$ então $\varphi$ é sobrejetora. Para mostrarmos a injetividade, usaremos a proposição \ref{exel}. 

Dados $z$ em $S^1$ e $B=(B_j)$ em $\ell^1(\mathbb{Z}, \mathcal{B}^{\dagger})$, defina $\mu_{z}(B)=(z^{j}B_j)_j.$ Novamente por \cite{dixmier}, 2.7.4, podemos estender $\mu_z$ a um automorfismo de ${\mathcal{B}^{\dagger}}\times_{\alpha}\mathbb{Z}$ e obtemos então a ação $\mu: z\mapsto\mu_z$ do círculo em ${\mathcal{B}^{\dagger}}\times_{\alpha}\mathbb{Z}.$

%%%%%%%%em $\ell^1(\mathbb{Z}, \mathcal{B}^{\dagger})$. Considerando $i:\ell^1(\mathbb{Z}, \mathcal{B}^{\dagger}) \rightarrow {\mathcal{B}^{\dagger}}\times_{\alpha}\mathbb{Z}$ a inclusão, temos que $i\circ\mu_z: \ell^1(\mathbb{Z}, \mathcal{B}^{\dagger}) \rightarrow {\mathcal{B}^{\dagger}}\times_{\alpha}\mathbb{Z}$ é um $\ast$-homomorfismo contínuo. Novamente por \cite{dixmier}, $\mu$ é uma ação de $S^1$ em ${\mathcal{B}^{\dagger}}\times_{\alpha}\mathbb{Z}$.

Para $z$ em $S^1$, definimos também o operador unitário em $\mathcal{L}_{\Omega}$:  
$$(U_z v)(t, x)=z^t v(t,x), ~v\in L^2(\Omega),$$
e obtemos uma ação $\beta$ de $S^1$ em $\mathcal{B}^{\diamond}$, dada por 
$$\beta_z(B(M_t))=U_z B(M_t) U_z^{-1}~, ~~ B(M_t) ~\in ~\mathcal{B}^{\diamond}.$$ 

Temos que verificar que $\varphi$ é covariante. Para isso, é suficiente mostrar que para cada $z \in S^1$, $\beta_z(\varphi(B))=\varphi(\mu_z(B))$ para $B=(B_j)_j \in \ell^1(\mathbb{Z}, \mathcal{B}^{\dagger}).$  Temos que o conjunto $\{B\delta_0, \delta_1; B \in \mathcal{B}^{\dagger}\}$ gera $\ell^1(\mathbb{Z}, \mathcal{B}^{\dagger}),$ onde $\delta_n$ é a seqüência tal que o único elemento não nulo é 1 na posição $n$. Assim, é suficiente mostrar que vale a igualdade apenas nos geradores.  
\begin{eqnarray*}  
B\delta_0 & \longrightarrow & \mu_z(B\delta_0) = z^0 B\delta_0 = B\delta_0 ~~\mbox{e}~~ \varphi(B\delta_0) = B(M_t) ~\Longrightarrow ~ \varphi(\mu_z(B\delta_0)) = B(M_t) \\
& \longrightarrow &  \beta_z(\varphi(B\delta_0)) = \beta_z(B(M_t)) = U_z B(M_t) U_z^{-1} = B(M_t) \\
\delta_1 & \longrightarrow & \mu_z(\delta_1) = z\delta_1 ~\mbox{e}~ \varphi(z\delta_1) = z T_1 ~\Longrightarrow ~ \varphi (\mu_z(z\delta_1)) = z T_1 \\
& \longrightarrow & \varphi(\delta_1) = T_1 ~~\mbox{e}~~ \beta_z(T_1) = U_z T_1 U_z^{-1} = z T_1 ~\Longrightarrow ~\beta_z(\varphi (\delta_1)) = z T_1
\end{eqnarray*}

Para mostrar que $\varphi$ é injetiva, é suficiente mostrar que sua restrição aos pontos fixos de $\mu$ é injetiva. 

Seja $\mathcal{F}$ a subálgebra dos pontos fixos por $\mu$, 
$$
\mathcal{F} = \{x\in \mathcal{B}^{\dagger}\times_{\alpha}\mathbb{Z} ; \mu_z(x) = x ~\forall z\in S^1 \},
$$ 
e seja $E:\mathcal{B}^{\dagger}\times_{\alpha}\mathbb{Z}\rightarrow\mathcal{B}^{\dagger}\times_{\alpha}\mathbb{Z}$ definida por 
$$
E(x) = \int_{S^1} \mu_z(x) dz . 
$$
Se $x\in \mathcal{F}$, então $E(x) = x$. Assim, $\mathcal{F}$ está contida na imagem de $E$. Como para todo $j\neq 0,$ a integral $\int_{S^1} z^j dz=0,$ temos $E(a) \in \mathcal{B}^{\dagger}$ para todo $a \in \ell^1(\mathbb{Z}, \mathcal{B}^{\dagger}).$

Como $\ell^1(\mathbb{Z},\mathcal{B}^{\dagger})$ é denso em $\mathcal{B}^{\dagger}\times_{\alpha}\mathbb{Z}$ e $\mathcal{B}^{\dagger}$ é uma subálgebra fechada de $\mathcal{B}^{\dagger}\times_{\alpha}\mathbb{Z}$, temos que $Im E\subseteq \mathcal{B}^{\dagger}$. Como $\mathcal{B}^{\dagger}\subseteq \mathcal{F},$ então $ \mathcal{B}^{\dagger} = \mathcal{F}$. Conhecendo $\mathcal{F}$ e sabendo que $\varphi(B\delta_0)=B(M_t)$, temos
$$\varphi(B\delta_0)= \varphi(D\delta_0)~ \Longleftrightarrow ~ B(M_t) = D(M_t) ~ \Longleftrightarrow ~ B = D .$$

\end{proof}

\subsection{Ideal comutador e espaço símbolo de ${\mathcal{A}}^{\diamond}$}

Apesar de já termos uma boa descrição da álgebra ${\mathcal{A}}^{\diamond}$, queremos também determinar seu ideal comutador e seu espaço símbolo. 

A descrição do ideal comutador de ${\mathcal{A}}^{\diamond}$ é muito parecida com a de $\mathcal{E}_{\mathcal{A}},$ o ideal comutador da álgebra $\mathcal{A},$ por isso pode-se notar que os cálculos usados aqui serão praticamente os mesmos.     

Denote por $\mathcal{E}^{\diamond}_{\mathcal{B}}$ o ideal comutador de ${\mathcal{B}}^{\diamond}$.  

\begin{prop} O ideal $\mathcal{E}^{\diamond}_{\mathcal{B}}$ coincide com o fecho do conjunto
$$ \mathcal{E}_0^{\diamond} ~=~ \{ \sum_{j=-N}^{N} B_j(M_t)T_j ; ~ B_j \in C_0(\mathbb{R}, \mathcal{K}_{\mathbb{S}^1}), N \in \mathbb{N} \}. $$
\end{prop}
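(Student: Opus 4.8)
The plan is to prove the two inclusions $\overline{\mathcal{E}_0^{\diamond}} \subseteq \mathcal{E}^{\diamond}_{\mathcal{B}}$ and $\mathcal{E}^{\diamond}_{\mathcal{B}} \subseteq \overline{\mathcal{E}_0^{\diamond}}$ separately, closely mirroring the proof of Proposition~1.2 of \cite{melo} recalled in Chapter~1. Throughout I identify $\mathcal{B}^{\dagger}$ with a subalgebra of $C_b(\mathbb{R},\mathcal{S})$ as in the proof of Theorem~\ref{edagger}, and I write $\mathcal{B}^{\diamond}$ as the closure of finite sums $\sum_j B_j(M_t)T_j$ with $B_j \in \mathcal{B}^{\dagger}$, which is legitimate by Theorem~\ref{prodcruz} (the image of $\varphi$ is dense).

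First I would show $\overline{\mathcal{E}_0^{\diamond}} \subseteq \mathcal{E}^{\diamond}_{\mathcal{B}}$. Since $\mathcal{E}^{\diamond}_{\mathcal{B}}$ is closed, it suffices to show each generator $B(M_t)T_j$ with $B \in C_0(\mathbb{R},\mathcal{K}_{\mathbb{S}^1})$ lies in $\mathcal{E}^{\diamond}_{\mathcal{B}}$. The key point is that $C_0(\mathbb{R},\mathcal{K}_{\mathbb{S}^1})$ is exactly $\mathcal{E}^{\dagger}_{\mathcal{B}}$, the commutator ideal of $\mathcal{B}^{\dagger}$, by Theorem~\ref{edagger}; so $B(M_t) \in \mathcal{B}^{\dagger} \subseteq \mathcal{B}^{\diamond}$ is a (limit of sums of) commutators of elements of $\mathcal{B}^{\dagger} \subseteq \mathcal{B}^{\diamond}$, hence lies in $\mathcal{E}^{\diamond}_{\mathcal{B}}$, which is an ideal; multiplying by $T_j \in \mathcal{B}^{\diamond}$ keeps us inside the ideal. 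Taking closures gives the inclusion.

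For the reverse inclusion $\mathcal{E}^{\diamond}_{\mathcal{B}} \subseteq \overline{\mathcal{E}_0^{\diamond}}$, I would first check that $\overline{\mathcal{E}_0^{\diamond}}$ is a closed two-sided ideal of $\mathcal{B}^{\diamond}$: it is closed under multiplication by the generators $a(M_x)$, $B_i(M_t)$ ($i=4,5,6$) and $T_{\pm 1}$ of $\mathcal{B}^{\diamond}$. Multiplication by $T_{\pm 1}$ just shifts the index $j$ and translates $B_j$ in $\tau$, and $C_0(\mathbb{R},\mathcal{K}_{\mathbb{S}^1})$ is translation-invariant; multiplication by $a(M_x)$ or $B_i(M_t)$ on the left or right multiplies each $B_j(\tau)$ by an element of $\mathcal{S}$ (after conjugating past $T_j$, using $B_i(M_t)T_j = T_j B_i(M_t - j)$), and $\mathcal{K}_{\mathbb{S}^1}$ is an ideal of $\mathcal{S}$, so the product stays in $C_0(\mathbb{R},\mathcal{K}_{\mathbb{S}^1})$. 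Granting that $\overline{\mathcal{E}_0^{\diamond}}$ is an ideal, it then suffices to show that all commutators $[x,y]$ with $x,y$ generators of $\mathcal{B}^{\diamond}$ lie in $\overline{\mathcal{E}_0^{\diamond}}$, since $\mathcal{E}^{\diamond}_{\mathcal{B}}$ is the smallest closed ideal containing them. The only nontrivial commutators involve $T_{\pm 1}$ against $a(M_x)$, $B_i(M_t)$: commutators among $a(M_x),B_4,B_5,B_6$ already lie in $\mathcal{E}^{\dagger}_{\mathcal{B}} = C_0(\mathbb{R},\mathcal{K}_{\mathbb{S}^1}) \subseteq \overline{\mathcal{E}_0^{\diamond}}$; $T_1$ commutes with $a(M_x)$ since $a$ does not depend on $t$; and $[T_1, B_i(M_t)] = T_1(B_i(M_t-1) - B_i(M_t)) = T_1(\alpha(B_i) - B_i)(M_t)$, where $\alpha(B_i) - B_i$ is the difference of two elements of $C_b(\mathbb{R},\mathcal{S})$ that agree at $\tau = \pm\infty$ — and here is the crux: I must show this difference actually lies in $C_0(\mathbb{R},\mathcal{K}_{\mathbb{S}^1})$, not merely in $C_0(\mathbb{R},\mathcal{S})$. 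This follows from the explicit formulas: $B_i(\tau) - B_i(\tau - 1)$, computed on $\ell^2(\mathbb{Z})$ via $F_d$, is multiplication by a sequence in $j$ which for each fixed $\tau$ tends to $0$ as $|j| \to \infty$ (difference of two sequences with the same limits $\pm\infty$), hence is compact on $L^2(\mathbb{S}^1)$; and as $|\tau| \to \infty$ the whole operator norm tends to $0$ since $B_i(\tau)$ converges (to $0$, or to $\pm 1$ times the identity, which cancels in the difference). So $[T_1,B_i(M_t)] \in \overline{\mathcal{E}_0^{\diamond}}$.

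I expect the main obstacle to be this last compactness-and-decay estimate for $\alpha(B_i) - B_i$: one needs to be careful that for $B_5(\tau) = -\tau\tilde\Lambda(\tau)$ the individual terms do \emph{not} go to zero in norm, only their difference does, and that the convergence is uniform enough to conclude membership in $C_0(\mathbb{R},\mathcal{K}_{\mathbb{S}^1})$. Once that is in hand, assembling the ideal-generation argument is routine. An alternative, cleaner route that avoids re-deriving the ideal structure by hand would be to transport the crossed-product description of Theorem~\ref{prodcruz}: under $\varphi$, the commutator ideal of $\mathcal{B}^{\dagger} \rtimes_\alpha \mathbb{Z}$ maps onto $\mathcal{E}^{\diamond}_{\mathcal{B}}$, and one identifies it as the closed ideal generated by $\{B\delta_0 : B \in \mathcal{E}^{\dagger}_{\mathcal{B}}\} \cup \{[\delta_1, B\delta_0]\}$, whose image under $\varphi$ is precisely $\mathcal{E}_0^{\diamond}$; but the direct computation above is self-contained and parallels \cite{melo} most transparently, so that is the route I would write up.
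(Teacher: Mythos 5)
Your proposal is correct and takes essentially the same route as the paper: the inclusion $\overline{\mathcal{E}_0^{\diamond}} \subseteq \mathcal{E}^{\diamond}_{\mathcal{B}}$ comes from $C_0(\mathbb{R},\mathcal{K}_{\mathbb{S}^1}) = \mathcal{E}^{\dagger}_{\mathcal{B}} \subseteq \mathcal{E}^{\diamond}_{\mathcal{B}}$ together with the ideal property, and the reverse inclusion from checking that the commutators of the generators (and their adjoints) lie in $\mathcal{E}_0^{\diamond}$, using $[a(M_x),T_j]=0$ and $[B_k(M_t),T_j]=(B_k(M_t+j)-B_k(M_t))T_j$. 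The only difference is that the paper simply cites Proposition 1.1 of \cite{melo} for the facts $B_k(\cdot+j)-B_k(\cdot),\ [a(M_x),B_k],\ [B_i,B_k]\in C_0(\mathbb{R},\mathcal{K}_{\mathbb{S}^1})$ that you re-derive by hand, and, like you, it tacitly relies on $\overline{\mathcal{E}_0^{\diamond}}$ being an ideal of $\mathcal{B}^{\diamond}$, a verification you make explicit.
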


\begin{proof} Primeiro vamos verificar que os comutadores dos geradores de ${\mathcal{B}}^{\diamond}$, e seus adjuntos, estão em $ \mathcal{E}_0^{\diamond}$. É fácil ver que   
$$ 
[a(M_x), T_j] = 0 ~,~~ [B_k(M_t) , T_j] = (B_k(M_t +j)- B_k(M_t)) T_j ,
$$
para $j \in  \mathbb{Z}$ e $k = 4, 5, 6$. Em \cite{melo}, temos na demonstração da proposição 1.1 que
$B_k(\cdot +j)-B_k(\cdot) \in C_0(\mathbb{R}, \mathcal{K}_{\mathbb{S}^1})$ para $ k = 4, 5, 6.$ 
Assim, o operador $B_k(M_t +j)- B_k(M_t)$ é a multiplicação pela função $B_k(\cdot +j)-B_k(\cdot)\in C_0(\mathbb{R}, \mathcal{K}_{\mathbb{S}^1})$. Temos ainda na mesma proposição que 
$$
[a(M_x), B_k] ,~ [B_i, B_k] \in C_0(\mathbb{R}, \mathcal{K}_{{\mathbb{S}^1}}), ~i, k = 4, 5, 6,
$$
logo, os comutadores dos respectivos operadores são multiplicações por funções em $C_0(\mathbb{R}, \mathcal{K}_{{\mathbb{S}^1}})$ e o mesmo vale para os adjuntos desses operadores. Portanto, $\mathcal{E}^{\diamond}_{\mathcal{B}}$ está contido no fecho de $\mathcal{E}_0^{\diamond}.$

Sabemos que $C_0(\mathbb{R}, \mathcal{K}_{{\mathbb{S}^1}}) = \mathcal{E}^{\dagger}_{\mathcal{B}}$ está contido em $\mathcal{E}^{\diamond}_{\mathcal{B}}$. Então para todo $j$ inteiro, $K(M_t)T_j \in \mathcal{E}^{\diamond}_{\mathcal{B}}$, para $K$ em $C_0(\mathbb{R}, \mathcal{K}_{{\mathbb{S}^1}})$ (note que $K(M_t)$ é um elemento do ideal $\mathcal{E}^{\diamond}_{\mathcal{B}}$). Assim, o fecho de  $\mathcal{E}_0^{\diamond}$ está em $\mathcal{E}^{\diamond}_{\mathcal{B}}$.  
\end{proof}

Podemos ainda dizer que $\mathcal{E}^{\diamond}_{\mathcal{B}}$ é o fecho de 
$$ \{ \sum_{j=-N}^{N} a_j(M_t)T_j ; ~ a_j \in C_0(\mathbb{R}), N \in \mathbb{N}\}\otimes \mathcal{K}_{\mathbb{S}^1}. $$

\begin{prop} Sendo $ W $ a aplicação definida em (\ref{w}), temos que 
$$ W \mathcal{F} W^{-1} = C(S^1)\otimes \mathcal{K}_{\mathbb{Z}} $$
onde $\mathcal{F}$ é o fecho de 
$$\mathcal{F}^0 = \{ \sum_{j=-N}^{N} a_j(M_t)T_j ; ~ a_j \in C_0(\mathbb{R}), N \in \mathbb{N}\}.$$
\end{prop}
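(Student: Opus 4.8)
\textbf{Plano de demonstra\c{c}\~ao.} The idea is to transport each generator of $\mathcal{F}$ through the unitary $W$, read off an explicit formula, check directly that the image lands in $C(S^1)\otimes\mathcal{K}_{\mathbb{Z}}=C(S^1,\mathcal{K}_{\mathbb{Z}})$, and then get the reverse inclusion by a fibrewise argument. First I would unwind the definitions of $W$, of $u^{\diamond}$ and of $Y_{\varphi}$. Writing $\hat{a}(\varphi)$ for the multiplication operator on $\ell^{2}(\mathbb{Z})$ by the sequence $(a(\varphi-k))_{k\in\mathbb{Z}}$, a one-line computation (using $(T_j u)^{\diamond}(\varphi)=Y_j u^{\diamond}(\varphi)$ and $(a(M_t)u)^{\diamond}(\varphi)=\hat{a}(\varphi)u^{\diamond}(\varphi)$, together with $Y_{\varphi}Y_j=Y_jY_{\varphi}$) gives
$$ W\,a(M_t)\,W^{-1}=\big[\,\varphi\longmapsto Y_{\varphi}\,\hat{a}(\varphi)\,Y_{-\varphi}\,\big],\qquad W\,T_j\,W^{-1}=Y_j\ \ (\text{constant in }\varphi),$$
so that $W(a(M_t)T_j)W^{-1}$ is pointwise multiplication by $\varphi\mapsto Y_{\varphi}\hat{a}(\varphi)Y_{-\varphi}Y_j$. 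This is also the content of Proposition 1.5 of \cite{melo} in the degenerate case of trivial fibre and $b\equiv1$, so that formula may simply be quoted instead.

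Next I would verify the inclusion $W\mathcal{F}W^{-1}\subseteq C(S^1)\otimes\mathcal{K}_{\mathbb{Z}}$. Since $a\in C_{0}(\mathbb{R})$, the sequence $(a(\varphi-k))_k$ vanishes at infinity, so each $\hat{a}(\varphi)$ is compact; $\varphi\mapsto\hat{a}(\varphi)$ is norm-continuous by uniform continuity of $a$; $\varphi\mapsto Y_{\varphi}=F_dU_{\varphi}F_d^{-1}$ is norm-continuous because $\|U_{\varphi}-U_{\varphi'}\|=\sup_{|\theta|\le\pi}|1-e^{i(\varphi-\varphi')\theta}|\le\pi|\varphi-\varphi'|$; and $\hat{a}(\varphi+1)=Y_{-1}\hat{a}(\varphi)Y_{1}$ shows that $\varphi\mapsto Y_{\varphi}\hat{a}(\varphi)Y_{-\varphi}$ is $1$-periodic, hence defines a continuous $\mathcal{K}_{\mathbb{Z}}$-valued function on $S^{1}$; multiplying by the constant $Y_j$ stays in $C(S^{1},\mathcal{K}_{\mathbb{Z}})$. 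Because $B\mapsto WBW^{-1}$ is a $\ast$-isomorphism and $C(S^{1})\otimes\mathcal{K}_{\mathbb{Z}}$ is norm-closed, this gives the inclusion.

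For the reverse inclusion, put $\mathcal{D}:=W\mathcal{F}W^{-1}$, a closed $C^{\ast}$-subalgebra of $C(S^{1},\mathcal{K}_{\mathbb{Z}})$. Two facts finish it. First, $\mathcal{D}$ is a $C(S^{1})$-module: if $g\in C(S^{1})$ and $p(t)=g(e^{2\pi it})$, then $p(M_t)$ carries each generator $a(M_t)T_j$ to $(pa)(M_t)T_j\in\mathcal{F}$ with $pa\in C_{0}(\mathbb{R})$, so $p(M_t)\mathcal{F}\subseteq\mathcal{F}$, while $Wp(M_t)W^{-1}$ is pointwise multiplication by $g$ (here one uses $p(\varphi-k)=p(\varphi)$ for $k\in\mathbb{Z}$); hence $C(S^{1})\cdot\mathcal{D}\subseteq\mathcal{D}$. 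Second, every fibre $\mathcal{D}_{\varphi}:=\{T(\varphi):T\in\mathcal{D}\}$ equals $\mathcal{K}_{\mathbb{Z}}$: as $a$ ranges over $C_{0}(\mathbb{R})$, $\hat{a}(\varphi)$ ranges over all diagonal compact operators on $\ell^{2}(\mathbb{Z})$, in particular over all diagonal matrix units $e_{kk}$; since $e_{kk}Y_j=e_{k,k+j}$ the $C^{\ast}$-algebra generated by $\{\hat{a}(\varphi)Y_j\}$ contains all matrix units, hence is all of $\mathcal{K}_{\mathbb{Z}}$, and conjugation by the fixed unitary $Y_{\varphi}$ is an automorphism. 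Then a standard partition-of-unity argument concludes: given $f\in C(S^{1},\mathcal{K}_{\mathbb{Z}})$ and $\varepsilon>0$, for each $\varphi_{0}$ choose $g_{\varphi_{0}}\in\mathcal{D}$ with $g_{\varphi_{0}}(\varphi_{0})=f(\varphi_{0})$, so $\|g_{\varphi_{0}}-f\|<\varepsilon$ on a neighbourhood $U_{\varphi_{0}}$; pass to a finite subcover $U_{\varphi_{1}},\dots,U_{\varphi_{n}}$, take a subordinate partition of unity $\{\chi_i\}\subset C(S^{1})$, and observe that $\sum_i\chi_i g_{\varphi_i}\in\mathcal{D}$ (by the module property) is within $\varepsilon$ of $f$; since $\mathcal{D}$ is closed, $f\in\mathcal{D}$.

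\textbf{Main obstacle.} The only subtle points are pinning down the explicit conjugation formulas with the correct sign conventions for $T_j$, $Y_{\varphi}$ and $u^{\diamond}$ (the paper uses two opposite conventions for $T_j$), and the identification $Wp(M_t)W^{-1}=$ multiplication by $g$, which is what makes the $C(S^{1})$-module structure work; the remaining estimates and the patching argument are routine.
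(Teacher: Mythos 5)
Your proposal is correct, and its first half coincides with the paper's proof: the explicit conjugation formula $W a(M_t)T_j W^{-1} = \bigl[\varphi \mapsto Y_{\varphi}\, a(\varphi - M_k)\, Y_{-\varphi}Y_{\pm j}\bigr]$ (up to the sign convention for $T_j$), followed by compactness of $a(\varphi - M_k)$ for $a \in C_0(\mathbb{R})$, norm continuity in $\varphi$ and periodicity, giving $W\mathcal{F}W^{-1} \subseteq C(S^1)\otimes \mathcal{K}_{\mathbb{Z}}$. Where you genuinely diverge is the reverse inclusion. The paper reduces it to the scheme of the proof of the Teorema \ref{edagger}: fibrewise fullness, i.e. $\{WGW^{-1}(\varphi);\, G \in \mathcal{F}\} = \mathcal{K}_{\mathbb{Z}}$ for each $\varphi$, plus the verification that $W\mathcal{F}W^{-1}$ separates the pure states of $C(S^1)\otimes\mathcal{K}_{\mathbb{Z}}$, and then the criterion 10.4.5 of Dixmier. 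You keep the fibrewise fullness step (your computation $e_{kk}Y_j = e_{k,k+j}$, together with the remark that every $c_0$ diagonal arises as $a(\varphi - M_k)$, is a clean way to obtain it), but you replace the pure-state argument by two observations: $W\mathcal{F}W^{-1}$ is a $C(S^1)$-module, because multiplication by a $1$-periodic continuous function maps $\mathcal{F}$ into itself and is carried by $W$ to pointwise multiplication on $S^1$; and a closed $C(S^1)$-submodule of $C(S^1,\mathcal{K}_{\mathbb{Z}})$ with full fibres is everything, by the standard partition-of-unity patching you spell out. This is a more elementary and self-contained route: it avoids the pure-state machinery and the appeal to Dixmier, at the small cost of verifying the module property (which you do correctly, and which the paper never makes explicit). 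Neither argument is essentially shorter, since both rest on the same conjugation formula and fibre computation; the paper's choice has the advantage of reusing verbatim the technique already set up for the Teorema \ref{edagger}, while yours makes the proposition independent of that proof.
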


\begin{proof} Para $j$ inteiro e $\varphi$ real, temos
$$ Y_{\varphi} a(\varphi - M_k) Y_{-j} Y_{-\varphi} (Y_{\varphi} u^{\diamond}(\varphi)) = 
Y_{\varphi} (a(\varphi - k) u(\varphi - k +  j))_{k\in\mathbb{Z}}. $$
Como $W a (M_t) T_j W^{-1} (Wu)(\varphi) = Y_{\varphi} (a(\varphi - k) u(\varphi - k +  j))_{k\in\mathbb{Z}} $, isto implica que 
$$ W a (M_t) T_j W^{-1} = Y_{\varphi} a(\varphi - M_k) Y_{-j-\varphi} $$
para $a$ em $C_0(\mathbb{R}).$

Usando que $ \lim_{k\rightarrow\pm\infty} a(\varphi - k) = 0 $ para todo $\varphi$ real, e que 
$$ Y_{\varphi +1 } a(\varphi +1 - M_k) Y_{-(\varphi +1)- j} = Y_{\varphi} a(\varphi - M_k) Y_{-\varphi - j}, $$
segue que $ Y_{\varphi} a(\varphi - M_k) Y_{-\varphi - j} $ é uma função contínua em $S^1 = \{e^{i2\pi\varphi}; \varphi \in \mathbb{R}\}$ com valores em operadores compactos. 

Como o mergulho de $C(S^1, \mathcal{K}_{\mathbb{Z}})$ em $\mathcal{L}_{S^1\times\mathbb{Z}}$ é uma isometria e $C(S^1, \mathcal{K}_{\mathbb{Z}}) = C(S^1)\otimes\mathcal{K}_{\mathbb{Z}},$ temos que
$$W\mathcal{F}W^{-1} \subseteq C(S^1)\otimes\mathcal{K}_{\mathbb{Z}}. $$

A igualdade sai por argumentos análogos usados na demonstração do teorema \ref{edagger}, que se resume a provar que para cada $e^{2\pi i\varphi}\in S^1$, $\{WGW^{-1}(\varphi); G\in \mathcal{F}\} = \mathcal{K}_{\mathbb{Z}}$ e que $W\mathcal{F}W^{-1}$ separa os estados puros de $C(S^1)\otimes\mathcal{K}_{\mathbb{Z}}$.
%%%%%%%%%%%%%%%%%%%%  dar uma idéia sobre a demonstracao %%%%%%%%%%%%%%%%
\end{proof}

%As proposições a seguir descrevem o ideal comutador %$\mathcal{E}^{\diamond}$ o espaço símbolo $M^{\diamond}$ da álgebra ${\mathcal{A}}^{\diamond}$. 

A próxima proposição nos dá a descrição que queríamos para o ideal comutador $\mathcal{E}^{\diamond}$ da álgebra $\mathcal{A}^{\diamond}$.

\begin{prop}\label{gamma'} $\mathcal{E}^{\diamond}$ e  $C(S^1, \mathcal{K}_{\mathbb{Z}\times\mathbb{S}^1})$ são isomorfos e este isomorfismo é dado pela conjugação por $FW^{-1}$:
$$(WF^{-1}\otimes I_{\mathbb{S}^1}) \mathcal{E}^{\diamond} (FW^{-1}\otimes I_{\mathbb{S}^1}) = C(S^1, \mathcal{K}_{\mathbb{Z}})\otimes\mathcal{K}_{\mathbb{S}^1},$$
onde $W$ é dada em (\ref{w}) e $F$ é a transformada de Fourier. 
\end{prop}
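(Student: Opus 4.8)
The plan is to obtain the isomorphism by transporting the two preceding propositions through the Fourier transform $F$ and the unitary $W$, while keeping careful track of the extra tensor factor $\mathcal{K}_{\mathbb{S}^1}$. By construction $\mathcal{E}^{\diamond} = F\,\mathcal{E}^{\diamond}_{\mathcal{B}}\,F^{-1}$, where here $F$ abbreviates $F\otimes I_{\mathbb{S}^1}$; hence, since $(WF^{-1})^{-1} = FW^{-1}$, the claimed identity
\[
 (WF^{-1}\otimes I_{\mathbb{S}^1})\,\mathcal{E}^{\diamond}\,(FW^{-1}\otimes I_{\mathbb{S}^1}) \;=\; C(S^1,\mathcal{K}_{\mathbb{Z}})\otimes\mathcal{K}_{\mathbb{S}^1}
\]
is equivalent, after peeling off the conjugation by $F\otimes I_{\mathbb{S}^1}$, to
\[
 (W\otimes I_{\mathbb{S}^1})\,\mathcal{E}^{\diamond}_{\mathcal{B}}\,(W^{-1}\otimes I_{\mathbb{S}^1}) \;=\; C(S^1,\mathcal{K}_{\mathbb{Z}})\otimes\mathcal{K}_{\mathbb{S}^1},
\]
so I would work entirely on the $\mathcal{B}^{\diamond}$ side.

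By the proposition describing $\mathcal{E}^{\diamond}_{\mathcal{B}}$, this ideal equals $\mathcal{F}\otimes\mathcal{K}_{\mathbb{S}^1}$, the spatial C*-tensor product (it is the closure of the algebraic tensor product of $\mathcal{F}^0$ with $\mathcal{K}_{\mathbb{S}^1}$, and $\mathcal{F}^0$ is dense in $\mathcal{F}$). Conjugation by a unitary of the form $U\otimes I$ carries a spatial tensor product $A\otimes B$ onto $(UAU^{-1})\otimes B$ --- this is immediate on elementary tensors and extends by continuity and density --- so
\[
 (W\otimes I_{\mathbb{S}^1})\,(\mathcal{F}\otimes\mathcal{K}_{\mathbb{S}^1})\,(W^{-1}\otimes I_{\mathbb{S}^1}) \;=\; (W\mathcal{F}W^{-1})\otimes\mathcal{K}_{\mathbb{S}^1}.
\]
The previous proposition gives $W\mathcal{F}W^{-1} = C(S^1)\otimes\mathcal{K}_{\mathbb{Z}} = C(S^1,\mathcal{K}_{\mathbb{Z}})$, which yields exactly $C(S^1,\mathcal{K}_{\mathbb{Z}})\otimes\mathcal{K}_{\mathbb{S}^1}$, completing the set equality.

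To recast the conclusion as $\mathcal{E}^{\diamond}\cong C(S^1,\mathcal{K}_{\mathbb{Z}\times\mathbb{S}^1})$, I would then use the standard identifications $\mathcal{K}_{\mathbb{Z}}\otimes\mathcal{K}_{\mathbb{S}^1}\cong\mathcal{K}(\ell^2(\mathbb{Z})\otimes L^2(\mathbb{S}^1))=\mathcal{K}_{\mathbb{Z}\times\mathbb{S}^1}$ and $C(S^1)\otimes\mathcal{K}_{\mathbb{Z}\times\mathbb{S}^1}\cong C(S^1,\mathcal{K}_{\mathbb{Z}\times\mathbb{S}^1})$; both tensor products are unambiguous because $\mathcal{K}$ and $C(S^1)$ are nuclear, and the last is the usual $C(X)\otimes B\cong C(X,B)$. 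Since all the analytic work has already been carried out in the two preceding propositions, there is no genuine obstacle here; the only point demanding care is the tensor-factor bookkeeping --- in particular verifying that the conjugation computation for $\mathcal{F}$ genuinely passes through the extra compact factor $\mathcal{K}_{\mathbb{S}^1}$ --- which needs nothing beyond the elementary-tensor argument indicated above.
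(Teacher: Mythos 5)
Your argument is correct and is precisely the route the paper intends: the proposition is stated there without an explicit proof, being an immediate consequence of the two preceding propositions ($\mathcal{E}^{\diamond}_{\mathcal{B}}=\mathcal{F}\otimes\mathcal{K}_{\mathbb{S}^1}$ and $W\mathcal{F}W^{-1}=C(S^1)\otimes\mathcal{K}_{\mathbb{Z}}$), which is exactly what you spell out. Your bookkeeping of the conjugation by $F\otimes I_{\mathbb{S}^1}$ and $W\otimes I_{\mathbb{S}^1}$ on elementary tensors, and the nuclearity remarks identifying $C(S^1,\mathcal{K}_{\mathbb{Z}})\otimes\mathcal{K}_{\mathbb{S}^1}$ with $C(S^1,\mathcal{K}_{\mathbb{Z}\times\mathbb{S}^1})$, fill in the omitted details correctly.
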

%\begin{proof} 
%Seja $A = a(M)e^{ijM}\otimes ~ K ~\in ~\mathcal{E}^{\diamond}$, com $ a \in C_0(\mathbb{R})$ e $ K \in \mathcal{K}_{\mathbb{S}^1}$. Então 
%$$WFa(M)T_j F^{-1}W^{-1} \otimes K ~\in ~ C(S^1, \mathcal{K}_{\mathbb{Z}\times\mathbb{S}^1}).$$
%\end{proof}
Considere o homomorfismo 
$$ \gamma ': \mathcal{A}^{\diamond} \rightarrow C(S^1, \mathcal{L}_{\mathbb{Z}\times\mathbb{S}^1})$$
dado por $\gamma_{A}'(z)=\gamma_{A}(z,\pm 1)$ para todo $A\in\mathcal{A}^{\diamond}.$ Então $\gamma '$ restrito a $\mathcal{E}^{\diamond}$ tem imagem igual a  $C(S^1, \mathcal{K}_{\mathbb{Z}\times\mathbb{S}^1})$. 

\begin{prop}\label{mdiamond} O espaço símbolo ${\bf M}^{\diamond}$ de $\mathcal{A}^{\diamond}$ é homeomorfo a $ S^1\times \mathbb{S}^1 \times S^1_{\infty}$ e a aplicação do símbolo $\sigma^{\diamond} :  \mathcal{A}^{\diamond} \longrightarrow C( S^1 \times \mathbb{S}^1 \times S^1_{\infty}) $
é dada por 
$$\sigma^{\diamond}_{A}(e^{i\theta}, x, w) = \sigma^{\dagger}_{A}(x,w), ~\forall ~ A \in \cal{A}^{\dagger}$$ 
e 
$$\sigma^{\diamond}_{e^{ijM_{\theta}}}(e^{i\theta}, x, w) = e^{ij\theta}. $$
\end{prop}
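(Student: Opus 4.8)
\textbf{Plano de demonstra\c c\~ao (proposi\c c\~ao \ref{mdiamond}).}
A estrat\'egia \'e a mesma usada no teorema \ref{mdagger}: exibir ${\bf M}^{\diamond}$ como a imagem de uma aplica\c c\~ao dual injetora e contínua, e depois identificar essa imagem. Come\c co considerando as inclus\~oes naturais $i_1 : C^{\infty}(\mathbb{S}^1) \to \mathcal{A}^{\diamond}/\mathcal{E}^{\diamond}$, $i_2 : \mathcal{C} \to \mathcal{A}^{\diamond}/\mathcal{E}^{\diamond}$ (onde $\mathcal{C}$ \'e a \'algebra do lema \ref{compactificacao}, com espa\c co de ideais maximais $\overline{\mathbb{R}\times\mathbb{Z}}$) e a inclus\~ao $i_3$ da sub\'algebra gerada pelas multiplica\c c\~oes peri\'odicas $e^{ijM_{\theta}}$, cujo espa\c co de ideais maximais \'e $S^1$. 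Como essas tr\^es sub\'algebras geram $\mathcal{A}^{\diamond}/\mathcal{E}^{\diamond}$ (que \'e comutativa, pois $\mathcal{E}^{\diamond}$ \'e o ideal comutador), a aplica\c c\~ao dual
$$ \iota^{*} : {\bf M}^{\diamond} \longrightarrow S^1 \times \mathbb{S}^1 \times \overline{\mathbb{R}\times\mathbb{Z}}, \qquad \omega \longmapsto (\omega\circ i_3,\, \omega\circ i_1,\, \omega\circ i_2) $$
\'e injetora; sendo ${\bf M}^{\diamond}$ compacto e o alvo Hausdorff, $\iota^{*}$ \'e um homeomorfismo sobre sua imagem, restando apenas determinar essa imagem.

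O passo seguinte \'e mostrar que a imagem de $\iota^{*}$ est\'a contida em $S^1 \times \mathbb{S}^1 \times S^1_{\infty}$, isto \'e, que a componente em $\overline{\mathbb{R}\times\mathbb{Z}}$ nunca cai na parte finita $\mathbb{R}\times\mathbb{Z}$. O argumento \'e id\^entico ao do teorema \ref{mdagger}: se existisse $\omega_0$ associado a um ponto com coordenada $(t,j)\in\mathbb{R}\times\mathbb{Z}$, ter\'iamos $\sigma^{\diamond}_{A_4}(\cdot) = B_4'(t,j) = (1+t^2+j^2)^{-1/2}\neq 0$, contradizendo $A_4\in\mathcal{C}\subset\mathcal{E}^{\diamond}$ (ou seja, $\sigma^{\diamond}_{A_4}=0$, j\'a que $\mathcal{C}\cap\mathcal{E}^{\diamond}$ cont\'em $A_4$ -- de fato $A_4\in\mathcal{E}^{\dagger}\subseteq\mathcal{E}^{\diamond}$, conforme a prova do teorema \ref{mdagger}). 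Para a igualdade $\operatorname{Im}\iota^{*} = S^1\times\mathbb{S}^1\times S^1_{\infty}$, reproduzo as duas afirma\c c\~oes da prova do teorema \ref{mdagger}, agora com um automorfismo de transla\c c\~ao a mais: usando o automorfismo $\phi_\zeta$ de conjuga\c c\~ao por $T_\zeta$ (que age nos geradores $a(M_x)$ por rota\c c\~ao em $\mathbb{S}^1$, fixa $\mathcal{C}$ e fixa as periódicas $e^{ijM_\theta}$) e, analogamente, o automorfismo de transla\c c\~ao em $\mathbb{R}$ (que fixa $a(M_x)$, rotaciona a vari\'avel de $S^1$ vinda das periódicas e preserva $\mathcal{C}$, pois $C(\overline{\mathbb{R}\times\mathbb{Z}})$ \'e invariante por transla\c c\~ao), mostro que $\operatorname{Im}\iota^{*}$ \'e invariante sob $S^1\times\mathbb{S}^1$ agindo nas duas primeiras coordenadas; e com um elemento $B\in C(\overline{\mathbb{R}\times\mathbb{Z}})$ suportado numa vizinhan\c ca de $v_1\in S^1_\infty$ e com $B(v_1)\neq 0$, obtenho que $B\notin C_0(\mathbb{R},\mathcal{K}_{\mathbb{Z}})$, logo o operador correspondente n\~ao est\'a em $\mathcal{E}^{\diamond}$ (usando a descri\c c\~ao de $\mathcal{E}^{\diamond}$ da proposi\c c\~ao \ref{gamma'}), o que for\c ca todo ponto de $S^1\times\mathbb{S}^1\times S^1_\infty$ a estar na imagem.

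Finalmente, a descri\c c\~ao da aplica\c c\~ao $\sigma^{\diamond}$: a cada $(e^{i\theta},x,w)$ corresponde, via o homeomorfismo $\iota^{*}$, um funcional $\omega\in{\bf M}^{\diamond}$ que se fatora como avalia\c c\~ao em $x$ nas $a(M_x)$, avalia\c c\~ao em $w\in S^1_\infty$ nas $A_4,A_5,A_6$ (fornecendo $0,\tau,\xi$, exatamente como em \ref{mdagger}, j\'a que as restri\c c\~oes de $\sigma^{\diamond}$ a $\mathcal{A}^{\dagger}$ coincidem com $\sigma^{\dagger}$), e avalia\c c\~ao em $e^{i\theta}\in S^1$ nas periódicas, dando $\sigma^{\diamond}_{e^{ijM_\theta}}(e^{i\theta},x,w)=e^{ij\theta}$. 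Isto encerra a demonstra\c c\~ao. O ponto mais delicado \'e o mesmo que em \ref{mdagger}: justificar cuidadosamente a afirma\c c\~ao de que um operador de multiplica\c c\~ao por $B\in C(\overline{\mathbb{R}\times\mathbb{Z}})$ que n\~ao decai em alguma dire\c c\~ao de $S^1_\infty$ n\~ao pertence ao ideal comutador $\mathcal{E}^{\diamond}$ -- aqui preciso da caracteriza\c c\~ao de $\mathcal{E}^{\diamond}$ via $\gamma'$ (proposi\c c\~ao \ref{gamma'}) para traduzir ``pertencer a $\mathcal{E}^{\diamond}$'' em ``a fun\c c\~ao correspondente est\'a em $C_0(\mathbb{R},\mathcal{K}_{\mathbb{Z}})$'' e derivar a contradi\c c\~ao via uma sequ\^encia $(\tau_j,s_j)\to\infty$ com $h(\tau_j,s_j)\to v_1$.
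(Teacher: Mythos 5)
Your plan is correct, but it takes a genuinely different route from the paper. The paper also starts from a dual map (with two inclusions, $\kappa_1$ of the periodic functions $P_{2\pi}$ and $\kappa_2$ of $\mathcal{A}^{\dagger}/\mathcal{E}^{\dagger}$, whose symbol space is already known from Theorem \ref{mdagger}), but it does \emph{not} repeat the invariance-plus-support argument: to prove surjectivity of $\kappa^{*}$ and compute the symbols it uses the already known symbol space ${\bf M}_{\mathcal{A}}$ of the full algebra (Theorem \ref{ma2}), restricting each multiplicative functional $\omega$ associated with $(t,e^{i\theta},x,(\tau,\xi))$ along the inclusion $j_2:\mathcal{A}^{\diamond}/\mathcal{E}^{\diamond}\rightarrow \mathcal{A}/\mathcal{E}_{\mathcal{A}}$ and checking on the generators $a(M_x), e^{ijM_\theta}, A_4, A_5, A_6$ that the restriction is exactly the evaluation functional at $(e^{i\theta},x,(\tau,\xi))$; this gives the whole of $S^1\times\mathbb{S}^1\times S^1_{\infty}$ in the image and the formulas for $\sigma^{\diamond}$ in one stroke, at the price of importing the description of ${\bf M}_{\mathcal{A}}$ from \cite{melo}. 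Your version is self-contained with respect to ${\bf M}_{\mathcal{A}}$: three inclusions into $S^1\times\mathbb{S}^1\times\overline{\mathbb{R}\times\mathbb{Z}}$, exclusion of the finite part via $A_4\in\mathcal{E}^{\dagger}\subseteq\mathcal{E}^{\diamond}$, and surjectivity via the rotation automorphisms $\phi_{\zeta}$ together with conjugation by translations in $t$ (which indeed fixes $a(M_x)$ and fixes $\mathcal{C}$ elementwise, and rotates the periodic circle), followed by the support argument. The one step you must actually write out — and which is strictly harder here than in Theorem \ref{mdagger} — is the claim that a multiplication operator $A\in\mathcal{C}$ lying in $\mathcal{E}^{\diamond}$ has its function $B$ in $C_0(\mathbb{R},\mathcal{K}_{\mathbb{Z}})$: in the $\dagger$ case this is immediate from $\mathcal{E}^{\dagger}_{\mathcal{B}}=C_0(\mathbb{R},\mathcal{K}_{\mathbb{S}^1})$, whereas $\mathcal{E}^{\diamond}$ is strictly larger (it contains the twisted terms $B_j(M_t)T_j$, $j\neq 0$). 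It does go through, e.g.\ by noting that $\gamma'_A(e^{2\pi i\varphi})$ is unitarily equivalent to the block multiplication by the sequence $(B(\varphi-j))_{j}$ and that $\gamma'$ maps $\mathcal{E}^{\diamond}$ into $C(S^1,\mathcal{K}_{\mathbb{Z}\times\mathbb{S}^1})$, so compactness forces $\|B(\varphi-j)\|\rightarrow 0$ and each $B(\tau)$ compact, i.e.\ $B\in C_0(\mathbb{R},\mathcal{K}_{\mathbb{S}^1})$; alternatively one can average over the gauge action $\beta_z$ from the crossed-product section to project $\mathcal{E}^{\diamond}_{\mathcal{B}}$ onto $C_0(\mathbb{R},\mathcal{K}_{\mathbb{S}^1})$ while fixing multiplication operators. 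Also make sure the final contradiction covers points $v_1\in S^1_{\infty}$ approached with bounded $\tau$ (there you need the decay in the $\mathbb{Z}$-variable coming from compactness, not the $\tau\rightarrow\infty$ decay). With that step supplied, your argument is complete and buys independence from Theorem \ref{ma2}, while the paper's argument is shorter given that theorem.
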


\begin{proof} Considere as inclusões
$$
\begin{array}{cccccccccc}
 \kappa_1\ :  & \! P_{2\pi} & \! \longrightarrow &
 \! \displaystyle\frac{\cal{A}^{\diamond}}{\mathcal{E}^{\diamond}} & \! \ \ \  & \!
 \kappa_2\ :  & \! \displaystyle\frac{\cal{A}^{\dagger}}{\mathcal{E}^{\dagger}} & \! \longrightarrow &
 \!\displaystyle\frac{\cal{A}^{\diamond}}{\mathcal{E}^{\diamond}} \\     
 & \!  p_j & \! \longmapsto & \! [p_j(M_t)]_{\mathcal{E}^{\diamond}} & \!  & \!  & \!  [A]_{\mathcal{E}^{\dagger}} & \! \longmapsto & \!    
 [A]_{\mathcal{E}^{\diamond}}
\end{array}
$$
onde $P_{2\pi}$ é o conjunto das funções contínuas $2\pi$-periódicas.

Sabendo que $S^1$ ~e~ $\mathbb{S}^1\times S^1_{\infty}$ são os espaços
símbolo de $P_{2\pi}$ e $\mathcal{A}^{\dagger}$, respectivamente, temos a aplicação dual 
$$ 
\begin{array}{cccc}
 {\kappa}^{*} \ :  & \! {\bf M}^{\diamond} & \! \longrightarrow &
 \! S^1 \times \mathbb{S}^1 \times S^1_{\infty} \\
 & \! \omega & \! \longmapsto & \! (\omega \circ \kappa_1, \omega \circ \kappa_2) 
\end{array}
$$
onde ${\kappa}^{*}$ será um homeomorfismo sobre a sua imagem. 
 
A idéia agora é fazer o mesmo para a álgebra $\cal{A}$, apesar de já conhecermos o seu espaço símbolo. 
Sejam 
$$
\begin{array}{cccccccccc}
 j_1\ :  & \! C([-\infty, +\infty]) & \! \longrightarrow &
 \! \displaystyle\frac{\cal{A}}{\mathcal{E}_{\mathcal{A}}} & \! \ \ \  & \!
 j_2\ :  & \! \displaystyle\frac{\cal{A}^{\diamond}}{\mathcal{E}^{\diamond}} & \! \longrightarrow &
 \!\displaystyle\frac{\cal{A}}{\mathcal{E}_{\mathcal{A}}} \\     
 & \!  b & \! \longmapsto & \! [b(M_t)]_{\mathcal{E}_{\mathcal{A}}} & \!  & \!  & \!  [A]_{\mathcal{E}^{\diamond}} & \! \longmapsto & \!    
 [A]_{\mathcal{E}_{\mathcal{A}}}
\end{array}
$$
as inclusões canônicas. A aplicação dual é dada por 
$$ 
\begin{array}{ccccc}
 {j}^{*} \ :  & \! {\bf M}_{\mathcal{A}} & \! \longrightarrow &
 \! [-\infty, +\infty] \times Im {\kappa}^{*} & \! \subseteq [-\infty, +\infty]\times S^1 \times \mathbb{S}^1 \times S^1_{\infty} \\
 & \! \omega & \! \longmapsto & \! (\omega \circ j_1, \omega \circ j_2) & \!
\end{array}
$$
No teorema \ref{ma}, temos uma descrição de ${\bf M}_{\mathcal{A}}$,  que também pode ser escrito como $X\times\mathbb{S}^1 \times S^1$, onde o conjunto $X=\{ (t,e^{i\theta})\in [-\infty, +\infty]\times S^1; \theta = t \mbox{ se } |t|<\infty\}$. Temos ainda que ${\bf M}_{\mathcal{A}}$ é homeomorfo a $Im j^{*}$. 

Dado o funcional linear multiplicativo $\omega \in {\bf M}_{\mathcal{A}}$ correspondente ao ponto $(t, e^{i\theta}, x, (\tau,\xi))$ $\in X\times\mathbb{S}^1 \times S^1$, queremos mostrar que $\omega \circ j_2$ é o funcional linear $\lambda_{(e^{i\theta}, x, (\tau,\xi))} \in {\bf M}^{\diamond}$ associado ao ponto $(e^{i\theta}, x, (\tau,\xi)).$  

Para isto, basta mostrar que $\omega \circ j_2 ([A_i]_{\mathcal{E}_{\mathcal{A}}})= \lambda_{(e^{i\theta}, x, (\tau,\xi))}([A_i]_{\mathcal{E}^{\diamond}})$ onde $A_i, ~i=1,3,4,5,6,$ são os geradores de $\mathcal{A}^{\diamond}.$
Sabemos que $\omega\circ j_2([A_i]_{\mathcal{E}_{\mathcal{A}}})= \omega([A_i]_{\mathcal{E}_{\mathcal{A}}}), ~i=1,3,4,5,6$:  
\begin{enumerate}
\item $ \omega([a(M_x)]_{\mathcal{E}_{\mathcal{A}}}) = \sigma_{a(M_x)}(t, e^{i\theta}, x, (\tau,\xi)) =  a(x)$
\item $ \omega([e^{ijM_{\theta}}]_{\mathcal{E}_{\mathcal{A}}}) = \sigma_{e^{ijM_{\theta}}}(t, e^{i\theta}, x, (\tau,\xi)) =  e^{ij{\theta}}$
\item $ \omega([A_4]_{\mathcal{E}_{\mathcal{A}}}) = \sigma_{A_4}(t, e^{i\theta}, x, (\tau,\xi)) = 0 $
\item $ \omega([A_5]_{\mathcal{E}_{\mathcal{A}}}) = \sigma_{A_5}(t, e^{i\theta}, x, (\tau,\xi)) = \tau $
\item $ \omega([A_6]_{\mathcal{E}_{\mathcal{A}}}) = \sigma_{A_6}(t, e^{i\theta}, x, (\tau,\xi)) = \xi $
\end{enumerate}
Para calcularmos $\lambda_{(e^{i\theta}, x, (\tau,\xi))}([A_i]_{\mathcal{E}^{\diamond}})$, temos de voltar para a aplicação dual 
$$ 
\begin{array}{cccc}
 {\kappa}^{*} \ :  & \! {\bf M}^{\diamond} & \! \longrightarrow &
 \! S^1 \times \mathbb{S}^1 \times S^1_{\infty} \\
 & \! \lambda & \! \longmapsto & \! (\lambda \circ \kappa_1, \lambda \circ \kappa_2 \circ i_1, \lambda \circ \kappa_2 \circ i_2 ) 
\end{array}
$$
lembrando que $i_1$ e $i_2$ vêm da demonstração do espaço símbolo de ${\bf M}^{\dagger}$.

Para $e^{ijM_{\theta}}$, o operador de multiplicação pela função contínua $p_j(\theta)= e^{ij\theta}$ de período $2\pi$, $\lambda_{(e^{i\theta}, x, (\tau,\xi))}([e^{ijM_{\theta}}]_{\mathcal{E}^{\diamond}})$ é igual ao funcional $\lambda \circ \kappa_1 $ calculado em $p_j$. Como os funcionais lineares multiplicativos de $P_{2\pi}$ são dados pela evaluação num ponto de $S^1$, então vamos associar ao funcional  $\lambda \circ \kappa_1$ o ponto $e^{i\theta}.$ Logo, 
$$\lambda_{(e^{i\theta}, x, (\tau,\xi))}([e^{ijM_{\theta}}]_{\mathcal{E}^{\diamond}}) = \lambda \circ \kappa_1(p_j)= p_j(\theta) = e^{ij\theta} .$$

Para calcularmos $\lambda_{(e^{i\theta}, x, (\tau,\xi))}([a(M_x)]_{\mathcal{E}^{\diamond}})$, para $a\in C^{\infty}(\mathbb{S}^1)$, temos que 
$$\lambda_{(e^{i\theta}, x, (\tau,\xi))}([a(M_x)]_{\mathcal{E}^{\diamond}}) = \lambda \circ \kappa_2 ([a(M_x)]_{\mathcal{E}^{\dagger}}) = \lambda \circ \kappa_2 \circ i_1(a)$$
Também neste caso, os  funcionais lineares multiplicativos de $C^{\infty}(\mathbb{S}^1)$ são dados pela evaluação num ponto de $\mathbb{S}^1$, então  associamos a $\lambda \circ \kappa_1 \circ i_1 $ o ponto $x.$ Logo,
$$\lambda_{(e^{i\theta}, x, (\tau,\xi))}([a(M_x)]_{\mathcal{E}^{\diamond}}) =a(x) .$$

Como $\sigma^{\dagger}_{A_4} = 0$ e $\lambda \circ \kappa_2$ é um funcional linear em $\textbf{M}^{\dagger}$ então 
$$\lambda_{(e^{i\theta}, x, (\tau,\xi))}([A_4]_{\mathcal{E}^{\diamond}})= \lambda \circ \kappa_2 ([A_4]_{\mathcal{E}^{\dagger}}) =  0 .$$
Temos ainda que $\lambda \circ \kappa_2 \circ i_2$ é um funcional em $\mathcal{C}$ e portanto, a ele está associado um par $(\tau, \xi) \in S_{\infty}^1$ pelo teorema \ref{mdagger},  e portanto
$$\lambda \circ \kappa_2 \circ i_2 (A_5) = B_5'(\tau, \xi) =\tau ~, ~~~~~\lambda \circ \kappa_2 \circ i_2 (A_6) = B_6'(\tau, \xi) =\xi .$$

Logo, dado $\omega$ fica bem definido o funcional $\lambda_{(e^{i\theta}, x, (\tau,\xi))} $ em ${\bf M}^{\diamond} $ para todo $(e^{i\theta}, x, (\tau,\xi))\in S^1 \times \mathbb{S}^1 \times S^1_{\infty}.$ Portanto, a imagem de $\kappa^{*}$ é igual a $S^1 \times \mathbb{S}^1 \times S^1_{\infty}$ e 
$${\sigma^{\diamond}}_{a(M_x)}(e^{i\theta}, x, (\tau, \xi)) = a(x), ~~ {\sigma^{\diamond}}_{e^{ijM_{\theta}}}(e^{i\theta}, x, (\tau, \xi)) = e^{ij\theta} ,$$
$$\sigma^{\diamond}_{A_4}(e^{i\theta}, x, (\tau, \xi)) = 0, ~~\sigma^{\diamond}_{A_5}(e^{i\theta}, x, (\tau, \xi)) = \tau , ~~
\sigma^{\diamond}_{A_6}(e^{i\theta}, x, (\tau, \xi)) = \xi .$$

\end{proof}

%\subsection{${\mathcal{A}}^{\diamond}$ e o $\gamma$-símbolo}

%Analogamente ao teorema 7 de \cite{cintia}, temos que a imagem do $\gamma$-símbolo é isomorfa a uma soma direta de duas cópias de ${\mathcal{A}}^{\diamond}$.

%\begin{teo} Existe um $\ast$-homomorfismo sobrejetor
%$$ \psi: \mathcal{A}\longrightarrow \mathcal{A}^{\diamond}\oplus\mathcal{A}^{\diamond}, $$
%tal que $\psi(A)=(A,A)$ se $A\in \mathcal{A}^{\diamond},~ \psi(b(M))=(b(-\infty)I,b(+\infty)I)$ se $b\in C([-\infty,+\infty])$ e $\psi(K)=0$ se $K$ é compacto. Além disso, $Im \psi$ e $Im \gamma$ são isomorfas.
%\end{teo}

%\begin{proof} É fácil ver que $\psi$ é um $\ast$-homomorfismo sobrejetor. Vamos agora mostrar que $Im \psi$ e $Im \gamma$ são isomorfos. 

%Na demonstração da proposição \ref{gamma}, (\cite{melo}, proposição 2.4), temos 
%$$\gamma_{b(M)}(\varphi) = (b(+ \infty)I, b(-\infty)I) ~~\mbox{e} ~~ \gamma_{A} (\varphi) = (WF^{-1}AFW^{-1}, WF^{-1}AFW^{-1}) $$
%para $A \in \mathcal{A}^{\diamond}$ e $b \in C([-\infty, +\infty])$. Portanto, o isomorfismo 
%$$ Im \psi \rightarrow Im \gamma $$
%é dado pela conjugação por $FW^{-1}$. 

%\end{proof}

\chapter{A K-Teoria de $\mathcal{A}$ e suas C*-subálgebras}

Este último capítulo consiste no cálculo da K-teoria da C*-álgebra $\mathcal{A}\subset \mathcal{L}(L^2(\Omega)),$ onde $\Omega = \mathbb{R}\times \mathbb{S}^1$. 

Apresentaremos primeiro alguns conceitos básicos sobre esta teoria e resultados importantes  que serão úteis nas próximas secões. 
Recomendamos \cite{rordam} e \cite{wegge} como literatura neste assunto. 

Começaremos calculando a K-teoria das subálgebras $\mathcal{A}^{\dagger}$ e $\mathcal{A}^{\diamond}.$ Para isso será fundamental a\-na\-li\-sar a seqüência de Pimsner-Voiculescu construída a partir da realização de $\mathcal{A}^\diamond$ como um produto cruzado. 

Conhecer a K-teoria de $\mathcal{A}^{\diamond}$ nos fornecerá uma das informações necessárias para conseguirmos determinar a K-teoria de $\mathcal{A}.$ Um dos resultados mais interessantes deste último capítulo, seção 3.3, é o cálculo do índice de certos operadores de Fredholm, usando a fórmula do índice de Fedosov, uma particularização da fórmula de Atiyah-Singer. Este cálculo é necessário para determinar a aplicação do índice de uma dada seqüência exata de seis termos em K-teoria induzida por uma seqüência exata curta envolvendo $\mathcal{A}^{\diamond}$. 

\section{Conceitos básicos}

Seja $A$ uma C*-álgebra. Denotaremos por $\tilde{A}$ a unitização de $A$. Dado um $n$ natural, seja $M_n(\tilde{A})$ o conjuntos das matrizes $n\times n $ com entradas em $\tilde{A}$. Considerando $P_n(\tilde{A}):=\{ \rho \in M_{n}(\tilde{A}); \rho = {\rho}^2 = {\rho}^{\ast} \}$ o conjunto das projeções em $\tilde{A}$ e $U_n(\tilde{A}):=\{ u \in M_{n}(\tilde{A}); uu^{\ast}=u^{\ast}u=1 \}$ o conjunto dos elementos unitários em $\tilde{A}$, usaremos a seguinte notação: 
$$P_{\infty}(\tilde{A}) := \bigcup_{n\in \mathbb{N}} P_{n}(\tilde{A})~, ~~ U_{\infty}(\tilde{A}) := \bigcup_{n\in \mathbb{N}} U_{n}(\tilde{A})$$

De uma forma resumida, podemos definir a K-teoria de C*-álgebra pelos funtores $K_0$ e $K_1$ que associam a uma C*-álgebra $A$ dois grupos abelianos $K_0(A)$ e $K_1(A)$. O grupo $K_0(A)$ pode ver visto como diferenças formais $[p]-[s(p)]$ de classes de equivalência de projeções $p, s(p) \in P_{\infty}(\tilde{A})$, onde $s(p)$ é a parte escalar de $p$. $K_1(A)$ é o grupo de classes de homotopias de unitários (ou inversíveis) em $U_{\infty}(\tilde{A}).$ 

Uma propriedade desses funtores é que dada uma seqüência exata curta de C*-álgebras,
$$ 0 \longrightarrow A \stackrel{i}{\longrightarrow} B \stackrel{\pi}{\longrightarrow} C \longrightarrow 0,$$
associamos a ela uma seqüência exata de seis termos com os seus K-grupos
$$
\begin{array}{ccccc}
 K_0(A)  & \! \stackrel{i_*}{\longrightarrow} & \!  K_0(B)  & \! \stackrel{{\pi}_*}{\longrightarrow} & \!  K_0(C)    \\ \\    
 {\delta}_1 \ \uparrow & \! ~  & \! ~  & \! ~ & \! \downarrow \ {\delta}_0  \\ \\
 
 K_1(C) & \! \stackrel{{\pi}_*}{\longleftarrow} & \!  K_1(B)  & \! \stackrel{{i}_*}{\longleftarrow} & \!  K_1(A)
 \end{array}
$$
onde $i_*$ e $\pi_*$ são as aplicações induzidas funtorialmente, $\delta_1: K_1(C)\rightarrow K_0(A)$ é dita aplicação do índice e $\delta_0: K_0(C)\rightarrow K_1(A)$ é a aplicação exponencial. 

\begin{definicao} {\rm (\cite{wegge}, Definição 7.2.1)} A {\bf suspensão} de uma C*-álgebra $A$ é a C*-álgebra
\begin{eqnarray*}
SA & := & A\otimes C_0(\mathbb{R}) \\
& \cong & C_0(\mathbb{R}, A) \\
& \cong & C_0((0,1), A) \\
& \cong & \{f\in C(S^1, A); f(1)=0\}.
\end{eqnarray*}
Nos três últimos objetos, soma e produto são pontuais, adjunto é dado pelo complexo conjugado e a norma é a norma do supremo. O produto tensorial $A\otimes C_0(\mathbb{R})$ significa o produto tensorial entre C*-álgebras completado. 
\end{definicao}

A seguir, enunciaremos um isomorfismo em K-teoria que envolve a álgebra e sua suspensão.  

\begin{teo}\label{theta} Os grupos $K_1(A)$ e $K_0(SA)$ são isomorfos para toda C*-álgebra $A.$ O isomorfismo, ao qual denotaremos por $\theta_{A},$ tem a seguinte descrição: dado um unitário $u \in U_n(\tilde{A})$, existem unitários $w_t \in U_{2n}(\tilde{A}),~t\in [0,1],$ que forma uma homotopia entre $w_0= 1_{2n}$ e
$$w_1= \left(\begin{array}{cc}
u & 0 \\
0 & u^* \\
\end{array}\right)
=: diag(u,u^{\ast}) .$$
Isto dá origem a um caminho de projeções
$$q_t := w_t \cdot diag(1_n, 0) \cdot w_t^{\ast} \in M_{2n}(\tilde{A}), ~t\in [0,1],$$
com $q_0 = diag(1_n,0)$ e $q_1 = diag(u,u^{\ast})\cdot diag(1_n,0) \cdot diag(u^{\ast},u) = diag(1_n,0)$. A aplicação $q: t\mapsto q_t$  pertence a $M_{2n}(\tilde{SA})$ e $q_t - diag(1_n,0) \in M_{2n}(A)$ para todo $t\in [0,1]$. Temos então que 
$$\theta_{A}([u]_1) := [q]_0 - [1_n]_0 . $$
\end{teo}

\begin{obs}\label{obs}{\rm Para $u\in \tilde{A}$ unitário, basta tomar $w_t:=diag(u,1)\cdot u_t \cdot diag(u^{\ast},1)\cdot u_t^{\ast} \in M_2(\tilde{A}),$ (\cite{wegge}, Teorema 7.2.5, Teorema 4.2.9), onde }
$$u_t:= 
\left(
\begin{array}{cc}
\cos{\frac{\pi t}{2}} & -\mbox{sen}{\frac{\pi t}{2}} \\
\mbox{sen}{\frac{\pi t}{2}} &  \cos{\frac{\pi t}{2}} \\
\end{array}
\right).
$$
\end{obs}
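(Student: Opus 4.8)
The plan is to verify that the prescribed path $w_t := diag(u,1)\cdot u_t \cdot diag(u^*,1)\cdot u_t^*$ fulfils, in the case $n=1$, every requirement imposed on the homotopy $w_t$ in Teorema \ref{theta}: that each $w_t$ lies in $U_2(\tilde A)$, that $t\mapsto w_t$ is continuous, and that $w_0 = 1_2$ while $w_1 = diag(u,u^*)$. Once these three points are checked, $w_t$ is an admissible choice in the construction of $\theta_A$ and there is nothing further to prove; the observation is precisely the classical Whitehead-type rotation homotopy recorded in the cited results of Wegge-Olsen.

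First I would establish unitarity. The matrix $u_t$ has real entries and satisfies $u_t u_t^* = u_t^* u_t = 1_2$ for every $t$, so $u_t \in U_2(\tilde A)$ once scalars are identified with multiples of the unit. Since $u$ is unitary, $diag(u,1)$ and $diag(u^*,1)$ also lie in $U_2(\tilde A)$. Hence $w_t$ is a product of four elements of the group $U_2(\tilde A)$ and is therefore itself unitary, with no structural argument needed beyond closure of $U_2(\tilde A)$ under multiplication.

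Next I would treat continuity and the endpoints. Continuity of $t\mapsto w_t$ is immediate from the continuity of $t\mapsto u_t$, whose entries are $\cos(\pi t/2)$ and $\pm\sin(\pi t/2)$, together with the continuity of multiplication in $M_2(\tilde A)$. At $t=0$ one has $u_0 = 1_2$, so $w_0 = diag(u,1)\,diag(u^*,1) = diag(uu^*,1) = 1_2$. At $t=1$ the matrix $u_1$ is the quarter-turn, and grouping $w_1$ as $\bigl[diag(u,1)\,u_1\bigr]\bigl[diag(u^*,1)\,u_1^*\bigr]$ and carrying out the two inner $2\times 2$ products gives the off-diagonal forms whose product is exactly $diag(u,u^*)$, the required terminal point.

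I do not expect a genuine obstacle: the verification is routine. The only step requiring care is the single $2\times 2$ computation at $t=1$, which must respect the order of multiplication, since $u$ need not commute with the off-diagonal scalar entries of $u_1$; organizing the product as indicated above makes the cancellation transparent and produces $diag(u,u^*)$ cleanly.
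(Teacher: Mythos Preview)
Your verification is correct and complete: unitarity, continuity, and the endpoint conditions $w_0=1_2$, $w_1=diag(u,u^*)$ are exactly what Teorema \ref{theta} requires, and your $t=1$ computation is right. The paper itself offers no argument beyond the citation to Wegge--Olsen, so your direct check is precisely the routine verification that reference records; one minor quibble is that your caution about $u$ not commuting with the scalar entries of $u_1$ is unnecessary, since scalars are central in $M_2(\tilde A)$.
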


Neste trabalho, vamos caracterizar a suspensão pelas funções definidas no círculo por ser mais conveniente para os nossos cálculos. Sendo assim, para $u\in \tilde{A}$ unitário,  a função $q \in P_2(S\tilde{A})$ definida acima em função de $t\in [0,1]$ deve ser uma função com domínio em $S^1$. Vamos então descrevê-la em função desta nova variável.
\begin{eqnarray*}
t\in [0,1] \longmapsto q_t & := & w_t \cdot diag(1, 0) \cdot w_t^{\ast} \\
& = & 
\left(\begin{array}{cc}
u & 0 \\
0 & 1 \\
\end{array}\right)
\cdot u_t \cdot 
\left( \begin{array}{cc}
u^{\ast} & 0 \\
0        & 1 \\
\end{array}\right)
\cdot u_t^{\ast} \cdot 
\left(\begin{array}{cc}
1 & 0 \\
0 & 0 \\
\end{array}\right)
\cdot u_t \cdot 
\left(\begin{array}{cc}
u & 0 \\
0 & 1 \\
\end{array}\right)
\cdot u_t^{\ast} \cdot 
\left(\begin{array}{cc}
u^{\ast} & 0 \\
0        & 1 \\
\end{array}\right) 
\end{eqnarray*}
Fazendo estas contas, obtemos uma matriz em função de $u, \cos{\frac{\pi t}{2}}$ e $\mbox{sen}{\frac{\pi t}{2}}:$
$$ 
\left(\begin{array}{cc}
1 - (2-u-u^{\ast})\cos^2{\frac{\pi t}{2}}\mbox{sen}^2{\frac{\pi t}{2}}  & 
(u-1)\cos^3{\frac{\pi t}{2}}\mbox{sen}{\frac{\pi t}{2}}+u(u-1)\cos{\frac{\pi t}{2}}\mbox{sen}^3{\frac{\pi t}{2}} \vspace{0.4cm}\\ 
(u^{\ast}-1)\cos^3{\frac{\pi t}{2}}\mbox{sen}{\frac{\pi t}{2}}+u^{\ast}(u^{\ast}-1)\cos{\frac{\pi t}{2}}\mbox{sen}^3{\frac{\pi t}{2}} &
(2-u-u^{\ast})\cos^2{\frac{\pi t}{2}}\mbox{sen}^2{\frac{\pi t}{2}} \\
\end{array}\right)
$$

Seja $z=z_1+iz_2=e^{2\pi it}\in {S}^1$, $z_1 = \cos 2\pi t$ e $z_2 = \mbox{sen} 2\pi t .$ Usando as igualdades trigonométricas, $\cos(\pi t+ \pi t)$ e $\mbox{sen}(\pi t+ \pi t)$, e em seguida, $\cos(\pi t /2 + \pi t/2)$ e $\mbox{sen}(\pi t /2 + \pi t/2)$, obtemos as relações
$$\cos^2(\frac{\pi t}{2})\mbox{sen}^2(\frac{\pi t}{2}) = \frac{1-z_1}{8}$$
$$\cos({\frac{\pi t}{2}})\mbox{sen}^3({\frac{\pi t}{2}}) = \frac{1}{2}\sqrt{\frac{1-z_1}{8}} - \frac{z_2}{8}~,~~
\cos^3({\frac{\pi t}{2}})\mbox{sen}({\frac{\pi t}{2}}) = \frac{1}{2}\sqrt{\frac{1-z_1}{8}} + \frac{z_2}{8} .$$
Substituindo estas igualdades na matriz anterior, temos que $q  \in P_2(\tilde{SA})$ é dada por
$$ z_1+iz_2 \in {S}^1 \mapsto q_{z_1+iz_2} =
\left(\begin{array}{cc}
1 - (2-u-u^{\ast})\frac{1-z_1}{8}  & 
\frac{u^2-1}{2}\sqrt{\frac{1-z_1}{8}} -(u-1)^2 \frac{z_2}{8} \vspace{0.3cm}\\
\frac{{u^{\ast}}^2-1}{2}\sqrt{\frac{1-z_1}{8}} -(u^{\ast}-1)^2 \frac{z_2}{8} &
(2-{u}-u^{\ast})\frac{1-z_1}{8}  \\
\end{array}\right)
$$

Denotemos por $Q(a,b,c)$, para $a$ unitário em $\tilde{A}$ e $b+ic$ em $S^1$, a seguinte matriz  
\begin{equation}
Q(a,b,c) = 
\left(\begin{array}{cc}
1 - (2-a-a^{\ast})\frac{1-b}{8}  & 
\frac{{a}^2-1}{2}\sqrt{\frac{1-b}{8}} -(a-1)^2 \frac{c}{8} \vspace{0.3cm}\\
\frac{{{a}^{\ast}}^{2}-1}{2}\sqrt{\frac{1-b}{8}} -(a^{\ast}-1)^2 \frac{c}{8} &
(2-a-a^{\ast})\frac{1-b}{8}  \\
\end{array}\right)
\label{matriz}
\end{equation}

Outro resultado importante em K-teoria é a {\bf Periodicidade de Bott}. Esta garante que $K_0(A)$ e $K_1(SA)$ são isomorfos e este isomorfismo é da seguinte forma. Para todo $n\in \mathbb{N}$ e para toda projeção $p\in P_n(\tilde{A})$, define-se uma aplicação $f_p : S^1\rightarrow U_n(\tilde{A})$ por 
$$f_p(z)=zp + (1_n - p).$$
%Identificando $M_n(SA^{+})$ como o conjunto das funções $f \in C(S^1, M_n(A))$ tais que $f(1)\in M_n(\mathbb{C}1_{A}),$ temos que $f_p \in U_n(SA^{+}).$
Obtemos assim a {\bf aplicação de Boot}:
$$
\begin{array}{cccc}
 \beta_{A}: & \! K_0(A) & \! \rightarrow & \! K_1(SA) \\     
 & \! [p]_0 - [s(p)]_0& \! \mapsto & \! [f_pf^{\ast}_{s(p)}]_1
\end{array}
$$

Um resultado que usaremos constantemente nos cálculos de K-teoria é que, dada a seqüência exata curta cindida
\begin{center}
\begin{picture}(440,25)(0,0)
\put(85,0){$0$}\put(100,5){\vector(1,0){20}}
\put(130,0){$SA$}\put(160,5){\vector(1,0){20}}\put(168,10){${i}$}
\put(195,0){$C(S^1, A))$}\put(260,6){\vector(1,0){20}}\put(267,12){${p}$}
\put(295,0){$A$}\put(280,2){\vector(-1,0){20}}\put(267,-6){${s}$}
\put(315,5){\vector(1,0){20}} \put(340,0){0,}
\end{picture}
\end{center}
onde $i$ é a inclusão, $p$ é a evaluação no ponto 1 e $s$ é tal que $p\circ s = Id_{A}$, então vale que 
$$K_i(C(S^1, A))= i_{\ast}(K_i(SA))\oplus s_{\ast}(K_i(A)), ~i=0,1,$$
onde $i_{\ast}$ e $s_{\ast}$ são as aplicações induzidas.

Para o cálculo da aplicação do índice, temos o seguinte lema. 

\begin{lema}\label{lema1}{\rm (\cite{cintia}, Lema 1)} Seja $A$ uma álgebra de Banach unital e seja $\delta_1: K_1(A/{{J}})\rightarrow K_0({J})$ a aplicação do índice da seqüência exata de seis termos em K-teoria associada a seqüência exata curta $0\rightarrow {J} \rightarrow A \rightarrow A/{J} \rightarrow 0$. Se $u\in M_n({A}/{J})$ é um inversível, $\pi(a)=u$ e $\pi(b)=u^{-1},$ então 
$$
\delta_1([u]_1)= 
\left[
\left(\begin{array}{cc}
2ab-(ab)^2 & a(2-ba)(1-ba) \\
(1-ba)b & (1-ba)^2 \\
\end{array}\right)
\right]_0
-
\left[
\left(\begin{array}{cc}
1 & 0 \\
0 & 0 \\
\end{array}\right)
\right]_0
$$
\end{lema}

No caso em que $J$ é o ideal dos operadores compactos $K(H)$ de uma C*-algebra $A \subset \mathcal{L}(H),$ onde $H$ é um espaço de Hilbert,
temos que a aplicação do índice $\delta_1$ da seqüência abaixo
$$
\begin{array}{ccccc}
 \mathbb{Z} \cong K_0(\mathcal{K}) & \! {\longrightarrow} & \!  K_0(A)  & \! {\longrightarrow} & \!  K_0(A/{\mathcal{K}})   \\ \\    
 {\delta}_1 \ \uparrow & \! ~  & \! ~  & \! ~ & \! \downarrow \ {\delta}_0  \\ \\
 K_1(A/{\mathcal{K}}) & \! {\longleftarrow} & \!  K_1(A)  & \! {\longleftarrow} & \! K_1(\mathcal{K})=0
\end{array}
$$
é dada pelo índice de Fredholm, isto é, 
$$\delta_1([[T]_{\mathcal{K}}]_1) = \mathsf{ind}(T),$$
onde $\mathsf{ind}$ é o índice de Fredholm do operador $T$(\cite{rordam}, proposição 9.4.2).
%%%%%%%% COLOCOCAR PARA ANTES DA SUSPENSÃO!!!!!!

\section{A K-teoria de $\mathcal{A}^{\dagger}$}

Começaremos os cálculos pela menor álgebra. Considere a seguinte seqüência exata curta:
\begin{equation}
0 \ \longrightarrow \ \mathcal{E}^{\dagger} \  \stackrel{i}{\longrightarrow} \ \mathcal{A}^{\dagger} \  
\stackrel{\pi}{\longrightarrow} \ \frac{\mathcal{A}^{\dagger}}{\mathcal{E}^{\dagger}} \ \longrightarrow \ 0, 
\label{secdagger}
\end{equation}
onde {\it i} é a inclusão e $\pi$ é a projeção canônica. 

O teorema \ref{edagger} nos dá o isomorfismo entre $\mathcal{E}^{\dagger}$ e $C_0(\mathbb{R}, \mathcal{K}_{\mathbb{S}^1}).$ Sabendo que $K_0(\mathcal{K}_{\mathbb{S}^1})\cong \mathbb{Z}$ e $K_1(\mathcal{K}_{\mathbb{S}^1}) = 0,$ e que vale $S\mathcal{K}_{\mathbb{S}^1}\cong C_0(\mathbb{R}, \mathcal{K}_{\mathbb{S}^1}),$ então podemos facilmente concluir que 
$$ K_0(\mathcal{E}^{\dagger}) = 0~~~~ \mbox{e}~~~~ K_1(\mathcal{E}^{\dagger}) \cong \mathbb{Z} .$$

%Vamos agora determinar $K_0$ e $K_1$ de $\mathcal{A}^{\dagger}/{\mathcal{E}}^{\dagger},$ sabendo que esta álgebra é isomorfa a $C(S^1_{\infty}\times \mathbb{S}^1 ).$
\begin{prop}\label{primeiro} $K_0(\mathcal{A}^{\dagger}/{\mathcal{E}}^{\dagger})$ e $K_1(\mathcal{A}^{\dagger}/{\mathcal{E}}^{\dagger})$ são ambos isomorfos a $\mathbb{Z}^2$.
\end{prop}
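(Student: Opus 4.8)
The plan is to identify the quotient $\mathcal{A}^{\dagger}/\mathcal{E}^{\dagger}$ with the algebra of continuous functions on a $2$-torus and then compute its K-theory by the split-exact-sequence technique recalled in the previous section. By Theorem~\ref{mdagger} the Gelfand transform gives a $\ast$-isomorphism $\mathcal{A}^{\dagger}/\mathcal{E}^{\dagger}\cong C({\bf M}^{\dagger})\cong C(\mathbb{S}^1\times S_{\infty}^1)$. Since $S_{\infty}^1=\partial\mathbb{B}^2$ is a circle, the space $\mathbb{S}^1\times S_{\infty}^1$ is homeomorphic to $S^1\times\mathbb{S}^1$, and hence $\mathcal{A}^{\dagger}/\mathcal{E}^{\dagger}\cong C(S^1\times\mathbb{S}^1)\cong C(S^1,C(\mathbb{S}^1))$, the continuous functions on the circle $S^1\cong S_{\infty}^1$ with values in $C(\mathbb{S}^1)$.

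Next I would apply, with $A=C(\mathbb{S}^1)$, the split short exact sequence
$$0\longrightarrow SC(\mathbb{S}^1)\stackrel{i}{\longrightarrow}C(S^1,C(\mathbb{S}^1))\stackrel{p}{\longrightarrow}C(\mathbb{S}^1)\longrightarrow 0,$$
where $p$ is evaluation at $1\in S^1$ and the section $s$ is the inclusion of $C(\mathbb{S}^1)$ as the constant functions; this is precisely the split sequence recalled just before Lemma~\ref{lema1}. It yields
$$K_j\bigl(\mathcal{A}^{\dagger}/\mathcal{E}^{\dagger}\bigr)\cong K_j\bigl(SC(\mathbb{S}^1)\bigr)\oplus K_j\bigl(C(\mathbb{S}^1)\bigr),\qquad j=0,1.$$

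It then remains to insert the K-groups of $C(\mathbb{S}^1)$ and of its suspension. One has $K_0(C(\mathbb{S}^1))\cong\mathbb{Z}$, generated by $[1]_0$, and $K_1(C(\mathbb{S}^1))\cong\mathbb{Z}$, generated by the class of the unitary $z\mapsto z$; this is standard (see \cite{rordam} or \cite{wegge}), but a self-contained argument is obtained by rerunning the same split-sequence reasoning for $0\to C_0(\mathbb{R})\to C(\mathbb{S}^1)\to\mathbb{C}\to 0$ together with $C_0(\mathbb{R})=S\mathbb{C}$, Theorem~\ref{theta} and Bott periodicity. Applying the suspension isomorphism $K_0(SA)\cong K_1(A)$ of Theorem~\ref{theta} and the Bott isomorphism $K_1(SA)\cong K_0(A)$ to $A=C(\mathbb{S}^1)$ gives $K_0(SC(\mathbb{S}^1))\cong K_1(C(\mathbb{S}^1))\cong\mathbb{Z}$ and $K_1(SC(\mathbb{S}^1))\cong K_0(C(\mathbb{S}^1))\cong\mathbb{Z}$. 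Substituting into the displayed decomposition gives $K_0(\mathcal{A}^{\dagger}/\mathcal{E}^{\dagger})\cong\mathbb{Z}\oplus\mathbb{Z}=\mathbb{Z}^2$ and $K_1(\mathcal{A}^{\dagger}/\mathcal{E}^{\dagger})\cong\mathbb{Z}\oplus\mathbb{Z}=\mathbb{Z}^2$, as claimed.

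There is no genuine obstacle: this is the usual computation of $K_*(C(\mathbb{T}^2))$. The only points requiring a little care are checking that the identification $\mathcal{A}^{\dagger}/\mathcal{E}^{\dagger}\cong C(S^1,C(\mathbb{S}^1))$ is the one induced by Theorem~\ref{mdagger} and that the short exact sequence above is indeed split exact; alternatively, the same conclusion drops out of the K\"unneth formula applied to $C(\mathbb{S}^1)\otimes C(S^1)$, but the split-sequence route uses only tools already developed in this chapter.
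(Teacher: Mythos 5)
Your argument is correct and follows essentially the same route as the paper: identify $\mathcal{A}^{\dagger}/\mathcal{E}^{\dagger}$ with $C(S^1_{\infty},C(\mathbb{S}^1))$ via Theorem \ref{mdagger}, use the split sequence with the suspension $SC(\mathbb{S}^1)$, and insert the known K-groups of $C(\mathbb{S}^1)$ via Theorem \ref{theta} and Bott periodicity. The only difference is that the paper additionally records explicit generators (needed later for the connecting maps), which the statement itself does not require.
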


\begin{proof} Antes de calcularmos estes grupos, observe que pelo teorema \ref{mdagger}, temos o seguinte isomorfismo
$$\mathcal{A}^{\dagger}/{\mathcal{E}}^{\dagger} \cong C(S^1_{\infty}\times \mathbb{S}^1 ).$$ 
Vamos primeiro calcular $K_0(C(S^1_{\infty}\times \mathbb{S}^1))$ e $K_1(C(S^1_{\infty}\times \mathbb{S}^1))$ e depois, pelo isomorfismo entre os grupos, determinar $K_0$ e $K_1$ de $\mathcal{A}^{\dagger}/{\mathcal{E}}^{\dagger}$.

Note que $C(S^1_{\infty}\times \mathbb{S}^1)\cong C(S^1_{\infty}, C(\mathbb{S}^1))$. Assim podemos construir a seqüência exata curta cindida envolvendo $C(\mathbb{S}^1)$ e sua suspensão:
\begin{center}
\begin{picture}(440,25)(0,0)
\put(65,0){$0$}\put(80,5){\vector(1,0){20}}
\put(110,0){$SC(\mathbb{S}^1)$}\put(155,5){\vector(1,0){20}}\put(163,10){${i}$}
\put(185,0){$C(S^1_{\infty}, C(\mathbb{S}^1))$}\put(260,6){\vector(1,0){20}}\put(267,12){${p}$}
\put(290,0){$C(\mathbb{S}^1)$}\put(280,2){\vector(-1,0){20}}\put(267,-6){${s}$}
\put(325,5){\vector(1,0){20}} \put(355,0){0,}
\end{picture}
\end{center}
onde $i$ é a inclusão, $p$ é a evaluação no ponto 1 e $s$ é tal que $p\circ s = I_{C(\mathbb{S}^1)}$. 

Então vale que 
\begin{equation}
K_i(C(S^1_{\infty}, C(\mathbb{S}^1)))= i_{\ast}(K_i(SC(\mathbb{S}^1)))\oplus s_{\ast}(K_i(C(\mathbb{S}^1))), ~i=0,1,
\label{susp}
\end{equation}
onde $i_{\ast}$ e $s_{\ast}$ são as aplicações induzidas. 

É conhecido que $K_0(C(\mathbb{S}^1))=\mathbb{Z}[1]_0, $ onde $[1]_0$ é um gerador do grupo $K_0(C(\mathbb{S}^1))$ e 1 representa a função $x\mapsto 1$ em $C(\mathbb{S}^1).$  Temos ainda que $K_1(C(\mathbb{S}^1))=\mathbb{Z}[\texttt{x}]_1,$ onde $[\texttt{x}]_1$ é um gerador de $K_1(C(\mathbb{S}^1))$ e $\texttt{x}$ é a função identidade em  $\mathbb{S}^1$. Sabendo disso, estamos aptos a calcular $K_i(C(S^1_{\infty}, C(\mathbb{S}^1))),~i = 0, 1,$ usando a igualdade (\ref{susp}).

\begin{itemize}
\item $K_0(C(S^1_{\infty}, C(\mathbb{S}^1))):$

Calculando $s_{*}$ no gerador de $K_0(C(\mathbb{S}^1))$, temos 
$$
s_{\ast}([1]_0) = [w \mapsto 1]_0 \in K_0(C(S^1_{\infty}, C(\mathbb{S}^1))).
$$
Vamos representar este gerador de $K_0(C(S^1_{\infty}, C(\mathbb{S}^1)))$ novamente por $[1]_0$, mas entendendo que se trata de  $[w\mapsto (x\mapsto 1)]_0.$ Assim, $s_{*}(K_0(C(\mathbb{S}^1))) = \mathbb{Z} [1]_0$. 

Passemos ao cálculo de $i_{*}(K_0(SC(\mathbb{S}^1)))$. Sabemos que $K_1(C(\mathbb{S}^1)) \cong K_0(SC(\mathbb{S}^1)),$ com o isomorfismo dado no teorema \ref{theta} e ainda, conhecemos o gerador do grupo $K_1(C(\mathbb{S}^1)).$ Temos então que
$$\theta_{C(\mathbb{S}^1)}([\texttt{x}]_1) = [w=u+iv \mapsto Q(\texttt{x}, u, v)]_0,$$
Logo, 
\begin{equation}
K_0(C(S^1_{\infty}, C(\mathbb{S}^1))) = \mathbb{Z}[1]_0 \oplus \mathbb{Z}[u+iv\mapsto Q(\texttt{x}, u, v)]_0.
\label{k0cs1s1}
\end{equation}

\item $K_1(C(S^1_{\infty}, C(\mathbb{S}^1))):$

Para calcular $s_{\ast}(K_1(C(\mathbb{S}^1)))$, calculemos $s_{\ast}$ num gerador:
$$s_{*}([\texttt{x}]_1) = [w \mapsto \texttt{x} ]_1 \in K_1(C(S^1_{\infty}, C(\mathbb{S}^1))).$$
Como a função $w\mapsto \texttt{x}$ não depende da variável de $w\in S^1_{\infty}$, podemos então escrever que  $s_{\ast}(K_1(C(\mathbb{S}^1))) = \mathbb{Z}[\texttt{x}]_1.$ 

O grupo $i_{*}(K_1(SC(\mathbb{S}^1)))$ é determinado a partir da aplicação de Bott.  Dado o isomorfismo $\beta_{C(\mathbb{S}^1)}: K_0(C(\mathbb{S}^1))\rightarrow K_1(SC(\mathbb{S}^1))$ e conhecendo o gerador de $K_0(C(\mathbb{S}^1))$, temos
$$\beta_{C(\mathbb{S}^1)}([1]_0) = [\texttt{w}]_1 ,$$
onde $\texttt{w}$ representa a função identidade em $S^1_{\infty}.$ Observe que esta função $\texttt{w} \in C(S^1_{\infty}, C(\mathbb{S}^1))$ não depende da variável em $\mathbb{S}^1$. 

Assim, concluímos que 
\begin{equation}
K_1(C(S^1_{\infty}, C(\mathbb{S}^1))) = \mathbb{Z}[\texttt{w}]_1 \oplus \mathbb{Z}[\texttt{x}]_1.
\label{k1cs1s1}
\end{equation}
\end{itemize}

De (\ref{k0cs1s1}) e de (\ref{k1cs1s1}), obtemos os isomorfismos desejados:
$$
K_0(\mathcal{A}^{\dagger}/{\mathcal{E}}^{\dagger})\cong\mathbb{Z}^2 ~~~~\mbox{e}~~~~
 K_1(\mathcal{A}^{\dagger}/{\mathcal{E}}^{\dagger})\cong\mathbb{Z}^2.
$$

Vamos agora determinar os geradores destes grupos. 
Tomando a classe em $\mathcal{A}^{\dagger}/{\mathcal{E}}^{\dagger}$ correspondente a respectiva função em  $C(S^1_{\infty}\times \mathbb{S}^1)$ por $\sigma^{\dagger}$,  conseguimos encontrar os geradores de $ K_i(\mathcal{A}^{\dagger}/{\mathcal{E}}^{\dagger}), i=0,1 :$
$$
K_0(\mathcal{A}^{\dagger}/{\mathcal{E}}^{\dagger}) = \mathbb{Z}[[I]_{{\mathcal{E}}^{\dagger}}]_0 \oplus 
\mathbb{Z}[[Q(\texttt{x}(M_x), A_5, A_6)]_{{\mathcal{E}}^{\dagger}}]_0, 
$$
$$
K_1(\mathcal{A}^{\dagger}/{\mathcal{E}}^{\dagger}) = \mathbb{Z}[[\texttt{x}(M_x)]_{{\mathcal{E}}^{\dagger}}]_1 \oplus 
\mathbb{Z}[[A_5+iA_6]_{{\mathcal{E}}^{\dagger}}]_1 ,
$$
onde Q agora é definida para operadores em $\mathcal{A}:$
\begin{equation}
Q(A,B,C) := 
\left(\begin{array}{cc}
I - (2-A-A^{\ast})\frac{I-B}{8}  & 
\frac{{A}^2-I}{2}\sqrt{\frac{I-B}{8}} -(A-I)^2 \frac{C}{8} \vspace{0.3cm}\\
\frac{{{A}^{\ast}}^{2}-I}{2}\sqrt{\frac{I-B}{8}} -(A^{\ast}-I)^2 \frac{C}{8} &
(2-A-A^{\ast})\frac{I-B}{8}  \\
\end{array}\right)
\label{matriz2}
\end{equation}

\end{proof}

Associada à seqüência (\ref{secdagger}), temos a seqüência exata de seis termos em K-teoria
\begin{equation}
\begin{array}{ccccc}
 0=K_0(\mathcal{E}^{\dagger})  & \! \stackrel{i_*}{\longrightarrow} & \! 
 K_0(\mathcal{A}^{\dagger})  & \! \stackrel{{\pi}_*}{\longrightarrow} & \!
 K_0(\mathcal{A}^{\dagger}/{\mathcal{E}}^{\dagger}) \cong \mathbb{Z}^2    \\ \\    
 {\delta}_1^{\dagger} \ \uparrow & \! ~  & \! ~  & \! ~ & \! \downarrow \ {\delta}_0^{\dagger}  \\ \\
 \mathbb{Z}^2 \cong K_1(\mathcal{A}^{\dagger}/{\mathcal{E}}^{\dagger}) & \! \stackrel{{\pi}_*}{\longleftarrow} & \! 
 K_1(\mathcal{A}^{\dagger})  & \! \stackrel{{i}_*}{\longleftarrow} & \!
 K_1(\mathcal{E}^{\dagger}) \cong \mathbb{Z}
\end{array}
\end{equation}

A descrição que temos do isomorfismo $K_1(C_0(\mathbb{R},\mathcal{K}_{\mathbb{S}^1}))\cong \mathbb{Z}$  não é explicita o suficiente para nos permitir saber exatamente qual é a imagem de $\delta_0$. Como 
$$\delta_0^{\dagger}([[I]_{\mathcal{E}^{\dagger}}]_0) = [exp(2\pi i I)]_1 = [I]_1 = 0 ,$$
temos que $[[I]_{\mathcal{E}^{\dagger}}]_0$ pertence ao núcleo de $\delta_0$. 
%A aplicação $\delta_1^{\dagger}$ é nula. Temos apenas que calcular a aplicação $\delta_0^{\dagger}$ nos geradores de $K_0(\mathcal{A}^{\dagger}/{\mathcal{E}}^{\dagger})$. É fácil calcular $\delta_0^{\dagger}$ em $[[I]_{\mathcal{E}^{\dagger}}]_0$ pois 

Portanto, apenas uma das seguintes possibilidades se verifica:
\begin{enumerate}
\item Se $\delta_0^{\dagger} = 0$ então $ K_0(\mathcal{A}^{\dagger}) \cong K_0(\mathcal{A}^{\dagger}/\mathcal{E}^{\dagger}) \cong \mathbb{Z}^2.$ Temos ainda a seqüência exata curta 
$$ 0 \rightarrow K_1(\mathcal{E}^{\dagger}) \cong \mathbb{Z} \rightarrow  K_1(\mathcal{A}^{\dagger}) \rightarrow K_1(\mathcal{A}^{\dagger}/{\mathcal{E}}^{\dagger})\cong \mathbb{Z}^2 \rightarrow 0. $$
Como $\mathbb{Z}^2$ é um módulo livre, então esta seqüência cinde e como $\mathbb{Z}$ é abeliano, podemos então escrever $K_1(\mathcal{A}^{\dagger})$ como a seguinte soma direta:
$$K_1(\mathcal{A}^{\dagger}) \cong \mathbb{Z}\oplus \mathbb{Z}^2 = \mathbb{Z}^3.$$

\item Se $\delta_0^{\dagger}([[Q(\texttt{x}(M_x), A_5, A_6)]_{\mathcal{E}^{\dagger}}]_0)$ corresponder a um inteiro $\eta \neq 0$ pelo isomorfismo $K_1(\mathcal{E}^{\dagger})\cong \mathbb{Z}$,  temos que
$$ 0 \rightarrow K_0(\mathcal{A}^{\dagger}) \rightarrow  K_0(\mathcal{A}^{\dagger}/{\mathcal{E}}^{\dagger}) \cong \mathbb{Z}^2 \rightarrow Im\delta_0^{\dagger} \cong \eta\mathbb{Z} \rightarrow 0, $$
e portanto $ K_0(\mathcal{A}^{\dagger}) \cong \mathbb{Z}.$ Podemos ainda escrever a seqüência 
$$ 0 \rightarrow \frac{K_1(\mathcal{E}^{\dagger})}{Im\delta_0^{\dagger}} \cong \mathbb{Z}_{\eta} \rightarrow  K_1(\mathcal{A}^{\dagger}) \rightarrow K_1(\mathcal{A}^{\dagger}/{\mathcal{E}}^{\dagger})\cong \mathbb{Z}^2 \rightarrow 0, $$
onde $\mathbb{Z}_{\eta} \cong \mathbb{Z}/{\eta}\mathbb{Z}$.
Como $\mathbb{Z}^2$ é um módulo livre e os grupos são todos abelianos, então podemos escrever 
$ K_1(\mathcal{A}^{\dagger})$ como a soma  direta $ \mathbb{Z}^2 \oplus \mathbb{Z}_{\eta}$.
\label{poss1}
\end{enumerate}

\section{A K-teoria de $\mathcal{A}^{\diamond}$}
 
A idéia usada na seção anterior será reproduzida aqui para calcular os K-grupos de $\mathcal{A}^{\diamond}$. Começamos construindo a seqüência exata curta envolvendo a álgebra e seu ideal comutador:
\begin{equation}
0 \ \longrightarrow \ \mathcal{E}^{\diamond} \  \stackrel{i}{\longrightarrow} \ \mathcal{A}^{\diamond} \  
\stackrel{\pi}{\longrightarrow} \ \frac{\mathcal{A}^{\diamond}}{\mathcal{E}^{\diamond}} \ \longrightarrow \ 0.
\end{equation}

Do teorema \ref{gamma'}, temos o isomorfismo entre ${\mathcal{E}^{\diamond}}$ e $C(S^{1},{\cal{K}}_{\mathbb{Z}\times \mathbb{S}^1})$. Considere a seqüência exata cindida
\begin{center}
\begin{picture}(440,20)(0,0)
\put(65,0){$0$}\put(80,5){\vector(1,0){20}}
\put(110,0){$S{\mathcal{K}}_{\mathbb{Z}\times \mathbb{S}^1}$}\put(155,5){\vector(1,0){20}}\put(163,10){${i}$}
\put(185,0){$C(S^1, {\cal{K}}_{\mathbb{Z}\times \mathbb{S}^1})$}\put(260,6){\vector(1,0){20}}\put(267,12){${p}$}
\put(290,0){${\mathcal{K}}_{\mathbb{Z}\times \mathbb{S}^1}$}\put(280,2){\vector(-1,0){20}}\put(267,-6){${s}$}
\put(325,5){\vector(1,0){20}} \put(355,0){0,}
\end{picture}
\end{center}
onde $p(f)=f(1).$ Então,  
\begin{equation}
K_0(C(S^{1},{\cal{K}}_{\mathbb{Z}\times \mathbb{S}^1})) \cong K_0({\cal{K}}_{\mathbb{Z}\times \mathbb{S}^1}) = \mathbb{Z}[E]_0,
\label{k0k}
\end{equation}
onde o isomorfismo é induzido por $p$ e $E$ é uma projeção de posto 1 de ${\cal{K}}_{\mathbb{Z}\times \mathbb{S}^1},$ e $$K_1(C(S^{1},{\cal{K}}_{\mathbb{Z}\times \mathbb{S}^1}))\cong K_1(S{\cal{K}}_{\mathbb{Z}\times \mathbb{S}^1}) = \mathbb{Z}[z\mapsto zE+(1-E)]_1,$$
pela aplicação de Bott. 
Portanto 
$$ K_i({\mathcal{E}^{\diamond}}) \cong \mathbb{Z} , ~i=0,1. $$

\begin{prop} $K_0({\mathcal{A}^{\diamond}}/{\mathcal{E}^{\diamond}}) \cong \mathbb{Z}^4$ e $K_1({\mathcal{A}^{\diamond}}/{\mathcal{E}^{\diamond}})
\cong \mathbb{Z}^4$.
\end{prop}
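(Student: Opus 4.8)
The plan is to reduce the computation to the K-theory of $\mathcal{A}^{\dagger}/\mathcal{E}^{\dagger}$, already obtained in Proposition~\ref{primeiro}. By Proposition~\ref{mdiamond} there is a $\ast$-isomorphism $\mathcal{A}^{\diamond}/\mathcal{E}^{\diamond}\cong C(S^1\times\mathbb{S}^1\times S^1_{\infty})$, in which the first factor $S^1$ is the circle swept out by the symbol $e^{i\theta}$ of the periodic multipliers, while $\mathbb{S}^1\times S^1_{\infty}$ is exactly the symbol space ${\bf M}^{\dagger}$ of $\mathcal{A}^{\dagger}$ described in Theorem~\ref{mdagger}. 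Writing $A:=\mathcal{A}^{\dagger}/\mathcal{E}^{\dagger}\cong C(\mathbb{S}^1\times S^1_{\infty})$ and using $C(X\times Y)\cong C(X,C(Y))$, this says $\mathcal{A}^{\diamond}/\mathcal{E}^{\diamond}\cong C(S^1,A)$, the remaining circle being the periodic one.

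First I would set up the split short exact sequence
$$0\longrightarrow SA \stackrel{i}{\longrightarrow} C(S^1,A)\stackrel{p}{\longrightarrow} A\longrightarrow 0,$$
where $p$ is evaluation at the point $e^{i\theta}=1$ and $s\colon A\to C(S^1,A)$ sends an element to the corresponding constant function, so $p\circ s=Id_{A}$. By the splitting identity recalled above, $K_j(C(S^1,A))=i_{\ast}(K_j(SA))\oplus s_{\ast}(K_j(A))$ for $j=0,1$. Combining $K_0(SA)\cong K_1(A)$ (Theorem~\ref{theta}), $K_1(SA)\cong K_0(A)$ (Bott periodicity), and $K_0(A)\cong K_1(A)\cong\mathbb{Z}^2$ (Proposition~\ref{primeiro}), one gets
$$K_0(\mathcal{A}^{\diamond}/\mathcal{E}^{\diamond})\cong K_1(A)\oplus K_0(A)\cong\mathbb{Z}^4,\qquad K_1(\mathcal{A}^{\diamond}/\mathcal{E}^{\diamond})\cong K_0(A)\oplus K_1(A)\cong\mathbb{Z}^4 .$$

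Finally I would exhibit explicit generators, which will be needed for the index computations later in the chapter. Pulling back along $s_{\ast}$ the four generators of $K_{\ast}(A)$ found in Proposition~\ref{primeiro} (the classes of $I$, of $Q(\texttt{x}(M_x),A_5,A_6)$, of $\texttt{x}(M_x)$ and of $A_5+iA_6$, now regarded as constant in $e^{i\theta}$), and pushing forward along $i_{\ast}$ the images of those same classes under the isomorphism $\theta_A$ and under the Bott map $\beta_A$, one obtains four generators of each K-group, which I would describe through the symbol map $\sigma^{\diamond}$: in particular $\beta_A([1]_0)$ produces the new generator of $K_1$, represented by the periodic multiplier $p_1(M_t)$ of symbol $e^{i\theta}$, while applying $\theta_A$ to the two $K_1$-generators of $A$ produces, via the matrix $Q$, the two new generators of $K_0$. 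The K-theoretic computation itself is routine — it is essentially the standard inductive evaluation $K_{\ast}(C(\mathbb{T}^3))\cong\mathbb{Z}^4$ — so the only point that requires care is the bookkeeping of these generators and of the maps $\theta_A$, $\beta_A$ along the two circle directions, and then transporting everything back to $\mathcal{A}^{\diamond}/\mathcal{E}^{\diamond}$ through the isomorphism of Proposition~\ref{mdiamond}.
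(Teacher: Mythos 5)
Your proof is correct, and it is in essence the paper's own method: identify $\mathcal{A}^{\diamond}/\mathcal{E}^{\diamond}$ with the continuous functions on the three-torus $S^1\times\mathbb{S}^1\times S^1_{\infty}$ (Proposi\c{c}\~ao \ref{mdiamond}), peel off one circle by a split short exact sequence, use $K_j(C(S^1,A))\cong K_j(SA)\oplus K_j(A)$ together with $K_0(SA)\cong K_1(A)$ (Teorema \ref{theta}) and Bott periodicity. The only real difference is the order of factorization: you split off the periodic circle and quote Proposi\c{c}\~ao \ref{primeiro} directly for $A=\mathcal{A}^{\dagger}/\mathcal{E}^{\dagger}\cong C(\mathbb{S}^1\times S^1_{\infty})$, whereas the paper splits off $S^1_{\infty}$, first redoing the two-torus step for $Y=C(\mathbb{S}^1,C(S^1))$ (the same computation as in Proposi\c{c}\~ao \ref{primeiro}, with the periodic circle in place of $S^1_{\infty}$) and only then suspending over $S^1_{\infty}$. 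Your route is a bit more economical and certainly proves $K_0\cong K_1\cong\mathbb{Z}^4$, but it yields a different basis in the ``mixed'' classes: your new $K_0$-generators carry the periodic-circle coordinates in the last two arguments of $Q$ (classes of the form $[Q(\texttt{x}(M_x),\cdot,\cdot)]_0$ and $[Q(A_5+iA_6,\cdot,\cdot)]_0$ evaluated along $e^{i\theta}$), and your fourth $K_1$-generator is $[\,1_2+(e^{iM_t}-1)Q(\texttt{x}(M_x),A_5,A_6)\,]_1$, while the paper's basis contains $[Q(e^{iM_t},\texttt{a}(M_x),\texttt{b}(M_x))]_0$, $[Q(e^{iM_t},A_5,A_6)]_0$ and $[\,I+(A_5+iA_6-1)Q(e^{iM_t},\texttt{a}(M_x),\texttt{b}(M_x))\,]_1$, i.e.\ with $e^{iM_t}$ as the unitary entry of $Q$. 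Both are legitimate bases of $\mathbb{Z}^4$, so the proposition itself is established; just be aware that the subsequent computations of $\delta_1^{\diamond}$ and $\delta_0^{\diamond}$ (Teorema \ref{sobre} and the discussion following it, and later the comparison with $\delta_0$ for $\mathcal{A}$) are performed on the paper's specific generators, so with your basis those index calculations would have to be redone or translated through the change of basis.
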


\begin{proof} Pela proposição \ref{mdiamond}, conhecemos o espaço símbolo de $\mathcal{A}^{\diamond}$. Comecemos então determinando $K_0$ e $K_1$ de $C(S^1_{\infty} \times \mathbb{S}^1 \times S^1)$ (mudamos a ordem do produto cartesiano por ser mais conveniente para nós). Para facilitar os nossos cálculos, vamos reescrever $C(S^1_{\infty} \times \mathbb{S}^1 \times S^1)$
como $C(S^1_{\infty}, C(\mathbb{S}^1, C(S^1))).$ Lembrando que $S^1$ é o espaço símbolo das funções $2\pi$-periódicas, temos que $K_0(C(S^1))= \mathbb{Z}[1]_0$ e  $K_1(C(S^1))= \mathbb{Z}[\texttt{z}:e^{i\theta}\mapsto e^{i\theta}]_1.$ 

Com base no que foi feito na demonstração da proposição \ref{primeiro} podemos concluir que 
$$K_0(C(\mathbb{S}^1, C(S^1))) = \mathbb{Z}[1]_0 \oplus \mathbb{Z}[x=a+ib \mapsto Q( \texttt{z} , a, b)]_0$$
e
$$K_1(C(\mathbb{S}^1, C(S^1))) = \mathbb{Z}[\texttt{x}]_1 \oplus \mathbb{Z}[\texttt{z}]_1.$$
Observe que aqui trabalhamos com círculos (de origem) diferentes daqueles da proposição \ref{primeiro}, mas os cálculos são os mesmos. 

Denote $Y:=C(\mathbb{S}^1, C(S^1)).$ Agora usamos a seqüência envolvendo a suspensão de $Y, $ 
\begin{center}
\begin{picture}(440,17)(0,0)
\put(85,0){$0$}\put(100,5){\vector(1,0){20}}
\put(135,0){$SY$}\put(160,5){\vector(1,0){20}}\put(168,10){${i}$}
\put(190,0){$C(S^1_{\infty}, Y)$}\put(250,6){\vector(1,0){20}}\put(258,12){${p}$}
\put(290,0){$Y$}\put(270,2){\vector(-1,0){20}}\put(258,-6){${s}$}
\put(310,5){\vector(1,0){20}} \put(340,0){0,}
\end{picture}
\end{center}
%onde $i$ é a inclusão, $p$ é a evaluação no ponto 1 e $s$ é tal que $p\circ s = Id_{Y}$. 
Então 
$$
K_i(C(S^1_{\infty}, Y)) = i_{\ast}(K_i(SY))\oplus s_{\ast}(K_i(Y)),~i=0,1.
$$
Com esta igualdade, podemos calcular $K_i(C(S^1_{\infty}, Y)), ~i=0,1.$

\begin{itemize}
\item $K_0(C(S^1_{\infty}, Y)):$

Vamos calcular $s_{\ast}$ nos geradores de $K_0(Y):$
$$ s_{\ast}([1]_0)=[w \mapsto 1]_0 ,$$
$$ s_{\ast}([a+ib \mapsto Q(\texttt{z}, a, b)]_0)=[w\mapsto \left(a+ib \mapsto Q(\texttt{z}, a, b)\right)]_0 .$$
Para calcular $i_{\ast}(K_0(SY))$ usamos o isomorfismo ${\theta}_Y : K_1(Y) \rightarrow K_0(SY) :$
$$ {\theta}_Y([\texttt{x}]_1) = [w=u+iv \mapsto Q(\texttt{x}, u, v)]_0 , $$  
$$ {\theta}_Y([\texttt{z}]_1) = [w=u+iv \mapsto Q(\texttt{z}, u, v)]_0.$$
Assim, podemos concluir que $K_0(C(S^1_{\infty}\times \mathbb{S}^1\times S^1))$ é igual a 
$$
\mathbb{Z}[(w, x,e^{i\theta})\mapsto 1]_0 
\oplus \mathbb{Z}[(w, a+ib,e^{i\theta})\mapsto Q(e^{i\theta}, a, b)]_0 
$$
\begin{equation}
\oplus \mathbb{Z}[(u+iv, x,e^{i\theta})\mapsto Q(x, u, v)]_0 
\oplus \mathbb{Z}[(u+iv, x,e^{i\theta})\mapsto Q(e^{i\theta}, u, v)]_0 
\end{equation}

\item $K_1(C(S^1_{\infty}, Y)):$

$s_{\ast}$ em $K_1(Y)$:
$$ s_{\ast}([\texttt{x}]_1)=[w \mapsto \texttt{x}]_1  ~~~~\mbox{e} ~~~~ 
s_{\ast}([\texttt{z}]_1)=[w \mapsto \texttt{z}]_1 .$$
Usamos o isomorfismo de Bott para determinar os geradores de $K_1(SY):$
$$ \beta_{Y}([1]_0)=[\texttt{w}]_1 ,$$
onde $ \texttt{w}$ é a função identidade de $C(S_{\infty}^1),$ e 
$$ \beta_{Y}([a+ib\mapsto Q(\texttt{z}, a, b)]_0)=[w \mapsto (a+ib\mapsto 1_{2} + (w -1)\cdot Q(\texttt{z}, a, b))]_1, $$
onde $1_2$ representa a matriz identidade de dimensão 2. 
Temos então que $ K_1(C(S^1_{\infty}\times \mathbb{S}^1\times S^1))$ é igual a
$$ \mathbb{Z}[(w, x,e^{i\theta})\mapsto x]_1 \oplus \mathbb{Z}[(w, x,e^{i\theta})\mapsto {w}]_1 \oplus \mathbb{Z}[(w, x,e^{i\theta})\mapsto e^{i\theta}]_1 $$
\begin{equation}
\oplus\mathbb{Z}[(w, a+ib,e^{i\theta})\mapsto 1_{2} + (w-1)Q(e^{i\theta}, a, b)]_1
\end{equation}
\end{itemize}

Para determinarmos os geradores de $K_0$ e $K_1$  de $\mathcal{A}^{\diamond}/\mathcal{E}^{\diamond}$, usamos o símbolo $\sigma^{\diamond}: \mathcal{A}^{\diamond} \rightarrow C(S^1_{\infty} \times \mathbb{S}^1 \times S^1)$. %onde $\sigma^{\diamond}$ nos geradores comuns de $\mathcal{A}^{\dagger}$ e $\mathcal{A}^{\diamond}$ têm os mesmos símbolos e $$ \sigma_{e^{iM_{\theta}}} (w, x, e^{i\theta}) =  e^{i\theta}. $$
Considerando $Q$ como em \ref{matriz2} definida para operadores de $\mathcal{A}$, segue que
\begin{eqnarray*}
K_0({\mathcal{A}^{\diamond}}/{\mathcal{E}^{\diamond}}) & = & \mathbb{Z}[[I]_{\mathcal{E}^{\diamond}}]_0 \oplus 
\mathbb{Z}[[Q( e^{iM_{t}} , \texttt{a}(M_x),\texttt{b}(M_x))]_{\mathcal{E}^{\diamond}}]_0 \oplus 
\mathbb{Z}[[Q( e^{iM_{t}} , A_5, A_6)]_{\mathcal{E}^{\diamond}}]_0 \oplus \nonumber \\
& &  \mathbb{Z}[[Q( \texttt{x}(M_x), A_5, A_6)]_{\mathcal{E}^{\diamond}}]_0 
\end{eqnarray*}
\begin{eqnarray*}
K_1({\mathcal{A}^{\diamond}}/{\mathcal{E}^{\diamond}}) & = & \mathbb{Z}[[e^{iM_{t}}]_{\mathcal{E}^{\diamond}}]_1 \oplus 
\mathbb{Z}[[\texttt{x}(M_x)]_{\mathcal{E}^{\diamond}}]_1 \oplus 
\mathbb{Z}[[A_5 + i A_6]_{\mathcal{E}^{\diamond}}]_1 \oplus \nonumber \\
& &  \mathbb{Z}[[Id + (A_5 + i A_6 - 1)Q( e^{iM_{t}} , \texttt{a}(M_x),\texttt{b}(M_x))]_{\mathcal{E}^{\diamond}}]_1 ,
\end{eqnarray*}
onde 
$e^{iM_{t}}$ é o operador de multiplicação dado por 
$$e^{iM_{t}}(u)(t,x) = e^{it}u(t,x),$$
e $\texttt{a}(M_x)$, $ \texttt{b}(M_x)$  são dados por:
$$\texttt{a}(M_x)(u)(t,x=a+ib) = a\cdot u(t,x)~, ~~ \texttt{b}(M_x)(u)(t,x=a+ib) = b \cdot u(t,x). $$ 
\end{proof}

Para obtermos a K-teoria de $\mathcal{A}^{\diamond}$, vamos calcular as aplicações exponencial e do índice para a seqüência exata de seis termos em K-teoria abaixo:
\begin{equation}
\begin{array}{ccccc}
 \mathbb{Z} \cong K_0(\mathcal{E}^{\diamond})  & \! \stackrel{i_*}{\longrightarrow} & \! 
 K_0(\mathcal{A}^{\diamond})  & \! \stackrel{{\pi}_*}{\longrightarrow} & \!
 K_0(\mathcal{A}^{\diamond}/{\mathcal{E}}^{\diamond}) \cong \mathbb{Z}^4   \\ \\    
 {\delta}_1^{\diamond} \ \uparrow & \! ~  & \! ~  & \! ~ & \! \downarrow \ {\delta}_0^{\diamond}  \\ \\
 \mathbb{Z}^4 \cong K_1(\mathcal{A}^{\diamond}/{\mathcal{E}}^{\diamond}) & \! \stackrel{{\pi}_*}{\longleftarrow} & \! 
 K_1(\mathcal{A}^{\diamond})  & \! \stackrel{{i}_*}{\longleftarrow} & \!
 K_1(\mathcal{E}^{\diamond})\cong \mathbb{Z}
\end{array}
\label{sgr}
\end{equation}

Para determinar $\delta_1^{\diamond}$,  conseguimos mostrar que vale a proposição análoga da proposição 3 de \cite{cintia}  para $\mathcal{A}^{\diamond}$.

\begin{prop}\label{delta1diam} Seja $A$ em $\mathcal{A}^{\diamond}$ tal que $[A]_{\mathcal{E}^{\diamond}}$ seja inversível em $\mathcal{A}^{\diamond}/\mathcal{E}^{\diamond}.$  Então ${\gamma}_{A}'(z)$ é um operador de Fredholm em $\mathcal{L}_{\mathbb{Z}\times \mathbb{S}^1}$ para todo $z \in S^1$ e
$$ 
{\delta}_{1}^{\diamond}(\left[[A]_{\mathcal{E}^{\diamond}}]_1\right) =
\mathsf{ind}{\gamma}_{A}'(1)[E]_0 \in K_0({\cal{K}}_{\mathbb{Z}\times \mathbb{S}^1}),
$$
onde $\mathsf{ind}$ denota o índice de Fredholm e $E$ é uma projeção de posto 1 em ${\mathcal{K}}_{\mathbb{Z}\times \mathbb{S}^1}$.
\end{prop}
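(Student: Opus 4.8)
A proposi��o tem duas partes: (1) para cada $z\in S^1$, o operador $\gamma_A'(z)$ � de Fredholm em $\mathcal{L}_{\mathbb{Z}\times\mathbb{S}^1}$; (2) a aplica��o do �ndice $\delta_1^{\diamond}$ aplicada em $[[A]_{\mathcal{E}^{\diamond}}]_1$ � dada pelo �ndice de Fredholm de $\gamma_A'(1)$ vezes $[E]_0$. Come�arei pela parte (1). Como $[A]_{\mathcal{E}^{\diamond}}$ � invers�vel em $\mathcal{A}^{\diamond}/\mathcal{E}^{\diamond}$, existe $B\in\mathcal{A}^{\diamond}$ com $AB-I,\ BA-I\in\mathcal{E}^{\diamond}$. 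Aplicando o $\ast$-homomorfismo $\gamma'$ (cuja restri��o a $\mathcal{E}^{\diamond}$ tem imagem $C(S^1,\mathcal{K}_{\mathbb{Z}\times\mathbb{S}^1})$), obtenho que $\gamma_A'\gamma_B'-I$ e $\gamma_B'\gamma_A'-I$ pertencem a $C(S^1,\mathcal{K}_{\mathbb{Z}\times\mathbb{S}^1})$. Avaliando em $z\in S^1$, conclui-se que $\gamma_A'(z)$ � invert�vel m�dulo $\mathcal{K}_{\mathbb{Z}\times\mathbb{S}^1}$, logo Fredholm, com $\gamma_B'(z)$ como parametrix. Al�m disso, o �ndice de Fredholm $z\mapsto\mathsf{ind}\,\gamma_A'(z)$ � localmente constante (invariante por perturba��es compactas e cont�nuas em norma), portanto constante em cada componente conexa de $S^1$; como $S^1$ � conexo, $\mathsf{ind}\,\gamma_A'(z)$ n�o depende de $z$, e em particular � igual a $\mathsf{ind}\,\gamma_A'(1)$.

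Para a parte (2), a estrat�gia � a mesma usada na proposi��o 3 de \cite{cintia}. Aplico o lema \ref{lema1} � seq��ncia exata curta $0\to\mathcal{E}^{\diamond}\to\mathcal{A}^{\diamond}\to\mathcal{A}^{\diamond}/\mathcal{E}^{\diamond}\to 0$: escolhendo $a=A$ e $b=B$ como acima (com $\pi(a)=[A]_{\mathcal{E}^{\diamond}}$ e $\pi(b)=[A]_{\mathcal{E}^{\diamond}}^{-1}$), obtenho
$$
\delta_1^{\diamond}([[A]_{\mathcal{E}^{\diamond}}]_1)=
\left[\left(\begin{array}{cc} 2AB-(AB)^2 & A(2-BA)(1-BA)\\ (1-BA)B & (1-BA)^2\end{array}\right)\right]_0
-\left[\left(\begin{array}{cc}1&0\\0&0\end{array}\right)\right]_0,
$$
onde a matriz $2\times 2$ representa uma proje��o em $M_2(\widetilde{\mathcal{E}^{\diamond}})$ cuja parte escalar � $\mathrm{diag}(1,0)$. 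Agora transporto essa classe de $K_0(\mathcal{E}^{\diamond})$ para $K_0(C(S^1,\mathcal{K}_{\mathbb{Z}\times\mathbb{S}^1}))$ via o isomorfismo $\gamma'|_{\mathcal{E}^{\diamond}}$ da proposi��o \ref{gamma'}: a proje��o acima � levada na fun��o cont�nua $z\mapsto P(z)$, onde $P(z)$ � a proje��o obtida substituindo $A,B$ por $\gamma_A'(z),\gamma_B'(z)$. Como $\gamma_A'(z),\gamma_B'(z)$ s�o Fredholm inversos m�dulo compactos um do outro, $P(z)$ � exatamente a proje��o de posto finito que aparece na express�o padr�o do �ndice: $[P(z)]_0-[\mathrm{diag}(1,0)]_0 = \mathsf{ind}\,\gamma_A'(z)\,[E]_0$ em $K_0(\mathcal{K}_{\mathbb{Z}\times\mathbb{S}^1})$.

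Resta juntar as pe�as. Usando o isomorfismo $K_0(C(S^1,\mathcal{K}_{\mathbb{Z}\times\mathbb{S}^1}))\cong K_0(\mathcal{K}_{\mathbb{Z}\times\mathbb{S}^1})$ induzido pela avalia��o $p(f)=f(1)$ (seq��ncia cindida da p�gina anterior, equa��o \ref{k0k}), a classe $[z\mapsto P(z)]_0$ corresponde a $[P(1)]_0$. Combinando com a f�rmula do lema \ref{lema1} e com a identifica��o acima, obtenho
$$
\delta_1^{\diamond}([[A]_{\mathcal{E}^{\diamond}}]_1)=[P(1)]_0-[\mathrm{diag}(1,0)]_0=\mathsf{ind}\,\gamma_A'(1)\,[E]_0,
$$
como desejado. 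O ponto mais delicado � garantir a compatibilidade entre a f�rmula alg�brica do lema \ref{lema1} (que vale para a seq��ncia abstrata $0\to J\to\mathcal{A}\to\mathcal{A}/J\to 0$) e a interpreta��o anal�tica do �ndice de Fredholm: � preciso verificar que, sob o isomorfismo $\gamma'$, a proje��o $2\times 2$ do lema se identifica com a proje��o sobre o n�cleo (m�dulo compactos) associada a $\gamma_A'(z)$, o que decorre da f�rmula padr�o que expressa $\mathsf{ind}\,T$ em termos de uma parametrix $S$ de $T$ (ver \cite{rordam}, proposi��o 9.4.2, e sua demonstra��o) — aqui aplicada fibra a fibra, e com o cuidado adicional de que tudo varia continuamente em $z$.
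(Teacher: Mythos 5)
Your proposal is correct and follows essentially the same route as the paper's own proof: Fredholmness of $\gamma'_A(z)$ via a parametrix $B$ with $I-AB,\ I-BA\in\mathcal{E}^{\diamond}$ pushed through $\gamma'$, then Lema \ref{lema1} applied to the sequence $0\to\mathcal{E}^{\diamond}\to\mathcal{A}^{\diamond}\to\mathcal{A}^{\diamond}/\mathcal{E}^{\diamond}\to 0$, transported by $\gamma'|_{\mathcal{E}^{\diamond}}$ and evaluation at $1$ (the isomorphism (\ref{k0k})), and finally identified with $\mathsf{ind}\,\gamma'_A(1)[E]_0$ by comparing with the index map of the Calkin-type sequence $0\to\mathcal{K}_{\mathbb{Z}\times\mathbb{S}^1}\to\mathcal{L}_{\mathbb{Z}\times\mathbb{S}^1}\to\mathcal{L}_{\mathbb{Z}\times\mathbb{S}^1}/\mathcal{K}_{\mathbb{Z}\times\mathbb{S}^1}\to 0$, which is the Fredholm index (\cite{rordam}, 9.4.2) and, by Lema \ref{lema1}, is computed by the very same projection matrix. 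Only a cosmetic slip: the projection $P(z)$ is not itself of finite rank --- only $P(z)-\mathrm{diag}(1,0)$ is compact --- but this does not affect the argument.
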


\begin{proof} Seja $B$ em $\mathcal{A}^{\diamond}$ tal que $ I - AB $ e $ I - BA $ pertençam a $\mathcal{E}^{\diamond}.$ Pela proposição \ref{gamma'}, $ {\gamma}_{I-AB}'= {\gamma}_{I}' - {\gamma}_{A}'{\gamma}_{B}' $ e $ {\gamma}_{I-BA}'= {\gamma}_{I}' - {\gamma}_{B}'{\gamma}_{A}' $ pertencem a 
$C(S^1, \mathcal{K}_{\mathbb{Z}\times \mathbb{S}^1})$. Assim, para cada $z\in S^1,~{\gamma}_{A}'(z)$ possui um elemento inverso, módulo $\mathcal{K}_{\mathbb{Z}\times \mathbb{S}^1},$ e portanto ${\gamma}_{A}'(z)$ é um operador de Fredholm em $\mathcal{L}_{\mathbb{Z}\times \mathbb{S}^1}.$

Considerando a aplicação dada pela projeção canônica $\pi : \mathcal{A}^{\diamond}\rightarrow \mathcal{A}^{\diamond}/ \mathcal{E}^{\diamond}$, tome $a=A$ uma pré-imagem para $u=[A]_{\mathcal{E}^{\diamond}}.$ Seja $B$ em $\mathcal{A}^{\diamond}$ tal que $u^{-1}=[B]_{\mathcal{E}^{\diamond}},$ então $B$ é uma pré-imagem para $u^{-1}.$ Usando o lema \ref{lema1} e o isomorfismo \ref{k0k}, temos que ${\delta}_1([[A]_{\mathcal{E}^{\diamond}}]_1)$ é igual a 
$$
\left[
\left(\begin{array}{cc}
2{\gamma}_{A}'(1){\gamma}_{B}'(1)-({\gamma}_{A}'(1){\gamma}_{B}'(1))^2 & {\gamma}_{A}'(1)(2-{\gamma}_{B}'(1){\gamma}_{A}'(1))(1-{\gamma}_{B}'(1){\gamma}_{A}'(1)) \\
(1-{\gamma}_{B}'(1){\gamma}_{A}'(1)){\gamma}_{B}'(1) & (I-{\gamma}_{B}'(1){\gamma}_{A}'(1))^2 \\
\end{array}\right)
\right]_0
-
\left[
\left(\begin{array}{cc}
I & 0 \\
0 & 0 \\
\end{array}\right)
\right]_0.
$$

Para a seqüência exata
$$  0 \ \longrightarrow \ {\cal{K}}_{\mathbb{Z}\times \mathbb{S}^1} \  \stackrel{i}{\longrightarrow} \ 
{\cal{L}}_{\mathbb{Z}\times \mathbb{S}^1} \  
\stackrel{\pi}{\longrightarrow} \ 
\frac{{\cal{L}}_{\mathbb{Z}\times \mathbb{S}^1}}{{\cal{K}}_{\mathbb{Z}\times \mathbb{S}^1}} \ \longrightarrow \ 0, 
$$
temos que a aplicação do índice $\delta_1: K_1(\mathcal{L}_{\mathbb{Z}\times \mathbb{S}^1}/\mathcal{K}_{\mathbb{Z}\times \mathbb{S}^1})\rightarrow K_0(\mathcal{K}_{\mathbb{Z}\times \mathbb{S}^1})$ é dada pelo índice de Fredholm. Assim, 
$$\delta_1([[{\gamma}_{A}'(1)]_{\mathcal{K}}]_1) = \mathsf{ind}{\gamma}_{A}'(1)[E]_0.$$
Mas pelo lema \ref{lema1}, $\delta_1([[{\gamma}_{A}'(1)]_{\mathcal{K}}]_1)$ é dada pela mesma matriz de ${\delta}_1([[A]_{\mathcal{E}^{\diamond}}]_1)$. Logo, 
$$ \delta_1([[A]_{\mathcal{E}^{\diamond}}]_1) = \mathsf{ind}{\gamma}_{A}'(1)[E]_0.$$

\end{proof}

\begin{teo}\label{sobre} A aplicação ${\delta}_1^{\diamond}$ em (\ref{sgr}) é sobrejetora.
\end{teo}

\begin{proof} Calculemos ${\delta}_1^{\diamond}$ nos geradores de $K_1(\mathcal{A}^{\diamond}/{\mathcal{E}^{\diamond}}).$ É fácil ver que os operadores de multiplicação $ \texttt{x}(M_x),~ e^{iM_t} $ são unitários em $\mathcal{A}^{\diamond}$ e portanto são pré-imagens para $[\texttt{x}(M_x)]_{{\mathcal{E}^{\diamond}}},~ [e^{iM_t}]_{{\mathcal{E}^{\diamond}}} $. Logo, 
$$
{\delta}_1^{\diamond}([[\texttt{x}(M_x)]_{\mathcal{E}^{\diamond}}]_1) = {\delta}_1^{\diamond}([[e^{iM_t}]_{\mathcal{E}^{\diamond}}]_1) = 0.
$$

Usaremos a proposição \ref{delta1diam} para calcular ${\delta}_1^{\diamond}$ nos outros dois geradores. 
Observe que $\gamma_{A}'(1)$ significa $\gamma_{A}'(e^{2\pi i \varphi}),$ com $\varphi = 0.$
Para o elemento  $[[A_5+iA_6]_{\mathcal{E}^{\diamond}}]_1$, temos que $\gamma_{A_5+iA_6}'(1)$ em $\mathcal{L}_{\mathbb{Z}\times\mathbb{S}^1}$ é o operador de multiplicação pela seqüência
\begin{eqnarray}
%Y_0[-(0-M_j)\tilde{\Lambda}(0-M_j)] Y_0 + i Y_0 \left[\frac{1}{i}\frac{\partial}{\partial x} \tilde{\Lambda}(0-M_j)\right] Y_0 = 
\left(j\left(1+j^2-\frac{\partial^2}{\partial{\beta}^2}\right)^{-1/2} + 
i \frac{1}{i} \frac{\partial}{\partial{\beta}}\left(1+j^2-\frac{\partial^2}{\partial{\beta}^2}\right)^{-1/2}\right)_j 
\label{a5a6'}
\end{eqnarray}
O próximo passo é calcular o índice de Fredholm deste operador. Para isso, podemos vê-lo como um operador em $\mathcal{L}(L^2(\mathbb{Z}) \otimes L^2(\mathbb{S}^1))$ e assim conjugá-lo com $I_{\mathbb{Z}}\otimes F_d$, ou seja, a conjugação pela transformada de Fourier discreta ocorrerá apenas em $L^2(\mathbb{S}^1)$.   %de $\mathcal{L}_{\mathbb{Z}\times\mathbb{S}^1}$,
Obtemos portanto, um operador em $\mathcal{L}_{\mathbb{Z}\times\mathbb{Z}}$ de multiplicação pela seqüência 
$$
(j(1+j^2+k^2)^{-1/2} - i k(1+j^2+k^2)^{-1/2})_{j,k} ~\in ~ {\ell}^2(\mathbb{Z}\times\mathbb{Z}).
$$
Agora vamos calcular a dimensão do núcleo deste operador:
$$
(j(1+j^2+k^2)^{-1/2} - i k(1+j^2+k^2)^{-1/2}u_{j,k})_{j,k} = 0 ~~\Longleftrightarrow 
$$
$$
\frac{j- i k}{(1+j^2+k^2)^{-1/2}}u_{j,k} = 0 ~~\forall ~~j,~ k ~~\Longleftrightarrow 
u_{j,k}=0, ~\forall ~j~\mbox{ou}~k \neq 0,~\mbox{e}~u_{0,0}~\mbox{qualquer.} 
$$
Concluímos então que a dimensão do núcleo é 1. Usando o mesmo raciocínio, concluímos que a dimensão do núcleo de seu adjunto também é 1. Portanto, 
$$
\mathsf{ind}\gamma_{A_5+iA_6}'(1) = 0 .
$$ 
Logo, 
$$
{\delta}_1([[A_5 + i A_6]_{\mathcal{E}^{\diamond}}]_1) = 0.
$$

Considere o gerador $[[I+(A_5+iA_6-1)Q(e^{iM_t},\texttt{a}(M_x),\texttt{b}(M_x))]_{\mathcal{E}^{\diamond}}]_1.$ Então, 
\begin{eqnarray*}
{\gamma}_{I+(A_5+iA_6-1)Q(e^{iM_t},\texttt{a}(M_x),\texttt{b}(M_x))}'(1) &=& 
I + ({\gamma}_{A_5+iA_6}'(1) - 1){\gamma}_{Q(e^{iM_t},\texttt{a}(M_x),\texttt{b}(M_x))}'(1)
%& = &
\end{eqnarray*}
Já conhecemos o operador ${\gamma}_{A_5+iA_6}'(1)\in \mathcal{L}_{\mathbb{Z}\times\mathbb{S}^1}$ em (\ref{a5a6'}). O outro operador é dado por
\begin{equation}
{\gamma}_{Q(e^{iM_t},\texttt{a}(M_x),\texttt{b}(M_x))}'(1) = Q(Y_{-1},\texttt{a}(M_x), \texttt{b}(M_x))~ \in ~M_2(\mathcal{L}_{\mathbb{Z}\times\mathbb{S}^1}). 
\label{gamma'Q}
\end{equation}

Explicaremos primeiro os passos a serem tomados para o cálculo do índice do operador \linebreak ${\gamma}_{I+(A_5+iA_6-1)Q(e^{iM_t},\texttt{a}(M_x),\texttt{b}(M_x))}'(1)$, para depois efetuá-lo. 

A idéia será conjugar este operador com a transformada de Fourier discreta na variável inteira e obter um operador de $\mathcal{L}(L^2(S^1\times\mathbb{S}^1).$ Temos que a variedade $S^1\times\mathbb{S}^1$ é compacta e tem \textit{Todd class} (\cite{fedosov}, (11)) igual a 1. Com as hipóteses satisfeitas, usaremos a fórmula do índice para operadores pseudodiferenciais elípticos de Fedosov \cite{fedosov}, uma particularização da fórmula de Atiyah-Singer, a qual pode ser escrita em termos do símbolo do operador:
\begin{equation}
\mathsf{ind} A = \frac{(-1)^{n+1}}{(2\pi i)^n}\frac{(n-1)!}{(2n-1)!}\int_{S(M)}Tr({\sigma}^{-1}d\sigma)^{2n-1} ,
\label{indice}
\end{equation}
onde $n$ é dimensão da variedade $M$, $S(M)$ é o fibrado das coesferas de $M$, $\sigma$ é o símbolo principal do operador pseudodiferencial $A$ e $({\sigma}^{-1}d\sigma)^{2n-1}$ é uma $(2n-1)$-forma. %isto é, esta potência significa produto exterior.
O símbolo principal coincide com o $\sigma$-símbolo que definimos neste trabalho,\cite{cordes3} e \cite{severino}.

Conjugando o operador ${\gamma}_{A_5+iA_6}'(1)$ com $F_d \otimes I_{\mathbb{S}^1}$, temos:
$$
F_d^{-1}{\gamma}_{A_5+iA_6}(1,+1)F_d = 
-\frac{1}{i}\frac{\partial}{\partial{\alpha}}\left(1-\frac{\partial^2}{\partial{\alpha}^2}-\frac{\partial^2}{\partial{\beta}^2}\right)^{-1/2} + 
i \frac{1}{i} \frac{\partial}{\partial{\beta}}\left(1-\frac{\partial^2}{\partial{\alpha}^2}-\frac{\partial^2}{\partial{\beta}^2}\right)^{-1/2},
$$
com $e^{i\alpha} \in S^1$~ e ~$x = e^{i{\beta}} \in \mathbb{S}^1.$ Para $Q(Y_{-1},\texttt{a}(M_x), \texttt{b}(M_x))$, 
$$
F_d^{-1}Q(Y_{-1},\texttt{a}(M_x), \texttt{b}(M_x))F_d = Q(F_d^{-1}Y_{-1}F_d,\texttt{a}(M_x), \texttt{b}(M_x)) = 
Q(e^{iM_{\alpha}},\texttt{a}(M_x), \texttt{b}(M_x))
$$

Lembrando que $\texttt{a}(M_x)$ e $\texttt{b}(M_x)$ são operadores de multiplicação pelas funções $\texttt{a} , \texttt{b} \in C^{\infty}(\mathbb{S}^1)$  tais que $ \texttt{a}: x = a+ib \mapsto Re(x) = a $, e $ \texttt{b}: x = a+ib \mapsto Im(x)= b$, podemos reescrevê-las em função de $x=e^{i\beta}$:
$$ \texttt{a}: e^{i\beta} \mapsto \cos\beta ~~, ~~~~ \texttt{b}: e^{i\beta} \mapsto \mbox{sen}\beta .$$

Obtemos então o seguinte operador em $M_2(\mathcal{L}_{S^1\times\mathbb{S}^1}):$
$$ T:=
I + \left( \frac{1}{i} 
\left( -\frac{\partial}{\partial{\alpha}} + i \frac{\partial}{\partial{\beta}} \right) \left(1-\frac{\partial^2}{\partial{\alpha}^2}-\frac{\partial^2}{\partial{\beta}^2}\right)^{-1/2} - 1 \right) 
Q(e^{iM_{\alpha}},\texttt{a}(M_{\beta}), \texttt{b}(M_{\beta}))
$$

Antes de irmos para o cálculo do índice de $T$, devemos determinar seu símbolo $\sigma$. Como $S(S^1\times \mathbb{S}^1)= 
S^1\times \mathbb{S}^1\times S^1_{\infty},$ onde $S^1_{\infty}$ representa o círculo proveniente do fibrado, temos que $\sigma_{T} \in C(S^1\times \mathbb{S}^1\times S^1_{\infty})$. Calculemos o símbolo de cada operador envolvido em $T$ separadamente. 
\begin{enumerate}
\item $\sigma_{e^{iM_{\alpha}}}((e^{i\alpha}, e^{i\beta}, e^{i\lambda}))= e^{i\alpha} $
\item $\sigma_{\texttt{a}(M_{\beta})}((e^{i\alpha}, e^{i\beta}, e^{i\lambda}))= Re(e^{i\beta})=\cos\beta $
\item $\sigma_{\texttt{b}(M_{\beta})}((e^{i\alpha}, e^{i\beta}, e^{i\lambda}))= Im(e^{i\beta})=\mbox{sen}\beta $
\item $\sigma_{\frac{1}{i}\frac{\partial}{\partial{\alpha}} \left(1-\frac{\partial^2}{\partial{\alpha}^2}-\frac{\partial^2}{\partial{\beta}^2}\right)^{-1/2}}
((e^{i\alpha}, e^{i\beta}, e^{i\lambda}))= Re (e^{i\lambda}) = \cos{\lambda} $
\item $\sigma_{\frac{1}{i}\frac{\partial}{\partial{\beta}}
\left(1-\frac{\partial^2}{\partial{\alpha}^2}-\frac{\partial^2}{\partial{\beta}^2}\right)^{-1/2}}
(e^{i\alpha}, e^{i\beta}, e^{i\lambda}))= Im( e^{i\lambda}) = \mbox{sen}{\lambda} $
\end{enumerate}
$$\Rightarrow \sigma_{T}((e^{i\alpha}, e^{i\beta}, e^{i\lambda})) = 
I + (-\cos\lambda + i \mbox{sen}\lambda - 1)Q(e^{i\alpha},\cos\beta,\mbox{sen}\beta)$$

No integrando temos
$$\sigma^{-1}d\sigma = \sigma^{-1}\left(\frac{\partial}{\partial\alpha}\sigma d\alpha + \frac{\partial}{\partial\beta}\sigma d\beta +
\frac{\partial}{\partial\lambda}\sigma d\lambda \right). $$
Chamando de $\sigma_{\alpha}= \frac{\partial}{\partial\alpha}\sigma , ~ \sigma_{\beta}=\frac{\partial}{\partial\beta}\sigma , ~
\sigma_{\lambda}= \frac{\partial}{\partial\lambda}\sigma,$ então
$$ (\sigma^{-1}d\sigma)^3 = [A_1 + A_2 +A_3 ] d\alpha d\beta d\lambda  $$
onde 
$$ A_1 = \sigma^{-1} \sigma_{\alpha} \sigma^{-1}(\sigma_{\beta} \sigma^{-1} \sigma_{\lambda} - \sigma_{\lambda} \sigma^{-1}\sigma_{\beta} ), $$
$$ A_2 = \sigma^{-1} \sigma_{\beta} \sigma^{-1}(\sigma_{\lambda} \sigma^{-1}\sigma_{\alpha} - \sigma_{\alpha} \sigma^{-1}\sigma_{\lambda} ), $$
$$ A_3 = \sigma^{-1} \sigma_{\lambda} \sigma^{-1}(\sigma_{\alpha}\sigma^{-1}\sigma_{\beta} - \sigma_{\beta} \sigma^{-1} \sigma_{\alpha}). $$

A partir daqui usamos o software \textit{Maple} para resolver diversos cálculos, como as derivadas parciais, os produtos de matrizes, o traço e por último a integral da fórmula (\ref{indice}). No apêndice pode ser encontrado os comandos usados para fazer estes cálculos. O valor da integral é $24{\pi}^{2}$ e portanto o índice de $T$ é 
$$
\mathsf{ind}~ T = \frac{(-1)^{3}}{(2\pi i)^2}~\frac{1}{3!}~24{\pi}^{2} = 1
$$
Logo, o índice de ${\gamma}_{I+(A_5+iA_6-1)Q(e^{iM_t},\texttt{a}(M_x),\texttt{b}(M_x))}'(1)$ é 1 e portanto, 
$$
{\delta}_1([[ I+(A_5+iA_6-1)Q(e^{iM_t},\texttt{a}(M_x),\texttt{b}(M_x)) ]_{\mathcal{E}^{\diamond}}]_1) = [E]_0. 
$$
Assim concluímos que ${\delta}_1^{\diamond}$ é sobrejetora. 
\end{proof}

Analisando a aplicação exponencial, temos que ${\delta}_0^{\diamond}$ nos geradores  $[[I]_{\mathcal{E}^{\diamond}}]_0$ e \linebreak $[[Q(e^{iM_t},\texttt{a}(M_x),\texttt{b}(M_x))]_{\mathcal{E}^{\diamond}}]_0 $  é igual a 0, pois estes operadores são projeções em $\mathcal{A}^{\diamond}.$ Mas não determinamos a imagem de ${\delta}_0^{\diamond}$ pelo motivo já citado no cálculo de ${\delta}_0^{\dagger}.$
Portanto, temos uma das seguintes possibilidades:

\begin{enumerate}

\item Se $\delta_0^{\diamond} = 0$ então $ K_0(\mathcal{A}^{\diamond})\cong K_0(\mathcal{A}^{\diamond}/\mathcal{E}^{\diamond}) \cong \mathbb{Z}^4$. Temos ainda a seguinte seqüência 
$$ 0 \rightarrow K_1(\mathcal{E}^{\diamond}) \cong \mathbb{Z} \rightarrow K_1(\mathcal{A}^{\diamond}) \rightarrow Ker \delta_1^{\diamond} \cong \mathbb{Z}^3 \rightarrow 0 $$
Como $\mathbb{Z}^3$ é um módulo livre, então esta seqüência cinde. Além disso, $\mathbb{Z}$ e $\mathbb{Z}^3$ são abelianos. Portanto 
$ K_1(\mathcal{A}^{\diamond}) \cong \mathbb{Z}\oplus \mathbb{Z}^3 = \mathbb{Z}^4$.

\item Se $Im \delta_0^{\diamond} \cong \mu\mathbb{Z},$ para algum inteiro $\mu$ não nulo, então  $K_0(\mathcal{A}^{\diamond}) \cong ker \delta_0^{\diamond} \cong \mathbb{Z}^3 $ e 
$ K_1(\mathcal{A}^{\diamond}) \cong \mathbb{Z}^3 \oplus \mathbb{Z}_{\mu}, $ onde   $\mathbb{Z}_{\mu} \cong \mathbb{Z}/\mu\mathbb{Z}.$
\label{poss2}
\end{enumerate}

\subsection{A seqüência de Pimsner-Voiculescu}

No teorema \ref{prodcruz}, vimos que existe um isomorfismo $\varphi : \mathcal{B}^{\dagger}{\rtimes}_{\alpha}\mathbb{Z} \rightarrow {\mathcal{B}}^{\diamond},$ onde $\alpha$ é o automorfismo de translação por 1, $\mathcal{B}^{\dagger} = F^{-1} \mathcal{A}^{\dagger} F$ e $\mathcal{B}^{\diamond} = F^{-1} \mathcal{A}^{\diamond} F$. Em \cite{pv}, teorema 2.4, Pimsner e Voiculescu obtiveram uma seqüência exata de seis termos envolvendo apenas os K-grupos da álgebra inicial e da ágebra do produto cruzado.
$$
\begin{array}{ccccc}
 K_0({\mathcal{B}^{\dagger}})  & \! \stackrel{id_* - \alpha^{-1}_* }{\longrightarrow} & \! 
 K_0({\mathcal{B}^{\dagger}})  & \! \stackrel{i_*}{\longrightarrow} & \!
 K_0(\mathcal{B}^{\dagger}{\rtimes}_{\alpha}\mathbb{Z}) \vs  \\     
       \ \uparrow & \! ~  & \! ~  & \! ~ & \! \downarrow \        \vs \\ 
 K_1(\mathcal{B}^{\dagger}{\rtimes}_{\alpha}\mathbb{Z}) & \! \stackrel{i_*}{\longleftarrow} & \! 
 K_1({\mathcal{B}^{\dagger}})  & \! \stackrel{id_* - \alpha^{-1}_*}{\longleftarrow} & \!
 K_1({\mathcal{B}^{\dagger}})
\end{array}
$$
onde $id $ é a aplicação identidade em $\mathcal{B}^{\dagger}$, $\alpha^{-1}$ é o automorfismo de translação por -1 e  $i : \mathcal{B}^{\dagger} \rightarrow \mathcal{B}^{\dagger}{\rtimes}_{\alpha}\mathbb{Z} $ é a inclusão.   

Esta nova seqüência nos fornecerá mais respostas sobre a K-teoria de $\mathcal{B}^{\dagger}$ e ${\mathcal{B}}^{\diamond}$. Observe que 
$\varphi\circ i : \mathcal{B}^{\dagger}\rightarrow{\mathcal{B}}^{\diamond}$ é a aplicação de inclusão de $\mathcal{B}^{\dagger}$ em ${\mathcal{B}}^{\diamond}$. Denotando novamente por $i$ esta inclusão, podemos reescrever a seqüência exata de Pimsner-Voiculescu como abaixo:
\begin{equation}
\begin{array}{ccccc}
 K_0({\mathcal{B}^{\dagger}})  & \! \stackrel{id_* - \alpha^{-1}_* }{\longrightarrow} & \! 
 K_0({\mathcal{B}^{\dagger}})  & \! \stackrel{i_*}{\longrightarrow} & \!
 K_0({\mathcal{B}^{\diamond}}) \vs  \\     
       \ \uparrow & \! ~  & \! ~  & \! ~ & \! \downarrow \        \vs \\ 
 K_1({\mathcal{B}^{\diamond}}) & \! \stackrel{i_*}{\longleftarrow} & \! 
 K_1({\mathcal{B}^{\dagger}})  & \! \stackrel{id_* - \alpha^{-1}_*}{\longleftarrow} & \!
 K_1({\mathcal{B}^{\dagger}})
\end{array}
\end{equation}

Temos que $id_* - \alpha^{-1}_* \equiv 0$, pois $id$ e $\alpha^{-1}$ são homotópicas. De fato, considere $h:[0,1]\rightarrow Aut(\mathcal{B}^{\dagger})$ dada por $h_t = \alpha^{-t}$. Então $h_0 = id$ e $h_1=\alpha^{-1},$ e $t\mapsto h_t (B) = \alpha^{-t}(B)= B(\cdot +t)$ é contínua em $[0,1]$ para cada $B \in \mathcal{B}^{\dagger}$ pois $\mathcal{B}^{\dagger}\subset C_b(\mathbb{R}, \mathcal{L}_{\mathbb{S}^1}).$ 

Obtemos então, as seguintes seqüências exatas
$$ 0 \rightarrow K_0(\mathcal{B}^{\dagger}) \stackrel{i_*}{\rightarrow} K_0(\mathcal{B}^{\diamond}) \rightarrow K_1(\mathcal{B}^{\dagger}) \rightarrow 0, $$
$$ 0 \rightarrow K_1(\mathcal{B}^{\dagger}) \stackrel{i_*}{\rightarrow} K_1(\mathcal{B}^{\diamond}) \rightarrow K_0(\mathcal{B}^{\dagger}) \rightarrow 0. $$
Estas por sua vez, podem ser reescritas em função de $\mathcal{A}^{\dagger}$ e $\mathcal{A}^{\diamond}$, já que $K_i(\mathcal{B}^{\dagger})\cong K_i(\mathcal{A}^{\dagger})$ e $K_i(\mathcal{B}^{\diamond})\cong K_i(\mathcal{A}^{\diamond}), i=0,1.$ 
$$ 0 \rightarrow K_0(\mathcal{A}^{\dagger}) \stackrel{i_*}{\rightarrow} K_0(\mathcal{A}^{\diamond}) \rightarrow K_1(\mathcal{A}^{\dagger}) \rightarrow 0, ~~~~
 0 \rightarrow K_1(\mathcal{A}^{\dagger}) \stackrel{i_*}{\rightarrow} K_1(\mathcal{A}^{\diamond}) \rightarrow K_0(\mathcal{A}^{\dagger}) \rightarrow 0. $$
%De fato, a aplicação $i_*$ é agora a inclusão de $K_i(\mathcal{A}^{\dagger})$ em $K_i(\mathcal{A}^{\diamond}).$ 
Das possibilidades que tínhamos para $K_i({\mathcal{A}^{\dagger}}),$ página \pageref{poss1}, e $K_i({\mathcal{A}^{\diamond}})$, página \pageref{poss2}, $i=0,1,$ as únicas que fazem sentido na seqüência acima são
$$ 
K_0({\mathcal{A}^{\dagger}})\cong \mathbb{Z} ~, ~~K_1({\mathcal{A}^{\dagger}})\cong \mathbb{Z}^2\oplus \mathbb{Z}_{\eta} ~, ~~ 
K_0({\mathcal{A}^{\diamond}}) \cong \mathbb{Z}^3 ~,~~ K_1({\mathcal{A}^{\diamond}}) \cong \mathbb{Z}^3\oplus \mathbb{Z}_{\mu},
$$
com $\eta, \mu$ inteiros positivos não nulos. 

O grupo $K_0({\mathcal{A}^{\dagger}})$ tem como gerador $[Id]_0$, onde $Id$ é o operador identidade de $\mathcal{A}^{\dagger}.$ Assim,  $i_*([Id]_0) = [i(Id)]_0=[Id]_0 \in K_0({\mathcal{A}^{\diamond}}),$ onde $Id$ é o operador identidade de ${\mathcal{A}^{\diamond}}$, e como  $[Id]_0$ é um dos geradores de $K_0({\mathcal{A}^{\diamond}}),$ isso garante que 
$$\frac{K_0(\mathcal{A}^{\diamond})}{K_0(\mathcal{A}^{\dagger})} \cong \mathbb{Z}^2.$$

Logo,
$$ 
K_0({\mathcal{A}^{\dagger}})\cong \mathbb{Z} ~, ~~K_1({\mathcal{A}^{\dagger}})\cong \mathbb{Z}^2 ~, ~~ 
K_i({\mathcal{A}^{\diamond}}) \cong \mathbb{Z}^3 ~,~~ i=0,1,
$$
onde $\eta $ e $\mu$ são iguais a  1.

Deste resultado segue o seguinte corolário.

\begin{corolario}\label{delta0} A aplicação $\delta_0^{\diamond}$ é sobrejetiva.
\end{corolario}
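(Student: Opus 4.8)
The plan is to deduce the corollary from the bookkeeping already in place. Recall the six-term exact sequence (\ref{sgr}) attached to $0\to\mathcal{E}^{\diamond}\to\mathcal{A}^{\diamond}\to\mathcal{A}^{\diamond}/\mathcal{E}^{\diamond}\to 0$, together with the two mutually exclusive possibilities listed on page \pageref{poss2}: either $\delta_0^{\diamond}=0$, in which case $K_0(\mathcal{A}^{\diamond})\cong\mathbb{Z}^4$; or $Im\,\delta_0^{\diamond}\cong\mu\mathbb{Z}$ for some nonzero integer $\mu$, in which case $K_0(\mathcal{A}^{\diamond})\cong\mathbb{Z}^3$ and $K_1(\mathcal{A}^{\diamond})\cong\mathbb{Z}^3\oplus\mathbb{Z}_{\mu}$. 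So the whole task is to feed in the K-groups of $\mathcal{A}^{\diamond}$ just computed via Pimsner--Voiculescu.

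First I would record that $K_0(\mathcal{A}^{\diamond})\cong K_1(\mathcal{A}^{\diamond})\cong\mathbb{Z}^3$. Since $K_0(\mathcal{A}^{\diamond})$ is not $\mathbb{Z}^4$, the first possibility is excluded, so we are in the second one: $Im\,\delta_0^{\diamond}\cong\mu\mathbb{Z}$ with $\mu\neq 0$ and $K_1(\mathcal{A}^{\diamond})\cong\mathbb{Z}^3\oplus\mathbb{Z}_{\mu}$. Now I would use that $\mathbb{Z}^3$ is torsion-free: comparing with $K_1(\mathcal{A}^{\diamond})\cong\mathbb{Z}^3$ forces $\mathbb{Z}_{\mu}=0$, i.e. $\mu=1$. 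Therefore $Im\,\delta_0^{\diamond}=\mathbb{Z}=K_1(\mathcal{E}^{\diamond})$, which is exactly the surjectivity of $\delta_0^{\diamond}$.

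If one wishes to avoid invoking the explicit case analysis of page \pageref{poss2}, I would instead argue directly from exactness of (\ref{sgr}) at $K_1(\mathcal{E}^{\diamond})\cong\mathbb{Z}$: there $Im\,\delta_0^{\diamond}=\ker\bigl(i_*\colon K_1(\mathcal{E}^{\diamond})\to K_1(\mathcal{A}^{\diamond})\bigr)$, so $\mathbb{Z}/Im\,\delta_0^{\diamond}$ embeds in the torsion-free group $K_1(\mathcal{A}^{\diamond})\cong\mathbb{Z}^3$. A subgroup $m\mathbb{Z}\subset\mathbb{Z}$ with $m\ge 2$ is ruled out since $\mathbb{Z}/m\mathbb{Z}$ cannot embed into a torsion-free group; and $m=0$ would give $\delta_0^{\diamond}=0$, hence (using that $\delta_1^{\diamond}$ is surjective by Theorem \ref{sobre}, so $i_*$ vanishes on $K_0(\mathcal{E}^{\diamond})$ and $\pi_*$ is injective on $K_0$) the contradiction $K_0(\mathcal{A}^{\diamond})\cong\mathbb{Z}^4$. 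Only $m=1$ survives, so $\delta_0^{\diamond}$ is onto.

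There is essentially no genuine obstacle here: the corollary is a formal consequence of the K-group computations preceding it. The only point that needs a moment of care is checking that the torsion-freeness of $K_1(\mathcal{A}^{\diamond})$ obtained from the Pimsner--Voiculescu sequence really does pin $\mu$ down to $1$, rather than merely bounding it.
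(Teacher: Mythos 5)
Your proposal is correct and is essentially the paper's own argument: the paper proves the corollary in one line, "since $\mu=1$, $Im\,\delta_0^{\diamond}=K_1(\mathcal{E}^{\diamond})$", where $\mu=1$ was obtained immediately beforehand from the Pimsner--Voiculescu computation giving $K_0(\mathcal{A}^{\diamond})\cong K_1(\mathcal{A}^{\diamond})\cong\mathbb{Z}^3$ together with the dichotomy of page \pageref{poss2} --- exactly the case-exclusion and torsion-freeness bookkeeping you spell out. Your second, purely exactness-based variant at $K_1(\mathcal{E}^{\diamond})$ is just a rephrasing of the same reasoning, so there is no genuinely different route here.
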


\begin{proof} Como $\mu = 1$ então $Im \delta_0^{\diamond} = K_1(\mathcal{E}^{\diamond}). $ 
\end{proof}

\section{A K-teoria de $\mathcal{A}$}

Nesta última seção, apresentaremos a  K-teoria de $\mathcal{A}$ e de ${\mathcal{A}}/{{\mathcal{K}}_{\Omega}},$ onde ${\mathcal{K}}_{\Omega}$ representa o ideal dos operadores compactos em $L^2(\Omega)$. Construiremos três seqüências exatas envolvendo estas álgebras e o ideal comutador de $\mathcal{A}$, e a partir das seqüências exatas de seis termos em K-teoria, vamos calcular as aplicações de conexão. 

Considere a seqüência exata curta  

\begin{equation}\label{sss}
0 \ \longrightarrow \ \frac{\mathcal{E}_{\mathcal{A}}}{{\mathcal{K}}_{\Omega}} \  
\stackrel{i}{\longrightarrow} \ \frac{\mathcal{A}}{{\mathcal{K}}_{\Omega}} \  
\stackrel{\pi}{\longrightarrow} \ \frac{\mathcal{A}}{\mathcal{E}_{\mathcal{A}}} \ \longrightarrow \ 0, 
\end{equation}
onde {\it i} \'e a inclusão e $\pi$ \'e a projeç\~ao can\^onica. No capítulo 1 vimos que $\mathcal{E}_{\mathcal{A}}$ é o núcleo da aplicação 
$\sigma : \mathcal{A}\rightarrow C({\bf M_{\mathcal{A}}})$ e ${\mathcal{A}}/\mathcal{E}_{\mathcal{A}}$ é isomorfa a $C({\bf M_{\mathcal{A}}})$.

Primeiro vamos determinar os grupos $K_0$ e $K_1$ de $\mathcal{E}_{\mathcal{A}}/{\mathcal{K}}_{\Omega}.$

A aplicação $\Psi$ dada em \ref{psi} nos diz que $\mathcal{E}_{\mathcal{A}}/{\mathcal{K}}_{\Omega}$ é isomorfo a duas cópias de $C(S^{1},{\cal{K}}_{\mathbb{Z}\times \mathbb{S}^1}),$ já que  
$$
\begin{array}{ccc}
 C(S^{1}\times\{-1,+1\},{\cal{K}}_{\mathbb{Z}\times \mathbb{S}^1}) & \! \rightarrow & \!
 C(S^{1},{\cal{K}}_{\mathbb{Z}\times \mathbb{S}^1})\oplus C(S^{1},{\cal{K}}_{\mathbb{Z}\times \mathbb{S}^1}) \\     
 f & \! \mapsto & \! (f(\cdot,-1) , f(\cdot,+1))
\end{array}
$$
é um isomorfismo.

Já sabemos que $K_0(C(S^{1},{\cal{K}}_{\mathbb{Z}\times \mathbb{S}^1}))\cong\mathbb{Z}$ e $K_1(C(S^{1},{\cal{K}}_{\mathbb{Z}\times \mathbb{S}^1}))\cong\mathbb{Z},$ então 

$$K_i\left(\frac{\mathcal{E}_{\mathcal{A}}}{{\mathcal{K}}_{\Omega}}\right)\cong \mathbb{Z}\oplus\mathbb{Z},~i=0,1. $$

Agora partimos para o cálculo dos K-grupos de $\mathcal{A}/\mathcal{E}_{\mathcal{A}}.$ Como ${\bf M_{\mathcal{A}}}$ é o espaço símbolo de $\mathcal{A},$ se conhecermos $K_0$ e $K_1$ de  $C({\bf M_{\mathcal{A}}}),$ usando o $\sigma$-símbolo obteremos o resultado desejado. 

O próximo lema é um resultado de \cite{cintia}, Proposição 1, que nos dá a K-teoria da álgebra $C(X)$, onde $X$ é o subconjunto de $[-\infty,+\infty]\times S^1$  que consiste dos pontos $(t, e^{i\theta})$ tais que $t=\theta$ se $|t|<\infty$.

%Escolha uma função não decrescente $b$ em $C^{\infty}([-\infty,+\infty])$ tal que $b(t)=1$ se $t\geq 1/5$ e $b(t)=0$ se $t\leq -1/5$ e seja $c$ tal que $c=1-b$. Assim, $b(+\infty)=c(-\infty)=1$.

Sejam as funções $l, ~\tilde{l} : X\rightarrow\mathbb{C}$ definidas da seguinte forma:
$$ 
l(t,e^{i\theta})= 
\left\{\begin{array}{rl}
e^{i\theta},& \mbox{ se }  t \geq 0 \\
1,& \mbox{ se } t < 0
\end{array}\right. 
$$
e
$$
\tilde{l}(t,e^{i\theta})= 
\left\{\begin{array}{rl}
1,& \mbox{ se }  t \geq 0 \\
e^{i\theta},& \mbox{ se } t < 0
\end{array}\right.
$$
(Note que se $|t|<\infty$ então $(t,e^{i\theta})=(t,e^{it})$)

\begin{lema} $K_0(C(X))= \mathbb{Z}[1]_0$ e $K_1(C(X))= \mathbb{Z}[l]_1 \oplus \mathbb{Z}[\tilde{l}]_1.$
\end{lema}

\begin{prop} Temos que $K_0(\mathcal{A}/{\mathcal{E}}_{\mathcal{A}})$ e $K_1(\mathcal{A}/{\mathcal{E}}_{\mathcal{A}})$ são ambos isomorfos a $\mathbb{Z}^6.$
\end{prop}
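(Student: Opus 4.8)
The plan is to exploit the homeomorphism ${\bf M}_{\mathcal{A}}\cong X\times\mathbb{S}^1\times S^1$ established in Theorem~\ref{ma2}, together with the isomorphism $\mathcal{A}/\mathcal{E}_{\mathcal{A}}\cong C({\bf M}_{\mathcal{A}})$ coming from the Gelfand transform of the $\sigma$-symbol. Thus it suffices to compute $K_0$ and $K_1$ of $C(X\times\mathbb{S}^1\times S^1)\cong C(X,C(\mathbb{S}^1,C(S^1)))$. First I would record the K-theory of the three building blocks: $K_0(C(X))=\mathbb{Z}[1]_0$ and $K_1(C(X))=\mathbb{Z}[l]_1\oplus\mathbb{Z}[\tilde l]_1$ from the preceding lemma; $K_0(C(\mathbb{S}^1))=\mathbb{Z}[1]_0$, $K_1(C(\mathbb{S}^1))=\mathbb{Z}[\texttt{x}]_1$; and $K_0(C(S^1))=\mathbb{Z}[1]_0$, $K_1(C(S^1))=\mathbb{Z}[\texttt{z}]_1$.

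Next I would assemble these via the same splitting technique used repeatedly in the proofs of Proposition~\ref{primeiro} and the K-theory of $\mathcal{A}^\diamond$. Writing $Y:=C(\mathbb{S}^1,C(S^1))$, the computation already carried out for $\mathcal{A}^\diamond$ gives $K_0(Y)=\mathbb{Z}[1]_0\oplus\mathbb{Z}[x=a+ib\mapsto Q(\texttt{z},a,b)]_0$ and $K_1(Y)=\mathbb{Z}[\texttt{x}]_1\oplus\mathbb{Z}[\texttt{z}]_1$; in particular $K_i(Y)\cong\mathbb{Z}^2$ for $i=0,1$. Then I would treat $C({\bf M}_{\mathcal{A}})\cong C(X,Y)$. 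Since $X$ deformation retracts onto $S^1$ (collapsing the two half-lines), $C(X)$ has the same K-theory as $C(S^1)$ but with $K_1$ of rank two because of the two ``ends''; concretely, because $X$ is homotopy equivalent to a wedge or a circle-with-interval whose reduced K-theory is $K^0=0$, $K^1=\mathbb{Z}^2$ (as the lemma records), the Künneth formula for $C(X)\otimes Y$ gives
$$
K_0(C(X,Y))\cong\big(K_0(C(X))\otimes K_0(Y)\big)\oplus\big(K_1(C(X))\otimes K_1(Y)\big),
$$
$$
K_1(C(X,Y))\cong\big(K_0(C(X))\otimes K_1(Y)\big)\oplus\big(K_1(C(X))\otimes K_0(Y)\big),
$$
with no Tor terms since everything is free. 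Plugging in $K_0(C(X))\cong\mathbb{Z}$, $K_1(C(X))\cong\mathbb{Z}^2$, $K_0(Y)\cong K_1(Y)\cong\mathbb{Z}^2$, both groups come out $\mathbb{Z}^2\oplus\mathbb{Z}^4=\mathbb{Z}^6$, as claimed. Alternatively, to stay inside the elementary toolkit used in the rest of the thesis, I would avoid invoking Künneth and instead use the split short exact sequence associated to evaluation at a basepoint of $X$: one has $C(X)$ fitting in $0\to J\to C(X)\to C(S^1)\to 0$ where $J$ is an ideal with $K_0(J)=0$, $K_1(J)=\mathbb{Z}$, and splicing this (tensored with $Y$, which is nuclear so exactness is preserved) into the six-term sequence yields the same ranks after checking the connecting maps vanish.

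Finally, to exhibit generators — which the statement's proof will need for the later index computations — I would transport the generators of $K_i(C({\bf M}_{\mathcal{A}}))$ back through the $\sigma$-symbol to classes $[[A]_{\mathcal{E}_{\mathcal{A}}}]_i$ in $K_i(\mathcal{A}/\mathcal{E}_{\mathcal{A}})$, exactly as was done for $\mathcal{A}^\dagger$ and $\mathcal{A}^\diamond$: for $K_0$ the generators are $[[I]]_0$, three classes of the form $[[Q(-,-,-)]]_0$ built from $e^{iM_t}$, $A_5$, $A_6$, $\texttt{x}(M_x)$, $\texttt{a}(M_x)$, $\texttt{b}(M_x)$ and from the functions $l,\tilde l$, and two more coming from the $K_1(C(X))\otimes K_1(Y)$ summand; symmetrically for $K_1$. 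The main obstacle is purely bookkeeping: organizing the six generators in each degree consistently with the $X\times\mathbb{S}^1\times S^1$ decomposition and making sure the Bott/suspension isomorphisms ($\beta$ and $\theta$, with the explicit matrix $Q$ from \eqref{matriz2}) are applied to the correct circle factor, so that these generators are genuinely usable when computing the index map $\delta_1$ for the sequence \eqref{sss} in the steps that follow. No deep difficulty is expected — the ranks follow formally once the three factor computations and the vanishing of the relevant connecting maps are in place.
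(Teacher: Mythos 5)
Your argument is correct, but it takes a genuinely different route from the one in the text. The thesis never invokes the K\"unneth theorem: it writes $C({\bf M}_{\mathcal{A}})\cong C(S^1, C(\mathbb{S}^1, C(X)))$ and applies twice the split exact sequence $0\to SA\to C(S^1,A)\to A\to 0$, with $C(X)$ as the innermost coefficient algebra, using the explicit isomorphism $\theta$ of the teorema~\ref{theta} and the Bott map; this stays inside the elementary toolkit used throughout and, more importantly, produces at the same time the explicit generators (the classes $[[Q(l(M_t),\texttt{a}(M_x),\texttt{b}(M_x))]_{\mathcal{E}_{\mathcal{A}}}]_0$, $[[l(M_t)]_{\mathcal{E}_{\mathcal{A}}}]_1$, etc.) that are indispensable afterwards for the computation of $\delta_0$ and $\delta_1$. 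Your main route instead reuses $Y=C(\mathbb{S}^1\times S^1)$ from the $\mathcal{A}^{\diamond}$ computation and applies the K\"unneth formula to $C(X)\otimes Y$; since $C(X)$ is separable commutative (hence in the bootstrap class) and all the K-groups involved are free abelian, the Tor term vanishes and the graded count $\mathbb{Z}^2\oplus\mathbb{Z}^4\cong\mathbb{Z}^6$ in each degree is legitimate --- quicker for the ranks, at the price of postponing the generator bookkeeping that the paper's iterated-suspension argument yields for free. Two small corrections that do not affect the conclusion: $X$ does not deformation retract onto $S^1$ (its $K^1$ has rank two, and since the line spirals onto the two circles at $\pm\infty$ one should not argue via homotopy equivalence at all; the safe input is precisely the lemma quoted from \cite{cintia}, which is all you actually use); and in your alternative elementary route the quotient $C(X)\to C(S^1)$ should be restriction to one of the circles at infinity rather than evaluation at a basepoint, split by the second-coordinate projection $X\to S^1$ --- with that reading, and $K_0(J)=0$, $K_1(J)\cong\mathbb{Z}$ for the complementary ideal, that variant also goes through.
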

\begin{proof} Primeiro vamos mostrar que $K_0(C({\bf M_{\mathcal{A}}}))\cong\mathbb{Z}^6$ e  $K_1(C({\bf M_{\mathcal{A}}}))\cong\mathbb{Z}^6.$ 

Com a descrição de ${\bf M_{\mathcal{A}}}$ dada no final do capítulo 1, no caso $\mathbb{B}=\mathbb{S}^1$, podemos escrever ${\bf M_{\mathcal{A}}}$ como $X\times \mathbb{S}^1\times S^1.$ Conseguimos então o seguinte isomorfismo
$$C({\bf M_{\mathcal{A}}})\cong C(S^1, C(\mathbb{S}^1, C(X))).$$ 
Esta descrição de $C({\bf M_{\mathcal{A}}})$ será fundamental para os nossos cálculos. Inicialmente, vamos determinar a K-teoria de $C(\mathbb{S}^1, C(X))$ para depois determinar a de $C(S^1, C(\mathbb{S}^1, C(X))).$

Lembrando que a seqüência exata 
\begin{center}
\begin{picture}(440,17)(0,0)
\put(85,0){$0$}\put(100,5){\vector(1,0){20}}
\put(126,0){$SC(X)$}\put(165,5){\vector(1,0){20}}\put(170,10){${i}$}
\put(190,0){$C(\mathbb{S}^1, C(X))$}\put(258,6){\vector(1,0){20}}\put(266,12){${p}$}
\put(288,0){$C(X)$}\put(278,2){\vector(-1,0){20}}\put(266,-6){${s}$}
\put(318,5){\vector(1,0){20}} \put(345,0){0,}
\end{picture}
\end{center}
cinde, então vale que 
$$K_i(C(\mathbb{S}^1, C(X)))= i_{\ast}(K_i(SC(X)))\oplus s_{\ast}(K_i(C(X))), ~i=0,1.$$

\begin{itemize}
\item  $K_0(C(\mathbb{S}^1, C(X))):$

Já sabemos que $K_0(C(X))=\mathbb{Z}[1]_0,$ então $s_{\ast}([1]_0)=[x\mapsto 1]_0 \in K_0(C(\mathbb{S}^1, C(X))).$ 
%Para facilitar a notação, vamos escrever apenas $s_{\ast}(K_0(C(X)))=\mathbb{Z}[1]_0.$ 

Conhecemos o isomorfismo entre $K_1(C(X))$ e $K_0(SC(X)).$ %Como $C(X)$ é unital, 
Temos então que 
$$\theta_{C(X)}([l]_1) = [x=a+ib \mapsto Q(l,a,b)]_0 ~, ~~ \theta_{C(X)}([\tilde{l}]_1) = [x=a+ib \mapsto Q(\tilde{l},a,b)]_0. $$
Logo 
\begin{equation}
K_0(C(\mathbb{S}^1, C(X))) = \mathbb{Z}[1]_0 \oplus \mathbb{Z}[a+ib \mapsto Q(l,a,b)]_0 \oplus \mathbb{Z}[a+ib \mapsto Q(\tilde{l},a,b)]_0 
\label{k0}
\end{equation}

\item  $K_1(C(\mathbb{S}^1, C(X))):$

Sabendo que $K_1(C(X))=\mathbb{Z}[l]_1\oplus\mathbb{Z}[\tilde{l}]_1,$ então 
$$s_{\ast}(K_1(C(X)))=\mathbb{Z}[x\mapsto l]_1\oplus\mathbb{Z}[x \mapsto \tilde{l}]_1 .$$
%Para facilitar a notação, vamos escrever apenas $s_{\ast}(K_1(C(X)))=\mathbb{Z}[l]_1\oplus\mathbb{Z}[\tilde{l}]_1.$ 

Vamos usar a aplicação de Bott para conhecer $K_1(SC(X)).$  
$$\beta_{C(X)}([1]_0) = [\texttt{x}]_1 ,$$
onde $\texttt{x} $ é a função identidade $\mathbb{S}^1$  

Temos então que 
\begin{equation}
K_1(C(\mathbb{S}^1, C(X))) = \mathbb{Z}[\texttt{x}]_1 \oplus \mathbb{Z}[l]_1 \oplus \mathbb{Z}[\tilde{l}]_1 .
\label{k1}
\end{equation}
\end{itemize}

Conhecendo os grupos dados em (\ref{k0}) e (\ref{k1}), podemos partir para a próxima seqüência exata cindida:  
\begin{center}
\begin{picture}(440,17)(0,0)
\put(40,0){$0$}\put(50,5){\vector(1,0){20}}
\put(80,0){$SC(S^1,C(X))$}\put(155,5){\vector(1,0){20}}\put(165,10){${i}$}
\put(180,0){$C(S^1,C(\mathbb{S}^1, C(X)))$}\put(280,6){\vector(1,0){20}}\put(288,12){${p}$}
\put(310,0){$C(\mathbb{S}^1,C(X))$}\put(300,2){\vector(-1,0){20}}\put(288,-6){${s}$}
\put(375,5){\vector(1,0){20}} \put(400,0){0,}
\end{picture}
\end{center}
Daí,   
$$K_i(C(S^1,C(\mathbb{S}^1, C(X))))= i_{\ast}(K_i(SC(S^1,C(X))))\oplus s_{\ast}(K_i(C(S^1,C(X)))), ~i=0,1.$$

\begin{itemize}
\item  $K_0(C(S^1(C(\mathbb{S}^1, C(X))))):$

Chamemos $Y:=C(\mathbb{S}^1,C(X)).$ Comecemos calculando $s_{\ast}(K_0(Y)):$
$$ s_{\ast}([1]_0) = [w \mapsto 1]_0 ~ , ~~s_{\ast}([a+ib \mapsto Q(l,a,b)]_0) = [w\mapsto (a+ib \mapsto Q(l,a,b))]_0 ,$$
$$ s_{\ast}([a+ib \mapsto Q(\tilde{l},a,b)]_0) = [w \mapsto (a+ib \mapsto Q(\tilde{l},a,b))]_0 $$

Para calcular $i_{\ast}(K_0(SY)),$ usaremos novamente o isomorfismo descrito no teorema \ref{theta}, \linebreak $\theta_Y : K_1(Y) \rightarrow K_0(SY):$
$$ \theta_Y([l]_1) = [w=u+iv  \mapsto Q(l,u,v)]_0~,~~\theta_Y([\tilde{l}]_1) = [w=u+iv \mapsto Q(\tilde{l},u,v)]_0 , $$
$$ \theta_Y([\texttt{x}]_1) = [w=u+iv \mapsto Q(\texttt{x},u,v)]_0 .$$

Fazendo $ m = (u+iv, x= a+ib, (t, e^{i\theta})) \in S^1\times \mathbb{S}^1 \times X$, temos 
\begin{eqnarray}
K_0(C(S^1\times \mathbb{S}^1 \times X)) & = & \mathbb{Z}[m\mapsto 1]_0 \oplus \mathbb{Z}[m \mapsto Q(l(t,e^{i\theta}),a,b)]_0 \oplus \mathbb{Z}[ m \mapsto Q(\tilde{l}(t, e^{i\theta}),a,b)]_0  \nonumber \\ 
& & \oplus \mathbb{Z}[m \mapsto Q(l(t, e^{i\theta}),u,v)]_0 \oplus \mathbb{Z}[m \mapsto Q(\tilde{l}(t, e^{i\theta}),u,v)]_0 \nonumber \\
& & \oplus \mathbb{Z}[m \mapsto Q(x,u,v)]_0
\label{k0ma}
\end{eqnarray}

\item $K_1(C(S^1(C(\mathbb{S}^1, C(X)))))$

Chamando ainda de $Y$ o conjunto $C(\mathbb{S}^1, C(X)),$ conhecemos em (\ref{k1}) o grupo $K_1(Y)$. Assim, podemos calcular facilmente $s_{\ast}(K_1(Y)).$ 
$$
s_{\ast}(K_1(Y))=\mathbb{Z}[w \mapsto \texttt{x}]_1 \oplus \mathbb{Z}[w \mapsto l]_1 \oplus \mathbb{Z}[w \mapsto \tilde{l})]_1 
$$

Para conhecermos $K_1(SY)$, calcularemos a aplicação de Bott nos geradores do grupo $K_0(Y)$. 
$$
\beta_{Y}([1]_0) = [\texttt{w}]_1 ,~ \mbox{com} ~ \texttt{w}(w) = w \cdot 1 + (1 - 1) = w
$$
$$
\beta_{Y}([a+ib \mapsto Q(l,a,b)]_0) = [ w  \mapsto 1_2 + (w - 1)(a+ib \mapsto Q(l,a,b))]_1
$$
$$
\beta_{Y}([a+ib \mapsto Q(\tilde{l},a,b)]_0) = [w \mapsto 1_2 + (w - 1)(a+ib \mapsto Q(\tilde{l},a,b))]_1
$$

Temos então, para $ m = (w=u+iv, x= a+ib, (t, e^{i\theta})) \in S^1\times \mathbb{S}^1 \times X$,
\begin{eqnarray}
K_1(C(S^1 \times \mathbb{S}^1 \times X)) & = & \mathbb{Z}[m \mapsto x]_1 \oplus \mathbb{Z}[m \mapsto l(t, e^{i\theta})]_1 \oplus \mathbb{Z}[m \mapsto \tilde{l}(t, e^{i\theta})]_1 \oplus \mathbb{Z}[m \mapsto w]_1 \nonumber \\
& &\oplus \mathbb{Z}[m \mapsto 1+(w-1)Q(l(t, e^{i\theta}),a,b)]_1 \\
& & \oplus \mathbb{Z}[m \mapsto 1+(w-1)Q(\tilde{l}(t, e^{i\theta}),a,b)]_1 \nonumber 
\label{k1ma}
\end{eqnarray}
\end{itemize} 

Como $\mathcal{A}/\mathcal{E}_{\mathcal{A}} \cong C({\bf M_{\mathcal{A}}}) \cong C(S^1\times \mathbb{S}^1\times X),$ já sabemos que 
$$ 
K_0(\mathcal{A}/{\mathcal{E}}_{\mathcal{A}})\cong \mathbb{Z}^6 ~~\mbox{e}~~ K_1(\mathcal{A}/{\mathcal{E}}_{\mathcal{A}})\cong \mathbb{Z}^6.
$$
Vamos agora determinar os geradores destes grupos usando o $\sigma$-símbolo. Para cada função definida em $C(S^1\times \mathbb{S}^1\times X),$ acharemos a sua respectiva pré-imagem em $\mathcal{A}/\mathcal{E}_{\mathcal{A}}.$ 
%e $\sigma$ nestes ponto é: 
$$\sigma_{I}=1~, ~~\sigma_{l(M_t)}= l ~, ~~\sigma_{\tilde{l}(M_t)}= \tilde{l},$$
onde $I$ é o operador identidade e $l(M_t)$ e $\tilde{l}(M_t)$ são operadores de multiplicação pelas funções $l$ e $\tilde{l}$, respectivamente. 
Lembrando que no teorema \ref{ma} a ordem das variáveis apresentada em ${\bf M_{\mathcal{A}}}$ é 
$(t,x,w,e^{i\theta})\in [-\infty,+\infty] \times \mathbb{S}^1 \times S^1 \times S^1,$ temos
\begin{itemize}
\item[i.] $\sigma_{\texttt{x}(M_x)}((t,x,w,e^{i\theta}))=x,$ onde $(\texttt{x}(M_x)u)(t,x)= x\cdot u(t,x)$;
\item[ii.] $\sigma_{\texttt{a}(M_x)}((t,x=a+ib,w,e^{i\theta}))= a,$ onde $(\texttt{a}(M_x)u)(t,x)= a\cdot u(t,x)$, isto é, operador de multiplicação pela parte real de $x$;   
\item[iii.] $\sigma_{\texttt{b}(M_x)}((t,x=a+ib,w,e^{i\theta}))= b,$ onde $(\texttt{b}(M_x)u)(t,x)= b\cdot u(t,x)$, isto é, operador de multiplicação pela parte imaginária de $x$;   
\item[iv.] $\sigma_{A_5}((t,x,w=u+iv,e^{i\theta})) = u;$  
\item[v.] $\sigma_{A_6}((t,x,w=u+iv,e^{i\theta})) = v.$
\end{itemize}

Assim, os geradores de $K_0(\mathcal{A}/{\mathcal{E}}_{\mathcal{A}})$ e $K_1(\mathcal{A}/{\mathcal{E}}_{\mathcal{A}})$ são 
\begin{eqnarray*}
K_0(\mathcal{A}/{\mathcal{E}}_{\mathcal{A}}) & = & \mathbb{Z}[[I]_{\mathcal{E}}]_0 \oplus \mathbb{Z}[[Q(l(M_t),\texttt{a}(M_x),\texttt{b}(M_x))]_{\mathcal{E}}]_0 \oplus \mathbb{Z}[[Q(\tilde{l}(M_t),\texttt{a}(M_x),\texttt{b}(M_x))]_{\mathcal{E}}]_0 \oplus \nonumber \\
& &  \mathbb{Z}[[Q(\texttt{x}(M_x), A_5, A_6)]_{\mathcal{E}}]_0 
\oplus \mathbb{Z}[[Q(l(M_t),A_5,A_6)]_{\mathcal{E}}]_0 \oplus 
\mathbb{Z}[[Q(\tilde{l}(M_t),A_5,A_6)]_{\mathcal{E}}]_0
\label{k0ae}
\end{eqnarray*}
e
\begin{eqnarray*}
K_1(\mathcal{A}/{\mathcal{E}}_{\mathcal{A}}) & = & \mathbb{Z}[[\texttt{x}(M_x)]_{\mathcal{E}}]_1 \oplus \mathbb{Z}[[l(M_t)]_{\mathcal{E}}]_1 \oplus 
\mathbb{Z}[[\tilde{l}(M_t)]_{\mathcal{E}}]_1 \oplus \mathbb{Z}[[A_5+iA_6]_{\mathcal{E}}]_1 \oplus  \nonumber \\
& & \mathbb{Z}[[Id + (A_5+iA_6 - 1)Q(l(M_t),\texttt{a}(M_x),\texttt{b}(M_x))]_{\mathcal{E}}]_1 \oplus \nonumber \\
& & \mathbb{Z}[[Id + (A_5+iA_6 - 1)Q(\tilde{l}(M_t),\texttt{a}(M_x),\texttt{b}(M_x))]_{\mathcal{E}}]_1
\label{k1ae}
\end{eqnarray*}

\end{proof}

Da seq\"u\^encia e\-xa\-ta curta  (\ref{sss}), temos a correspondente seq\"u\^encia e\-xa\-ta  de seis termos em K-teoria: 
\begin{equation}\label{sss6}
\begin{array}{ccccc}
 \mathbb{Z}^2 \cong K_0(\mathcal{E}_{\mathcal{A}}/{\mathcal{K}}_{\Omega})  & \! \stackrel{i_*}{\longrightarrow} & \! 
 K_0(\mathcal{A}/{\mathcal{K}}_{\Omega})  & \! \stackrel{{\pi}_*}{\longrightarrow} & \!
 K_0(\mathcal{A}/{\mathcal{E}}_{\mathcal{A}}) \cong \mathbb{Z}^6   \\ \\    
 {\delta}_1 \ \uparrow & \! ~  & \! ~  & \! ~ & \! \downarrow \ {\delta}_0  \\ \\
 \mathbb{Z}^6 \cong K_1(\mathcal{A}/{\mathcal{E}}_{\mathcal{A}}) & \! \stackrel{{\pi}_*}{\longleftarrow} & \! 
 K_1(\mathcal{A}/{\cal{K}}_{\Omega})  & \! \stackrel{{i}_*}{\longleftarrow} & \!
 K_1(\mathcal{E}_{\mathcal{A}}/{\cal{K}}_{\Omega}) \cong \mathbb{Z}^2
\end{array}
\end{equation}
Nossos esforços se concentram em descobrir $K_0(\mathcal{A}/{\cal{K}}_{\Omega})$ e $K_1(\mathcal{A}/{\cal{K}}_{\Omega}).$ Para isto, devemos calcular  a aplicação exponencial ${\delta}_0$ e a aplicação do índice ${\delta}_1$. 

%Podemos reescrever \ref{sss6} da seguinte forma: 
%\begin{equation}
%\begin{array}{ccccc}
% \mathbb{Z}^2  & \! \stackrel{i_*}{\longrightarrow} & \!  K_0(\mathcal{A}/{\cal{K}}_{\Omega})  & \! \stackrel{{\pi}_*}{\longrightarrow} & \!    \mathbb{Z}^6 \\ \\    
%{\delta}_1 \ \uparrow & \! ~  & \! ~  & \! ~ & \! \downarrow \ {\delta}_0  \\ \\
% \mathbb{Z}^6  & \! \stackrel{{\pi}_*}{\longleftarrow} & \! K_1(\mathcal{A}/{\cal{K}}_{\Omega})  & \! \stackrel{{i}_*}{\longleftarrow} & \!  \mathbb{Z}^2
%\end{array}
%\end{equation}

Novamente aqui, a descrição que temos do isomorfismo $K_1(C(S^1, \mathcal{K}_{\mathbb{Z}\times\mathbb{S}^1}))\cong \mathbb{Z}$ não nos permite determinar exatamente a imagem de $\delta_0$. Apesar disso, temos a seguinte proposição que nos garante que $\delta_0$ é diferente de zero.

\begin{prop} A aplicação $\delta_0$ em (\ref{sss6}) é não nula. Além disso, via o isomorfismo 
$$
K_1\left(\frac{\mathcal{E}_{\mathcal{A}}}{\mathcal{K}_{\Omega}}\right) \rightarrow K_1(C(S^1\times\{-1,+1\}, \mathcal{K}_{\mathbb{Z}\times\mathbb{S}^1})) \rightarrow K_0(\mathcal{K}_{\mathbb{Z}\times\mathbb{S}^1})\oplus K_0(\mathcal{K}_{\mathbb{Z}\times\mathbb{S}^1}) \rightarrow \mathbb{Z}\oplus \mathbb{Z},
$$
o elemento $(1,1)$ pertence à imagem de $\delta_0$. Isto é, existe um elemento $[[T]_{\mathcal{K}}]_1 \in Im \delta_0$ tal que este corresponde ao elemento $(1,1) \in \mathbb{Z}\oplus \mathbb{Z}$. 
\end{prop}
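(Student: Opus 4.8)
The plan is to transport the surjectivity of $\delta_0^{\diamond}$ established in Corollary~\ref{delta0} along the inclusion $\mathcal{A}^{\diamond}\subset\mathcal{A}$, using naturality of the exponential map in the six-term exact sequence. Since every generator of $\mathcal{A}^{\diamond}$ is a generator of $\mathcal{A}$, we have $\mathcal{A}^{\diamond}\subset\mathcal{A}$; and since a commutator of elements of $\mathcal{A}^{\diamond}$ is a commutator of elements of $\mathcal{A}$, the commutator ideals satisfy $\mathcal{E}^{\diamond}\subseteq\mathcal{E}_{\mathcal{A}}$. Composing the inclusions with the quotient map by $\mathcal{K}_{\Omega}$ produces a commutative diagram of short exact sequences
$$
\begin{array}{ccccccccc}
0 & \to & \mathcal{E}^{\diamond} & \to & \mathcal{A}^{\diamond} & \to & \mathcal{A}^{\diamond}/\mathcal{E}^{\diamond} & \to & 0 \\
  &     & \downarrow             &     & \downarrow             &     & \downarrow                                 &     &   \\
0 & \to & \mathcal{E}_{\mathcal{A}}/\mathcal{K}_{\Omega} & \to & \mathcal{A}/\mathcal{K}_{\Omega} & \to & \mathcal{A}/\mathcal{E}_{\mathcal{A}} & \to & 0
\end{array}
$$
in which the left vertical arrow $\lambda$ is $\mathcal{E}^{\diamond}\hookrightarrow\mathcal{E}_{\mathcal{A}}\to\mathcal{E}_{\mathcal{A}}/\mathcal{K}_{\Omega}$, the middle one is $\mathcal{A}^{\diamond}\hookrightarrow\mathcal{A}\to\mathcal{A}/\mathcal{K}_{\Omega}$, and the right one is the induced $\ast$-homomorphism $\rho\colon\mathcal{A}^{\diamond}/\mathcal{E}^{\diamond}\to\mathcal{A}/\mathcal{E}_{\mathcal{A}}$. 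Naturality of the six-term exact sequence then gives $\delta_0\circ\rho_*=\lambda_*\circ\delta_0^{\diamond}$, where $\delta_0$ and $\delta_0^{\diamond}$ are the exponential maps of (\ref{sss6}) and (\ref{sgr}).

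Next I would identify $\lambda_*$ on $K_1$. By Proposition~\ref{gamma'} and the remark following it, $\gamma'$ restricts to an isomorphism $\gamma'|_{\mathcal{E}^{\diamond}}\colon\mathcal{E}^{\diamond}\stackrel{\sim}{\to}C(S^1,\mathcal{K}_{\mathbb{Z}\times\mathbb{S}^1})$ with $\gamma_A'(z)=\gamma_A(z,\pm1)$, while for $A\in\mathcal{E}_{\mathcal{A}}$ one has $\Psi([A]_{\mathcal{K}_{\Omega}})=\gamma_A\in C(S^1\times\{-1,+1\},\mathcal{K}_{\mathbb{Z}\times\mathbb{S}^1})$ (see (\ref{psi})). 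None of the generators $A_1,A_3,A_4,A_5,A_6$ of $\mathcal{A}^{\diamond}$ is a multiplication by a function of $C([-\infty,+\infty])$, so Proposition~\ref{gamma} shows that $\gamma_A(z,+1)=\gamma_A(z,-1)$ for every $A\in\mathcal{A}^{\diamond}$; hence, under the splitting $C(S^1\times\{-1,+1\},\mathcal{K}_{\mathbb{Z}\times\mathbb{S}^1})\cong C(S^1,\mathcal{K}_{\mathbb{Z}\times\mathbb{S}^1})\oplus C(S^1,\mathcal{K}_{\mathbb{Z}\times\mathbb{S}^1})$, $f\mapsto(f(\cdot,-1),f(\cdot,+1))$, of the statement, the map $\Psi\circ\lambda$ equals $\gamma'|_{\mathcal{E}^{\diamond}}$ followed by the diagonal embedding $g\mapsto(g,g)$. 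Consequently, after the identifications $K_1(\mathcal{E}^{\diamond})\cong\mathbb{Z}$ and $K_1(\mathcal{E}_{\mathcal{A}}/\mathcal{K}_{\Omega})\cong\mathbb{Z}\oplus\mathbb{Z}$, the map $\lambda_*$ becomes the diagonal $n\mapsto(n,n)$; in particular it carries the generator $[z\mapsto zE+(1-E)]_1$ of $K_1(\mathcal{E}^{\diamond})$ to the element corresponding to $(1,1)$ under the chain of isomorphisms in the statement (up to the overall sign, which is irrelevant since the image is a subgroup).

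Finally, Corollary~\ref{delta0} furnishes $x\in K_0(\mathcal{A}^{\diamond}/\mathcal{E}^{\diamond})$ with $\delta_0^{\diamond}(x)$ a generator of $K_1(\mathcal{E}^{\diamond})$. Setting $\xi:=\rho_*(x)\in K_0(\mathcal{A}/\mathcal{E}_{\mathcal{A}})$, the naturality identity yields $\delta_0(\xi)=\lambda_*(\delta_0^{\diamond}(x))$, which corresponds to $(1,1)$ in $\mathbb{Z}\oplus\mathbb{Z}$. Writing $\delta_0(\xi)=[[T]_{\mathcal{K}}]_1$ — here $T=\exp(2\pi i\,\tilde a)$ for a self-adjoint lift $\tilde a$, in the unitization of $\mathcal{A}/\mathcal{K}_{\Omega}$, of a projection over $\mathcal{A}/\mathcal{E}_{\mathcal{A}}$ representing $\xi$; such a lift can even be written down explicitly from the generators of $K_0(\mathcal{A}^{\diamond}/\mathcal{E}^{\diamond})$ and Theorem~\ref{sobre} — exhibits the required element of $Im\,\delta_0$, and in particular $\delta_0\ne0$.

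The one genuinely delicate step is the bookkeeping in the second paragraph: one must check that $\Psi$ of (\ref{psi}), the isomorphism $\gamma'|_{\mathcal{E}^{\diamond}}$ of Proposition~\ref{gamma'}, and the direct-sum splitting used in the statement fit together so that $\Psi\circ\lambda$ really is the diagonal inclusion $g\mapsto(g,g)$ and not some twisted copy; this is precisely what forces the generator onto $(1,1)$ rather than onto $(1,0)$, $(0,1)$, or a proper multiple of $(1,1)$. Everything else is a formal consequence of the naturality of the exponential map together with the $K$-theory of $\mathcal{A}^{\diamond}$ already computed.
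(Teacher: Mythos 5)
Your proposal is correct, and it reaches the conclusion by a genuinely different mechanism than the thesis. The thesis never invokes naturality of the six-term sequence for the inclusion $\mathcal{A}^{\diamond}\subset\mathcal{A}$; instead it computes $K_1(\Psi)\circ\delta_0$ by hand on the specific generators $[[Q(l(M_t),A_5,A_6)]_{\mathcal{E}_{\mathcal{A}}}]_0$, $[[Q(\tilde{l}(M_t),A_5,A_6)]_{\mathcal{E}_{\mathcal{A}}}]_0$ and $[[Q(\texttt{x}(M_x),A_5,A_6)]_{\mathcal{E}_{\mathcal{A}}}]_0$, writing out the exponentials $\exp\bigl(2\pi i\,\gamma_{(\cdot)}\bigr)$ and using that $\gamma_{l(M_t)}(z,-1)=I$ while $\gamma_{l(M_t)}(z,+1)=Y_{-1}$, so that the $l$-generator contributes only to the $+1$ component, the $\tilde{l}$-generator only to the $-1$ component, and the $\texttt{x}$-generator diagonally; combining them with coefficients $\alpha,\alpha,\beta$ coming from Corollary~\ref{delta0} produces the element hitting $(1,1)$. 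You instead push the $\mathcal{A}^{\diamond}$-class forward along the morphism of extensions and reduce everything to the single observation that $\gamma_A(\cdot,+1)=\gamma_A(\cdot,-1)$ for all $A\in\mathcal{A}^{\diamond}$ (which is indeed forced by Proposition~\ref{gamma}, since the set where the two agree is a closed $*$-subalgebra containing the generators of $\mathcal{A}^{\diamond}$), so that $\lambda_*$ is the diagonal $n\mapsto(n,n)$ and $\delta_0\circ\rho_*=\lambda_*\circ\delta_0^{\diamond}$ does the rest; your closing remark that a sign ambiguity is harmless because the image is a subgroup is the right way to dispose of the only bookkeeping issue, and the compatibility you flag does hold because both copies of $\mathbb{Z}$ in the statement and the copy used for $K_1(\mathcal{E}^{\diamond})$ are identified through the same chain $K_1(C(S^1,\mathcal{K}_{\mathbb{Z}\times\mathbb{S}^1}))\cong K_0(\mathcal{K}_{\mathbb{Z}\times\mathbb{S}^1})\cong\mathbb{Z}$. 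What your route buys is economy: no exponential computations and no need for the functions $l,\tilde{l}$ at this stage. What the thesis' computation buys is finer information that your argument does not immediately give and that the text uses right after the proposition, namely that if $\delta_0^{\diamond}([[Q(e^{iM_t},A_5,A_6)]_{\mathcal{E}^{\diamond}}]_0)$ corresponds to $\nu$, then $\delta_0$ of the $l$- and $\tilde{l}$-generators correspond to $(0,\nu)$ and $(\nu,0)$ respectively; this feeds into the subsequent enumeration of the possible images of $\delta_0$, so if you adopted your proof you would still need a supplementary computation of that kind later.
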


\begin{proof} Pelo corolário \ref{delta0} sabemos que $\delta_0^{\diamond} : K_0(\mathcal{A}^{\diamond}/\mathcal{E}^{\diamond}) \rightarrow K_1(\mathcal{E}^{\diamond})$ é sobrejetora. Portanto existe um elemento $[[A]_{\mathcal{E}^{\diamond}}]_0 \in K_0(\mathcal{A}^{\diamond}/\mathcal{E}^{\diamond})$ tal que $\delta_0([[A]_{\mathcal{E}^{\diamond}}]_0) \in K_1(\mathcal{E}^{\diamond})$ corresponde a 1 pelo isomorfismo
$$ 
K_1(\mathcal{E}^{\diamond}) \cong K_1(C(S^1, \mathcal{K}_{\mathbb{Z}\times\mathbb{S}^1})) \cong K_0(\mathcal{K}_{\mathbb{Z}\times\mathbb{S}^1}) \cong \mathbb{Z}.
$$

Conhecemos os geradores de $K_0(\mathcal{A}^{\diamond}/\mathcal{E}^{\diamond})$ e sabemos que $\delta_0^{\diamond}$ calculado em $[[I]_{\mathcal{E}^{\diamond}}]_0$ e em \linebreak $[[Q(e^{iM_t}, a(M_x), b(M_x))]_{\mathcal{E}^{\diamond}}]_0$ é zero, pois $I$ e $Q(e^{iM_t}, a(M_x), b(M_x))$ são projeções em $P_{\infty}(\tilde{\mathcal{A}^{\diamond}})$. Os outros dois geradores são $[[Q(e^{iM_t}, A_5, A_6)]_{\mathcal{E}^{\diamond}}]_0$ e $[[Q(\texttt{x}(M_x), A_5, A_6)]_{\mathcal{E}^{\diamond}}]_0.$ 

Assim, podemos escrever $[[A]_{\mathcal{E}^{\diamond}}]_0$ como sendo a soma 
$$ \alpha_1 [[I]_{\mathcal{E}^{\diamond}}]_0 + \alpha_2 [[Q(e^{iM_t}, a(M_x), b(M_x))]_{\mathcal{E}^{\diamond}}]_0 + 
\alpha_3 [[Q(e^{iM_t}, A_5, A_6)]_{\mathcal{E}^{\diamond}}]_0 + \alpha_4 [[Q(\texttt{x}(M_x), A_5, A_6)]_{\mathcal{E}^{\diamond}}]_0,$$
com $\alpha_i \in \mathbb{Z}, ~i=1,2,3,4.$
Calculando  $\delta_0^{\diamond} $ em $[[A]_{\mathcal{E}^{\diamond}}]_0$  temos que 
$$
\delta_0^{\diamond}([[A]_{\mathcal{E}^{\diamond}}]_0) = \delta_0^{\diamond} (\alpha_3 [[Q(e^{iM_t}, A_5, A_6)]_{\mathcal{E}^{\diamond}}]_0 + \alpha_4 [[Q(\texttt{x}(M_x), A_5, A_6)]_{\mathcal{E}^{\diamond}}]_0) $$
pois nos outros geradores, $\delta_0^{\diamond}$ se anula. 
Podemos então assumir que $[[A]_{\mathcal{E}^{\diamond}}]_0 = \alpha[[Q(e^{iM_t}, A_5, A_6)]_{\mathcal{E}^{\diamond}}]_0 + \beta [[Q(\texttt{x}(M_x), A_5, A_6)]_{\mathcal{E}^{\diamond}}]_0,$ com $\alpha$ e $\beta$ inteiros não nulos simultaneamente.  

Os geradores de $K_0(\mathcal{A}/\mathcal{E}_{\mathcal{A}})$, nos quais não calculamos $\delta_0$, são
$$ 
[[Q(l(M_t), A_5, A_6)]_{\mathcal{E}_{\mathcal{A}}}]_0 ~, ~~[[Q(\tilde{l}(M_t), A_5, A_6)]_{\mathcal{E}_{\mathcal{A}}}]_0~, ~~
[[Q(\texttt{x}(M_x), A_5, A_6)]_{\mathcal{E}_{\mathcal{A}}}]_0 .
$$

Dados os isomorfismos $\Psi : \mathcal{E}_{\mathcal{A}}/ \mathcal{K}_{\Omega}\rightarrow C(S^1\times\{-1,+1\}, \mathcal{K}_{\mathbb{Z}\times\mathbb{S}^1})$ e $\gamma'|_{\mathcal{E}^{\diamond}} : \mathcal{E}^{\diamond}\rightarrow C(S^1, \mathcal{K}_{\mathbb{Z}\times\mathbb{S}^1})$, sabemos que estes induzem isomorfismos $K_1(\Psi)$ e $K_1(\gamma'|_{\mathcal{E}^{\diamond}})$ nos grupos. Queremos mostrar que 
$$
K_1(\Psi)\circ \delta_0 ([[Q(l(M_t), A_5, A_6)]_{\mathcal{E}_{\mathcal{A}}}]_0) = (0 , K_1(\gamma'|_{\mathcal{E}^{\diamond}})\circ \delta_0^{\diamond}([[Q(e^{iM_t}, A_5, A_6)]_{\mathcal{E}^{\diamond}}]_0) ) $$
% calcular $K_1(\gamma')\circ \delta_0^{\diamond}$ em $[[Q(e^{iM_t}, A_5, A_6)]_{\mathcal{E}^{\diamond}}]_0.$

Pela definição da aplicação exponencial, dado $[T]_{\mathcal{E}^{\diamond}}\in P_n(({A^{\diamond}/\mathcal{E}^{\diamond}})\tilde{}~),$ 
$$ \delta_0^{\diamond}([[T]_{\mathcal{E}^{\diamond}}]_0) = - \left[exp{2\pi i \left(\frac{T+T^{\ast}}{2}\right)}\right]_1 ,$$
já que ${(T+T^{\ast})}/{2}$ é uma pré-imagem auto-adjunta em $ M_n(\tilde{\mathcal{A}^{\diamond}})$ de $[T]_{\mathcal{E}^{\diamond}}.$ Então, 
$$ \delta_0^{\diamond}([[Q(e^{iM_t}, A_5, A_6)]_{\mathcal{E}^{\diamond}}]_0) = 
- \left[exp{2\pi i \left(\frac{Q(e^{iM_t}, A_5, A_6)+Q(e^{iM_t}, A_5, A_6)^{\ast}}{2}\right)}\right]_1 $$
e calculando $K_1(\gamma'|_{\mathcal{E}^{\diamond}})$ neste elemento, temos
$$
K_1(\gamma'|_{\mathcal{E}^{\diamond}})\left( - \left[exp{2\pi i \left(\frac{Q(e^{iM_t}, A_5, A_6)+Q(e^{iM_t}, A_5, A_6)^{\ast}}{2}\right)}\right]_1\right)= 
$$
$$
- \left[\gamma'_{exp{2\pi i(\frac{Q(e^{iM_t}, A_5, A_6)+Q(e^{iM_t}, A_5, A_6)^{\ast}}{2})}}\right]_1 = 
- \left[exp{2\pi i \gamma'_{(\frac{Q(e^{iM_t}, A_5, A_6)+Q(e^{iM_t}, A_5, A_6)^{\ast}}{2})}}\right]_1 . 
$$

A aplicação $\gamma'$ é da forma $\gamma_{A}'(z)=\gamma_A (z,\pm 1)$ para todo $z\in S^1$. Assim, $\gamma'_{Q(e^{iM_t}, A_5, A_6)} $ é igual a matriz  \\
$
\left(\begin{array}{cc}
1 - (2- \gamma_{e^{iM_t}}'-{\gamma_{e^{iM_t}}'}^{\ast})\frac{1-\gamma_{A_5}'}{8}  & 
\frac{{\gamma_{e^{iM_t}}'}^2-1}{2}\sqrt{\frac{1-\gamma_{A_5}'}{8}} -(\gamma_{e^{iM_t}}'-1)^2 \frac{\gamma_{A_6}'}{8} \vspace{0.3cm}\\
\frac{{{\gamma_{e^{iM_t}}'}^{\ast}}^{2}-1}{2}\sqrt{\frac{1-\gamma_{A_5}'}{8}} -({\gamma_{e^{iM_t}}'}^{\ast}-1)^2 \frac{\gamma_{A_6}'}{8} &
(2-\gamma_{e^{iM_t}}'-{\gamma_{e^{iM_t}}'}^{\ast})\frac{1-\gamma_{A_5}'}{8}  \\
\end{array}\right) =
$ \\
$$ 
\left(\begin{array}{cc}
1 - (2- Y_{-1}-Y_{1})\frac{1-\gamma_{A_5}'}{8}  & 
\frac{{Y_{-1}}^2-1}{2}\sqrt{\frac{1-\gamma_{A_5}'}{8}} -(Y_{-1}-1)^2 \frac{\gamma_{A_6}'}{8} \vspace{0.3cm}\\
\frac{{Y_{1}}^{2}-1}{2}\sqrt{\frac{1-\gamma_{A_5}'}{8}} -(Y_{1}-1)^2 \frac{\gamma_{A_6}'}{8} &
(2-Y_{-1}-Y_{1})\frac{1-\gamma_{A_5}'}{8}  \\
\end{array}\right) = Q(Y_{-1}, \gamma_{A_5}', \gamma_{A_6}')
$$

Portanto,
$$
K_1(\gamma'|_{\mathcal{E}^{\diamond}})\circ \delta_0^{\diamond}([[Q(e^{iM_t}, A_5, A_6)]_{\mathcal{E}^{\diamond}}]_0) =  
- \left[exp{2\pi i \left(\frac{Q(Y_{-1}, \gamma_{A_5}', \gamma_{A_6}')+ Q(Y_{-1}, \gamma_{A_5}', \gamma_{A_6}')^{*}}{2}\right)}\right]_1 .
$$ 

Considere agora a aplicação $\delta_0$ calculada em $[[Q(l(M_t), A_5, A_6)]_{\mathcal{E}_{\mathcal{A}}}]_0$. Pela definição temos
$$ \delta_0([[Q(l(M_t), A_5, A_6)]_{\mathcal{E}_{\mathcal{A}}}]_0) = - \left[\left[exp{2\pi i \left(\frac{Q(l(M_t), A_5, A_6)+ Q(l(M_t), A_5, A_6)^{\ast}}{2}\right)}\right]_{\mathcal{K}_{\Omega}}\right]_1 .$$

O isomorfismo $\Psi : \mathcal{E}_{\mathcal{A}}/ \mathcal{K}_{\Omega}\rightarrow C(S^1\times\{-1,+1\}, \mathcal{K}_{\mathbb{Z}\times\mathbb{S}^1})$ é tal que $\Psi ([A]_{\mathcal{K}}) = \gamma_{A}$ para todo $A\in \mathcal{E}_{\mathcal{A}}. $ Então, 
$$
K_1(\Psi)\left( - \left[\left[exp{2\pi i \left(\frac{Q(l(M_t), A_5, A_6)+ Q(l(M_t), A_5, A_6)^{\ast}}{2}\right)}\right]_{\mathcal{K}_{\Omega}}\right]_1 \right)= 
$$
$$
- \left[\gamma_{exp{2\pi i(\frac{Q(l(M_t), A_5, A_6)+Q(l(M_t), A_5, A_6)^{\ast}}{2})}}\right]_1 = 
- \left[exp{2\pi i \gamma_{(\frac{Q(l(M_t), A_5, A_6)+Q(l(M_t), A_5, A_6)^{\ast}}{2})}}\right]_1 . 
$$
Como $l(-\infty, e^{i\theta})=1$, então $\gamma_{l(M_t)}(z,-1)= l(-\infty, Y_{-1}) = I $ para todo $z\in S^1$ e portanto temos  
\begin{eqnarray}
\gamma_{Q(l(M_t), A_5, A_6)}(z, -1) & = & Q(\gamma_{l(M_t)}(z,-1) , \gamma_{A_5}(z,-1), \gamma_{A_6}(z,-1)) \nonumber \\
& = & Q( I , \gamma_{A_5}(z,-1), \gamma_{A_6}(z,-1)) = \left(\begin{array}{cc}
I & 0 \\
0 & 0 \\  
\end{array}\right) \nonumber
\end{eqnarray}
para todo $z\in S^1$. Logo 
$$exp{2\pi i \gamma_{(\frac{Q(l(M_t), A_5, A_6)+Q(l(M_t), A_5, A_6)^{\ast}}{2})}(z,-1)} = exp 2\pi i \left(\begin{array}{cc}
I & 0 \\
0 & 0 \\
\end{array}\right) = \left(\begin{array}{cc}
I & 0 \\
0 & I \\
\end{array}\right). $$ 
para todo $z\in S^1.$
Agora, $\gamma_{l(M_t)}(z,+1)= l(+\infty, Y_{-1}) = Y_{-1} $ para todo $z$, então 
$$  \gamma_{Q(l(M_t), A_5, A_6)}(z, +1) = Q(Y_{-1}, \gamma_{A_5} (z,+1), \gamma_{A_6} (z,+1)). $$
Isto implica  que $exp{2\pi i \gamma_{(\frac{Q(l(M_t), A_5, A_6)+Q(l(M_t), A_5, A_6)^{\ast}}{2})}(z,+1)}$ é igual a 
$$
exp 2\pi i \left(\frac{Q(Y_{-1}, \gamma_{A_5} (z,+1), \gamma_{A_6} (z,+1)) + Q(Y_{-1}, \gamma_{A_5} (z,+1), \gamma_{A_6} (z,+1))^{*}}{2}\right).
$$
para todo $z\in S^1$. 

Como $\gamma_{A_5}'(z)= \gamma_{A_5}(z,+1)$ e $\gamma_{A_6}'(z)= \gamma_{A_6}(z,+1)$ pra todo $z \in S^1$, podemos concluir que 
\begin{eqnarray}
K_1(\Psi) \circ \delta_0([[Q(l(M_t), A_5, A_6)]_{\mathcal{E}_{\mathcal{A}}}]_0) & = &  
\left(
\left[\left(\begin{array}{cc}
I & 0 \\
0 & I \\
\end{array}\right)\right]_1 , 
K_1(\gamma ') \circ \delta_0^{\diamond}([[Q(e^{iM_t}, A_5, A_6)]_{\mathcal{E}^{\diamond}}]_0)
\right) \nonumber \\
& = &\left(
0 , 
K_1(\gamma ') \circ \delta_0^{\diamond}([[Q(e^{iM_t}, A_5, A_6)]_{\mathcal{E}^{\diamond}}]_0)
\right) \nonumber
\end{eqnarray}

De forma análoga, podemos concluir que 
$$ K_1(\Psi)\circ \delta_0([[Q(\tilde{l}(M_t), A_5, A_6)]_{\mathcal{E}_{\mathcal{A}}}]_0) =  (K_1(\gamma'|_{\mathcal{E}^{\diamond}})\circ \delta_0^{\diamond}([[Q(e^{iM_t}, A_5, A_6)]_{\mathcal{E}^{\diamond}}]_0) , 0). $$ 

Vamos calcular agora $K_1(\Psi) \circ \delta_0$ em $[[Q(\texttt{x}(M_x), A_5, A_6)]_{\mathcal{E}_{\mathcal{A}}}]_0.$ Como $\gamma_{\texttt{x}(M_x)}(z,\pm1) = \gamma_{\texttt{x}(M_x)}'(z),$ 
$\gamma_{A_5}(z,\pm1) = \gamma_{A_5}'(z)$ e $\gamma_{A_6}(z,\pm1) = \gamma_{A_6}'(z)$ para todo $z\in S^1$, e seguindo o mesmo raciocínio de cálculos feito acima, temos
$$ K_1(\Psi) \circ \delta_0([[Q(\texttt{x}(M_x), A_5, A_6)]_{\mathcal{E}_{\mathcal{A}}}]_0) = $$
$$(K_1(\gamma ') \circ \delta_0^{\diamond}([[Q(\texttt{x}(M_x), A_5, A_6)]_{\mathcal{E}^{\diamond}}]_0 ), 
  K_1(\gamma ') \circ \delta_0^{\diamond}([[Q(\texttt{x}(M_x), A_5, A_6)]_{\mathcal{E}^{\diamond}}]_0)) .
$$

Podemos então concluir que, para 
$$[[B]_{\mathcal{E}_{\mathcal{A}}}]_0 = \alpha[[Q(l(M_t), A_5, A_6)]_{\mathcal{E}_{\mathcal{A}}}]_0 + \alpha[[Q(\tilde{l}(M_t), A_5, A_6)]_{\mathcal{E}_{\mathcal{A}}}]_0 + \beta [[Q(\texttt{x}(M_x), A_5, A_6)]_{\mathcal{E}_{\mathcal{A}}}]_0
$$
temos
$$ K_1(\Psi) \circ \delta_0([[B]_{\mathcal{E}_{\mathcal{A}}}]_0) = (K_1(\gamma ') \circ \delta_0^{\diamond}([[A]_{\mathcal{E}^{\diamond}}]_0 ),
K_1(\gamma ') \circ \delta_0^{\diamond}([[A]_{\mathcal{E}^{\diamond}}]_0 )),$$
para $[[A]_{\mathcal{E}^{\diamond}}]_0 = \alpha[[Q(e^{iM_t}, A_5, A_6)]_{\mathcal{E}^{\diamond}}]_0 + \beta [[Q(\texttt{x}(M_x), A_5, A_6)]_{\mathcal{E}^{\diamond}}]_0.$ Logo, o elemento $\delta_0([[B]_{\mathcal{E}_{\mathcal{A}}}]_0)$ corresponde ao par $(1,1) \in \mathbb{Z}^2.$
\end{proof}

Da proposição anterior, podemos concluir também que se $\delta_0^{\diamond}([[Q(e^{iM_t}, A_5, A_6)]_{\mathcal{E}^{\diamond}}]_0)$ corresponde a um inteiro $\nu$, então $ \delta_0([[Q(l(M_t), A_5, A_6)]_{\mathcal{E}_{\mathcal{A}}}]_0) \leftrightarrow (0,\nu)$ e $ \delta_0([[Q(\tilde{l}(M_t), A_5, A_6)]_{\mathcal{E}_{\mathcal{A}}}]_0) \leftrightarrow (\nu,0).$ 

Partimos agora para o cálculo de $\delta_1: K_1(\mathcal{A}/{\mathcal{E}}_{\mathcal{A}})\rightarrow K_0(\mathcal{E}_{\mathcal{A}}/{\cal{K}}_{\Omega})$. A proposição 3 de \cite{cintia} pode ser novamente reescrita a esta álgebra, em que $\mathcal{A}\subset \mathcal{L}(L^2(\mathbb{R}\times\mathbb{S}^1))$. 

\begin{prop}\label{delta1} Seja $A$ em $\mathcal{A}$ tal que $[A]_{\mathcal{E}_{\mathcal{A}}}$ seja inversível em $\mathcal{A}/{\mathcal{E}_{\mathcal{A}}}.$ 
Então ${\gamma}_{A}(z,-1)$ e ${\gamma}_{A}(z,+1)$ são operadores de Fredholm em $\mathcal{L}_{\mathbb{Z}\times \mathbb{S}^1}$ para todo $z \in S^1$ e
$$ 
{\delta}_1(\left[[A]_{\mathcal{E}_{\mathcal{A}}}]_1\right) =
(\mathsf{ind}({\gamma}_{A}(1,-1))[E]_0,\mathsf{ind}({\gamma}_{A}(1,+1))[E]_0)
\in K_0({\cal{K}}_{\mathbb{Z}\times \mathbb{S}^1})\oplus K_0({\cal{K}}_{\mathbb{Z}\times \mathbb{S}^1}),
$$
onde $\mathsf{ind}$ denota o índice de Fredholm e $E$ é uma projeção de posto 1 em ${\mathcal{K}}_{\mathbb{Z}\times \mathbb{S}^1}$.
\end{prop}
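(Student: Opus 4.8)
The plan is to reproduce, for the short exact sequence (\ref{sss}) and the homomorphism $\gamma$, the argument already carried out for $\gamma'$ in Proposition \ref{delta1diam} (which in turn follows Proposition~3 of \cite{cintia}). First I would prove the Fredholm claim. Since $[A]_{\mathcal{E}_{\mathcal{A}}}$ is invertible, choose $B$ in $\mathcal{A}$ with $I-AB$ and $I-BA$ in $\mathcal{E}_{\mathcal{A}}$. As $\gamma$ is a $\ast$-homomorphism, $\gamma_{I-AB}=\gamma_I-\gamma_A\gamma_B$ and $\gamma_{I-BA}=\gamma_I-\gamma_B\gamma_A$ both lie in $\gamma(\mathcal{E}_{\mathcal{A}})=C(S^1\times\{-1,+1\},\mathcal{K}_{\mathbb{Z}\times\mathbb{S}^1})$, as recorded at the end of Chapter~1. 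Evaluating at a point $(z,\varepsilon)$ with $\varepsilon=\pm1$, one sees that $\gamma_A(z,\varepsilon)$ has a two-sided inverse in $\mathcal{L}_{\mathbb{Z}\times\mathbb{S}^1}$ modulo $\mathcal{K}_{\mathbb{Z}\times\mathbb{S}^1}$, hence is Fredholm by Atkinson's theorem.

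Next I would compute $\delta_1$ via Lemma \ref{lema1}. Applying it to (\ref{sss}) with $J=\mathcal{E}_{\mathcal{A}}/\mathcal{K}_{\Omega}$, and taking $a=[A]_{\mathcal{K}_{\Omega}}$ and $b=[B]_{\mathcal{K}_{\Omega}}$ as preimages of $u=[A]_{\mathcal{E}_{\mathcal{A}}}$ and $u^{-1}=[B]_{\mathcal{E}_{\mathcal{A}}}$ (such $B$ exists in $\mathcal{A}$ since $\pi$ in (\ref{sss}) is induced by the surjection $\mathcal{A}\to\mathcal{A}/\mathcal{E}_{\mathcal{A}}$), the element $\delta_1([[A]_{\mathcal{E}_{\mathcal{A}}}]_1)$ is the class in $K_0(\mathcal{E}_{\mathcal{A}}/\mathcal{K}_{\Omega})$ of the explicit $2\times2$ projection matrix of Lemma \ref{lema1} built from $ab$ and $ba$, minus $[\mathrm{diag}(1,0)]_0$. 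I would then push this class forward through the isomorphism $\Psi$ of (\ref{psi}) composed with $f\mapsto(f(\cdot,-1),f(\cdot,+1))$, which identifies $\mathcal{E}_{\mathcal{A}}/\mathcal{K}_{\Omega}$ with $C(S^1,\mathcal{K}_{\mathbb{Z}\times\mathbb{S}^1})\oplus C(S^1,\mathcal{K}_{\mathbb{Z}\times\mathbb{S}^1})$. Using that $\gamma$ restricted to the ideal equals $\Psi\circ[\,\cdot\,]_{\mathcal{K}_{\Omega}}$ and that $\gamma_A\gamma_B=\gamma_{AB}$, the image class is represented in each copy by the same polynomial expression in $\gamma_A(\cdot,\varepsilon)$ and $\gamma_B(\cdot,\varepsilon)$; evaluating at $z=1$ and invoking Lemma \ref{lema1} for the sequence $0\to\mathcal{K}_{\mathbb{Z}\times\mathbb{S}^1}\to\mathcal{L}_{\mathbb{Z}\times\mathbb{S}^1}\to\mathcal{L}_{\mathbb{Z}\times\mathbb{S}^1}/\mathcal{K}_{\mathbb{Z}\times\mathbb{S}^1}\to0$, whose index map is the Fredholm index (\cite{rordam}, 9.4.2), this matrix is exactly $\delta_1([[\gamma_A(1,\varepsilon)]_{\mathcal{K}}]_1)=\mathsf{ind}(\gamma_A(1,\varepsilon))[E]_0$. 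Since evaluation at $1$ induces the isomorphism $K_0(C(S^1,\mathcal{K}_{\mathbb{Z}\times\mathbb{S}^1}))\cong K_0(\mathcal{K}_{\mathbb{Z}\times\mathbb{S}^1})$ of (\ref{k0k}), nothing is lost in passing to $z=1$, and the two coordinates assemble to the stated value in $K_0(\mathcal{K}_{\mathbb{Z}\times\mathbb{S}^1})\oplus K_0(\mathcal{K}_{\mathbb{Z}\times\mathbb{S}^1})$.

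The hard part is essentially bookkeeping rather than new ideas: one must keep straight that for $A\notin\mathcal{E}_{\mathcal{A}}$ the object $\gamma_A$ is the multiplier $\Psi T_A\Psi^{-1}$ of $C(S^1\times\{-1,+1\},\mathcal{K}_{\mathbb{Z}\times\mathbb{S}^1})$, so that the product rule and the identity $\gamma|_{\mathcal{E}_{\mathcal{A}}}=\Psi\circ[\,\cdot\,]_{\mathcal{K}_{\Omega}}$ can be combined to carry the Lemma \ref{lema1} matrix faithfully through $\Psi$ coordinatewise; once this is set up, the remaining steps are formal, the only analytic inputs being the identification of the index map of a compact-operator extension with the Fredholm index and the $K_0$-isomorphism induced by evaluation on $C(S^1,\mathcal{K}_{\mathbb{Z}\times\mathbb{S}^1})$.
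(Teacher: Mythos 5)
Your proposal is correct and follows essentially the same route as the paper: the paper's proof simply declares the argument analogous to Proposition \ref{delta1diam}, using the identification $\mathcal{E}_{\mathcal{A}}/\mathcal{K}_{\Omega}\cong C(S^1\times\{-1,+1\},\mathcal{K}_{\mathbb{Z}\times\mathbb{S}^1})\cong\mathcal{E}^{\diamond}\oplus\mathcal{E}^{\diamond}$, and what you wrote is precisely that analogy carried out coordinatewise (Atkinson for the Fredholm claim, Lemma \ref{lema1} for the sequence (\ref{sss}), transport through $\Psi$, evaluation at $z=1$ via (\ref{k0k}), and the Calkin-extension index map).
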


\begin{proof}
A demonstração desta proposição é análoga à feita em \ref{delta1diam}, já que $\mathcal{E}_{\mathcal{A}}/\mathcal{K}_{\Omega}\cong C(S^1\times \{-1,+1\},{\cal{K}}_{\mathbb{Z}\times \mathbb{S}^1}) \cong  \mathcal{E}^{\diamond}\oplus \mathcal{E}^{\diamond}.$ 
\end{proof}

\begin{teo}\label{sobrejetora} A aplicação ${\delta}_1$ em (\ref{sss6}) é sobrejetora. 
\end{teo}

\begin{proof} Calculemos ${\delta}_1$ nos geradores de $K_1(\mathcal{A}/{\mathcal{E}}_{\mathcal{A}}).$ É fácil ver que os operadores de multiplicação
$$
\texttt{x}(M_x)~, ~~l(M_t)~, ~~\tilde{l}(M_t)
$$
são unitários em $\mathcal{A}$, e portanto unitários em $\mathcal{A}/\mathcal{K}_{\Omega}.$ Logo, 
$$
{\delta}_1([[\texttt{x}(M_x)]_{\mathcal{E}}]_1) = {\delta}_1([[l(M_t)]_{\mathcal{E}}]_1) = {\delta}_1([[\tilde{l}(M_t)]_{\mathcal{E}}]_1) = (0,0).
$$

Para calcular ${\delta}_1$ nos outros geradores, usaremos o resultado da proposição \ref{delta1} . Calculando $\gamma_{A_5+iA_6}(1,\pm 1)$, obtemos o seguinte operador em $\mathcal{L}_{\mathbb{Z}\times \mathbb{S}^1}$ de multiplicação pela sequência 
\begin{equation}
\gamma_{A_5+iA_6}(1,\pm 1) = 
%Y_0[-(0-M_j)\tilde{\Lambda}(0-M_j)] Y_0 + i Y_0 \left[\frac{1}{i}\frac{\partial}{\partial x} \tilde{\Lambda}(0-M_j)\right] Y_0 = 
\left(j\left(1+j^2-\frac{\partial^2}{\partial{\beta}^2}\right)^{-1/2} + 
i \frac{1}{i} \frac{\partial}{\partial{\beta}}\left(1+j^2-\frac{\partial^2}{\partial{\beta}^2}\right)^{-1/2}\right)_j .
\label{a5a6}
\end{equation}

Mas este operador é igual a $\gamma_{A_5+iA_6}'(1)$ em (\ref{a5a6'}) e já calculamos o seu índice. Portanto, $ \mathsf{ind}(\gamma_{A_5+iA_6}(1,\pm 1)) = 0 $ e  
$$
{\delta}_1([[A_5 + i A_6]_{\mathcal{E}_{\mathcal{A}}}]_1) = (0,0).
$$

Considere o gerador $[[I+(A_5+iA_6-1)Q(l(M_t),\texttt{a}(M_x),\texttt{b}(M_x))]_{\mathcal{E}}]_1$ de $K_1(\mathcal{A}/\mathcal{E}_{\mathcal{A}}).$ Então, 
\begin{eqnarray*}
{\gamma}_{I+(A_5+iA_6-1)Q(l(M_t),\texttt{a}(M_x),\texttt{b}(M_x))}(1, -1) & = & 
I + ({\gamma}_{A_5+iA_6}(1,-1)-1)
\left(\begin{array}{cc}
I & 0 \\
0  & 0 \\
\end{array}\right) \\
& = &
\left(\begin{array}{cc}
{\gamma}_{A_5+iA_6}(1,-1) & 0 \\
0  & I \\
\end{array}\right) 
\end{eqnarray*}
pois ${\gamma}_{Q(l(M_t),\texttt{a}(M_x),\texttt{b}(M_x))}(1,-1)= Q(\gamma_{l(M_t)}(1,-1),\texttt{a}(M_x),\texttt{b}(M_x)),$ e já sabemos que $\gamma_{l(M_t)}(1,-1)= l(-\infty, Y_{-1}) = I.$ 
Pelos cálculos anteriores podemos concluir que 
$$ \mathsf{ind}({\gamma}_{I+(A_5+iA_6-1)Q(l(M_t),\texttt{a}(M_x),\texttt{b}(M_x))}(1, -1)) = 0 .$$ 

Ainda falta calcular o índice de $\gamma$ neste mesmo operador no ponto $(1,+1)$:
\begin{eqnarray*}
{\gamma}_{I+(A_5+iA_6-1)Q(l(M_t),\texttt{a}(M_x),\texttt{b}(M_x))}(1, +1) &=& 
I + ({\gamma}_{A_5+iA_6}(1,+1) - 1){\gamma}_{Q(l(M_t),\texttt{a}(M_x),\texttt{b}(M_x))}(1, +1)
%& = &
\end{eqnarray*}
O operador ${\gamma}_{A_5+iA_6}(1,+1)\in \mathcal{L}_{\mathbb{Z}\times\mathbb{S}^1}$ é dado em (\ref{a5a6}). O outro operador é dado por
$$
{\gamma}_{Q(l(M_t),\texttt{a}(M_x),\texttt{b}(M_x))}(1, +1) = Q(Y_{-1},\texttt{a}(M_x), \texttt{b}(M_x))~ \in ~M_2(\mathcal{L}_{\mathbb{Z}\times\mathbb{S}^1}),
$$
pois ${\gamma}_{l(M_t)}(1,+1) = l(+\infty, Y_{-1})  = Y_{-1}$. 

Note que em  (\ref{gamma'Q}), $Q(Y_{-1},\texttt{a}(M_x), \texttt{b}(M_x)) = {\gamma}_{Q(e^{iM_t},\texttt{a}(M_x),\texttt{b}(M_x))}'(1).$ Logo, temos a seguinte igualdade: 
$$ 
I + ({\gamma}_{A_5+iA_6}(1,+1) - 1){\gamma}_{Q(l(M_t),\texttt{a}(M_x),\texttt{b}(M_x))}(1, +1) = 
I + ({\gamma}_{A_5+iA_6}'(1) - 1){\gamma}_{Q(e^{iM_t},\texttt{a}(M_x),\texttt{b}(M_x))}'(1).
$$
Como o índice de $ I + ({\gamma}_{A_5+iA_6}'(1) - 1){\gamma}_{Q(e^{iM_t},\texttt{a}(M_x),\texttt{b}(M_x))}'(1)$ é igual a 1, pois já o calculamos no teorema \ref{sobre},  segue que o índice de ${\gamma}_{I+(A_5+iA_6-1)Q(l(M_t),\texttt{a}(M_x),\texttt{b}(M_x))}(1, +1)$ é 1 e portanto, 
$$
{\delta}_1([[ I+(A_5+iA_6-1)Q(l(M_t),\texttt{a}(M_x),\texttt{b}(M_x)) ]_{\mathcal{E}}]_1) = (0,[E]_0). 
$$
De forma análoga, 
$$
{\delta}_1([[ I+(A_5+iA_6-1)Q(\tilde{l}(M_t),\texttt{a}(M_x),\texttt{b}(M_x)) ]_{\mathcal{E}}]_1) = ([E]_0,0). 
$$
Assim concluímos que ${\delta}_1$ é sobrejetora. 
\end{proof}

Conhecendo a aplicação do índice da seqüência (\ref{sss6}),temos as seguintes possibilidades para os K-grupos de $\mathcal{A}/{\cal{K}}_{\Omega}$:

\begin{enumerate} \label{poss3}

\item Se $\delta_0$ é sobrejetora, temos as seqüências exatas a partir de (\ref{sss6})
$$ 0 \rightarrow K_0(\mathcal{A}/{\cal{K}}_{\Omega}) \rightarrow K_0(\mathcal{A}/{\mathcal{E}}_{\mathcal{A}})\cong \mathbb{Z}^6 \stackrel{\delta_0}{\rightarrow} K_1({\mathcal{E}}_{\mathcal{A}}/{\cal{K}}_{\Omega})\cong \mathbb{Z}^2 \rightarrow 0 ,$$
$$ 0 \rightarrow K_1(\mathcal{A}/{\cal{K}}_{\Omega}) \rightarrow K_1(\mathcal{A}/{\mathcal{E}}_{\mathcal{A}})\cong \mathbb{Z}^6 \stackrel{\delta_1}{\rightarrow} K_0({\mathcal{E}}_{\mathcal{A}}/{\cal{K}}_{\Omega})\cong \mathbb{Z}^2 \rightarrow 0 ,$$
o que implica que  $K_0(\mathcal{A}/{\cal{K}}_{\Omega})$ e $K_1(\mathcal{A}/{\cal{K}}_{\Omega})$ são isomorfos a $\mathbb{Z}^4$.

\item Se $Im \delta_0 \cong \mathbb{Z}(1,1) \oplus \mathbb{Z}(\nu,0),$ para $\nu \neq \pm 1$ não nulo, temos 
$$ 0 \rightarrow K_0(\mathcal{A}/{\cal{K}}_{\Omega}) \rightarrow K_0(\mathcal{A}/{\mathcal{E}}_{\mathcal{A}})\cong \mathbb{Z}^6 \stackrel{\delta_0}{\rightarrow} Im \delta_0 \rightarrow 0 ,$$
$$ 0 \rightarrow \frac{K_1({\mathcal{E}}_{\mathcal{A}}/{\cal{K}}_{\Omega})}{Im \delta_0}\cong \mathbb{Z}_{\nu} \rightarrow K_1(\mathcal{A}/{\cal{K}}_{\Omega}) \rightarrow Ker \delta_1 \cong \mathbb{Z}^4 \rightarrow 0 ,$$
então $K_0(\mathcal{A}/{\cal{K}}_{\Omega})\cong \mathbb{Z}^4$ e $K_1(\mathcal{A}/{\cal{K}}_{\Omega})\cong \mathbb{Z}^4 \oplus \mathbb{Z}_{\nu}$.

\item Se $Im \delta_0 \cong \mathbb{Z}(1,1)$, temos as seqüências
$$ 0 \rightarrow K_0(\mathcal{A}/{\cal{K}}_{\Omega}) \rightarrow K_0(\mathcal{A}/{\mathcal{E}}_{\mathcal{A}})\cong \mathbb{Z}^6 \stackrel{\delta_0}{\rightarrow} Im \delta_0 \cong \mathbb{Z} \rightarrow 0 ,$$
$$ 0 \rightarrow \frac{K_1({\mathcal{E}}_{\mathcal{A}}/{\cal{K}}_{\Omega})}{Im \delta_0}\cong \mathbb{Z} \rightarrow K_1(\mathcal{A}/{\cal{K}}_{\Omega}) \rightarrow Ker \delta_1 \cong \mathbb{Z}^4 \rightarrow 0 ,$$
então $K_0(\mathcal{A}/{\cal{K}}_{\Omega})\cong \mathbb{Z}^5 $ e $K_1(\mathcal{A}/{\cal{K}}_{\Omega})\cong \mathbb{Z}^5 $.

\end{enumerate}

%O fato de não calcularmos a aplicação exponencial em várias dessas seqüências, porque não  conseguimos determinar explicitamente o isomorfismo $ K_1(C(S^1, \mathcal{K})\rightarrow \mathbb{Z},$ fez com que nós procurássemos outras alternativas para cercar o problema. 

Passamos agora, para outra seqüência exata na tentativa de obter mais informações sobre a K-teoria de $\mathcal{A}.$ Para a seqüência abaixo
$$ 0 \ \longrightarrow \mathcal{E}_{\mathcal{A}} \  \stackrel{i}{\longrightarrow} \ \mathcal{A} \  \stackrel{\pi}{\longrightarrow} \ \frac{\mathcal{A}}{\mathcal{E}_{\mathcal{A}}} \ \longrightarrow \  0 , $$ 
considere a seqüência exata de seis termos em K-teoria associada:
\begin{equation}
\begin{array}{ccccc}
 K_0(\mathcal{E}_{\mathcal{A}})  & \! \stackrel{i_*}{\longrightarrow} & \! 
 K_0(\mathcal{A})  & \! \stackrel{{\pi}_*}{\longrightarrow} & \!
 K_0(\mathcal{A}/{\mathcal{E}}_{\mathcal{A}})   \\ \\    
 {\delta}_1 \ \uparrow & \! ~  & \! ~  & \! ~ & \! \downarrow \ {\delta}_0  \\ \\
 K_1(\mathcal{A}/{\mathcal{E}}_{\mathcal{A}}) & \! \stackrel{{\pi}_*}{\longleftarrow} & \! 
 K_1(\mathcal{A})  & \! \stackrel{{i}_*}{\longleftarrow} & \!
 K_1(\mathcal{E}_{\mathcal{A}})
\end{array}
\label{seq}
\end{equation}

Já determinamos $ K_0(\mathcal{A}/{\mathcal{E}}_{\mathcal{A}})$ e $K_1(\mathcal{A}/{\mathcal{E}}_{\mathcal{A}})$ e temos argumentos suficientes para calcularmos $K_0(\mathcal{E}_{\mathcal{A}})$ e $K_1(\mathcal{E}_{\mathcal{A}})$. 

\begin{prop} $K_0(\mathcal{E}_{\mathcal{A}})\cong \mathbb{Z}^2$ e $K_1(\mathcal{E}_{\mathcal{A}}) \cong \mathbb{Z}$. 
\end{prop}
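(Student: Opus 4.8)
Proof proposal.

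The plan is to reduce, via the structural results of Chapter~1, to the $K$-theory of the small circle algebra $\mathcal{S}\subset\mathcal{L}_{S^1}$, and to compute the latter with a single six-term exact sequence. First I would note that $\mathcal{E}_{\mathcal{A}}$ and $\mathcal{E}_{\mathcal{B}}$ are isomorphic (conjugation by the unitary $F\otimes I_{\mathbb{S}^1}$), and that $\mathcal{E}_{\mathcal{B}}=J\otimes\mathcal{K}_{\mathbb{S}^1}$; by stability of $K$-theory, $K_i(\mathcal{E}_{\mathcal{A}})\cong K_i(J)$. Next, Proposição~\ref{e} (in the case $\mathbb{B}=\mathbb{S}^1$), applied to $\mathcal{E}_{\mathcal{B}}=J\otimes\mathcal{K}_{\mathbb{S}^1}$, shows that conjugation by $W\otimes I_{\mathbb{S}^1}$ (which acts only on the $L^2(\mathbb{R})$ factor) carries $\mathcal{E}_{\mathcal{B}}$ onto $\mathcal{S}\otimes\mathcal{K}_{\mathbb{Z}}\otimes\mathcal{K}_{\mathbb{S}^1}=(\mathcal{S}\otimes\mathcal{K}_{\mathbb{Z}})\otimes\mathcal{K}_{\mathbb{S}^1}$; compressing both sides to a rank-one projection in the last factor gives $WJW^{-1}=\mathcal{S}\otimes\mathcal{K}_{\mathbb{Z}}$, so once more by stability $K_i(\mathcal{E}_{\mathcal{A}})\cong K_i(\mathcal{S})$.

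It then remains to compute $K_i(\mathcal{S})$. I would use the extension $0\to\mathcal{K}_{S^1}\to\mathcal{S}\stackrel{\rho}{\to}C(S^1\times\{-1,+1\})\to 0$, valid since all commutators of $\mathcal{S}$ are compact and $\mathcal{S}/\mathcal{K}_{S^1}\cong C(S^1\times\{-1,+1\})$. In the associated six-term sequence one has $K_0(\mathcal{K}_{S^1})\cong\mathbb{Z}$, $K_1(\mathcal{K}_{S^1})=0$, and $K_0(C(S^1\times\{-1,+1\}))\cong K_1(C(S^1\times\{-1,+1\}))\cong\mathbb{Z}^2$; the exponential map $\delta_0$ is automatically zero, so the whole computation hinges on the index map $\delta_1\colon\mathbb{Z}^2\cong K_1(C(S^1\times\{-1,+1\}))\to K_0(\mathcal{K}_{S^1})\cong\mathbb{Z}$.

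To show $\delta_1$ is surjective I would exhibit a Fredholm element of $\mathcal{S}$ of index $\pm1$. Take $b\in C(\mathbb{Z}\cup\{-\infty,+\infty\})$ with $b(j)=1$ for $j\geq 0$ and $b(j)=0$ for $j<0$; then $P:=b(D_\theta)\in\mathcal{S}$ is a projection (the Hardy projection on $L^2(S^1)$), and, writing $u\in\mathcal{S}$ for multiplication by the function $e^{i\theta}$, the operator $T:=PuP+(I-P)\in\mathcal{S}$ is the Toeplitz operator of symbol $e^{i\theta}$ direct-summed with the identity on the complement, hence Fredholm of index $\pm1$. A short computation with the symbol formulas of Chapter~1 gives $\rho(T)=(\texttt{z},1)$, a unitary in $C(S^1\times\{-1,+1\})$ generating one of the $\mathbb{Z}$ summands of $K_1$; since the index map here is the Fredholm index (\cite{rordam}, 9.4.2), $\delta_1([\rho(T)]_1)=\pm1$, so $\mathrm{im}\,\delta_1=\mathbb{Z}$. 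Exactness then gives $K_1(\mathcal{S})\cong\ker\delta_1\cong\mathbb{Z}$ (the quotient $\mathbb{Z}^2/\ker\delta_1\cong\mathbb{Z}$ being free), the connecting map forces $i_*\colon K_0(\mathcal{K}_{S^1})\to K_0(\mathcal{S})$ to vanish, and hence $\rho_*$ identifies $K_0(\mathcal{S})$ with $\ker\delta_0=K_0(C(S^1\times\{-1,+1\}))\cong\mathbb{Z}^2$. Combining with the first paragraph yields $K_0(\mathcal{E}_{\mathcal{A}})\cong\mathbb{Z}^2$ and $K_1(\mathcal{E}_{\mathcal{A}})\cong\mathbb{Z}$. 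The only substantive step is the surjectivity of $\delta_1$; everything else is bookkeeping with exact sequences and stability, and that surjectivity is just the classical Noether/Gohberg--Krein index theorem on the circle (alternatively one may quote the $K$-theory of $\mathcal{S}$ from \cite{cordes3}, or, since $J$ is the Fourier conjugate of the commutator ideal of the algebra in \cite{cintia}, cite that reference directly).
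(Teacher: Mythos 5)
Your proposal is correct and follows essentially the same route as the paper: reduce by stability to $\mathcal{S}$ via the isomorphism $\mathcal{E}_{\mathcal{A}}\cong\mathcal{S}\otimes\mathcal{K}_{\mathbb{Z}\times\mathbb{S}^1}$ (Proposi\c{c}\~ao 1.5), then run the six-term sequence of $0\to\mathcal{K}_{S^1}\to\mathcal{S}\to C(S^1\times\{-1,+1\})\to 0$, with everything resting on the surjectivity of $\delta_1$. Your Fredholm witness $PuP+(I-P)$ differs from the paper's $b(D_{\theta})\texttt{x}(M_x)+c(D_{\theta})$ only by a compact operator; the paper computes its index $-1$ directly by conjugating with $F_d$ and counting kernel and cokernel, where you invoke the classical Toeplitz index theorem.
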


\begin{proof} A proposição \ref{e} nos dá o seguinte isomorfismo
$$ \mathcal{E}_{\mathcal{A}} \cong \mathcal{S} \otimes \mathcal{K}_{\mathbb{Z}\times\mathbb{S}^1} .$$
Ao invés de calcularmos  $K_0$ e $K_1$ de $\mathcal{E}_{\mathcal{A}}$ , vamos calcular os K-grupos de $ \mathcal{S}$, já que $\mathcal{S} \otimes \mathcal{K}_{\mathbb{Z}\times\mathbb{S}^1} $ é a estabilização da C*-álgebra $ \mathcal{S}$ (\cite{rordam}, 6.4) e portanto
$$ K_0(\mathcal{S} \otimes \mathcal{K}_{\mathbb{Z}\times\mathbb{S}^1}) \cong K_0(\mathcal{S}) ~~ \mbox{e} ~~ 
K_1(\mathcal{S} \otimes \mathcal{K}_{\mathbb{Z}\times\mathbb{S}^1}) \cong K_1(\mathcal{S}). $$

No capítulo 1, vimos que $\mathcal{K}_{S^1}$ está contido em $\mathcal{S}$ e que $\mathcal{S}/\mathcal{K}_{S^1} \cong C(S^1 \times (-1,+1)).$ A partir da seqüência exata
$$  0 \ \longrightarrow \mathcal{K}_{S^1} \  \stackrel{i}{\longrightarrow} \ {\cal{S}} \  \stackrel{\pi}{\longrightarrow} \ \frac{\cal{S}}{\mathcal{K}_{S^1}} \ \longrightarrow \ 0 ,$$
temos a seqüência exata em K-teoria
$$
\begin{array}{ccccc}
 \mathbb{Z} \cong K_0(\mathcal{K}_{S^1})  & \! {\longrightarrow} & \! 
 K_0(\mathcal{S})  & \! {\longrightarrow} & \!
 K_0(\mathcal{S}/\mathcal{K}_{S^1})  \\ \\    
 {\delta}_1 \ \uparrow & \! ~  & \! ~  & \! ~ & \! \downarrow \ {\delta}_0  \\ \\
 K_1(\mathcal{S}/\mathcal{K}_{S^1}) & \! {\longleftarrow} & \! 
 K_1(\mathcal{S})  & \! {\longleftarrow} & \!
 K_1(\mathcal{K}_{S^1}) = 0
\end{array}
$$
Sabemos que $K_0(C(S^1)\oplus C(S^1)) = \mathbb{Z}[(0,1)]_0 \oplus \mathbb{Z}[(1,0)]_0 ,$ então
$$ K_0(\mathcal{S}/\mathcal{K}_{S^1}) =  \mathbb{Z}[[b(D_{\theta})]_{\mathcal{K}_{S^1}}]_0 \oplus \mathbb{Z}[[c(D_{\theta})]_{\mathcal{K}_{S^1}}]_0 $$
onde $b(D_{\theta})= F_d^{-1}b(M_j)F_d, ~c(D_{\theta})= F_d^{-1}c(M_j)F_d$ e escolhemos $ b(M_j)$ de tal forma que, para um inteiro $n_0$ fixo, 
$$
b(j)= 
\left\{\begin{array}{rl}
1,& \mbox{ se }  j \geq n_0 \\
0,& \mbox{ se }  j < n_0
\end{array}\right.
$$
e $c(M_j)$ é tal que $c(j) = 1 - b(j).$ De fato, estas são boas escolhas de $b(D_{\theta})$ e $c(D_{\theta})$ pois 
$$(\rho_{b(D_{\theta})}(x,-1), \rho_{b(D_{\theta})}(x,+1)= (b(-\infty),b(+\infty))= (0,1), $$
$$(\rho_{c(D_{\theta})}(x,-1), \rho_{c(D_{\theta})}(x,+1)) = (c(-\infty),c(+\infty))=(1,0).$$

Para $ K_1(C(S^1)\oplus C(S^1)) = \mathbb{Z}[(\texttt{x},1)]_1 \oplus \mathbb{Z}[(1,\texttt{x})]_1$, onde $\texttt{x}:x\in S^1 \mapsto x$, temos
$$ K_1(\mathcal{S}/\mathcal{K}_{S^1}) = \mathbb{Z}[[b(D_{\theta})\texttt{x}(M_x) + c(D_{\theta})]_{\mathcal{K}_{S^1}}]_1 \oplus \mathbb{Z}[[b(D_{\theta}) + \texttt{x}(M_x)c(D_{\theta})]_{\mathcal{K}_{S^1}}]_1. $$ 

O nosso trabalho se resume a calcular $\delta_1 : K_1(\mathcal{S}/\mathcal{K}_{S^1})\rightarrow K_0(\mathcal{K}_{S^1}),$ que é dada pelo índice de Fredholm. Como o operador $b(D_{\theta})\texttt{x}(M_x) + c(D_{\theta})$ não é unitário em $\mathcal{S}$, vamos calcular seu índice.  
\begin{eqnarray}
b(D_{\theta})\texttt{x}(M_x) + c(D_{\theta}) & = & F_d^{-1} b(M_j) F_d \texttt{x}(M_x) + F_d^{-1} c(M_j) F_d  \nonumber \\
& = & F_d^{-1} (b(M_j) F_d \texttt{x}(M_x) + c(M_j) F_d)  \nonumber \\
& = & F_d^{-1} (b(M_j)Y_{-1} F_d  + c(M_j) F_d)  \nonumber \\
& = & F_d^{-1} (b(M_j)Y_{-1} + c(M_j)) F_d  \nonumber 
\end{eqnarray}
Com a igualdade acima, temos que a dimensão do núcleo de $b(D_{\theta})\texttt{x}(M_x) + c(D_{\theta})$ é igual a dimensão do núcleo de $b(M_j)Y_{-1} + c(M_j).$
$$ (b(M_j)Y_{-1} + c(M_j))(u_j)_j = 0 \Longleftrightarrow u_j = 0 ~\forall j \in \mathbb{Z} ,$$
então a dimensão do núcleo de  $b(M_j)Y_{-1} + c(M_j)$ é igual a 0. 
Vamos agora calcular a dimensão do núcleo de seu adjunto: 
$$ \overline{\texttt{x}}(M_x)b(D_{\theta}) + c(D_{\theta}) = F_d^{-1} (Y_{1}b(M_j) + c(M_j)) F_d .$$
$$ (Y_{1}b(M_j) + c(M_j))(u_j)_j = 0 \Longleftrightarrow u_j = 0 ~\forall j \neq n_0 -1, n_0 ~ \mbox{e} ~ u_{n_0-1}+u_{n_0} = 0.$$
Isto acontece pois na posição $n_0 -1$ da seqüência vamos ter o elemento $u_{n_0-1}+u_{n_0}$ e $u_{n_0-1}$ e $u_{n_0}$ só aparecem nesta posição, e nas outras posições todos os outros elementos aparecem sozinhos. Assim, dim $ker (Y_{1}b(M_j) + c(M_j)) = 1$. Logo, 
$$ \mathsf{ind}(b(D_{\theta})\texttt{x}(M_x) + c(D_{\theta})) =  0 - 1 = -1, $$
e portanto $\delta_1$ é sobrejetora. 

Segue que,
$$ K_0(\mathcal{E}_{\mathcal{A}}) \cong K_0(\mathcal{S}) \cong \mathbb{Z}^2 ~~\mbox{e} ~~K_1(\mathcal{E}_{\mathcal{A}}) \cong K_1(\mathcal{S}) \cong \mathbb{Z}$$
como queríamos. 
\end{proof}

\begin{obs} \label{obs2} Com este resultado, podemos observar na seqüência 
$$
\begin{array}{ccccc}
 \mathbb{Z} \cong K_0(\mathcal{K}_{\Omega})  & \! {\longrightarrow} & \! 
 K_0(\mathcal{E}_{\mathcal{A}}) \cong \mathbb{Z}^2  & \! {\longrightarrow} & \!
 K_0({\mathcal{E}}_{\mathcal{A}}/\mathcal{K}_{\Omega}) \cong \mathbb{Z}^2  \\ \\    
 {\delta}_1 \ \uparrow & \! ~  & \! ~  & \! ~ & \! \downarrow \ {\delta}_0  \\ \\
 \mathbb{Z}^2 \cong K_1({\mathcal{E}}_{\mathcal{A}}/\mathcal{K}_{\Omega}) & \! {\longleftarrow} & \! 
 K_1(\mathcal{E}_{\mathcal{A}})\cong \mathbb{Z}  & \! {\longleftarrow} & \!
 K_1(\mathcal{K}_{\Omega}) = 0
\end{array}
$$
que a aplicação do índice $\delta_1 : K_1(\mathcal{E}_{\mathcal{A}}/ \mathcal{K}_{\Omega}) \rightarrow  K_0(\mathcal{K}_{\Omega})$ é não nula (caso contrário, $\mathbb{Z}\cong \mathbb{Z}^2$). Se $Im \delta_1 \cong n\mathbb{Z},$ para algum $n$ inteiro não nulo. Então teríamos a seqüência 
$$ 0 \rightarrow \frac{K_0(\mathcal{K}_{\Omega})}{Im \delta_1} \cong \mathbb{Z}_n \rightarrow K_0(\mathcal{E}_{\mathcal{A}}) \cong \mathbb{Z}^2  \stackrel{\vartheta}{\rightarrow} K_0({\mathcal{E}}_{\mathcal{A}}/\mathcal{K}_{\Omega}) \cong \mathbb{Z}^2 \rightarrow 0, $$
onde $\mathbb{Z}_n \cong \mathbb{Z}/n\mathbb{Z}.$ Note que o núcleo da aplicação $\vartheta$ é isomorfo a $\mathbb{Z}_n$, mas $\mathbb{Z}_n$ não é um subgrupo de $\mathbb{Z}^2$. Logo $\delta_1$ é sobrejetora.  
Além disso, esta aplicação também é dada pelo índice de Fredholm. Portanto, sabemos que existe um operador de Fredholm $T$ em $M_n(\tilde{\mathcal{E}_{\mathcal{A}}})$ que tem índice igual a $1$. Vale ainda ressaltar que $K_0(\mathcal{E}_{\mathcal{A}}) \cong K_0({\mathcal{E}}_{\mathcal{A}}/\mathcal{K}_{\Omega}).$  
\end{obs}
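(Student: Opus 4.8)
The plan is to treat the statement as a purely homological consequence of the $K$-groups already computed, together with the standard identification of the index map with the Fredholm index. Reading off the six-term sequence displayed above, the first point is that the exponential map $\delta_0 : K_0(\mathcal{E}_{\mathcal{A}}/\mathcal{K}_{\Omega})\to K_1(\mathcal{K}_{\Omega})$ vanishes, since $K_1(\mathcal{K}_{\Omega})=0$. All the content therefore lies in the index map $\delta_1 : K_1(\mathcal{E}_{\mathcal{A}}/\mathcal{K}_{\Omega})\cong\mathbb{Z}^2\to K_0(\mathcal{K}_{\Omega})\cong\mathbb{Z}$ and in the inclusion- and quotient-induced maps in degree $0$, which I denote $\iota_* : K_0(\mathcal{K}_{\Omega})\to K_0(\mathcal{E}_{\mathcal{A}})$ and $\vartheta : K_0(\mathcal{E}_{\mathcal{A}})\to K_0(\mathcal{E}_{\mathcal{A}}/\mathcal{K}_{\Omega})$.

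First I would check that $\delta_1\neq 0$. Since $K_1(\mathcal{K}_{\Omega})=0$, exactness forces the map $i_* : K_1(\mathcal{E}_{\mathcal{A}})\cong\mathbb{Z}\to K_1(\mathcal{E}_{\mathcal{A}}/\mathcal{K}_{\Omega})\cong\mathbb{Z}^2$ to be injective. If $\delta_1$ were zero, then exactness at $K_1(\mathcal{E}_{\mathcal{A}}/\mathcal{K}_{\Omega})$ would give $\mathrm{Im}\,i_*=\ker\delta_1=\mathbb{Z}^2$, so $i_*$ would be an isomorphism $\mathbb{Z}\to\mathbb{Z}^2$, which is absurd. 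Hence $\mathrm{Im}\,\delta_1=n\mathbb{Z}$ for some nonzero $n$.

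Next I would upgrade this to surjectivity and extract the $K_0$ isomorphism. Exactness at $K_0(\mathcal{K}_{\Omega})$ gives $\ker\iota_*=\mathrm{Im}\,\delta_1=n\mathbb{Z}$, so $\mathrm{Im}\,\iota_*\cong\mathbb{Z}/n\mathbb{Z}$. Because $\delta_0=0$, the map $\vartheta$ is surjective, and exactness at $K_0(\mathcal{E}_{\mathcal{A}})$ gives $\ker\vartheta=\mathrm{Im}\,\iota_*\cong\mathbb{Z}_n$. But $\ker\vartheta$ is a subgroup of the torsion-free group $K_0(\mathcal{E}_{\mathcal{A}})\cong\mathbb{Z}^2$, so this finite group must be trivial; hence $n=\pm1$ and $\delta_1$ is onto. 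Consequently $\ker\iota_*=K_0(\mathcal{K}_{\Omega})$, i.e. $\iota_*=0$, so $\ker\vartheta=0$ and $\vartheta$ is injective; being also surjective, it is an isomorphism, giving $K_0(\mathcal{E}_{\mathcal{A}})\cong K_0(\mathcal{E}_{\mathcal{A}}/\mathcal{K}_{\Omega})$.

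Finally I would draw the analytic conclusion. Since $\mathcal{K}_{\Omega}\subset\mathcal{E}_{\mathcal{A}}\subset\mathcal{L}(L^2(\Omega))$, the index map of this extension is computed by the Fredholm index, by the general fact recorded just after Lemma \ref{lema1} (following \cite{rordam}, 9.4.2): for a Fredholm $T\in M_n(\tilde{\mathcal{E}_{\mathcal{A}}})$ one has $\delta_1([[T]_{\mathcal{K}_{\Omega}}]_1)=\mathsf{ind}(T)\,[E]_0$, where $[E]_0$ generates $K_0(\mathcal{K}_{\Omega})\cong\mathbb{Z}$. Surjectivity of $\delta_1$ then yields a class hitting the generator $[E]_0$, that is, a Fredholm operator $T\in M_n(\tilde{\mathcal{E}_{\mathcal{A}}})$ with $\mathsf{ind}(T)=1$. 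I expect no genuine obstacle here: the whole argument is an exercise in exact sequences of finitely generated abelian groups, the single delicate point being the careful bookkeeping of injectivity and surjectivity of $\iota_*$ and $\vartheta$ — in particular the use of torsion-freeness of $\mathbb{Z}^2$ to annihilate the would-be torsion kernel $\mathbb{Z}_n$. The only substantive input is the previously established count $K_0(\mathcal{E}_{\mathcal{A}})\cong\mathbb{Z}^2$, $K_1(\mathcal{E}_{\mathcal{A}})\cong\mathbb{Z}$, on which everything rests.
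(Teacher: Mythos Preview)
Your argument is correct and follows essentially the same route as the paper's own remark: both detect that $\delta_1\neq 0$ by the impossibility of $\mathbb{Z}\cong\mathbb{Z}^2$, both upgrade to surjectivity via the torsion-freeness of $K_0(\mathcal{E}_{\mathcal{A}})\cong\mathbb{Z}^2$ (the kernel of $\vartheta$ would be a finite cyclic group inside $\mathbb{Z}^2$, hence trivial), and both invoke the standard identification of $\delta_1$ with the Fredholm index to extract an index-$1$ operator. Your write-up is more explicit in tracking the maps $\iota_*$ and $\vartheta$ and in spelling out why $\vartheta$ becomes an isomorphism, but the strategy is identical.
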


\begin{prop} A aplicação $\delta_1$ em (\ref{seq}) é sobrejetora. 
\end{prop}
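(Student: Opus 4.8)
The statement to prove is that $\delta_1$ in the six-term sequence (\ref{seq}) — associated to $0\to\mathcal{E}_{\mathcal{A}}\to\mathcal{A}\to\mathcal{A}/\mathcal{E}_{\mathcal{A}}\to 0$ — is surjective, i.e. $\delta_1\colon K_1(\mathcal{A}/\mathcal{E}_{\mathcal{A}})\to K_0(\mathcal{E}_{\mathcal{A}})\cong\mathbb{Z}^2$ is onto.

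The plan is to exploit the commuting diagram of the two short exact sequences: the one in (\ref{seq}) and the "quotient by compacts" sequence (\ref{sss}), linked by the quotient map $\mathcal{A}\to\mathcal{A}/\mathcal{K}_\Omega$ and $\mathcal{E}_{\mathcal{A}}\to\mathcal{E}_{\mathcal{A}}/\mathcal{K}_\Omega$. This induces a commuting square relating the index maps $\delta_1$ of (\ref{seq}) and $\delta_1$ of (\ref{sss6}): writing $q\colon\mathcal{E}_{\mathcal{A}}\to\mathcal{E}_{\mathcal{A}}/\mathcal{K}_\Omega$ for the quotient, naturality of the boundary map gives $q_*\circ\delta_1^{(\ref{seq})}=\delta_1^{(\ref{sss6})}$ on $K_1(\mathcal{A}/\mathcal{E}_{\mathcal{A}})$ (both sequences have the same quotient $\mathcal{A}/\mathcal{E}_{\mathcal{A}}$, so the map on $K_1(\mathcal{A}/\mathcal{E}_{\mathcal{A}})$ is the identity). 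First I would record this naturality square carefully.

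Next I would combine three facts. From Theorem \ref{sobrejetora}, $\delta_1^{(\ref{sss6})}\colon K_1(\mathcal{A}/\mathcal{E}_{\mathcal{A}})\to K_0(\mathcal{E}_{\mathcal{A}}/\mathcal{K}_\Omega)\cong\mathbb{Z}^2$ is surjective. From Observation \ref{obs2}, the index map $\delta_1\colon K_1(\mathcal{E}_{\mathcal{A}}/\mathcal{K}_\Omega)\to K_0(\mathcal{K}_\Omega)\cong\mathbb{Z}$ coming from $0\to\mathcal{K}_\Omega\to\mathcal{E}_{\mathcal{A}}\to\mathcal{E}_{\mathcal{A}}/\mathcal{K}_\Omega\to 0$ is surjective and $q_*\colon K_0(\mathcal{E}_{\mathcal{A}})\to K_0(\mathcal{E}_{\mathcal{A}}/\mathcal{K}_\Omega)$ is an isomorphism $\mathbb{Z}^2\to\mathbb{Z}^2$ (this last is the final sentence of Observation \ref{obs2}, following from the six-term sequence there since $K_1(\mathcal{K}_\Omega)=0$ and $\delta_1$ surjective forces $q_*$ to be injective with finite cokernel, hence — because both groups are $\mathbb{Z}^2$ and $q_*$ has trivial kernel and the quotient $K_0(\mathcal{K}_\Omega)/\mathrm{Im}\,\delta_1$ would be a torsion subgroup of $\mathbb{Z}^2$, forcing it to vanish — an isomorphism). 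Then from the naturality square, $q_*\circ\delta_1^{(\ref{seq})}=\delta_1^{(\ref{sss6})}$ is surjective; since $q_*$ is an isomorphism, $\delta_1^{(\ref{seq})}$ itself must be surjective. That closes the argument.

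The main obstacle is the bookkeeping: making sure the naturality square is set up with the correct maps and that $q_*$ is genuinely the isomorphism claimed in Observation \ref{obs2} rather than merely injective. One must check that the composite $K_1(\mathcal{A}/\mathcal{E}_{\mathcal{A}})\xrightarrow{\delta_1^{(\ref{seq})}}K_0(\mathcal{E}_{\mathcal{A}})\xrightarrow{q_*}K_0(\mathcal{E}_{\mathcal{A}}/\mathcal{K}_\Omega)$ really coincides with $\delta_1^{(\ref{sss6})}$; this is the standard functoriality of the index map applied to the morphism of short exact sequences whose middle and right vertical arrows are $\mathcal{A}\to\mathcal{A}/\mathcal{K}_\Omega$ and $\mathrm{id}_{\mathcal{A}/\mathcal{E}_{\mathcal{A}}}$, and whose left vertical arrow is $q$. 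Alternatively, one can avoid the naturality square entirely by re-running the explicit computation of Theorem \ref{sobrejetora}: the same generators $[[I+(A_5+iA_6-1)Q(l(M_t),\texttt{a}(M_x),\texttt{b}(M_x))]_{\mathcal{E}}]_1$ and its $\tilde l$-analogue map, under $\delta_1^{(\ref{seq})}$, to classes in $K_0(\mathcal{E}_{\mathcal{A}})$ whose images under the isomorphism $q_*$ are $(0,[E]_0)$ and $([E]_0,0)$; since these generate $K_0(\mathcal{E}_{\mathcal{A}}/\mathcal{K}_\Omega)\cong\mathbb{Z}^2$ and $q_*$ is an isomorphism, they generate $K_0(\mathcal{E}_{\mathcal{A}})$, giving surjectivity directly. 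I would present the naturality argument as the primary proof and mention the explicit computation as the concrete underpinning.
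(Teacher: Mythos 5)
Your proposal is correct and is essentially the paper's own argument: the paper's proof is just the terse version, citing Theorem \ref{sobrejetora} for the surjectivity of $\delta_1\colon K_1(\mathcal{A}/\mathcal{E}_{\mathcal{A}})\to K_0(\mathcal{E}_{\mathcal{A}}/\mathcal{K}_{\Omega})$ together with the isomorphism $K_0(\mathcal{E}_{\mathcal{A}})\cong K_0(\mathcal{E}_{\mathcal{A}}/\mathcal{K}_{\Omega})$ from Observation \ref{obs2}, leaving implicit the naturality square $q_*\circ\delta_1^{(\ref{seq})}=\delta_1^{(\ref{sss6})}$ that you spell out. Making explicit that the isomorphism is implemented by $q_*$ and invoking functoriality of the index map is exactly the intended reading, so your write-up is a correct (indeed more complete) rendering of the same proof.
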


\begin{proof} Pelo teorema \ref{sobrejetora}, $\delta_1 : K_1({\mathcal{A}}/\mathcal{E}_{\mathcal{A}}) \rightarrow K_0({\mathcal{E}}_{\mathcal{A}}/\mathcal{K}_{\Omega})$ é sobrejetora e como $K_0(\mathcal{E}_{\mathcal{A}}) \cong K_0({\mathcal{E}}_{\mathcal{A}}/\mathcal{K}_{\Omega}), $ temos o que queríamos.   
\end{proof}

Assim, as possibilidades para $K_0(\mathcal{A})$ e $K_1(\mathcal{A})$ analisando a seqüência (\ref{seq}) são:
\begin{enumerate}
\item Se $\delta_0 \equiv 0$ então $K_0(\mathcal{A})\cong \mathbb{Z}^6$ e $K_1(\mathcal{A})\cong \mathbb{Z}^5.$
\item Se $\delta_0 $ em algum elemento de $K_0(\mathcal{A}/\mathcal{E}_{\mathcal{A}})$ for relacionado a um inteiro não nulo $\zeta$ pelo isomorfismo $K_1({\mathcal{E}}_{\mathcal{A}}) \cong \mathbb{Z}$, então $K_0(\mathcal{A})\cong \mathbb{Z}^5$ e $K_1(\mathcal{A})\cong \mathbb{Z}^4 \oplus \mathbb{Z}_{\zeta} .$
\end{enumerate}

Para finalizar, enunciamos o seguinte teorema que nos dá a K-teoria das álgebras $\mathcal{A}/{\cal{K}}_{\Omega}$ e $\mathcal{A}$. 

\begin{teo} Dada a seqüência exata curta
$$ 0 \ \longrightarrow {\cal{K}}_{\Omega} \  \stackrel{i}{\longrightarrow} \ {\cal{A}} \  \stackrel{\pi}{\longrightarrow} \ \frac{\cal{A}}{{\cal{K}}_{\Omega}} \ \longrightarrow \ 0 ,$$ 
onde $i$ é a inclusão e $\pi$ a projeção canônica, podemos afirmar: 
\begin{itemize}
\item[(i)] a aplicação do índice $\delta_1$ da seqüência exata de seis termos em K-teoria
$$
\begin{array}{ccccc}
 K_0({\cal{K}}_{\Omega}) & \! \stackrel{{i}_*}{\longrightarrow} & \! 
 K_0(\mathcal{A})  & \! \stackrel{{\pi}_*}{\longrightarrow} & \!
 K_0(\mathcal{A}/{\cal{K}}_{\Omega}) \vs  \\     
 {\delta}_1 \ \uparrow & \! ~  & \! ~  & \! ~ & \! \downarrow \ {\delta}_0 \vs \\ 
 K_1(\mathcal{A}/{\cal{K}}_{\Omega}) & \! \stackrel{{\pi}_*}{\longleftarrow} & \! 
 K_1(\mathcal{A})  & \! \stackrel{{i}_*}{\longleftarrow} & \! K_1({\cal{K}}_{\Omega})
\end{array}
$$
é sobrejetora;
\item[(ii)] $K_0(\mathcal{A})  \cong K_0(\mathcal{A}/{\cal{K}}_{\Omega}) \cong \mathbb{Z}^5;$
\item[(iii)] $K_1(\mathcal{A})\cong \mathbb{Z}^4$ e $K_1(\mathcal{A}/{\cal{K}}_{\Omega})\cong \mathbb{Z}^5$.
\end{itemize}
\end{teo}

\begin{proof}

Sabemos que existe um operador $T$ em $M_n(\tilde{\mathcal{E}_{\mathcal{A}}})$ cujo índice de Fredholm é $1$ pela observação \ref{obs2}. Portanto, existe um operador $T$ em $M_n(\tilde{\mathcal{A}})$ com índice de Fredholm igual a $ 1$. Logo, $\delta_1 ([[T]_{{\cal{K}}_{\Omega}}]_1) = \texttt{ind}(T)[P]_0$ onde $P \in {\cal{K}}_{\Omega}$ é uma projeção de posto 1. Portanto $\delta_1$ é sobrejetora. 

Como o núcleo de $i_{*}$ é $K_0({\cal{K}}_{\Omega})$ e $\delta_0$ é nula, pois $K_1({\cal{K}}_{\Omega})=0,$ então $K_0(\mathcal{A})  \cong K_0(\mathcal{A}/{\cal{K}}_{\Omega}).$ Para que este isomorfismo aconteça, as únicas possibilidades para $K_0(\mathcal{A})$ e $ K_0(\mathcal{A}/{\cal{K}}_{\Omega})$ (página \pageref{poss3}) são que ambos sejam isomorfos a $\mathbb{Z}^5.$

Como $K_0(\mathcal{A}/{\cal{K}}_{\Omega})\cong \mathbb{Z}^5,$ então podemos concluir  que $K_1(\mathcal{A}/{\cal{K}}_{\Omega})\cong \mathbb{Z}^5.$ 
O fato de $K_0(\mathcal{A})$ ser isomorfo a $\mathbb{Z}^5,$ implica que $K_1(\mathcal{A})\cong \mathbb{Z}^4\oplus \mathbb{Z}_{\zeta}.$ 
Como $\pi_{*}(K_1(\mathcal{A}))$ é o núcleo da aplicação $\delta_1,$ e portanto, um  subgrupo de $K_1(\mathcal{A}/{\cal{K}}_{\Omega})$, então $\zeta=\pm 1$ e concluímos que  $K_0(\mathcal{A}) \cong \mathbb{Z}^4.$

%Para que isto aconteça, estes grupos devem ser isomorfos a $\mathbb{Z}^5,$ pois nenhuma outra possibilidade caberia a estes dois grupos. Podemos ainda concluir mais:  
%$$ K_0(\mathcal{A}/{\cal{K}}_{\Omega}) \cong \mathbb{Z}^5 \Longrightarrow K_1(\mathcal{A}/{\cal{K}}_{\Omega})\cong \mathbb{Z}^5 $$
%$$ K_0(\mathcal{A}) \cong \mathbb{Z}^5 \Longrightarrow K_1(\mathcal{A})\cong \mathbb{Z}^4 \oplus \mathbb{Z}_{\zeta}. $$
%Como $K_1(\mathcal{A}/{\cal{K}}_{\Omega})\cong \mathbb{Z}^5$ então $\zeta$ tem de ser igual a $ \pm 1$. Com isso demonstramos o seguinte teorema que finaliza o nosso  trabalho. 
\end{proof}

\chapter*{Apêndice}
\addcontentsline{toc}{chapter}{Apêndice}
Apresentamos aqui, os comandos usados no software \textit{Maple} para calcular a integral dada em (\ref{indice}): 

\begin{eqnarray}
\mathsf{ind} T & = & \frac{(-1)^{3}}{(2\pi i)^2}\frac{(2-1)!}{(2\cdot 2 - 1)!}\int_{S^1 \times {S}^1 \times {S}^1 }Tr({\sigma}^{-1}d\sigma)^{3} \nonumber \\
& = & \frac{1}{24{\pi}^2}\int_{S^1 \times {S}^1 \times {S}^1 }Tr({\sigma}^{-1}d\sigma)^{3} \nonumber
\end{eqnarray}
onde o símbolo principal do operador $T$ coincide com o símbolo que definimos no nosso trabalho:
\begin{equation}
\sigma_{T}((e^{i\alpha}, e^{i\beta}, e^{i\lambda})) = I + (-\cos\lambda + i \mbox{sen}\lambda - 1)Q(e^{i\alpha},\cos\beta,\mbox{sen}\beta).
\label{t}
\end{equation}
Obtemos então que $ (\sigma^{-1}d\sigma)^3 = [A_1 + A_2 +A_3 ] d\alpha d\beta d\lambda $, para 
$$ A_1 = \sigma^{-1} \sigma_{\alpha} \sigma^{-1}(\sigma_{\beta} \sigma^{-1} \sigma_{\lambda} - \sigma_{\lambda} \sigma^{-1}\sigma_{\beta} ), $$
$$ A_2 = \sigma^{-1} \sigma_{\beta} \sigma^{-1}(\sigma_{\lambda} \sigma^{-1}\sigma_{\alpha} - \sigma_{\alpha} \sigma^{-1}\sigma_{\lambda} ), $$
$$ A_3 = \sigma^{-1} \sigma_{\lambda} \sigma^{-1}(\sigma_{\alpha}\sigma^{-1}\sigma_{\beta} - \sigma_{\beta} \sigma^{-1} \sigma_{\alpha}). $$
onde $\sigma_{\alpha}= \frac{\partial}{\partial\alpha}\sigma , ~ \sigma_{\beta}=\frac{\partial}{\partial\beta}\sigma , ~ \sigma_{\lambda}= \frac{\partial}{\partial\lambda}\sigma.$  

O objetivo é determinar $A_1, A_2$ e $A_3$, para depois calcular a integral. 

Seguem os comandos:

\begin{itemize} 
 \item $ m:= 1 - (1-\cos(\alpha))^{*}(1-\cos(\beta))/4;$ (entrada $a_{11}$ da matriz $Q$)
 \item $ p:= ((exp(I^{*}2^{*}\alpha)-1)/4)^{*}((1-\cos(\beta))/2)^{**}(1/2)- \sin(\beta)^{*}(exp(I^{*}x) -1)^{**}2/8;$ (entrada $a_{12}$ da matriz $Q$)
 \item $ q:= ((exp(-I^{*}2^{*}\alpha)-1)/4)^{*}((1-\cos(\beta))/2)^{**}(1/2)- \sin(\beta)^{*}(exp(-I^{*}x) -1)^{**}2/8;$ (entrada $a_{21}$ da matriz $Q$)
 \item $ n:= (1-\cos(\alpha))^{*}(1-\cos(\beta))/4;$ (entrada $a_{22}$ da matriz $Q$)
 \item $ A:= <<m,q> | <p,n>>;$  (a matriz $Q$)
 \item $ S:= 1+ f(\lambda)^{*}A;$ (\ref{t})
 \item $ Z:= S^{**}(-1); $ (inversa de $S$)
 \item $ md\alpha := diff(m, \alpha);$ (derivadas em relação a $\alpha$ das entradas da matriz ) 
 \item $ pd\alpha := diff(p, \alpha);$
 \item $ qd\alpha := diff(q, \alpha);$
 \item $ nd\alpha := diff(n, \alpha);$
 \item $ Sdx:= f(\lambda)^{*}<<md\alpha,qd\alpha> | <pd\alpha,nd\alpha>>;$ ( matriz  da derivada parcial em relação a $\alpha$) 
 \item $ md\beta := diff(m, \beta);$ (derivadas em relação a $\beta$ das entradas da matriz ) 
 \item $ pd\beta := diff(p, \beta);$
 \item $ qd\beta := diff(q, \beta);$
 \item $ nd\beta := diff(n, \beta);$
 \item $ Sd\beta := f(\lambda)^{*}<<md\beta,qd\beta> | <pd\beta,nd\beta>>;$ ( matriz  da derivada parcial em relação a $\beta$) 
 \item $ Sd\lambda := g(\lambda)^{*}A;$  (matriz onde $g(\lambda)$ é a derivada em relação a $\lambda$)
 \item $ C1:= Sd\beta . Z . Sd\lambda - Sd\lambda . Z . Sd\beta ; $ ($ = \sigma_{\beta} \sigma^{-1} \sigma_{\lambda} - \sigma_{\lambda} \sigma^{-1}\sigma_{\beta} $)
 \item $ C2:= Sd\lambda . Z . Sd\alpha - Sd\alpha . Z . Sd\lambda ; $ ($ = \sigma_{\lambda} \sigma^{-1}\sigma_{\alpha} - \sigma_{\alpha} \sigma^{-1}\sigma_{\lambda} $)
 \item $ C3:= Sd\alpha . Z . Sd\beta - Sd\beta . Z . Sd\alpha ; $ ($ = \sigma_{\alpha}\sigma^{-1}\sigma_{\beta} - \sigma_{\beta} \sigma^{-1} \sigma_{\alpha} $)
 \item $ A1 := Z.Sd\alpha . Z . C1 ;$
 \item $ A2 := Z.Sd\beta . Z . C2 ;$
 \item $ A3 := Z.Sd\lambda . Z . C3 ;$
 \item $ T1 := Trace(A1);$
 \item $ T2 := Trace(A2);$
 \item $ T3 := Trace(A3);$
 \item $ U := simplify(T1);$ (simplifica a expressão - opcional!)
 \item $ V := simplify(T2);$
 \item $ X := simplify(T3);$
 \item $ u1 := Int(U, \alpha=-Pi .. Pi);$ ( integração na variável $\alpha$  de $-\pi$ a $\pi$ do traço de $A1$ )
 \item $ h1 := value(u1);$  (mostra o valor da integral)
 \item $ u2 := Int(h1, \beta =-Pi .. Pi);$ ( integração na variável $\beta$  de $-\pi$ a $\pi$ de $h1$ )
 \item $ h2 := value(u2);$  (mostra o valor da integral que está em função de $f(\lambda)$ e $g(\lambda)$)
 $$   h2  := \frac{-2~I ~ Pi ~f(\lambda)^2 ~g(\lambda)}{f(\lambda)^2~+~ 2~f(\lambda)~ +~ 1} 
 $$
 \item $ r:= (-2^{*}I^{*}Pi^{*}(-\cos(\lambda)+ I^{*}\sin{\lambda}-1)^{**}2^{*}(\sin(\lambda)+ I^{*}\cos(\lambda))/ $ \\
 $((-\cos(\lambda)+ I^{*}\sin{\lambda}-1)^2~+~ 2~(-\cos(\lambda)+ I^{*}\sin{\lambda}-1)~ +~ 1) $ (vamos substituir $r$ em $h2$)
 \item $ u3 := Int(r, \lambda =-Pi .. Pi);$
 \item $ h3 := value(u3);$
 $$    h3  := 8~Pi^{2} $$
 \item $ v1 := Int(V, \alpha=-Pi .. Pi);$ ( integração na variável $\alpha$  de $-\pi$ a $\pi$ do traço de $A2$ )
 \item $ l1 := value(v1);$
 \item $ v2 := Int(l1, \beta =-Pi .. Pi);$
 \item $ l2 := value(v2);$
 $$    l2  := \frac{-2~I ~ Pi ~f(\lambda)^2 ~g(\lambda)}{f(\lambda)^2~+~ 2~f(\lambda)~ +~ 1} $$
  que é igual a $h2$, portanto a integral na variável $\lambda$ será  igual a $h3$.
 $$   l3  := 8~Pi^{2}$$ 
 \item $ x1 := Int(X, \alpha=-Pi .. Pi);$ ( integração na variável $\alpha$  de $-\pi$ a $\pi$ do traço de $A3$ )
 \item $ t1 := value(x1);$
 \item $ x2 := Int(t1, \beta =-Pi .. Pi);$
 \item $ t2 := value(x2);$
 $$   t2  := \frac{-2~I ~ Pi ~f(\lambda)^2 ~g(\lambda)}{f(\lambda)^2~+~ 2~f(\lambda)~ +~ 1} $$
 que é novamente igual a $l2$ e a $h2$, portanto 
 $$   t3  := 8~Pi^{2} $$
\end{itemize}

Logo 
$$\int_{-\pi}^{\pi}\int_{-\pi}^{\pi}\int_{-\pi}^{\pi} (T1+T2+T3)d\alpha d\beta d\lambda = 24 {\pi}^2 .$$

% para aparecer nas referencias os seguintes livros que nao
% foram citados no texto:

%\nocite{cordes3}
%\backmatter \singlespacing

\bibliographystyle{plain}

\bibliography{referencia}

%\begin{thebibliography}{}

%\bibitem{rordam} M. R{\o}rdam, F. Larsen, N. Laustsen; \emph{An introduction to K-theory for C*-algebras}, Cambridge University Press, Cambridge, 2000.

%\bibitem{} R. Brooks; {\it On removing coincidences of two maps when only one, rather then both, of then may be deformed by a homotopy}, Pacific J. Math. 39 (1971), 45-52.

\end{document}